\numberwithin{equation}{section}
\newcommand{\Fg}{\mathfrak{g}}
\newcommand{\Fh}{\mathfrak{h}}
\newcommand{\BZ}{\mathbb{Z}}
\newcommand{\BQ}{\mathbb{Q}}
\newcommand{\BR}{\mathbb{R}}
\newcommand{\BC}{\mathbb{C}}
\newcommand{\BB}{\mathbb{B}}
\newcommand{\ve}{\varepsilon}
\newcommand{\vp}{\varphi}
\newcommand{\vpi}{\varpi}
\newcommand{\bzero}{{\bf 0}}
\newcommand{\bp}{\mathbf{p}}
\newcommand{\bq}{\mathbf{q}}
\newcommand{\bv}{\bm{v}}
\newcommand{\Hom}{\mathop{\rm Hom}\nolimits}
\newcommand{\wt}{\mathop{\rm wt}\nolimits}
\newcommand{\id}{\mathop{\rm id}\nolimits}
\newcommand{\gch}{\mathop{\rm gch}\nolimits}
\newcommand{\cl}{\mathop{\rm cl}\nolimits}
\newcommand{\af}{\mathrm{af}}
\newcommand{\Lus}{\mathop{\sf Lus}\nolimits}
\newcommand{\rr}{\Delta_{\af}}
\newcommand{\prr}{\Delta_{\af}^{+}}
\newcommand{\lng}{w_{\circ}}
\newcommand{\si}{\frac{\infty}{2}}
\newcommand{\sell}{\ell^{\frac{\infty}{2}}}
\newcommand{\sil}{\prec}
\newcommand{\sile}{\preceq}
\newcommand{\sig}{\succ}
\newcommand{\QLS}{\mathrm{QLS}}
\newcommand{\SLS}{\mathbb{B}^{\frac{\infty}{2}}}
\newcommand{\SB}{\mathrm{BG}^{\si}(W_{\af})}
\newcommand{\QB}{\mathrm{QBG}(W)}
\newcommand{\tb}[1]{\le_{#1}}
\newcommand{\tbmin}[3]{\min(#1W_{#2},\le_{#3}\nobreak)}
\newcommand{\tiv}[2]{\ti{\bv}(#1,#2)}
\newcommand{\tixi}[2]{\ti{\bm{\xi}}(#1,#2)}
\newcommand{\bxi}[1]{\bm{\xi}(#1)}
\newcommand{\turn}[1]{[0,1]_{#1}}
\newcommand{\J}{S}
\newcommand{\Jc}{I \setminus \J}
\newcommand{\QJ}{Q_{\J}}
\newcommand{\QJv}{Q_{\J}^{\vee}}
\newcommand{\QJvp}{Q_{\J}^{\vee,+}}
\newcommand{\DeJ}{\Delta_{\J}}
\newcommand{\PJ}{\Pi^{\J}}
\newcommand{\WJ}{W_{\J}}
\newcommand{\WJu}{W^{\J}}
\newcommand{\WJa}{(W^{\J})_{\af}}
\newcommand{\SBJ}{\mathrm{BG}^{\si}((\WJu)_{\af})}
\newcommand{\SBa}{\mathrm{BG}^{\si}_{\sigma\lambda}((\WJu)_{\af})}
\newcommand{\SBb}[1]{\mathrm{BG}^{\si}_{#1\lambda}((\WJu)_{\af})}
\newcommand{\QBJ}{\mathrm{QBG}(\WJu)}
\newcommand{\QBa}{\mathrm{QBG}_{\sigma\lambda}(\WJu)}
\newcommand{\QBb}[1]{\mathrm{QBG}_{#1\lambda}(\WJu)}
\newcommand{\PS}[1]{\Pi^{\J_{#1}}}
\newcommand{\WSu}[1]{W^{\J_{#1}}}
\newcommand{\WS}[1]{W_{\J_{#1}}}
\newcommand{\QSv}[1]{Q_{\J_{#1}}^{\vee}}
\newcommand{\DeS}[1]{\Delta_{\J_{#1}}}
\newcommand{\io}[2]{\iota(#1,#2)}
\newcommand{\ze}[2]{\zeta(#1,#2)}
\newcommand{\edge}[1]{ \xrightarrow{\hspace{2pt}#1\hspace{2pt}} }
\newcommand{\mcr}[1]{\lfloor #1 \rfloor}
\newcommand{\pair}[2]{\langle #1,\,#2 \rangle}
\newcommand{\ol}[1]{\overline{#1}}
\newcommand{\ti}[1]{\widetilde{#1}}
\numberwithin{equation}{section}
\theoremstyle{plain}
\newtheorem{lem}{Lemma}[section]
\newtheorem{prop}[lem]{Proposition}
\newtheorem{thm}[lem]{Theorem}
\newtheorem{cor}[lem]{Corollary}
\theoremstyle{definition}
\newtheorem{dfn}[lem]{Definition}
\theoremstyle{remark}
\newtheorem{rem}[lem]{Remark}
\newtheorem{claim}{Claim}[lem]
\newenvironment{enu}{%
 \begin{enumerate}%
}{\end{enumerate}}
\newcommand{\bqed}{\quad \hbox{\rule[-0.5pt]{3pt}{8pt}}}
\newcommand{\vsp}{\vspace{3mm}}
\begin{document}

\setlength{\baselineskip}{18pt}

\title{\Large\bf 
Tensor product decomposition theorem \\ 
for quantum Lakshmibai-Seshadri paths and \\ 
standard monomial theory \\ for semi-infinite Lakshmibai-Seshadri paths%
\footnote{Key words and phrases: 
semi-infinite Lakshmibai-Seshadri path, quantum Lakshmibai-Seshadri path, 
standard monomial theory, \newline
Mathematics Subject Classification 2010: Primary 17B37; Secondary 14N15, 14M15, 33D52, 81R10. 
}%
}
\author{%
Satoshi Naito \\ 
 \small Department of Mathematics, Tokyo Institute of Technology, \\
 \small 2-12-1 Oh-okayama, Meguro-ku, Tokyo 152-8551, Japan \\
 \small (e-mail: {\tt naito@math.titech.ac.jp}) \\[5mm]
Fumihiko Nomoto \\ 
 \small Department of Mathematics, Tokyo Institute of Technology, \\
 \small 2-12-1 Oh-okayama, Meguro-ku, Tokyo 152-8551, Japan \\
 \small (e-mail: {\tt nomoto.f.aa@m.titech.ac.jp}) \\[3mm]
and \\[3mm]
Daisuke Sagaki \\ 
 \small Institute of Mathematics, University of Tsukuba, \\
 \small 1-1-1 Tennodai, Tsukuba, Ibaraki 305-8571, Japan \\
 \small (e-mail: {\tt sagaki@math.tsukuba.ac.jp})
}
\date{}
\maketitle

%
\begin{abstract} \setlength{\baselineskip}{15pt}
Let $\lambda$ be a (level-zero) dominant integral weight 
for an untwisted affine Lie algebra, and let $\QLS(\lambda)$ 
denote the quantum Lakshmibai-Seshadri (QLS) paths of shape $\lambda$.
For an element $w$ of a finite Weyl group $W$, 
the specializations at $t = 0$ and $t = \infty$ of 
the nonsymmetric Macdonald polynomial $E_{w \lambda}(q, t)$ are 
explicitly described in terms of QLS paths of shape $\lambda$ 
and the degree function defined on them.
Also, for (level-zero) dominant integral weights $\lambda$, $\mu$, 
we have an isomorphism $\Theta : \QLS(\lambda + \mu) \rightarrow \QLS(\lambda) \otimes \QLS(\mu)$ of crystals.
In this paper, we study the behavior of the degree function 
under the isomorphism $\Theta$ of crystals through 
the relationship between semi-infinite Lakshmibai-Seshadri (LS) paths and QLS paths.
As an application, we give a crystal-theoretic proof of 
a recursion formula for the graded characters of generalized Weyl modules.
\end{abstract}
%
%
\section{Introduction.} 
\label{sec:intro}

In our previous paper \cite{KNS}, we established (a combinatorial version of) 
standard monomial theory for semi-infinite Lakshmibai-Seshadri (LS for short) paths.
To be more precise, let $\lambda \in P^{+}$ be a (level-zero) dominant integral weight 
for an untwisted affine Lie algebra $\Fg_{\af}$, and 
let $\SLS(\lambda)$ denote the set of semi-infinite LS paths of shape $\lambda$; 
note that the set $\SLS(\lambda)$ provides a realization of the crystal basis 
of the extremal weight module $V(\lambda)$ 
over the quantum affine algebra $U_{q}(\Fg_{\af})$ (see \cite{INS}).
In \cite{KNS}, we proved that for 
(level-zero) dominant integral weights $\lambda, \mu \in P^{+}$, 
there exists an embedding $\Xi : \SLS(\lambda + \mu) \rightarrow \SLS(\lambda) \otimes \SLS(\mu)$ 
of crystals that sends the straight-line path $\pi_{\lambda + \mu}$ to 
the tensor product $\pi_{\lambda} \otimes \pi_{\mu}$ of 
the straight-line paths $\pi_{\lambda}$ and $\pi_{\mu}$.
In particular, the restriction of $\Xi$ to 
the connected component $\SLS_{0}(\lambda + \mu)$ of $\SLS(\lambda + \mu)$ 
containing $\pi_{\lambda + \mu}$ gives an isomorphism of crystals 
from $\SLS_{0}(\lambda + \mu)$ to the connected component $(\SLS(\lambda) \otimes \SLS(\mu))_{0}$ 
of $\SLS(\lambda) \otimes \SLS(\mu)$ containing $\pi_{\lambda} \otimes \pi_{\mu}$.
Moreover, in \cite{KNS}, we gave an explicit description of the image 
$\Xi(\SLS(\lambda + \mu)) \subset \SLS(\lambda) \otimes \SLS(\mu)$ 
in terms of the semi-infinite Bruhat order on the affine Weyl group $W_{\af}$ 
in a way similar to the one for the ordinary standard monomial theory due to 
Littelmann (\cite{Lit96}). 

Also, in \cite{NS05}, we proved the tensor product decomposition theorem 
for quantum Lakshmibai-Seshadri (QLS for short) paths.
To be more precise, for a (level-zero) dominant integral weight 
$\lambda \in P^{+} = \sum_{i \in I} m_{i} \vpi_{i}$, 
where the $\vpi_{i}$, $i \in I$, are the level-zero fundamental weights for $\Fg_{\af}$, 
let $\QLS(\lambda)$ denote the set of QLS paths of shape $\lambda$; 
note that the set $\QLS(\lambda)$ provides a realization of the crystal basis of 
the tensor product $\bigotimes_{i \in I} W(\vpi_{i})^{\otimes m_{i}}$ of 
the level-zero fundamental representations $W(\vpi_{i})$, $i \in I$, 
over the quantum affine algebra $U_{q}^{\prime}(\Fg_{\af})$ 
without the degree operator (see \cite{NS03}, \cite{NS06}).
In \cite{NS05}, we proved that for (level-zero) 
dominant integral weights $\lambda, \mu \in P^{+}$, 
there exists an isomorphism $\Theta : \QLS(\lambda + \mu) \rightarrow \QLS(\lambda) \otimes \QLS(\mu)$ 
of crystals that sends the straight-line path $\eta_{\lambda + \mu}$ to 
the tensor product $\eta_{\lambda} \otimes \eta_{\mu}$ of 
the straight-line paths $\eta_{\lambda}$ and $\eta_{\mu}$.

Based on the fact that the affine Weyl group $W_{\af}$ is 
the semi-direct product of the finite Weyl group $W$ and 
the coroot lattice $Q^{\vee} = \sum_{i\in I} \BZ \alpha_{i}^{\vee}$ of 
the underlying simple Lie algebra $\Fg \subset \Fg_{\af}$,
we can define a surjective morphism $\cl : \SLS(\lambda) \rightarrow \QLS(\lambda)$ 
of crystals for $\lambda \in P^{+}$;
note that for $\lambda, \mu \in P^{+}$, we have the following commutative diagram:
\begin{equation*}
\begin{CD}
\SLS_{0}(\lambda + \mu) @>{\Xi}>> \left(\SLS(\lambda) \otimes \SLS(\mu)\right)_{0} \\ 
@V{\cl}VV @VV{\cl \otimes \cl}V \\
\QLS(\lambda + \mu) @>>{\Theta}> \QLS(\lambda) \otimes \QLS(\mu).
\end{CD}
\end{equation*}
Here we note that $\cl(\SLS_{0}(\lambda)) = \QLS(\lambda)$; 
in fact, for each $\eta \in \QLS(\lambda)$, 
there exists a unique element $\pi_{\eta} \in \SLS_{0}(\lambda)$ such that $\cl(\pi_{\eta}) = \eta$ 
and such that the final direction of $\pi_{\eta}$ is identical to that of $\eta \in W$.
Using this element $\pi_{\eta}$, we define the (tail) degree function 
$\deg_{\lambda} : \QLS(\lambda) \rightarrow \BZ_{\le 0}$ by:
$\wt(\pi_{\eta}) = \lambda - \gamma + \deg_{\lambda}(\eta) \delta$,
where $\gamma \in Q$ and $\delta$ is the null root of the affine Lie algebra $\Fg_{\af}$.
Also, for an arbitrary $w \in W$, we can define the degree function 
$\deg_{w \lambda} : \QLS(\lambda) \rightarrow \BZ_{\le 0}$ at $w \lambda$ 
by ``twisting'' $\pi_{\eta}$ by a certain element in $Q^{\vee} \subset W_{\af}$ 
corresponding to $w$; note that $\deg_{\lambda} = \deg_{e \lambda}$, 
where $e$ is the identity element of $W$.
In \cite{LNSSS2} and \cite{LNSSS3}, 
we gave an explicit description of the specialization at $t = 0$ 
of the nonsymmetric Macdonald polynomial $E_{w \lambda}(q, t)$ 
in terms of (a specific subset of) the set $\QLS(\lambda)$ 
equipped with the degree function $\deg_{\lambda}$. 
In addition, in \cite{NNS1} (see also \cite{NS18}), 
we gave an explicit description of the specialization at $t = \infty$ of 
the nonsymmetric Macdonald polynomial $E_{w \lambda}(q, t)$ 
in terms of (a specific subset of) the set $\QLS(\lambda)$ 
equipped with the degree function $\deg_{w \lambda}$.

In this paper, we study the behavior of the degree function 
under the isomorphism $\Theta : \QLS(\lambda + \mu) \rightarrow \QLS(\lambda) \otimes \QLS(\mu)$ 
of crystals for $\lambda, \mu \in P^{+}$.
To be more precise, let $\eta \in \QLS(\lambda + \mu)$, 
and write $\Theta(\eta) \in \QLS(\lambda) \otimes \QLS(\mu)$ as: 
$\Theta(\eta) = \eta_{1} \otimes \eta_{2}$, with $\eta_{1} \in \QLS(\lambda)$ and $\eta_{2} \in \QLS(\mu)$.
Then our main result (Theorem~\ref{thm:main})) states that for an arbitrary $w \in W$,
\begin{equation*}
\deg_{w(\lambda + \mu)}(\eta) = 
\deg_{\io{\eta_{2}}{w} \lambda}(\eta_{1}) + \deg_{w \mu}(\eta_{2}) - \pair{\lambda}{\ze{\eta_{2}}{w}}. 
\end{equation*}
Here, $\io{\eta_{2}}{w}$ is an element of $W$, 
called the initial direction of $\eta_{2}$ with respect to $w$, 
defined in terms of the quantum version of Deodhar lifts introduced in \cite{LNSSS1}; 
note that if $\mu \in P^{+}$ is regular, then the $\io{\eta_{2}}{w}$ is 
just the initial direction $\iota(\eta_{2})$ of $\eta_{2}$.
Also, $\ze{\eta_{2}}{w}$ is an element of 
$Q^{\vee, +} := \sum_{i \in I} \BZ_{\ge 0} \alpha_{i}^{\vee}$ 
defined in terms of the quantum Bruhat graph (see Section~\ref{subsec:main} for details);
note that if $\mu \in P^{+}$ is regular and $\eta_{2} \in \QLS(\mu)$ is 
of the form $\eta_{2} = (v_{1}, \ldots, v_{s}; \sigma_{0} = 0, \ldots, \sigma_{s} = 1)$, 
then the $\ze{\eta_{2}}{w}$ is just the element
$\sum_{k = 1}^{s} \wt(v_{k+1} \Rightarrow v_{k})$, where $v_{s+1} := w$.

As an application of this result, 
we obtain the following equation (Corollary~\ref{cor:main}) 
between the generating functions 
$\gch_{w \lambda} \QLS(\lambda) := 
\sum_{\eta \in \QLS(\lambda)} q^{\deg_{w \lambda}(\eta)} e^{\wt(\eta)}$ 
for $\lambda \in P^{+}$ and $w \in W$ (called graded characters):
\begin{equation*}
\gch_{w(\lambda + \mu)} \QLS(\lambda + \mu) = 
\sum_{\eta \in \QLS(\mu)} 
 e^{\wt(\eta)} q^{\deg_{w \mu}(\eta) - \pair{\lambda}{\ze{\eta}{w}}} 
 \gch_{\iota(\eta,w) \lambda} \QLS(\lambda).
\end{equation*}
We know from \cite[Sect.~5.1]{No} that 
the graded character $\lng(\gch_{w \lambda} \QLS(\lambda))$ 
twisted by the longest element $\lng$ of $W$ is identical 
to the graded character of the generalized Weyl module $W_{\lng w \lambda}$ introduced in \cite{FM}. 
Therefore, we have thus given a crystal-theoretic proof of \cite[Theorem~1.17]{FM}. 

This paper is organized as follows. In Section~\ref{sec:review}, 
we first fix our notation for affine Lie algebras. 
Next, we recall some basic facts about the (parabolic) semi-infinite Bruhat graph, 
and then briefly review fundamental results on semi-infinite LS paths. 
Also, we recall some basic facts about the (parabolic) quantum Bruhat graph, 
and then briefly review fundamental results on QLS paths, 
which includes the definition and some of the properties of the degree function.
In Section~\ref{sec:main}, we first state our main result (Theorem~\ref{thm:main}).
Next, we show a technical lemma about the quantum version of Deodhar lifts, 
which plays an important role in our proof of the main result.
Finally, by using similarity maps for semi-infinite LS paths and QLS paths, 
we prove Theorem~\ref{thm:main}.
%
%
\subsection*{Acknowledgments.}
S.N. was partially supported by 
JSPS Grant-in-Aid for Scientific Research (B) 16H03920. 
D.S. was partially supported by 
JSPS Grant-in-Aid for Scientific Research (C) 15K04803. 

%
\section{Semi-infinite Lakshmibai-Seshadri paths and quantum Lakshmibai-Seshadri paths.}
\label{sec:review}
%
%
\subsection{Affine Lie algebras.}
\label{subsec:liealg}

Let $\Fg$ be a finite-dimensional simple Lie algebra over $\BC$
with Cartan subalgebra $\Fh$. 
Denote by $\{ \alpha_{i}^{\vee} \}_{i \in I}$ and 
$\{ \alpha_{i} \}_{i \in I}$ the set of simple coroots and 
simple roots of $\Fg$, respectively, and set
%
$Q^{\vee} := \bigoplus_{i \in I} \BZ \alpha_i^{\vee}$ and 
$Q^{\vee,+} := \sum_{i \in I} \BZ_{\ge 0} \alpha_i^{\vee}$; 
for $\xi,\,\zeta \in Q^{\vee}$, we write $\xi \ge \zeta$ if $\xi-\zeta \in Q^{\vee,+}$. 
Let $\Delta$, $\Delta^{+}$, and $\Delta^{-}$ be 
the set of roots, positive roots, and negative roots of $\Fg$, respectively, 
with $\theta \in \Delta^{+}$ the highest root of $\Fg$. 
For a root $\alpha \in \Delta$, we denote by $\alpha^{\vee}$ its dual root. 
We set $\rho:=(1/2) \sum_{\alpha \in \Delta^{+}} \alpha$. 
Also, let $\vpi_{i}$, $i \in I$, denote the fundamental weights for $\Fg$, and set
%
%
\begin{equation} \label{eq:P-fin}
P:=\bigoplus_{i \in I} \BZ \vpi_{i}, \qquad 
P^{+} := \sum_{i \in I} \BZ_{\ge 0} \vpi_{i}. 
\end{equation} 

Let $\Fg_{\af} = \bigl(\BC[z,z^{-1}] \otimes \Fg\bigr) \oplus \BC c \oplus \BC d$ be 
the untwisted affine Lie algebra over $\BC$ associated to $\Fg$, 
where $c$ is the canonical central element, and $d$ is 
the scaling element (or the degree operator), 
with Cartan subalgebra $\Fh_{\af} = \Fh \oplus \BC c \oplus \BC d$. 
We regard an element $\mu \in \Fh^{\ast}:=\Hom_{\BC}(\Fh,\,\BC)$ as an element of 
$\Fh_{\af}^{\ast}$ by setting $\pair{\mu}{c}=\pair{\mu}{d}:=0$, where 
$\pair{\cdot\,}{\cdot}:\Fh_{\af}^{\ast} \times \Fh_{\af} \rightarrow \BC$ denotes
the canonical pairing of $\Fh_{\af}^{\ast}:=\Hom_{\BC}(\Fh_{\af},\,\BC)$ and $\Fh_{\af}$. 
Let $\{ \alpha_{i}^{\vee} \}_{i \in I_{\af}} \subset \Fh_{\af}$ and 
$\{ \alpha_{i} \}_{i \in I_{\af}} \subset \Fh_{\af}^{\ast}$ be the set of 
simple coroots and simple roots of $\Fg_{\af}$, respectively, 
where $I_{\af}:=I \sqcup \{0\}$; note that 
$\pair{\alpha_{i}}{c}=0$ and $\pair{\alpha_{i}}{d}=\delta_{i0}$ 
for $i \in I_{\af}$. 
Denote by $\delta \in \Fh_{\af}^{\ast}$ the null root of $\Fg_{\af}$; 
recall that $\alpha_{0}=\delta-\theta$. 
Also, let $\Lambda_{i} \in \Fh_{\af}^{\ast}$, $i \in I_{\af}$, 
denote the fundamental weights for $\Fg_{\af}$ such that $\pair{\Lambda_{i}}{d}=0$, 
and set 
%
%
\begin{equation} \label{eq:P}
P_{\af} := 
  \left(\bigoplus_{i \in I_{\af}} \BZ \Lambda_{i}\right) \oplus 
   \BZ \delta \subset \Fh^{\ast}, \qquad 
P_{\af}^{0}:=\bigl\{\mu \in P_{\af} \mid \pair{\mu}{c}=0\bigr\};
\end{equation}
notice that $P_{\af}^{0}=P \oplus \BZ \delta$, and that
$\pair{\mu}{\alpha_{0}^{\vee}} = - \pair{\mu}{\theta^{\vee}}$ 
for $\mu \in P_{\af}^{0}$. We remark that for each $i \in I$, 
$\vpi_{i}$ is equal to $\Lambda_{i}-\pair{\Lambda_{i}}{c}\Lambda_{0}$, 
which is called the level-zero fundamental weight in \cite{Kas02}. 

Let $W := \langle s_{i} \mid i \in I \rangle$ and 
$W_{\af} := \langle s_{i} \mid i \in I_{\af} \rangle$ be the (finite) Weyl group of $\Fg$ and 
the (affine) Weyl group of $\Fg_{\af}$, respectively, 
where $s_{i}$ is the simple reflection with respect to $\alpha_{i}$ 
for each $i \in I_{\af}$. We denote by $\ell:W_{\af} \rightarrow \BZ_{\ge 0}$ 
the length function on $W_{\af}$, whose restriction to $W$ agrees with 
the one on $W$, by $e \in W \subset W_{\af}$ the identity element, 
and by $\lng \in W$ the longest element. 
We set
%
%
\begin{equation} \label{eq:tis}
\ti{s}_{i}:=
 \begin{cases}
 s_{i} & \text{if $i \in I$}, \\[1mm]
 s_{\theta} & \text{if $i=0$},
 \end{cases}
\qquad
\ti{\alpha}_{i}:=
 \begin{cases}
 \alpha_{i} & \text{if $i \in I$}, \\[1mm]
 -\theta & \text{if $i=0$}. 
 \end{cases}
\end{equation}
For each $\xi \in Q^{\vee}$, let $t_{\xi} \in W_{\af}$ denote 
the translation in $\Fh_{\af}^{\ast}$ by $\xi$ (see \cite[Sect.~6.5]{Kac}); 
for $\xi \in Q^{\vee}$, we have 
%
%
\begin{equation}\label{eq:wtmu}
t_{\xi} \mu = \mu - \pair{\mu}{\xi}\delta \quad 
\text{if $\mu \in \Fh_{\af}^{\ast}$ satisfies $\pair{\mu}{c}=0$}.
\end{equation}
Then, $\bigl\{ t_{\xi} \mid \xi \in Q^{\vee} \bigr\}$ forms 
an abelian normal subgroup of $W_{\af}$, in which $t_{\xi} t_{\zeta} = t_{\xi + \zeta}$ 
holds for $\xi,\,\zeta \in Q^{\vee}$. Moreover, we know from \cite[Proposition 6.5]{Kac} that
\begin{equation*}
W_{\af} \cong 
 W \ltimes \bigl\{ t_{\xi} \mid \xi \in Q^{\vee} \bigr\} \cong W \ltimes Q^{\vee}. 
\end{equation*}

Denote by $\rr$ the set of real roots of $\Fg_{\af}$, and 
by $\prr \subset \rr$ the set of positive real roots; 
we know from \cite[Proposition 6.3]{Kac} that
$\rr = 
\bigl\{ \alpha + n \delta \mid \alpha \in \Delta,\, n \in \BZ \bigr\}$, 
and 
$\prr = 
\Delta^{+} \sqcup 
\bigl\{ \alpha + n \delta \mid \alpha \in \Delta,\, n \in \BZ_{> 0}\bigr\}$. 
For $\beta \in \rr$, we denote by $\beta^{\vee} \in \Fh_{\af}$ 
its dual root, and $s_{\beta} \in W_{\af}$ the corresponding reflection; 
if $\beta \in \rr$ is of the form $\beta = \alpha + n \delta$ 
with $\alpha \in \Delta$ and $n \in \BZ$, then 
$s_{\beta} =s_{\alpha} t_{n\alpha^{\vee}} \in W \ltimes Q^{\vee}$.
%
%
\subsection{Parabolic semi-infinite Bruhat graph.}
\label{subsec:SiBG}

In this subsection, we take and fix an arbitrary subset $\J \subset I$. We set 
$\QJ := \bigoplus_{i \in \J} \BZ \alpha_i$, 
$\QJv := \bigoplus_{i \in \J} \BZ \alpha_i^{\vee}$,  
$\QJvp := \sum_{i \in \J} \BZ_{\ge 0} \alpha_i^{\vee}$, 
$\DeJ := \Delta \cap \QJ$, 
$\DeJ^{+} := \Delta^{+} \cap \QJ$, 
$\WJ := \langle s_{i} \mid i \in \J \rangle$, and 
$\rho_{\J}:=(1/2) \sum_{\alpha \in \DeJ^{+}} \alpha$; 
we denote by
$[\,\cdot\,]^{\J} : Q^{\vee} \twoheadrightarrow Q_{\Jc}^{\vee}$
the projection from $Q^{\vee}=Q_{\Jc}^{\vee} \oplus \QJv$
onto $Q_{\Jc}^{\vee}$ with kernel $\QJv$. 
Let $\WJu$ denote the set of minimal(-length) coset representatives 
for the cosets in $W/\WJ$; we know from \cite[Sect.~2.4]{BB} that 
%
%
\begin{equation} \label{eq:mcr}
\WJu = \bigl\{ w \in W \mid 
\text{$w \alpha \in \Delta^{+}$ for all $\alpha \in \DeJ^{+}$}\bigr\}.
\end{equation}
For $w \in W$, we denote by $\mcr{w}=\mcr{w}^{\J} \in \WJu$ 
the minimal coset representative for the coset $w \WJ$ in $W/\WJ$.
Also, following \cite{Pet97} (see also \cite[Sect.~10]{LS10}), we set
\begin{align}
(\DeJ)_{\af} 
  & := \bigl\{ \alpha + n \delta \mid 
  \alpha \in \DeJ,\,n \in \BZ \bigr\} \subset \Delta_{\af}, \\
(\DeJ)_{\af}^{+}
  &:= (\DeJ)_{\af} \cap \prr = 
  \DeJ^{+} \sqcup \bigl\{ \alpha + n \delta \mid 
  \alpha \in \DeJ,\, n \in \BZ_{> 0} \bigr\}, \\
\label{eq:stabilizer}
(\WJ)_{\af} 
 & := \WJ \ltimes \bigl\{ t_{\xi} \mid \xi \in \QJv \bigr\}
   = \bigl\langle s_{\beta} \mid \beta \in (\DeJ)_{\af}^{+} \bigr\rangle, \\
\label{eq:Pet}
(\WJu)_{\af}
 &:= \bigl\{ x \in W_{\af} \mid 
 \text{$x\beta \in \prr$ for all $\beta \in (\DeJ)_{\af}^{+}$} \bigr\};
\end{align}
if $\J = \emptyset$, then 
$(W^{\emptyset})_{\af}=W_{\af}$ and $(W_{\emptyset})_{\af}=\bigl\{e\bigr\}$. 
We know from \cite{Pet97} (see also \cite[Lemma~10.6]{LS10}) that 
for each $x \in W_{\af}$, there exist a unique 
$x_1 \in (\WJu)_{\af}$ and a unique $x_2 \in (\WJ)_{\af}$ 
such that $x = x_1 x_2$; let 
%
%
\begin{equation} \label{eq:PiJ}
\PJ : W_{\af} \twoheadrightarrow (\WJu)_{\af}, \quad x \mapsto x_{1}, 
\end{equation}
denote the projection, 
where $x= x_1 x_2$ with $x_1 \in (\WJu)_{\af}$ and $x_2 \in (\WJ)_{\af}$. 
%
%
\begin{lem} \label{lem:PiJ}
\mbox{}
\begin{enu}
\item It holds that 
%
%
\begin{equation} \label{eq:PiJ2}
\begin{cases}
\PJ(w)=\mcr{w} 
  & \text{\rm for all $w \in W$}; \\[1mm]
\PJ(xt_{\xi})=\PJ(x)\PJ(t_{\xi}) 
  & \text{\rm for all $x \in W_{\af}$ and $\xi \in Q^{\vee}$};
\end{cases}
\end{equation}
in particular, $(\WJu)_{\af} 
  = \bigl\{ w \PJ(t_{\xi}) \mid w \in \WJu,\,\xi \in Q^{\vee} \bigr\}$.

\item For each $\xi \in Q^{\vee}$, 
the element $\PJ(t_{\xi}) \in (\WJu)_{\af}$ is 
of the form{\rm:} $\PJ(t_{\xi})=ut_{\xi+\xi_{1}}$ 
for some $u \in \WJ$ and $\xi_{1} \in \QJv$. 

\item For $\xi,\,\zeta \in Q^{\vee}$, 
$\PJ(t_{\xi}) = \PJ(t_{\zeta})$ if and only if $\xi-\zeta \in \QJv$.

\end{enu}
\end{lem}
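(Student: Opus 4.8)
The three assertions of Lemma~\ref{lem:PiJ} are essentially structural consequences of the semidirect product decomposition $W_{\af} = W \ltimes Q^{\vee}$ together with the defining property of $\PJ$ coming from the factorization $W_{\af} = (\WJu)_{\af} \cdot (\WJ)_{\af}$ recalled just before the lemma. My plan is to establish (1) directly from uniqueness of the factorization, then derive (2) by an explicit computation with translations, and finally obtain (3) as a quick corollary of (2) and the uniqueness statement for $(\WJ)_{\af}$.

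\smallskip

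For part (1): to see $\PJ(w) = \mcr{w}$ for $w \in W$, write $w = \mcr{w} v$ with $\mcr{w} \in \WJu$ and $v \in \WJ$; since $\WJu \subset (\WJu)_{\af}$ (by comparing \eqref{eq:mcr} with \eqref{eq:Pet}, as every $\beta \in (\DeJ)_{\af}^{+}$ with positive coefficient of $\delta$ is automatically sent into $\prr$ by an element of $W$) and $\WJ \subset (\WJ)_{\af}$, this is the required factorization of $w$, so uniqueness gives $\PJ(w) = \mcr{w}$. For the multiplicativity $\PJ(x t_{\xi}) = \PJ(x)\PJ(t_{\xi})$, factor $x = x_1 x_2$ and $t_{\xi} = y_1 y_2$ with $x_1, y_1 \in (\WJu)_{\af}$ and $x_2, y_2 \in (\WJ)_{\af}$. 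The subtle point is that $x_2 y_1$ need not lie in $(\WJu)_{\af}$; one must instead argue that $\PJ(x t_{\xi}) = \PJ(x_1 \PJ(x_2 t_{\xi}))$ and reduce to showing $\PJ(x_2 t_{\xi}) = \PJ(t_{\xi})$ for $x_2 \in (\WJ)_{\af}$, i.e.\ that left multiplication by $(\WJ)_{\af}$ does not change the $\PJ$-component. This in turn follows because $(\WJ)_{\af}$ is a group, so $x_2 t_{\xi} = x_2 y_1 y_2$ and $x_2 y_1 \in (\WJ)_{\af} y_1$; writing $x_2 y_1 = z_1 z_2$ in the factorized form and using that $(\WJ)_{\af}$-cosets behave well, one gets $z_1 = y_1$ after absorbing. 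I expect the cleanest route is: since $(\WJu)_{\af}$ is a set of coset representatives for $W_{\af}/(\WJ)_{\af}$ on the \emph{right}, $\PJ(x)$ depends only on the coset $x(\WJ)_{\af}$; then $\PJ(xt_{\xi})$ depends only on $x t_{\xi} (\WJ)_{\af}$, and since $t_\xi (\WJ)_{\af} \supseteq t_\xi$ one checks $x t_\xi (\WJ)_{\af} = x_1 \PJ(t_\xi) (\WJ)_{\af}$ using that $x_2 \in (\WJ)_{\af}$ and $(\WJ)_{\af}$ is normalized appropriately. The final ``in particular'' clause is then immediate: any element of $(\WJu)_{\af}$ has the form $\PJ$ of something, and by the first two formulas that something can be taken to be $w t_\xi$ with $w \in W$, giving $\PJ(wt_\xi) = \mcr{w}\,\PJ(t_\xi)$.

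\smallskip

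For part (2): write $\PJ(t_{\xi}) = t_{\xi} a^{-1}$ where $a \in (\WJ)_{\af}$ is the $(\WJ)_{\af}$-component, so $a = \WJ$-part $\cdot\, t_{\zeta}$ with $\zeta \in \QJv$ by \eqref{eq:stabilizer}. Then $\PJ(t_\xi) = t_\xi \cdot t_{-\zeta} u^{-1} = t_{\xi - \zeta} u^{-1}$ with $u \in \WJ$; using the relation $u^{-1} t_{\eta} = t_{u^{-1}\eta} u^{-1}$ and rearranging, one lands on the claimed form $u' t_{\xi + \xi_1}$ with $u' \in \WJ$ and $\xi_1 \in \QJv$ (since $\WJv = \QJv$ is $\WJ$-stable). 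This is a short manipulation in $W \ltimes Q^{\vee}$ once the factorization is in hand.

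\smallskip

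For part (3): by part (2), $\PJ(t_\xi) = u\, t_{\xi + \xi_1}$ and $\PJ(t_\zeta) = u'\, t_{\zeta + \zeta_1}$ with $u, u' \in \WJ$ and $\xi_1, \zeta_1 \in \QJv$. If $\xi - \zeta \in \QJv$, then $t_{\xi} t_{-\zeta} = t_{\xi - \zeta} \in \{t_{\eta} \mid \eta \in \QJv\} \subset (\WJ)_{\af}$, so $t_\xi$ and $t_\zeta$ lie in the same right $(\WJ)_{\af}$-coset, whence $\PJ(t_\xi) = \PJ(t_\zeta)$ by the coset interpretation of $\PJ$. Conversely, if $\PJ(t_\xi) = \PJ(t_\zeta)$, then $t_\xi$ and $t_\zeta$ are in the same right $(\WJ)_{\af}$-coset, so $t_{-\zeta} t_\xi = t_{\xi - \zeta} \in (\WJ)_{\af} = \WJ \ltimes \{t_\eta \mid \eta \in \QJv\}$; projecting to the translation lattice (i.e.\ intersecting with $Q^\vee$) forces $\xi - \zeta \in \QJv$. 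The main obstacle in the whole argument is the multiplicativity in part (1): one must be careful that $\PJ$ is genuinely a coset map for the right $(\WJ)_{\af}$-action and that translations interact correctly with this — once that is pinned down, (2) and (3) are routine.
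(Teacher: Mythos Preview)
The paper's own proof of parts (1) and (2) consists entirely of citations to \cite{LS10} and \cite{LNSSS1}, while part (3) is dispatched in two lines from (1) and (2). Your direct, self-contained approach is therefore genuinely different from the paper's, and your arguments for (2) and (3) are essentially correct and close in spirit to what one finds in those references (modulo the harmless slip that the factorization $x=x_1x_2$ with $x_2\in(\WJ)_{\af}$ makes $(\WJu)_{\af}$ a system of representatives for the \emph{left} cosets $x(\WJ)_{\af}$, not the right cosets as you wrote).

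There is, however, a real gap in your treatment of the multiplicativity $\PJ(xt_\xi)=\PJ(x)\PJ(t_\xi)$ in part~(1). Your normalization remark does establish the coset equality $x t_\xi (\WJ)_{\af} = \PJ(x)\,\PJ(t_\xi)\,(\WJ)_{\af}$, since indeed $t_\xi^{-1}(\WJ)_{\af}t_\xi = (\WJ)_{\af}$. But this is not by itself sufficient: $(\WJu)_{\af}$ is only a set of coset representatives, not a subgroup, so you must also verify that the product $\PJ(x)\,\PJ(t_\xi)$ actually lies in $(\WJu)_{\af}$ before concluding it equals $\PJ(xt_\xi)$. This step is missing from your sketch. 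One way to fill it: writing $\PJ(t_\xi)=t_\xi y_2^{-1}$ with $y_2 \in (\WJ)_{\af}$, both $t_\xi$ and $y_2$ stabilize $(\DeJ)_{\af}$ setwise, hence so does $\PJ(t_\xi)$; combined with $\PJ(t_\xi)\in(\WJu)_{\af}$ this gives $\PJ(t_\xi)\bigl((\DeJ)_{\af}^+\bigr)\subset(\DeJ)_{\af}^+$, and then for any $x_1\in(\WJu)_{\af}$ one has $x_1\PJ(t_\xi)\bigl((\DeJ)_{\af}^+\bigr)\subset x_1\bigl((\DeJ)_{\af}^+\bigr)\subset\prr$, i.e.\ $x_1\PJ(t_\xi)\in(\WJu)_{\af}$ as required.
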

\begin{proof}
Part (1) follows from \cite[Proposition~10.10]{LS10}, and 
part (2) follows from \cite[(3.7)]{LNSSS1}. 
The ``if'' part of part (3) is obvious by part (1) and 
the fact that $t_{\xi-\zeta} \in (\WJ)_{\af}$. 
The ``only if'' part of part (3) is obvious by part (2). 
\end{proof}
%
%
\begin{dfn} \label{dfn:sell}
Let $x \in W_{\af}$, and 
write it as $x = w t_{\xi}$ with $w \in W$ and $\xi \in Q^{\vee}$. 
We define the semi-infinite length $\sell(x)$ of $x$ by:
$\sell (x) = \ell (w) + 2 \pair{\rho}{\xi}$. 
\end{dfn}
%
%
\begin{dfn}[\cite{Lu80}, \cite{Lu97}; see also \cite{Pet97}] \label{dfn:SiB}
\mbox{}
\begin{enu}
\item The (parabolic) semi-infinite Bruhat graph $\SBJ$ 
is the $\prr$-labeled directed graph whose 
vertices are the elements of $(\WJu)_{\af}$, and 
whose directed edges are of the form: 
$x \edge{\beta} y$ for $x,y \in (\WJu)_{\af}$ and $\beta \in \prr$ 
such that $y=s_{\beta}x$ and $\sell (y) = \sell (x) + 1$. 
When $\J=\emptyset$, we write $\SB$ for 
$\mathrm{BG}^{\si}\bigl((W^{\emptyset})_{\af}\bigr)$. 

\item 
The (parabolic) semi-infinite Bruhat order is a partial order 
$\sile$ on $(\WJu)_{\af}$ defined as follows: 
for $x,\,y \in (\WJu)_{\af}$, we write $x \sile y$ 
if there exists a directed path in $\SBJ$ from $x$ to $y$; 
we write $x \sil y$ if $x \sile y$ and $x \ne y$. 
\end{enu}
\end{dfn}

\begin{rem}
In the case $\J = \emptyset$, the semi-infinite Bruhat order on $W_{\af}$ is
essentially the same as the generic Bruhat order introduced in \cite{Lu80}; 
see \cite[Appendix~A.3]{INS} for details. Also, for a general $\J$, 
the parabolic semi-infinite Bruhat order on $(\WJu)_{\af}$
is essentially the same as the partial order on $\J$-alcoves introduced in
\cite{Lu97} when we take a special point to be the origin.
\end{rem}
%
%
\begin{prop}[{\cite[Corollary~4.2.2]{INS}}] \label{prop:beta}
Let $x,y \in (\WJu)_{\af}$, and $\beta \in \prr$. 
If $x \edge{\beta} y$ is an edge of $\SBJ$, then 
$\beta$ is either of the following forms{\rm:}
$\beta = \alpha$ with $\alpha \in \Delta^{+}$, or 
$\beta = \alpha + \delta$ with $\alpha \in \Delta^{-}$. 
Moreover, if $x = w \PJ(t_{\xi})$ with $w \in \WJu$ and $\xi \in Q^{\vee}$, 
then $w^{-1}\alpha \in \Delta^{+} \setminus \DeJ$. 
\end{prop}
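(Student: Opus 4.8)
The plan is to deduce the statement from the non-parabolic case $\J = \emptyset$, for which the analogue is already known, together with the compatibility of $\PJ$ with the semi-infinite Bruhat structure. First I would recall that, by Lemma~\ref{lem:PiJ}(1), every vertex $x \in (\WJu)_{\af}$ can be written as $x = w \PJ(t_{\xi})$ with $w \in \WJu$ and $\xi \in Q^{\vee}$, and, more usefully, can be lifted to some $\ti{x} \in W_{\af}$ with $\PJ(\ti{x}) = x$; a natural choice is $\ti{x} = w t_{\xi'}$ for an appropriate $\xi' \in Q^{\vee}$ with $[\xi']^{\J}$ matching the coroot part of $x$. The key input is that the projection $\PJ \colon W_{\af} \to (\WJu)_{\af}$ carries edges of $\SB$ to edges (or trivial moves) of $\SBJ$, and that an edge $x \edge{\beta} y$ of $\SBJ$ can be pulled back to an edge $\ti{x} \edge{\beta} \ti{y}$ of $\SB$ with the \emph{same} label $\beta$ (this is part of the parabolic-versus-non-parabolic dictionary for the semi-infinite Bruhat graph established in \cite{INS}). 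Once such a lift is in hand, the first assertion on the shape of $\beta$ follows from the corresponding fact for $\SB$: any edge label in the full semi-infinite Bruhat graph is of the form $\alpha$ with $\alpha \in \Delta^{+}$ or $\alpha + \delta$ with $\alpha \in \Delta^{-}$, since $\sell(s_\beta x) = \sell(x)+1$ forces $\beta$ to be a ``small'' positive real root in the semi-infinite sense.

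Next I would address the refined claim that, writing $x = w \PJ(t_{\xi})$ with $w \in \WJu$, one has $w^{-1}\alpha \in \Delta^{+} \setminus \DeJ$, where $\alpha$ is the finite part of $\beta$. The inclusion $w^{-1}\alpha \in \Delta^{+}$ should come from analyzing the semi-infinite length condition: writing $x = v t_{\eta}$ with $v \in W$, the edge $x \edge{\beta} y$ with $\sell$ increasing by $1$ translates, via $\sell(v t_\eta) = \ell(v) + 2\pair{\rho}{\eta}$, into a combinatorial condition on $v^{-1}\alpha$; since $\mcr{v} = w$ and $\WJu$-representatives are characterized by \eqref{eq:mcr}, one gets $w^{-1}\alpha \in \Delta^{+}$. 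The part $w^{-1}\alpha \notin \DeJ$ is where the parabolic structure genuinely enters: if $w^{-1}\alpha \in \DeJ$, then $\beta$ (or $s_\beta$ acting appropriately) would lie in $(\WJ)_{\af}$, i.e.\ $\beta \in (\DeJ)_{\af}^{+}$ after the translation part is accounted for, which would force $y = s_\beta x$ to differ from $x$ only inside a coset of $(\WJ)_{\af}$ — contradicting that both $x$ and $y$ are the \emph{distinguished} representatives in $(\WJu)_{\af}$, since $\PJ(s_\beta x) = \PJ(x) = x \neq y$. So the edge could not exist in $\SBJ$.

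In more detail, the technical heart is the following: given the edge $x \edge{\beta} y$ of $\SBJ$ with $\beta = \gamma + n\delta$, $\gamma \in \Delta$, I would use $s_\beta = s_\gamma t_{n \gamma^\vee}$ and the decomposition $W_{\af} = W \ltimes Q^\vee$ to compute the $W$-component and the $Q^\vee$-component of $y = s_\beta x$ in terms of those of $x$; comparing $\sell(y)$ and $\sell(x)+1$ then pins down $\gamma$ up to the two allowed shapes and simultaneously yields the sign condition on $w^{-1}\gamma$. The cleanest route, though, is simply to invoke \cite[Corollary~4.2.2]{INS} directly — the statement we are proving \emph{is} that corollary — so that the ``proof'' here amounts to: (i) reduce to $\J = \emptyset$ via the lifting property of $\PJ$ on edges, (ii) cite the known non-parabolic result for the shape of $\beta$, and (iii) translate the $\WJu$-representative condition \eqref{eq:mcr} into the assertion $w^{-1}\alpha \in \Delta^{+}\setminus\DeJ$.

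The main obstacle I anticipate is step (i): verifying that an edge of $\SBJ$ lifts to an edge of $\SB$ with the same label and that semi-infinite length is compatible under $\PJ$ in the precise way needed. This requires a careful bookkeeping of how $\PJ$ interacts with left multiplication by reflections $s_\beta$, $\beta \in \prr$, and with the length function $\sell$ — in particular showing $\sell$ decomposes as a ``parabolic'' piece plus a piece killed by $\PJ$, analogous to the classical fact $\ell(w) = \ell(\mcr{w}) + \ell(\mcr{w}^{-1}w)$ for $W/\WJ$. Everything else is a routine translation between root-theoretic and length-theoretic conditions.
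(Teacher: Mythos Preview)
The paper does not prove this proposition; it is stated as a direct citation of \cite[Corollary~4.2.2]{INS}, so there is no in-paper argument to compare against. That said, your outline is essentially correct but substantially overcomplicated at the one place you flag as the ``main obstacle.''

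Look again at Definition~\ref{dfn:SiB}: the edge condition in $\SBJ$ is \emph{identical} to the edge condition in $\SB$ (namely $y = s_{\beta}x$ and $\sell(y)=\sell(x)+1$), merely restricted to vertices lying in the subset $(\WJu)_{\af} \subset W_{\af}$. Hence an edge of $\SBJ$ \emph{is already} an edge of $\SB$; there is nothing to lift, and no compatibility of $\sell$ with $\PJ$ or parabolic decomposition of semi-infinite length is needed. Your anticipated obstacle does not exist, and step~(ii) applies verbatim to give both the shape of $\beta$ and, writing $x = (wu)t_{\xi'}$ with $u \in \WJ$ via Lemma~\ref{lem:PiJ}(2), the condition $(wu)^{-1}\alpha \in \Delta^{+}$.

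For step~(iii) your idea is right but the bookkeeping should be sharpened. The correct argument for $w^{-1}\alpha \notin \DeJ$ is: if $w^{-1}\alpha \in \DeJ$, then the finite part of $x^{-1}\beta$ is $u^{-1}w^{-1}\alpha \in \DeJ$, so $x^{-1}\beta \in (\DeJ)_{\af}$ and $s_{\beta}x = x\,s_{x^{-1}\beta}$ with $s_{x^{-1}\beta} \in (\WJ)_{\af}$; this forces $\PJ(s_{\beta}x) = \PJ(x) = x$, contradicting $y = s_{\beta}x \in (\WJu)_{\af}$ with $y \neq x$. Finally, since $w^{-1}\alpha \in \Delta \setminus \DeJ$ and $u \in \WJ$ stabilizes $\Delta^{+}\setminus\DeJ^{+}$, the condition $(wu)^{-1}\alpha \in \Delta^{+}$ is equivalent to $w^{-1}\alpha \in \Delta^{+}\setminus\DeJ$.
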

%
%
\begin{lem}[{\cite[Remark~4.1.3]{INS}}] \label{lem:si}
Let $x \in (\WJu)_{\af}$, and $i \in I_{\af}$. Then, 
\begin{equation} \label{eq:si1}
s_{i}x \in (\WJu)_{\af} \iff 
\pair{x\lambda}{\alpha_{i}^{\vee}} \ne 0 \iff 
x^{-1}\alpha_{i} \in (\Delta \setminus \DeJ)+\BZ\delta.
\end{equation}
Moreover, in this case, 
%
%
\begin{equation} \label{eq:simple}
\begin{cases}
x \edge{\alpha_{i}} s_{i}x \iff
\pair{x\lambda}{\alpha_{i}^{\vee}} > 0 \iff 
x^{-1}\alpha_{i}^{\vee} \in (\Delta^{+} \setminus \DeJ^{+})+\BZ\delta, & \\[1.5mm]
s_{i}x \edge{\alpha_{i}} x  \iff 
\pair{x\lambda}{\alpha_{i}^{\vee}} < 0 \iff 
x^{-1}\alpha_{i}^{\vee} \in (\Delta^{-} \setminus \DeJ^{-})+\BZ\delta. & 
\end{cases}
\end{equation}
\end{lem}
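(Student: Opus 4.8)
The plan is to verify the three stated characterizations for $s_i x \in (\WJu)_{\af}$, and then refine them to pin down the direction of the edge. First I would recall from \eqref{eq:Pet} that $x \in (\WJu)_{\af}$ means $x\beta \in \prr$ for all $\beta \in (\DeJ)_{\af}^{+}$, and that by Lemma~\ref{lem:PiJ} the element $x$ can be written as $x = w\PJ(t_\xi)$ with $w \in \WJu$, so that $x^{-1}\alpha_i$ lies in $\rr$ and can be tracked through the semidirect product decomposition $W_{\af} \cong W \ltimes Q^\vee$. The key observation is that $s_i x \notin (\WJu)_{\af}$ precisely when $s_i x = x v$ for some $v \in (\WJ)_{\af}$ with $v \ne e$; since $s_i$ is a simple reflection, $s_i x$ and $x$ differ by left multiplication by a reflection, hence $x^{-1} s_i x = s_{x^{-1}\alpha_i}$ is a reflection, and it lies in $(\WJ)_{\af} = \langle s_\beta \mid \beta \in (\DeJ)_{\af}^+\rangle$ iff $x^{-1}\alpha_i \in \pm(\DeJ)_{\af}^+$, i.e. iff $x^{-1}\alpha_i \in \DeJ + \BZ\delta$. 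Negating, $s_i x \in (\WJu)_{\af} \iff x^{-1}\alpha_i \in (\Delta \setminus \DeJ) + \BZ\delta$, which is the third condition in \eqref{eq:si1}.

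Next I would connect this to the pairing condition $\pair{x\lambda}{\alpha_i^\vee} \ne 0$. Writing $x^{-1}\alpha_i = \gamma + n\delta$ with $\gamma \in \Delta$ and $n \in \BZ$, one has $\pair{x\lambda}{\alpha_i^\vee} = \pair{\lambda}{x^{-1}\alpha_i^\vee} = \pair{\lambda}{\gamma^\vee}$ since $\lambda$ (being a level-zero weight, or at least having $\pair{\lambda}{c} = 0$ after suitable normalization in this parabolic setting; here $\lambda$ should be read as the dominant weight whose stabilizer is $\WJ$, so $\pair{\lambda}{\alpha_j^\vee} = 0$ exactly for $j \in \J$) annihilates $\delta$. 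Then $\pair{\lambda}{\gamma^\vee} = 0$ iff $\gamma^\vee \in \QJv$ iff $\gamma \in \DeJ$, which matches the third condition. This gives the chain of equivalences in \eqref{eq:si1}.

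For the refinement \eqref{eq:simple}, assuming $s_i x \in (\WJu)_{\af}$, I would use Definition~\ref{dfn:SiB}: the edge $x \edge{\alpha_i} s_i x$ exists iff $\sell(s_i x) = \sell(x) + 1$. Here I would invoke the fact (standard for the semi-infinite length, e.g. from \cite{INS}) that for a simple root $\alpha_i$ and $x \in W_{\af}$, $\sell(s_i x) - \sell(x) = \pm 1$ with the sign determined by whether $x^{-1}\alpha_i \in \prr$ or $-x^{-1}\alpha_i \in \prr$; concretely, writing $x = w t_\xi$, $\sell(s_i x) = \sell(x) + 1$ iff $x^{-1}\alpha_i \in \prr$. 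Now $x^{-1}\alpha_i = \gamma + n\delta$ with $\gamma \in \Delta \setminus \DeJ$; combining with the description of $\prr$ recalled in the excerpt, $x^{-1}\alpha_i \in \prr$ iff ($n > 0$) or ($n = 0$ and $\gamma \in \Delta^+$), which is exactly the condition $x^{-1}\alpha_i^\vee \in (\Delta^+ \setminus \DeJ^+) + \BZ\delta$. Finally $\pair{x\lambda}{\alpha_i^\vee} = \pair{\lambda}{\gamma^\vee}$, and since $\lambda$ is dominant and $\gamma \notin \DeJ$, the sign of $\pair{\lambda}{\gamma^\vee}$ equals the sign of $\gamma$ (positive root vs.\ negative root); this yields $\pair{x\lambda}{\alpha_i^\vee} > 0 \iff x^{-1}\alpha_i^\vee \in (\Delta^+ \setminus \DeJ^+) + \BZ\delta$, and the second line follows by applying the first to $s_i x$ in place of $x$ (noting $x \edge{\alpha_i} s_i x$ and $s_i x \edge{\alpha_i} x$ are the two possible directions).

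The main obstacle I anticipate is the bookkeeping in the semidirect-product decomposition: carefully translating the condition ``$x^{-1} s_i x \in (\WJ)_{\af}$'' into a condition on $x^{-1}\alpha_i$ as an affine real root, and making sure the reflection $s_{x^{-1}\alpha_i}$ genuinely lies in $(\WJ)_{\af} = \langle s_\beta \mid \beta \in (\DeJ)_{\af}^+\rangle$ exactly when $x^{-1}\alpha_i \in \DeJ + \BZ\delta$ — this uses that $(\WJ)_{\af}$ is the affine Weyl group of the root subsystem $\DeJ$, whose reflections are precisely the $s_\beta$ for $\beta \in (\DeJ)_{\af}$. Since the statement is explicitly attributed to \cite[Remark~4.1.3]{INS}, the cleanest route is probably to cite that source for the equivalences and merely indicate how they follow from Proposition~\ref{prop:beta}, Lemma~\ref{lem:PiJ}, and the structure of $\prr$, rather than reproving everything from scratch.
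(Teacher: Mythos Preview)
The paper gives no proof at all for this lemma; it simply records the statement and cites \cite[Remark~4.1.3]{INS}. So there is no ``paper's own proof'' to compare against, only the question of whether your sketch is sound. The first block of equivalences \eqref{eq:si1} is handled correctly in spirit: using \eqref{eq:Pet} and the fact that $s_i$ permutes $\prr \setminus \{\alpha_i\}$, one finds $s_i x \notin (\WJu)_{\af}$ iff $x^{-1}\alpha_i \in (\DeJ)_{\af}^{+}$, and since $x \in (\WJu)_{\af}$ forces $x^{-1}\alpha_i \notin -(\DeJ)_{\af}^{+}$, this is equivalent to $x^{-1}\alpha_i \in \DeJ + \BZ\delta$. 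Your pairing argument for the middle condition is also fine.

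However, there is a genuine error in the edge-direction part \eqref{eq:simple}. You invoke the criterion ``$\sell(s_i x) = \sell(x) + 1$ iff $x^{-1}\alpha_i \in \prr$'', but that is the statement for the \emph{ordinary} length $\ell$, not for the semi-infinite length $\sell$. You then compound this by asserting that $x^{-1}\alpha_i \in \prr$ (i.e., $n>0$ or ($n=0$ and $\gamma \in \Delta^{+}$)) is ``exactly'' the condition $\gamma \in \Delta^{+} \setminus \DeJ^{+}$; these are manifestly different (take $\gamma \in \Delta^{-} \setminus \DeJ^{-}$ and $n=5$). The correct criterion is that $\sell(s_i x) - \sell(x) = +1$ iff the \emph{finite part} $\gamma$ of $x^{-1}\alpha_i = \gamma + n\delta$ lies in $\Delta^{+}$, independently of $n$. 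This is seen by writing $x = w t_{\xi}$: for $i \in I$ one has $\sell(s_i x) - \sell(x) = \ell(s_i w) - \ell(w)$, whose sign is governed by $\gamma = w^{-1}\alpha_i$; for $i=0$ a short computation with $s_0 = s_{\theta} t_{-\theta^{\vee}}$ gives the analogous result with $\gamma = -w^{-1}\theta$. Once this is in place, the equivalence with $\pair{x\lambda}{\alpha_i^{\vee}} > 0$ follows exactly as you wrote, since $\pair{x\lambda}{\alpha_i^{\vee}} = \pair{\lambda}{\gamma^{\vee}}$ and, for dominant $\lambda$ with $\J = \J_\lambda$, the sign of $\pair{\lambda}{\gamma^{\vee}}$ for $\gamma \in \Delta \setminus \DeJ$ is precisely the sign of $\gamma$.
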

%
%
%
%
\subsection{Semi-infinite Lakshmibai-Seshadri paths.}
\label{subsec:SLS}

In this subsection, we fix $\lambda \in P^{+} \subset P_{\af}^{0} \subset P_{\af}$ 
(see \eqref{eq:P-fin} and \eqref{eq:P}), and set 
%
%
\begin{equation} \label{eq:J}
\J=\J_{\lambda}:= 
\bigl\{ i \in I \mid \pair{\lambda}{\alpha_i^{\vee}}=0 \bigr\} \subset I.
\end{equation}
%
%
\begin{dfn} \label{dfn:SBa}
For a rational number $0 < \sigma < 1$, 
we define $\SBa$ to be the subgraph of $\SBJ$ 
with the same vertex set but having only 
those directed edges of the form
$x \edge{\beta} y$ for which 
$\sigma\pair{x\lambda}{\beta^{\vee}} \in \BZ$ holds.
\end{dfn}
%
%
\begin{dfn}\label{dfn:SLS}
A semi-infinite Lakshmibai-Seshadri (LS for short) path of 
shape $\lambda $ is a pair 
%
%
\begin{equation} \label{eq:SLS}
\pi = (x_{1},\,\dots,\,x_{s} \,;\, 
       \sigma_{0},\,\sigma_{1},\,\dots,\,\sigma_{s}), \quad s \ge 1, 
\end{equation}
of a strictly decreasing sequence $x_1 \sig \cdots \sig x_s$ 
of elements in $(\WJu)_{\af}$ and an increasing sequence 
$0 = \sigma_0 < \sigma_1 < \cdots  < \sigma_s =1$ of rational numbers 
satisfying the condition that there exists a directed path 
in $\SBb{\sigma_{u}}$ from $x_{u+1}$ to  $x_{u}$ 
for each $u = 1,\,2,\,\dots,\,s-1$. 
\end{dfn}
%
%
\begin{rem} \label{rem:SLS}
We set
$\turn{\lambda}:=\bigl\{\sigma \in [0,1] \mid 
\sigma \pair{\lambda}{\alpha^{\vee}} \in \BZ 
\text{ for some $\alpha \in \Delta^{+} \setminus \DeJ^{+}$} \bigr\}$; 
note that $\turn{\lambda}$ is a finite set contained in $\BQ$. 
Let $0 < \sigma < 1$ be a rational number, and 
assume that there exists an edge $x \edge{\beta} y$ in $\SBa$; 
write $x \in \WJa$ and $\beta \in \prr$ 
as in Proposition~\ref{prop:beta}. 
Then we see that $\sigma\pair{\lambda}{w^{-1}\alpha^{\vee}} = 
\sigma\pair{w\lambda}{\alpha^{\vee}} = 
\sigma\pair{x\lambda}{\beta^{\vee}} \in \BZ$, 
which implies that $\sigma \in \turn{\lambda}$. 
Therefore, if $\pi \in \SLS(\lambda)$ is of the form \eqref{eq:SLS}, 
then $\sigma_{0},\,\sigma_{1},\,\dots,\,\sigma_{s} \in \turn{\lambda}$. 
\end{rem}

We denote by $\SLS(\lambda)$ 
the set of all semi-infinite LS paths of shape $\lambda$.
If $\pi \in \SLS(\lambda)$ is of the form \eqref{eq:SLS}, 
then we set $\iota(\pi):=x_{1} \in \WJa$ and $\kappa(\pi):=x_{s} \in \WJa$, 
and call them the initial and final directions of $\pi$, 
respectively. 

Following \cite[Sect.~3.1]{INS}, we endow the set $\SLS(\lambda)$ 
with a crystal structure with weights in $P_{\af}$ as follows. 
Let $\pi \in \SLS(\lambda)$ be of the form \eqref{eq:SLS}. 
We define $\ol{\pi}:[0,1] \rightarrow \BR \otimes_{\BZ} P_{\af}$ 
to be the piecewise-linear, continuous map 
whose ``direction vector'' for the interval 
$[\sigma_{u-1},\,\sigma_{u}]$ is $x_{u}\lambda \in P_{\af}$ 
for each $1 \le u \le s$, that is, 
%
%
\begin{equation} \label{eq:olpi}
\ol{\pi} (t) := 
\sum_{k = 1}^{u-1}(\sigma_{k} - \sigma_{k-1}) x_{k}\lambda + (t - \sigma_{u-1}) x_{u}\lambda
\quad
\text{for $t \in [\sigma_{u-1},\,\sigma_u]$, $1 \le u \le s$}. 
\end{equation}
We know from \cite[Proposition~3.1.3]{INS} that $\ol{\pi}$ 
is an (affine) LS path of shape $\lambda \in P_{\af}$, 
introduced in \cite[Sect.~4]{Lit95}. We set
%
%
\begin{equation} \label{eq:wt}
\wt (\pi):= \ol{\pi}(1) = \sum_{u = 1}^{s} (\sigma_{u}-\sigma_{u-1})x_{u}\lambda \in P_{\af}.
\end{equation}
We define root operators $e_{i}$, $f_{i}$, $i \in I_{\af}$, 
in the same manner as in \cite[Sect.~2]{Lit95}. Set 
%
%
\begin{equation} \label{eq:H}
\begin{cases}
H^{\pi}_{i}(t) := \pair{\ol{\pi}(t)}{\alpha_{i}^{\vee}} \quad 
\text{for $t \in [0,1]$}, \\[1.5mm]
m^{\pi}_{i} := 
 \min \bigl\{ H^{\pi}_{i} (t) \mid t \in [0,1] \bigr\}. 
\end{cases}
\end{equation}
As explained in \cite[Remark~2.4.3]{NS16}, 
all local minima of the function $H^{\pi}_{i}(t)$, $t \in [0,1]$, 
are integers; in particular, 
the minimum value $m^{\pi}_{i}$ is a nonpositive integer 
(recall that $\ol{\pi}(0)=0$, and hence $H^{\pi}_{i}(0)=0$).
We define $e_{i}\pi$ as follows. 
If $m^{\pi}_{i}=0$, then we set $e_{i} \pi := \bzero$, 
where $\bzero$ is an additional element not 
contained in any crystal. 
If $m^{\pi}_{i} \le -1$, then we set
%
%
\begin{equation} \label{eq:t-e}
\begin{cases}
t_{1} := 
  \min \bigl\{ t \in [0,\,1] \mid 
    H^{\pi}_{i}(t) = m^{\pi}_{i} \bigr\}, \\[1.5mm]
t_{0} := 
  \max \bigl\{ t \in [0,\,t_{1}] \mid 
    H^{\pi}_{i}(t) = m^{\pi}_{i} + 1 \bigr\}; 
\end{cases}
\end{equation}
note that $H^{\pi}_{i}(t)$ is 
strictly decreasing on the interval $[t_{0},\,t_{1}]$. 
Let $1 \le p \le q \le s$ be such that 
$\sigma_{p-1} \le t_{0} < \sigma_p$ and $t_{1} = \sigma_{q}$. 
Then we define $e_{i}\pi$ by
%
%
\begin{equation} \label{eq:epi}
\begin{split}
& e_{i} \pi := ( 
  x_{1},\,\ldots,\,x_{p},\,s_{i}x_{p},\,s_{i}x_{p+1},\,\ldots,\,
  s_{i}x_{q},\,x_{q+1},\,\ldots,\,x_{s} ; \\
& \hspace*{40mm}
  \sigma_{0},\,\ldots,\,\sigma_{p-1},\,t_{0},\,\sigma_{p},\,\ldots,\,\sigma_{q}=t_{1},\,
\ldots,\,\sigma_{s});
\end{split}
\end{equation}
if $t_{0} = \sigma_{p-1}$, then we drop $x_{p}$ and $\sigma_{p-1}$, and 
if $s_{i} x_{q} = x_{q+1}$, then we drop $x_{q+1}$ and $\sigma_{q}=t_{1}$.
Similarly, we define $f_{i}\pi$ as follows. 
Observe that $H^{\pi}_{i}(1) - m^{\pi}_{i}$ is a nonnegative integer. 
If $H^{\pi}_{i}(1) - m^{\pi}_{i} = 0$, then we set $f_{i} \pi := \bzero$. 
If $H^{\pi}_{i}(1) - m^{\pi}_{i}  \ge 1$, then we set
%
%
\begin{equation} \label{eq:t-f}
\begin{cases}
t_{0} := 
 \max \bigl\{ t \in [0,1] \mid H^{\pi}_{i}(t) = m^{\pi}_{i} \bigr\}, \\[1.5mm]
t_{1} := 
 \min \bigl\{ t \in [t_{0},\,1] \mid H^{\pi}_{i}(t) = m^{\pi}_{i} + 1 \bigr\};
\end{cases}
\end{equation}
note that $H^{\pi}_{i}(t)$ is 
strictly increasing on the interval $[t_{0},\,t_{1}]$. 
Let $0 \le p \le q \le s-1$ be such that $t_{0} = \sigma_{p}$ and 
$\sigma_{q} < t_{1} \le \sigma_{q+1}$. Then we define $f_{i}\pi$ by
%
%
\begin{equation} \label{eq:fpi}
\begin{split}
& f_{i} \pi := ( x_{1},\,\ldots,\,x_{p},\,s_{i}x_{p+1},\,\dots,\,
  s_{i} x_{q},\,s_{i} x_{q+1},\,x_{q+1},\,\ldots,\,x_{s} ; \\
& \hspace{40mm} 
  \sigma_{0},\,\ldots,\,\sigma_{p}=t_{0},\,\ldots,\,\sigma_{q},\,t_{1},\,
  \sigma_{q+1},\,\ldots,\,\sigma_{s});
\end{split}
\end{equation}
if $t_{1} = \sigma_{q+1}$, then we drop $x_{q+1}$ and $\sigma_{q+1}$, and 
if $x_{p} = s_{i} x_{p+1}$, then we drop $x_{p}$ and $\sigma_{p}=t_{0}$.
In addition, we set $e_{i} \bzero = f_{i} \bzero := \bzero$ 
for all $i \in I_{\af}$.
%
%
\begin{thm}[{see \cite[Theorem~3.1.5]{INS}}] \label{thm:SLS}
\mbox{}
\begin{enu}
\item The set $\SLS(\lambda) \sqcup \{ \bzero \}$ is 
stable under the action of the root operators 
$e_{i}$ and $f_{i}$, $i \in I_{\af}$.

\item For each $\pi \in \SLS(\lambda)$ 
and $i \in I_{\af}$, we set 
\begin{equation*}
\begin{cases}
\ve_{i} (\pi) := 
 \max \bigl\{ n \ge 0 \mid e_{i}^{n} \pi \neq \bzero \bigr\}, \\[1.5mm]
\vp_{i} (\pi) := 
 \max \bigl\{ n \ge 0 \mid f_{i}^{n} \pi \neq \bzero \bigr\}.
\end{cases}
\end{equation*}
Then, the set $\SLS(\lambda)$, 
equipped with the maps $\wt$, $e_{i}$, $f_{i}$, $i \in I_{\af}$, 
and $\ve_{i}$, $\vp_{i}$, $i \in I_{\af}$, 
defined above, is a crystal with weights in $P_{\af}$.
\end{enu}
\end{thm}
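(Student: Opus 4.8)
The plan is to transport the entire statement from the already-known theory of (affine) Lakshmibai-Seshadri paths in the sense of Littelmann, exploiting the map $\pi \mapsto \ol{\pi}$ introduced in \eqref{eq:olpi}. First I would recall from \cite[Sect.~4]{Lit95} (via \cite[Proposition~3.1.3]{INS}) that for every $\pi \in \SLS(\lambda)$ the path $\ol{\pi}$ is a genuine LS path of shape $\lambda \in P_{\af}$, so that the set $\bigl\{ \ol{\pi} \mid \pi \in \SLS(\lambda) \bigr\}$ sits inside the Littelmann crystal $\mathbb{B}(\lambda)$ of LS paths of shape $\lambda$, on which the root operators $e_i$, $f_i$, $i \in I_{\af}$, are defined and known to satisfy the crystal axioms (Littelmann's character/path model). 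The key point to verify is that the combinatorially-defined operators on $\SLS(\lambda)$ in \eqref{eq:epi}--\eqref{eq:fpi} are \emph{compatible} with Littelmann's operators under $\pi \mapsto \ol{\pi}$, i.e. $\ol{e_i \pi} = e_i \ol{\pi}$ and $\ol{f_i \pi} = f_i \ol{\pi}$ (with $\ol{\bzero} = \bzero$); granting this, part (1) and the crystal axioms in part (2) are immediate consequences provided the map $\pi \mapsto \ol{\pi}$ is injective, which it is since the data $(x_1 \sig \cdots \sig x_s; \sigma_0, \ldots, \sigma_s)$ can be read off from the breakpoints and direction vectors of $\ol{\pi}$ together with the fact that each $x_u$ is the unique element of $(\WJu)_{\af}$ with the prescribed ``$\lambda$-direction'' (using $\pair{\lambda}{\alpha^{\vee}} \ne 0$ for $\alpha \in \Delta \setminus \DeJ$).

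The heart of the argument is therefore the compatibility of root operators. Here I would argue as follows. Given $\pi = (x_1, \ldots, x_s; \sigma_0, \ldots, \sigma_s)$ and $i \in I_{\af}$, the function $H^{\pi}_i(t) = \pair{\ol{\pi}(t)}{\alpha_i^{\vee}}$ defined in \eqref{eq:H} is exactly the ``height function'' used by Littelmann to define $e_i$, $f_i$ on $\ol{\pi}$, so the cut points $t_0, t_1$ and the integers $p, q$ in \eqref{eq:t-e}--\eqref{eq:fpi} coincide with the data in Littelmann's recipe. Littelmann's $e_i \ol{\pi}$ is the path obtained from $\ol{\pi}$ by reflecting, via $s_i$, the portion over $[t_0, t_1]$ where $H^{\pi}_i$ is strictly decreasing from $m^{\pi}_i + 1$ to $m^{\pi}_i$; on the level of direction vectors this replaces $x_u \lambda$ by $s_i x_u \lambda = (s_i x_u)\lambda$ for the relevant indices $u$, which matches \eqref{eq:epi} verbatim. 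Two things must then be checked: (a) that the resulting sequence of affine Weyl group elements $s_i x_p \sig s_i x_{p+1} \sig \cdots \sig s_i x_q$ still consists of elements of $(\WJu)_{\af}$ and is strictly decreasing in the parabolic semi-infinite Bruhat order, and (b) that the new pair still satisfies the chain condition of Definition~\ref{dfn:SLS}, i.e. there is a directed path in $\SBb{\sigma}$ linking consecutive elements at the appropriate $\sigma$. For (a) and (b) I would invoke the ``lifting/lowering'' properties of the parabolic semi-infinite Bruhat graph established in \cite{INS}: Lemma~\ref{lem:si} tells us precisely when $s_i x \in (\WJu)_{\af}$ and in which direction the edge $x \leftrightarrow s_i x$ points, controlled by the sign of $\pair{x\lambda}{\alpha_i^{\vee}}$; and the monotonicity of $H^{\pi}_i$ on $[t_0,t_1]$ forces exactly the sign pattern needed so that applying $s_i$ preserves the chain structure. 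This is the same local argument as in Littelmann's original proof that LS paths are stable under root operators, transplanted to the semi-infinite setting, and it is exactly the content for which \cite[Theorem~3.1.5]{INS} is cited.

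For part (2), once compatibility is in hand the crystal axioms are inherited: $\wt(\pi) = \ol{\pi}(1)$ transforms correctly under $f_i$ (it changes by $-\alpha_i$), the quantities $\ve_i(\pi)$ and $\vp_i(\pi)$ defined by maximal strings of $e_i$'s and $f_i$'s agree with $-m^{\pi}_i$ and $H^{\pi}_i(1) - m^{\pi}_i$ respectively (a standard computation with the piecewise-linear function $H^{\pi}_i$, using that its local minima are integers as noted after \eqref{eq:H}, citing \cite[Remark~2.4.3]{NS16}), and the relation $\vp_i(\pi) - \ve_i(\pi) = \pair{\wt(\pi)}{\alpha_i^{\vee}}$ follows from $H^{\pi}_i(1) - H^{\pi}_i(0) = \pair{\wt(\pi)}{\alpha_i^{\vee}}$ together with $H^{\pi}_i(0)=0$. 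The main obstacle I anticipate is step (a)/(b) above --- verifying that the reflected chain still lies in $(\WJu)_{\af}$ and is strictly decreasing with the required connecting paths in the restricted graphs $\SBb{\sigma}$; this requires a careful case analysis at the endpoints $u=p$ and $u=q$ of the reflected segment (where the operator may ``merge'' two steps, as indicated by the clauses ``if $t_0 = \sigma_{p-1}$'' and ``if $s_i x_q = x_{q+1}$'' in \eqref{eq:epi}), and is precisely where the finer structure of the semi-infinite Bruhat graph from \cite{INS} is indispensable. Everything else is bookkeeping that parallels Littelmann's path model.
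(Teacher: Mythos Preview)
The paper does not give its own proof of this theorem: it is stated with the attribution ``see \cite[Theorem~3.1.5]{INS}'' and no argument follows. So there is nothing in the present paper to compare your proposal against; the result is simply imported from \cite{INS}.

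That said, your sketch is an accurate outline of the strategy actually used in \cite{INS}. The map $\pi \mapsto \ol{\pi}$ is injective for the reason you give (the stabilizer of $\lambda$ in $W_{\af}$ is $(\WJ)_{\af}$, so distinct elements of $(\WJu)_{\af}$ give distinct directions $x\lambda$), the root operators on $\SLS(\lambda)$ are designed to match Littelmann's operators on $\ol{\pi}$, and the substantive work is exactly your step (a)/(b): checking that the reflected segment $s_i x_p, \ldots, s_i x_q$ stays in $(\WJu)_{\af}$ and that the chain condition in $\SBb{\sigma}$ survives, including the boundary merges at $u=p$ and $u=q$. This case analysis, driven by Lemma~\ref{lem:si} and the diamond-type lemmas for the semi-infinite Bruhat order, is precisely what \cite[Sect.~3--4]{INS} carries out. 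Your identification of this as the only nontrivial point is correct; the crystal axioms in part~(2) then follow formally from the Littelmann theory once compatibility and injectivity are in hand.
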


We denote by $\SLS_{0}(\lambda)$ the connected component of 
$\SLS(\lambda)$ containing $\pi_{\lambda}:=(e;0,1)$. 
Also, for $x \in (\WJu)_{\af}$, 
we set $\pi_{\lambda}^{x}:=(x;0,1) \in \SLS(\lambda)$.

Recall from \cite[Theorem~3.2.1]{INS} that 
$\SLS(\lambda)$ is isomorphic as a crystal 
(with weights in $P_{\af}$) to the crystal basis of 
the extremal weight module of extremal weight $\lambda$. 
Hence we deduce from \cite[Sect.~7]{Kas94} that 
the affine Weyl group $W_{\af}$ acts on $\SLS(\lambda)$ by
%
%
\begin{equation} \label{eq:W1}
s_{i} \cdot \pi:=
\begin{cases}
f_{i}^{n}\pi & \text{if $n:=\pair{\wt(\pi)}{\alpha_{i}^{\vee}} \ge 0$}, \\[1.5mm]
e_{i}^{-n}\pi & \text{if $n:=\pair{\wt(\pi)}{\alpha_{i}^{\vee}} \le 0$}, 
\end{cases}
\end{equation}
for $\pi \in \SLS(\lambda)$ and $i \in I_{\af}$; 
by convention, we set $w \cdot \bzero:=\bzero$ 
for all $w \in W_{\af}$.
%
%
\begin{lem}[{see \cite[(5.1.6)]{INS}}] \label{lem:pix}
For every $x \in W_{\af}$, it holds that 
$x \cdot \pi_{\lambda} = (\PJ(x);0,1) = \pi_{\lambda}^{\PJ(x)}$. 
In particular, $\pi_{\lambda}^{x} \in \SLS_{0}(\lambda)$ 
for all $x \in (\WJu)_{\af}$. 
\end{lem}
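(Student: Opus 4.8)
The plan is to prove the identity $x \cdot \pi_{\lambda} = \pi_{\lambda}^{\PJ(x)}$ by induction on $\ell(x)$, the case $x = e$ being immediate since $\PJ(e) = e$. For the inductive step I would first record the elementary identity $\PJ(ab) = \PJ\bigl(a\,\PJ(b)\bigr)$ for $a, b \in W_{\af}$: writing $b = \PJ(b)\,b_{2}$ and $a\,\PJ(b) = \PJ\bigl(a\,\PJ(b)\bigr)\,c$ with $b_{2}, c \in (\WJ)_{\af}$, one gets $ab = \PJ\bigl(a\,\PJ(b)\bigr)\,(cb_{2})$ with $cb_{2} \in (\WJ)_{\af}$, so the claim follows from the uniqueness of the factorization $W_{\af} = \WJa \cdot (\WJ)_{\af}$ due to \cite{Pet97} (see also \cite[Lemma~10.6]{LS10}). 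Granting this, write $x = s_{i}x'$ with $\ell(x') = \ell(x) - 1$ coming from a reduced expression; by the inductive hypothesis $x' \cdot \pi_{\lambda} = \pi_{\lambda}^{\PJ(x')}$, so $x \cdot \pi_{\lambda} = s_{i} \cdot \pi_{\lambda}^{\PJ(x')}$, and it suffices to establish the key claim
\[
s_{i} \cdot \pi_{\lambda}^{y} = \pi_{\lambda}^{\PJ(s_{i}y)} \qquad \text{for every } y \in \WJa \text{ and } i \in I_{\af};
\]
indeed, applying this with $y = \PJ(x')$ and using $\PJ\bigl(s_{i}\,\PJ(x')\bigr) = \PJ(s_{i}x') = \PJ(x)$ completes the induction.

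To prove the key claim, put $n := \pair{\wt(\pi_{\lambda}^{y})}{\alpha_{i}^{\vee}} = \pair{y\lambda}{\alpha_{i}^{\vee}}$, note that $\ol{\pi_{\lambda}^{y}}(t) = t\,y\lambda$ so that $H^{\pi_{\lambda}^{y}}_{i}(t) = tn$, and argue by cases on the sign of $n$ using the description \eqref{eq:W1} of the $W_{\af}$-action together with Lemma~\ref{lem:si}. If $n = 0$, then $s_{i} \cdot \pi_{\lambda}^{y} = f_{i}^{0}\pi_{\lambda}^{y} = \pi_{\lambda}^{y}$; on the other hand Lemma~\ref{lem:si} gives $s_{i}y \notin \WJa$ and forces $y^{-1}\alpha_{i} \in \DeJ + \BZ\delta$, whence $y^{-1}s_{i}y = s_{y^{-1}\alpha_{i}} \in (\WJ)_{\af}$ and therefore $\PJ(s_{i}y) = y$, as required. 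If $n > 0$, then $s_{i} \cdot \pi_{\lambda}^{y} = f_{i}^{n}\pi_{\lambda}^{y}$, and a straightforward induction on $k$, using the definition \eqref{eq:fpi} of $f_{i}$ and the ``dropping'' conventions there, shows that $f_{i}^{k}\pi_{\lambda}^{y} = (s_{i}y,\,y;\,0,\,k/n,\,1)$ for $1 \le k \le n-1$, while the factor $y$ disappears at $k = n$, giving $f_{i}^{n}\pi_{\lambda}^{y} = (s_{i}y;\,0,\,1) = \pi_{\lambda}^{s_{i}y}$; since $n \ne 0$, Lemma~\ref{lem:si} gives $s_{i}y \in \WJa$, so $\PJ(s_{i}y) = s_{i}y$ and the claim holds. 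The case $n < 0$ is entirely analogous: writing $m := -n > 0$, one has $s_{i} \cdot \pi_{\lambda}^{y} = e_{i}^{m}\pi_{\lambda}^{y}$, and $e_{i}^{k}\pi_{\lambda}^{y} = (y,\,s_{i}y;\,0,\,(m-k)/m,\,1)$ for $1 \le k \le m-1$, collapsing to $\pi_{\lambda}^{s_{i}y} = \pi_{\lambda}^{\PJ(s_{i}y)}$ at $k = m$.

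Finally, the ``in particular'' assertion is immediate: for $x \in \WJa$ we have $\PJ(x) = x$, so $\pi_{\lambda}^{x} = x \cdot \pi_{\lambda}$ is obtained from $\pi_{\lambda}$ by applying root operators $e_{i}, f_{i}$ (this being how the $W_{\af}$-action \eqref{eq:W1} is built), hence lies in the connected component $\SLS_{0}(\lambda)$ of $\pi_{\lambda}$. The only genuinely computational part, and the place requiring care, is the case analysis for the key claim — in particular, verifying by induction that iterating $f_{i}$ (resp. $e_{i}$) transforms the straight-line path $\pi_{\lambda}^{y}$ into the straight-line path $\pi_{\lambda}^{s_{i}y}$ through the intermediate two-step paths, and that the dropping rules in \eqref{eq:epi}--\eqref{eq:fpi} produce exactly the collapse at the last step; the remainder is bookkeeping with the coset factorization $W_{\af} = \WJa \cdot (\WJ)_{\af}$.
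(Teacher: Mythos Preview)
Your proof is correct. The paper does not actually supply a proof of this lemma; it simply cites \cite[(5.1.6)]{INS}. Your argument is a direct, self-contained substitute: induction on $\ell(x)$, the coset identity $\PJ(ab) = \PJ\bigl(a\,\PJ(b)\bigr)$, and the key step that $s_{i} \cdot \pi_{\lambda}^{y} = \pi_{\lambda}^{\PJ(s_{i}y)}$ for $y \in \WJa$. The three-case analysis on $n = \pair{y\lambda}{\alpha_{i}^{\vee}}$ is handled correctly: for $n = 0$ you use Lemma~\ref{lem:si} to get $y^{-1}\alpha_{i} \in (\DeJ)_{\af}$, whence $y^{-1}s_{i}y = s_{y^{-1}\alpha_{i}} \in (\WJ)_{\af}$ by \eqref{eq:stabilizer} and so $\PJ(s_{i}y) = y$; for $n \ne 0$ the explicit root-operator formulas \eqref{eq:epi}--\eqref{eq:fpi} applied to the straight-line (and then two-step) paths behave exactly as you describe, with the dropping rules producing the collapse at the final step. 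What your approach buys over the bare citation is that the reader sees the mechanism --- the Weyl-group action on a single-segment path is just the root-operator string --- at the cost of a short but careful bookkeeping computation.
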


Let $\pi=(x_{1},\,\dots,\,x_{s} \,;\, 
\sigma_{0},\,\sigma_{1},\,\dots,\,\sigma_{s}) \in \SLS(\lambda)$. 
For $\xi \in Q^{\vee}$, we set
%
%
\begin{equation} \label{eq:Txi}
\pi \cdot T_{\xi} := (x_{1}\PJ(t_{\xi}),\,\dots,\,x_{s}\PJ(t_{\xi}) \,;\, 
        \sigma_{0},\,\sigma_{1},\,\dots,\,\sigma_{s});
\end{equation}
by convention, we set $\bzero \cdot T_{\xi}:=\bzero$. 
Then we see from the definition of semi-infinite LS paths that 
$\pi \cdot T_{\xi} \in \SLS(\lambda)$; note that $x_{u}\PJ(t_{\xi})=
\PJ(x_{u})\PJ(t_{\xi}) = \PJ(x_{u}t_{\xi}) \in \WJa$ by \eqref{eq:PiJ2}. 
The following lemma can be easily shown from the definitions. 
%
%
\begin{lem} \label{lem:Txi}
Let $\pi \in \SLS(\lambda)$, $\xi \in Q^{\vee}$, and $i \in I_{\af}$. 
Then, 
%
%
\begin{equation} \label{eq:Txi1}
e_{i}(\pi \cdot T_{\xi}) = (e_{i}\pi) \cdot T_{\xi}, \qquad
f_{i}(\pi \cdot T_{\xi}) = (f_{i}\pi) \cdot T_{\xi}, 
\end{equation}
%
%
\begin{equation} \label{eq:Txi1a}
\begin{cases}
\wt (\pi \cdot T_{\xi}) = \wt(\pi)-\pair{\lambda}{\xi}\delta, 
\ \text{\rm and hence} \ 
\pair{\wt (\pi \cdot T_{\xi})}{\alpha_{i}^{\vee}} = 
\pair{\wt(\pi)}{\alpha_{i}^{\vee}}, \\
\ve_{i}(\pi \cdot T_{\xi}) = \ve_{i}(\pi), \quad
\vp_{i}(\pi \cdot T_{\xi}) = \vp_{i}(\pi).
\end{cases}
\end{equation}
\end{lem}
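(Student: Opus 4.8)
The plan is to verify Lemma~\ref{lem:Txi} directly from the definitions of the root operators and of the operation $T_{\xi}$, reducing everything to a single observation: applying $T_{\xi}$ does not change the graph $\ol{\pi}$ of a path, only twists it by a fixed affine translation in weight space. Concretely, write $\pi=(x_{1},\dots,x_{s};\sigma_{0},\dots,\sigma_{s})$. First I would show that for each $1\le u\le s$ we have $x_{u}\PJ(t_{\xi})\lambda = x_{u}\lambda - \pair{\lambda}{\xi}\delta$. Indeed, by Lemma~\ref{lem:PiJ}(2), $\PJ(t_{\xi})=ut_{\xi+\xi_{1}}$ with $u\in\WJ$ and $\xi_{1}\in\QJv$, so $\PJ(t_{\xi})\lambda = u t_{\xi+\xi_{1}}\lambda = u(\lambda - \pair{\lambda}{\xi+\xi_{1}}\delta)$ by \eqref{eq:wtmu}; since $u\in\WJ$ fixes $\lambda$ and $\delta$, and $\pair{\lambda}{\xi_{1}}=0$ because $\xi_{1}\in\QJv$ and $\J=\J_{\lambda}$, this equals $\lambda-\pair{\lambda}{\xi}\delta$. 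Applying $x_{u}$ and using $x_{u}\delta=\delta$ gives the claim.

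From this, I would deduce from \eqref{eq:olpi} that $\ol{\pi\cdot T_{\xi}}(t) = \ol{\pi}(t) - t\,\pair{\lambda}{\xi}\delta$ for all $t\in[0,1]$: the direction vector on each interval $[\sigma_{u-1},\sigma_{u}]$ is shifted by the constant vector $-\pair{\lambda}{\xi}\delta$, and summing the contributions as in \eqref{eq:olpi} produces the stated $t$-linear correction. Evaluating at $t=1$ and using \eqref{eq:wt} gives $\wt(\pi\cdot T_{\xi})=\wt(\pi)-\pair{\lambda}{\xi}\delta$, which is the first assertion of \eqref{eq:Txi1a}; pairing with $\alpha_{i}^{\vee}$ and using $\pair{\delta}{\alpha_{i}^{\vee}}=0$ for all $i\in I_{\af}$ gives $\pair{\wt(\pi\cdot T_{\xi})}{\alpha_{i}^{\vee}}=\pair{\wt(\pi)}{\alpha_{i}^{\vee}}$.

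Next I would observe that the functions $H^{\pi}_{i}$ governing the root operators are \emph{unchanged} by $T_{\xi}$: since $H^{\pi\cdot T_{\xi}}_{i}(t)=\pair{\ol{\pi\cdot T_{\xi}}(t)}{\alpha_{i}^{\vee}} = \pair{\ol{\pi}(t)}{\alpha_{i}^{\vee}} - t\pair{\lambda}{\xi}\pair{\delta}{\alpha_{i}^{\vee}} = H^{\pi}_{i}(t)$, we get $m^{\pi\cdot T_{\xi}}_{i}=m^{\pi}_{i}$, and the cut points $t_{0},t_{1}$ and indices $p,q$ appearing in \eqref{eq:t-e}, \eqref{eq:epi}, \eqref{eq:t-f}, \eqref{eq:fpi} are identical for $\pi$ and $\pi\cdot T_{\xi}$. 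Therefore $\ve_{i}(\pi\cdot T_{\xi})=\ve_{i}(\pi)$ and $\vp_{i}(\pi\cdot T_{\xi})=\vp_{i}(\pi)$, completing \eqref{eq:Txi1a}. For \eqref{eq:Txi1}, the formula \eqref{eq:epi} for $e_{i}\pi$ replaces the relevant $x_{k}$ by $s_{i}x_{k}$; applying $\cdot T_{\xi}$ replaces each entry by $x_{k}\PJ(t_{\xi})$, and since $s_{i}(x_{k}\PJ(t_{\xi})) = (s_{i}x_{k})\PJ(t_{\xi})$ (left multiplication by $s_{i}$ commutes with right multiplication by $\PJ(t_{\xi})$ — and one checks via Lemma~\ref{lem:si} that $s_{i}x_{k}\in\WJa$ iff $s_{i}(x_{k}\PJ(t_{\xi}))\in\WJa$, because the relevant condition depends only on $x_{k}\lambda$ up to the $\pair{\delta}{\alpha_{i}^{\vee}}=0$-killed shift), the two sequences of reflections agree; the $\sigma$-data and the bookkeeping of dropped terms are literally the same since they depend only on the $H^{\pi}_{i}$-data. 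The case of $f_{i}$ is identical using \eqref{eq:fpi}. The only mildly delicate point — the "main obstacle" such as it is — is matching the conventions for dropping terms in \eqref{eq:epi}/\eqref{eq:fpi} (e.g.\ $s_{i}x_{q}=x_{q+1}$ versus $s_{i}(x_{q}\PJ(t_{\xi}))=x_{q+1}\PJ(t_{\xi})$), but these are equivalent because $\PJ(t_{\xi})$ acts freely by right multiplication on $\WJa$, so no genuine difficulty arises; indeed, as the paper says, "the following lemma can be easily shown from the definitions."
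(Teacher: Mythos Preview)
Your proof is correct and is precisely the direct verification from the definitions that the paper alludes to when it says ``the following lemma can be easily shown from the definitions.'' The paper gives no proof of its own, so there is nothing further to compare.
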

%
%
\begin{rem} \label{rem:Txi}
Let $\pi \in \SLS_{0}(\lambda)$, and write it as
$\pi=X\pi_{\lambda}$ for some monomial $X$ in root operators. Then, 
$\pi \cdot T_{\xi} = (X\pi_{\lambda}) \cdot T_{\xi} 
\stackrel{\eqref{eq:Txi1}}{=} X (\pi_{\lambda} \cdot T_{\xi}) = 
X \pi_{\lambda}^{\PJ(t_{\xi})} \in \SLS_{0}(\lambda)$. 
\end{rem}

Now, for $\lambda_{1},\dots,\,\lambda_{n} \in P^{+}$, 
we denote by $(\SLS(\lambda_{1}) \otimes \cdots \otimes \SLS(\lambda_{n}))_{0}$
the connected component of $\SLS(\lambda_{1}) \otimes \cdots \otimes \SLS(\lambda_{n})$
containing $\pi_{\lambda_{1}} \otimes \cdots \otimes \pi_{\lambda_{n}}$. 
The next lemma follows from \cite[Theorem~3.1]{KNS}. 
%
%
\begin{thm} \label{thm:SMT}
Let $\lambda_{1},\dots,\,\lambda_{n} \in P^{+}$, 
and set $\lambda:=\lambda_{1}+\cdots+\lambda_{n} \in P^{+}$. 
There exists an embedding 
$\Xi=\Xi_{\lambda_{1},\dots,\lambda_{n}}:
\SLS(\lambda) \hookrightarrow 
\SLS(\lambda_{1}) \otimes \cdots \otimes \SLS(\lambda_{n})$
of crystals that sends $\pi_{\lambda}$ to 
$\pi_{\lambda_{1}} \otimes \cdots \otimes \pi_{\lambda_{n}}$. 
In particular, the restriction of $\Xi=\Xi_{\lambda_{1},\dots,\lambda_{n}}$
to the connected component $\SLS_{0}(\lambda)$ is 
a {\rm(}unique{\rm)} isomorphism 
\begin{equation} \label{eq:Xi0}
\Xi=\Xi_{\lambda_{1},\dots,\lambda_{n}}:
\SLS_{0}(\lambda) \stackrel{\sim}{\rightarrow}
(\SLS(\lambda_{1}) \otimes \cdots \otimes \SLS(\lambda_{n}))_{0}
\end{equation}
of crystals that sends $\pi_{\lambda}$ to 
$\pi_{\lambda_{1}} \otimes \cdots \otimes \pi_{\lambda_{n}}$. 
\end{thm}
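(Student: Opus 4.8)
The plan is to reduce the $n$-fold statement to the $2$-fold case and then prove the $2$-fold case by combining the known tensor product decomposition for $\SLS$ with a careful analysis of when a tensor product of straight-line paths generates the correct connected component. More precisely, I would first observe that for $n \ge 3$, writing $\lambda = \lambda_1 + (\lambda_2 + \cdots + \lambda_n)$ and iterating, one obtains an embedding $\SLS(\lambda) \hookrightarrow \SLS(\lambda_1) \otimes \SLS(\lambda_2 + \cdots + \lambda_n) \hookrightarrow \SLS(\lambda_1) \otimes \SLS(\lambda_2) \otimes \cdots \otimes \SLS(\lambda_n)$, and one checks inductively that the composite sends $\pi_\lambda$ to $\pi_{\lambda_1} \otimes \cdots \otimes \pi_{\lambda_n}$ (using associativity of the tensor product of crystals and the fact that each embedding sends straight-line path to straight-line path, by the $2$-fold case). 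Thus everything comes down to $n = 2$, which is \cite[Theorem~3.1]{KNS} as cited.

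For the $2$-fold case, the key input is the cited result of \cite{KNS}: there is an embedding $\Xi : \SLS(\lambda + \mu) \hookrightarrow \SLS(\lambda) \otimes \SLS(\mu)$ of crystals with an explicit description of its image in terms of the semi-infinite Bruhat order, modeled on Littelmann's standard monomial theory \cite{Lit96}. Granting this, the statement that $\Xi(\pi_{\lambda+\mu}) = \pi_\lambda \otimes \pi_\mu$ should follow either directly from the construction in \cite{KNS} or from a uniqueness/weight argument: $\pi_{\lambda+\mu}$ is the unique element of $\SLS(\lambda + \mu)$ of weight $\lambda + \mu$, and $\pi_\lambda \otimes \pi_\mu$ is the unique element of $\SLS(\lambda) \otimes \SLS(\mu)$ of weight $\lambda + \mu$ with both tensor factors highest-weight-like (i.e. $\ve_i = 0$ for all $i \in I_{\af}$ on the first factor and the tensor product rule forcing the second); since $\Xi$ preserves weights and crystal structure, it must match these extremal vectors.

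Next I would address the ``in particular'' clause. Restricting $\Xi$ to the connected component $\SLS_0(\lambda + \mu)$ containing $\pi_{\lambda + \mu}$: since $\Xi$ is a crystal embedding and $\Xi(\pi_{\lambda+\mu}) = \pi_\lambda \otimes \pi_\mu$, the image of $\SLS_0(\lambda+\mu)$ is the connected component of $\SLS(\lambda)\otimes\SLS(\mu)$ containing $\pi_\lambda\otimes\pi_\mu$, namely $(\SLS(\lambda)\otimes\SLS(\mu))_0$; a crystal embedding restricted to a connected component and corestricted to its image is automatically an isomorphism of crystals. Uniqueness of such an isomorphism follows because any isomorphism of connected crystals is determined by the image of a single element (here $\pi_{\lambda+\mu}$), since every element of a connected crystal is reached from any other by a monomial in the root operators $e_i, f_i$, and crystal morphisms commute with these.

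The main obstacle is really the proof of the underlying $2$-fold embedding theorem itself, but that is assumed from \cite{KNS}; within the present write-up, the only genuine point requiring care is the identification $\Xi(\pi_{\lambda+\mu}) = \pi_\lambda \otimes \pi_\mu$, which must be extracted from the explicit image description in \cite[Theorem~3.1]{KNS} rather than taken for granted — one should verify that the semi-infinite-Bruhat-order conditions cutting out $\Xi(\SLS(\lambda+\mu))$ are satisfied trivially by the pair $(e; 0, 1) \otimes (e; 0, 1)$ and that the construction's normalization sends $\pi_{\lambda+\mu}$ there. The iteration for general $n$ then presents no new difficulty beyond bookkeeping with the associativity isomorphism of tensor products of crystals.
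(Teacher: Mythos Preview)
Your proposal is correct and matches the paper's treatment: the paper gives no proof of this theorem at all, simply recording that it follows from \cite[Theorem~3.1]{KNS}, which is exactly the 2-fold embedding you invoke. Your reduction from general $n$ to $n=2$ by iteration, together with the routine verification that the restriction to $\SLS_{0}(\lambda)$ is a (unique) isomorphism onto the connected component containing $\pi_{\lambda_{1}} \otimes \cdots \otimes \pi_{\lambda_{n}}$, is precisely the kind of unpacking the paper leaves implicit (the associativity bookkeeping appears later as Remark~\ref{rem:ass1}).
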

%
%
\begin{rem} \label{rem:ass1}
Let $\lambda,\mu,\nu \in P^{+}$. We see that 
the following diagram is commutative: 
%
%
\begin{equation} \label{eq:ass1a}
\begin{split}
\xymatrix{
\SLS_{0}(\lambda+\mu+\nu) 
 \ar[rr]^-{\Xi_{\lambda+\mu,\nu}}
 \ar[d]_{\Xi_{\lambda,\mu+\nu}} & &
(\SLS(\lambda+\mu) \otimes \SLS(\nu))_{0} \ar[d]^{\Xi_{\lambda\mu} \otimes \id} \\
(\SLS(\lambda) \otimes \SLS(\mu+\nu))_{0}
 \ar[rr]^-{\id \otimes \Xi_{\mu\nu}} & &
(\SLS(\lambda) \otimes \SLS(\mu) \otimes \SLS(\nu))_{0}.
}
\end{split}
\end{equation}
Indeed, both of the maps $(\id \otimes \Xi_{\mu\nu}) \circ 
\Xi_{\lambda,\mu+\nu}$ and 
$(\Xi_{\lambda\mu} \otimes \id) \circ
\Xi_{\lambda+\mu,\nu}$ are isomorphisms of crystals that 
send the element $\pi_{\lambda+\mu+\nu} \in \SLS_{0}(\lambda+\mu+\nu)$ 
to $\pi_{\lambda} \otimes \pi_{\mu} \otimes \pi_{\nu} \in 
(\SLS(\lambda) \otimes \SLS(\mu) \otimes \SLS(\nu))_{0}$.
Because $\SLS_{0}(\lambda+\mu+\nu) \subset \SLS(\lambda+\mu+\nu)$ 
is connected and contains $\pi_{\lambda+\mu+\nu}$, 
we conclude that the diagram above is commutative. 
\end{rem}

Keep the setting of Theorem~\ref{thm:SMT}.
Take $\J_{k}=\J_{\lambda_{k}}$, $1 \le k \le n$ and 
$\J=\J_{\lambda}$ as in \eqref{eq:J}.
%
%
\begin{lem}[{see \cite[Remark~3.5.2]{NS16}}] \label{lem:ext}
It holds that $\Xi(x \cdot \pi_{\lambda})=
(x \cdot \pi_{\lambda_{1}}) \otimes \cdots \otimes (x \cdot \pi_{\lambda_{n}})$
for $x \in W_{\af}$. 
\end{lem}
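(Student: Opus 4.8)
The plan is to reduce the claim for general $x \in W_{\af}$ to the two generating cases: $x = w \in W$ and $x = t_{\xi}$ for $\xi \in Q^{\vee}$, since every element of $W_{\af}$ is a product of such elements and the set of $x$ for which the assertion holds is closed under multiplication once we check compatibility. Concretely, writing $x = w t_{\xi}$ with $w \in W$ and $\xi \in Q^{\vee}$, I would first handle $x = t_{\xi}$ and then $x = w$, and finally combine. For $x = t_{\xi}$, note that $t_{\xi} \cdot \pi_{\lambda} = \pi_{\lambda} \cdot T_{\xi}$ is immediate from Lemma~\ref{lem:pix} together with the definition of $T_{\xi}$ in \eqref{eq:Txi}: indeed $t_{\xi} \cdot \pi_{\lambda} = (\PJ(t_{\xi}); 0, 1) = (e \cdot \PJ(t_{\xi}); 0, 1) = \pi_{\lambda} \cdot T_{\xi}$ (using $\PJ(e) = e$). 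Then the tensor product side, $(t_{\xi} \cdot \pi_{\lambda_{1}}) \otimes \cdots \otimes (t_{\xi} \cdot \pi_{\lambda_{n}})$, equals $(\pi_{\lambda_{1}} \cdot T_{\xi}) \otimes \cdots \otimes (\pi_{\lambda_{n}} \cdot T_{\xi})$ by the same reasoning applied to each $\lambda_{k}$.

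The key technical point is then that $\Xi$ intertwines $\cdot\, T_{\xi}$ on the source with the diagonal action $T_{\xi} \otimes \cdots \otimes T_{\xi}$ on the target. This should follow from Theorem~\ref{thm:SMT} and Lemma~\ref{lem:Txi}: since $\Xi|_{\SLS_{0}(\lambda)}$ is an isomorphism of crystals commuting with the root operators $e_{i}, f_{i}$, and since $\pi \cdot T_{\xi}$ is obtained from $\pi$ by applying $\PJ(t_{\xi})$ uniformly to all directions — an operation that, by \eqref{eq:Txi1}, commutes with all root operators — one writes a general $\pi \in \SLS_{0}(\lambda)$ as $X \pi_{\lambda}$ for a monomial $X$ in root operators (as in Remark~\ref{rem:Txi}), so that $\Xi(\pi \cdot T_{\xi}) = \Xi(X(\pi_{\lambda} \cdot T_{\xi})) = X \Xi(\pi_{\lambda} \cdot T_{\xi})$, and it remains to identify $\Xi(\pi_{\lambda} \cdot T_{\xi})$ with $(\pi_{\lambda_{1}} \cdot T_{\xi}) \otimes \cdots \otimes (\pi_{\lambda_{n}} \cdot T_{\xi})$. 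But this last element is again of the form $X' (\pi_{\lambda_{1}} \otimes \cdots \otimes \pi_{\lambda_{n}})$ for a monomial $X'$ in root operators (namely the same monomial expressing $\pi_{\lambda} \cdot T_{\xi}$ out of $\pi_{\lambda}$, using that the tensor product of connected components containing the $\pi_{\lambda_{k}}$ is itself connected), hence lies in $(\SLS(\lambda_{1}) \otimes \cdots \otimes \SLS(\lambda_{n}))_{0}$, and has the right weight by \eqref{eq:Txi1a}; by the uniqueness clause in Theorem~\ref{thm:SMT} it must equal $\Xi(\pi_{\lambda} \cdot T_{\xi})$.

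For the case $x = w \in W$, this is the content of \cite[Remark~3.5.2]{NS16} cited in the statement, or can be derived in the same spirit: the $W_{\af}$-action in \eqref{eq:W1} is built from the root operators via \eqref{eq:W1}, hence $\Xi$ — being an isomorphism of crystals with weights in $P_{\af}$ — intertwines the $W_{\af}$-action on $\SLS_{0}(\lambda)$ with the diagonal $W_{\af}$-action on $(\SLS(\lambda_{1}) \otimes \cdots \otimes \SLS(\lambda_{n}))_{0}$; applying this to $\pi_{\lambda}$ and using $\Xi(\pi_{\lambda}) = \pi_{\lambda_{1}} \otimes \cdots \otimes \pi_{\lambda_{n}}$ gives $\Xi(w \cdot \pi_{\lambda}) = (w \cdot \pi_{\lambda_{1}}) \otimes \cdots \otimes (w \cdot \pi_{\lambda_{n}})$. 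Combining the two cases: for $x = w t_{\xi}$, we have $x \cdot \pi_{\lambda} = w \cdot (t_{\xi} \cdot \pi_{\lambda})$, and since $t_{\xi} \cdot \pi_{\lambda} \in \SLS_{0}(\lambda)$ we may apply the $W$-case to it, using that $\Xi$ intertwines the $W_{\af}$-action throughout; the analogous manipulation on the tensor side then closes the argument.

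The main obstacle I expect is pinning down the intertwining of $\cdot\, T_{\xi}$ under $\Xi$ carefully — specifically, verifying that $(\pi_{\lambda_{1}} \cdot T_{\xi}) \otimes \cdots \otimes (\pi_{\lambda_{n}} \cdot T_{\xi})$ genuinely lies in the connected component $(\SLS(\lambda_{1}) \otimes \cdots \otimes \SLS(\lambda_{n}))_{0}$ and is expressible by the \emph{same} monomial in root operators that takes $\pi_{\lambda}$ to $\pi_{\lambda} \cdot T_{\xi}$. This requires knowing that $\pi_{\lambda} \cdot T_{\xi} = \pi_{\lambda}^{\PJ(t_{\xi})}$ sits in $\SLS_{0}(\lambda)$ — which is Lemma~\ref{lem:pix} — and that the tensor analog sits in the appropriate connected component, for which one argues that $\PJ(t_{\xi})$ acting on all three straight-line paths is realized as $T_{\xi} \otimes \cdots \otimes T_{\xi}$ and invokes the compatibility of $\cdot\, T_{\xi}$ with root operators from Lemma~\ref{lem:Txi}. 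Alternatively, and perhaps more cleanly, one cites \cite[Remark~3.5.2]{NS16} directly for $x \in W$ and reduces the translation part to the already-established commutativity; since the statement is flagged as coming from that reference, a short proof appealing to it plus the case $x = t_{\xi}$ via the uniqueness in Theorem~\ref{thm:SMT} should suffice.
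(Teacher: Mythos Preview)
The paper does not give a proof here (it only cites \cite[Remark~3.5.2]{NS16}), so the comparison is to the expected one-line argument behind that citation. That argument is: since $\Xi$ is an isomorphism of crystals, it intertwines the $W_{\af}$-action \eqref{eq:W1}, so $\Xi(x\cdot\pi_{\lambda}) = x\cdot(\pi_{\lambda_1}\otimes\cdots\otimes\pi_{\lambda_n})$ for every $x\in W_{\af}$; one then checks by induction on $\ell(x)$ that the crystal $W_{\af}$-action on the tensor product is \emph{diagonal} on the orbit of $\pi_{\lambda_1}\otimes\cdots\otimes\pi_{\lambda_n}$, using the tensor product rule together with the fact that for each $i\in I_{\af}$ the integers $\pair{x\lambda_k}{\alpha_i^{\vee}} = \pair{\lambda_k}{\gamma^{\vee}}$ (with $\gamma$ the finite part of $x^{-1}\alpha_i$) all have the same sign because each $\lambda_k$ is dominant. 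No split into $W$ versus $t_{\xi}$ is needed.

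Your proposal contains this observation but misstates what it gives you: $\Xi$ intertwines the $W_{\af}$-action defined through root operators on the tensor product crystal, not an a priori ``diagonal'' action. The substance of the lemma is exactly that these coincide on the orbit of $\pi_{\lambda_1}\otimes\cdots\otimes\pi_{\lambda_n}$, and your paragraph on the $W$ case asserts this without justifying it. Your separate treatment of $t_{\xi}$ then tries to recover this via $T_{\xi}$, but the step ``$(\pi_{\lambda_1}\cdot T_{\xi})\otimes\cdots\otimes(\pi_{\lambda_n}\cdot T_{\xi}) = X'(\pi_{\lambda_1}\otimes\cdots\otimes\pi_{\lambda_n})$ for the \emph{same} monomial $X'$ taking $\pi_{\lambda}$ to $\pi_{\lambda}\cdot T_{\xi}$'' is exactly the claim at issue, and neither connectedness nor the uniqueness in Theorem~\ref{thm:SMT} yields it (uniqueness pins down $\Xi$, not the image of a particular element; two elements of the same weight in a connected crystal need not coincide). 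Moreover, this is essentially the content of Lemma~\ref{lem:Txi2}, which in the paper is proved \emph{using} Lemma~\ref{lem:ext}, so routing the argument through $T_{\xi}$ risks circularity. The fix is to drop the $W$/$t_{\xi}$ split entirely and carry out the sign-alignment induction over simple reflections in $W_{\af}$.
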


For $\pi_{1} \otimes \cdots \otimes \pi_{n} 
\in \SLS(\lambda_{1}) \otimes \cdots \otimes \SLS(\lambda_{n})$ and $\xi \in Q^{\vee}$, 
we set
%
%
\begin{equation} \label{eq:Txiten}
(\pi_{1} \otimes \cdots \otimes \pi_{n}) \cdot T_{\xi}:=
(\pi_{1} \cdot T_{\xi}) \otimes \cdots \otimes 
(\pi_{n} \cdot T_{\xi}).
\end{equation}
%
%
\begin{lem} \label{lem:Txi2}
It holds that $\Xi(\pi \cdot T_{\xi}) = \Xi(\pi) \cdot T_{\xi}$ 
for all $\pi \in \SLS_{0}(\lambda)$ and $\xi \in Q^{\vee}$. 
\end{lem}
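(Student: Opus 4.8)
\textbf{Proof proposal for Lemma~\ref{lem:Txi2}.}
The plan is to reduce the statement to the corresponding commutativity properties of the root operators and of the crystal embedding $\Xi$, using the fact that $\SLS_{0}(\lambda)$ is generated from $\pi_{\lambda}$ by monomials in the $e_{i}$, $f_{i}$. First I would write an arbitrary $\pi \in \SLS_{0}(\lambda)$ as $\pi = X\pi_{\lambda}$ for some monomial $X$ in the root operators, exactly as in Remark~\ref{rem:Txi}. Then, on the one hand, by Remark~\ref{rem:Txi} we have $\pi \cdot T_{\xi} = X(\pi_{\lambda} \cdot T_{\xi}) = X\pi_{\lambda}^{\PJ(t_{\xi})}$, where the commutation of $X$ past $T_{\xi}$ is justified by \eqref{eq:Txi1} of Lemma~\ref{lem:Txi}. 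On the other hand, since $\Xi$ is a morphism of crystals we have $\Xi(\pi) = \Xi(X\pi_{\lambda}) = X\Xi(\pi_{\lambda}) = X(\pi_{\lambda_{1}} \otimes \cdots \otimes \pi_{\lambda_{n}})$.

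Next I would analyze the right-hand side $\Xi(\pi) \cdot T_{\xi}$. Using the definition \eqref{eq:Txiten} of $T_{\xi}$ on a tensor product together with the componentwise compatibility \eqref{eq:Txi1} of the root operators with $T_{\xi}$, one checks that $T_{\xi}$ commutes with each root operator on $\SLS(\lambda_{1}) \otimes \cdots \otimes \SLS(\lambda_{n})$ as well: indeed the tensor product rule for $e_{i}$, $f_{i}$ depends only on the functions $\varepsilon_{i}$, $\varphi_{i}$ of the tensor factors, and these are unchanged under $\cdot\,T_{\xi}$ by \eqref{eq:Txi1a}, while within the factor on which $e_{i}$ or $f_{i}$ acts the operation commutes with $\cdot\,T_{\xi}$ by \eqref{eq:Txi1}. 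Hence $\Xi(\pi) \cdot T_{\xi} = (X(\pi_{\lambda_{1}} \otimes \cdots \otimes \pi_{\lambda_{n}})) \cdot T_{\xi} = X((\pi_{\lambda_{1}} \otimes \cdots \otimes \pi_{\lambda_{n}}) \cdot T_{\xi})$.

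It therefore suffices to prove the base case $\Xi(\pi_{\lambda} \cdot T_{\xi}) = (\pi_{\lambda_{1}} \otimes \cdots \otimes \pi_{\lambda_{n}}) \cdot T_{\xi}$, i.e. $\Xi(\pi_{\lambda}^{\PJ(t_{\xi})}) = \pi_{\lambda_{1}}^{\PS{1}(t_{\xi})} \otimes \cdots \otimes \pi_{\lambda_{n}}^{\PS{n}(t_{\xi})}$. By Lemma~\ref{lem:pix}, $\pi_{\lambda}^{\PJ(t_{\xi})} = t_{\xi} \cdot \pi_{\lambda}$, and likewise $\pi_{\lambda_{k}}^{\PS{k}(t_{\xi})} = t_{\xi} \cdot \pi_{\lambda_{k}}$; so the desired identity is precisely $\Xi(t_{\xi} \cdot \pi_{\lambda}) = (t_{\xi} \cdot \pi_{\lambda_{1}}) \otimes \cdots \otimes (t_{\xi} \cdot \pi_{\lambda_{n}})$, which is the special case $x = t_{\xi}$ of Lemma~\ref{lem:ext}.

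The main obstacle is the middle step: carefully verifying that $\cdot\,T_{\xi}$ commutes with the root operators on the tensor product $\SLS(\lambda_{1}) \otimes \cdots \otimes \SLS(\lambda_{n})$, not just on a single factor. This is routine given \eqref{eq:Txi1} and \eqref{eq:Txi1a}, but it requires tracking through the tensor product rule for crystals to confirm that the factor selected by the signature rule, and the position within that factor at which the operator acts, are both insensitive to applying $\cdot\,T_{\xi}$ beforehand; once that is in hand, the induction on the length of the monomial $X$ closes immediately via Lemma~\ref{lem:ext}.
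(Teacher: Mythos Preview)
Your proposal is correct and follows essentially the same approach as the paper's proof: both write $\pi = X\pi_{\lambda}$, use Remark~\ref{rem:Txi} and Lemma~\ref{lem:ext} for the base case, and then reduce to showing that the tensor-product root operators commute with $\cdot\,T_{\xi}$ via \eqref{eq:Txi1} and \eqref{eq:Txi1a}. The only cosmetic difference is that the paper explicitly records the submonomials $X_{k}$ picked out by the tensor product rule and observes that the same $X_{k}$ arise after applying $T_{\xi}$, whereas you phrase this as an abstract commutation statement; the content is identical.
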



\begin{proof}
Let $\pi \in \SLS_{0}(\lambda)$, and write it as $\pi=X\pi_{\lambda}$ 
for some monomial $X$ in root operators. Assume that 
$\Xi(\pi)=\Xi(X\pi_{\lambda}) = X\Xi(\pi_{\lambda}) = 
X (\pi_{\lambda_{1}} \otimes \cdots \otimes \pi_{\lambda_{n}}) = 
X_{1}\pi_{\lambda_{1}} \otimes \cdots \otimes X_{n}\pi_{\lambda_{n}}$
for some monomials $X_{k}$, $1 \le k \le n$, in root operators 
(which are ``submonomials'' of $X$ by the tensor product rule for crystals). 
Then we have 
\begin{align} 
\Xi(\pi) \cdot T_{\xi} & = 
(X_{1}\pi_{\lambda_{1}} \cdot T_{\xi}) \otimes \cdots \otimes 
(X_{n}\pi_{\lambda_{n}} \cdot T_{\xi}) \nonumber \\ 
& = 
(X_{1}\pi_{\lambda_{1}}^{\PS{1}(t_{\xi})}) \otimes \cdots \otimes 
(X_{n}\pi_{\lambda_{n}}^{\PS{n}(t_{\xi})}) \quad 
 \text{by Remark~\ref{rem:Txi}}.  \label{eq:txi2a}
\end{align}
Here, by Remark~\ref{rem:Txi}, we see that 
$\pi \cdot T_{\xi} = X \pi_{\lambda}^{\PJ(t_{\xi})}$. 
Therefore, it follows from Lemma~\ref{lem:ext} that 
$\Xi(\pi \cdot T_{\xi})=\Xi(X \pi_{\lambda}^{\PJ(t_{\xi})}) = 
X \Xi(\pi_{\lambda}^{\PJ(t_{\xi})}) = 
X(\pi_{\lambda_{1}}^{\PS{1}(t_{\xi})} 
\otimes \cdots \otimes 
\pi_{\lambda_{n}}^{\PS{n}(t_{\xi})})$. 
Hence, the tensor product rule for crystals, 
along with \eqref{eq:Txi1a}, shows that 
\begin{equation*}
\Xi(\pi \cdot T_{\xi})= 
X(\pi_{\lambda_{1}}^{\PS{1}(t_{\xi})} 
\otimes \cdots \otimes 
\pi_{\lambda_{n}}^{\PS{n}(t_{\xi})}) = 
(X_{1}\pi_{\lambda_{1}}^{\PS{1}(t_{\xi})}) \otimes \cdots \otimes 
(X_{n}\pi_{\lambda_{n}}^{\PS{n}(t_{\xi})}).
\end{equation*}
By this equality and \eqref{eq:txi2a}, 
we obtain $\Xi(\pi \cdot T_{\xi}) = \Xi(\pi) \cdot T_{\xi}$, as desired. 
\end{proof}

%
\subsection{Parabolic quantum Bruhat graph and the tilted Bruhat order.}
\label{subsec:QBG}

In this subsection, we take and fix a subset $\J$ of $I$. 

\begin{dfn}
The (parabolic) quantum Bruhat graph $\QBJ$ is 
the ($\Delta^{+} \setminus \DeJ^{+})$-labeled
directed graph whose vertices are the elements of $\WJu$, and 
whose directed edges are of the form: $w \edge{\beta} v$ 
for $w,v \in \WJu$ and $\beta \in \Delta^{+} \setminus \DeJ^{+}$ 
such that $v= \mcr{ws_{\beta}}$, and such that either of 
the following holds: 
(i) $\ell(v) = \ell (w) + 1$; 
(ii) $\ell(v) = \ell (w) + 1 - 2 \pair{\rho-\rho_{\J}}{\beta^{\vee}}$.
An edge satisfying (i) (resp., (ii)) is called a Bruhat (resp., quantum) edge. 
When $\J=\emptyset$, we write $\QB$ for $\mathrm{QBG}(W^{\emptyset})$. 
\end{dfn}
%
%
\begin{rem} \label{rem:PQBG}
We know from \cite[Remark~6.13]{LNSSS1} that for each $w,\,v \in \WJu$, 
there exists a directed path in $\QBJ$ from $w$ to $v$.
\end{rem}

Let $w,\,v \in \WJu$, and let 
$\bp:w=
 v_{0} \edge{\beta_{1}}
 v_{1} \edge{\beta_{2}} \cdots 
       \edge{\beta_{s}}
 v_{s}=v$
be a directed path in $\QBJ$ from $w$ to $v$. 
Then we define the weight of $\bp$ by
%
%
\begin{equation} \label{eq:wtdp}
\wt^{\J}(\bp) := \sum_{
 \begin{subarray}{c}
 1 \le r \le s\,; \\[1mm]
 \text{$v_{r-1} \edge{\beta_{r}} v_{r}$ is} \\[1mm]
 \text{a quantum edge}
 \end{subarray}}
\beta_{r}^{\vee} \in Q^{\vee,+}; 
\end{equation}
when $\J=\emptyset$, we write $\wt(\bp)$ for $\wt^{\emptyset}(\bp)$. 
We know the following proposition from 
\cite[Proposition~8.1 and its proof]{LNSSS1}.
%
%
\begin{prop} \label{prop:81}
Let $w,\,v \in \WJu$. 
Let $\bp$ be a shortest directed path in $\QBJ$ from $w$ to $v$, 
and $\bq$ an arbitrary directed path in $\QBJ$ from $w$ to $v$. 
Then, $[\wt^{\J}(\bq)-\wt^{\J}(\bp)]^{\J} \in Q_{\Jc}^{\vee,+}$, 
where $[\,\cdot\,]^{\J} : Q^{\vee} \twoheadrightarrow Q_{\Jc}^{\vee}$ 
is as defined in Section~\ref{subsec:SiBG}.
Moreover, $\bq$ is shortest if and only if 
$[\wt^{\J}(\bq)]^{\J}=[\wt^{\J}(\bp)]^{\J}$.
\end{prop}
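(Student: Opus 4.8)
The plan is to reduce the parabolic statement to the non-parabolic case ($\J = \emptyset$) by lifting directed paths along the projection $\WJu \hookrightarrow W$ and then projecting weights via $[\,\cdot\,]^{\J}$. The basic mechanism I would use is the standard lifting lemma for the parabolic quantum Bruhat graph: any directed edge $w \edge{\beta} v$ in $\QBJ$ (with $w, v \in \WJu$) lifts to a directed path in $\QB$ from $w$ to $v$ (viewed as elements of $W$) all of whose edge-labels project, after taking duals and applying $[\,\cdot\,]^{\J}$, in a controlled way; more precisely, a Bruhat edge lifts to a single Bruhat edge and a quantum edge $w \edge{\beta} v$ lifts to a directed path whose quantum-edge contributions sum (after $[\,\cdot\,]^{\J}$) to $[\beta^{\vee}]^{\J}$. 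This is exactly the content of \cite[Lemma~7.2 / Lemma~7.3]{LNSSS1} (or the analogous statement cited there), so I would invoke it rather than reprove it. Concatenating these lifts turns $\bp$ into a directed path $\ti{\bp}$ in $\QB$ from $w$ to $v$ with $[\wt(\ti{\bp})]^{\J} = [\wt^{\J}(\bp)]^{\J}$, and similarly $\bq \rightsquigarrow \ti{\bq}$ with $[\wt(\ti{\bq})]^{\J} = [\wt^{\J}(\bq)]^{\J}$.

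Next I would apply the known non-parabolic facts about $\QB$: every directed path from $w$ to $v$ has a weight whose class modulo the "shortest weight" lies in $Q^{\vee,+}$, and — crucially — all shortest paths from $w$ to $v$ in $\QB$ have the \emph{same} weight (the "minimal weight" $\wt(w \Rightarrow v)$). These are precisely \cite[Proposition~8.1 and its proof]{LNSSS1} in the case $\J = \emptyset$, which the excerpt explicitly allows me to assume. Thus $\wt(\ti{\bq}) - \wt(\ti{\bp}^{\min}) \in Q^{\vee,+}$ for a shortest lift $\ti{\bp}^{\min}$; the point requiring care is that the lift $\ti{\bp}$ of the parabolic shortest path $\bp$ need not itself be a shortest path in $\QB$, but one checks from the length bookkeeping in the lifting lemma that $\ell(\ti{\bp}) - \ell_{\QB}^{\min}(w,v) = \ell_{\QBJ}^{\min}(w,v) - \ell_{\QBJ}(\bp) = 0$, so $\ti{\bp}$ \emph{is} shortest, and therefore $\wt(\ti{\bp}) = \wt(w \Rightarrow v)$ depends only on $w, v$. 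Applying $[\,\cdot\,]^{\J}$, which is a monoid homomorphism sending $Q^{\vee,+}$ onto $Q_{\Jc}^{\vee,+}$, to $\wt(\ti{\bq}) - \wt(\ti{\bp}) \in Q^{\vee,+}$ yields $[\wt^{\J}(\bq) - \wt^{\J}(\bp)]^{\J} \in Q_{\Jc}^{\vee,+}$, which is the first assertion.

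For the "moreover" part, the forward direction is immediate: if $\bq$ is also shortest in $\QBJ$, then its lift $\ti{\bq}$ is shortest in $\QB$ by the same length computation, so $\wt(\ti{\bq}) = \wt(w \Rightarrow v) = \wt(\ti{\bp})$, hence $[\wt^{\J}(\bq)]^{\J} = [\wt^{\J}(\bp)]^{\J}$. Conversely, suppose $[\wt^{\J}(\bq)]^{\J} = [\wt^{\J}(\bp)]^{\J}$ but $\bq$ is not shortest; then $\ell_{\QBJ}(\bq) > \ell_{\QBJ}(\bp)$, and I would derive a contradiction by a length-versus-weight identity in $\QBJ$ — namely, along any directed path the length drops by exactly $1$ at each Bruhat edge and rises by $2\pair{\rho - \rho_{\J}}{\beta^{\vee}} - 1$ at each quantum edge, so $\ell_{\QBJ}(\bq) - \ell_{\QBJ}(\bp)$ is controlled by $2\pair{\rho - \rho_{\J}}{\wt^{\J}(\bq)^{\vee} - \wt^{\J}(\bp)^{\vee}}$ together with a count of Bruhat edges; combined with $[\wt^{\J}(\bq)]^{\J} = [\wt^{\J}(\bp)]^{\J}$ and positivity, this forces $\ell_{\QBJ}(\bq) = \ell_{\QBJ}(\bp)$. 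The main obstacle I anticipate is precisely this last bookkeeping: keeping straight the interaction between the parabolic length contributions, the $\QJv$-ambiguity killed by $[\,\cdot\,]^{\J}$, and the lifting lemma, so that "shortest in $\QBJ$" and "shortest in $\QB$" match up cleanly; once that dictionary is in place, everything else is an application of the $\J = \emptyset$ results.
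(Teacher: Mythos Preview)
The paper does not prove this proposition at all: it is simply quoted from \cite[Proposition~8.1 and its proof]{LNSSS1}, with no argument given. So there is no ``paper's own proof'' to compare against beyond that citation, and your sketch is in effect an attempt to reconstruct the LNSSS1 argument.

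Your overall strategy (lift to $\QB$, invoke the $\J=\emptyset$ case, project by $[\,\cdot\,]^{\J}$) is indeed the shape of the LNSSS1 proof, but there is a genuine gap at the step you yourself flag. The displayed identity
\[
\ell(\ti{\bp}) - \ell^{\min}_{\QB}(w,v) \;=\; \ell^{\min}_{\QBJ}(w,v) - \ell_{\QBJ}(\bp)
\]
is asserted without justification, and it is not a formal consequence of the edge-by-edge lifting you describe: different parabolic edges lift to paths of different lengths in $\QB$, so there is no uniform ``length excess'' constant that would make this identity automatic. Without knowing that $\ti{\bp}$ is shortest in $\QB$, you cannot conclude $\wt(\ti{\bq})-\wt(\ti{\bp})\in Q^{\vee,+}$, and the positivity statement collapses. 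In LNSSS1 this is handled not by a length-bookkeeping trick but by a separate structural result (the existence of lifts of shortest parabolic paths to shortest paths in $\QB$, tied to the tilted-Bruhat minimal coset representatives); you would need to invoke or reprove that, not just the weight-compatibility of Lemma~7.2.

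For the ``moreover'' clause, your instinct is right but you have made it harder than necessary. The identity
\[
(\text{length of a path in }\QBJ) \;=\; \ell(v)-\ell(w) + 2\pair{\rho-\rho_{\J}}{\wt^{\J}(\text{path})}
\]
holds for \emph{every} directed path from $w$ to $v$ in $\QBJ$ (just sum the edge conditions), and since $\pair{\rho-\rho_{\J}}{\alpha_i^{\vee}}=0$ for $i\in\J$, the right-hand side depends only on $[\wt^{\J}(\text{path})]^{\J}$. Once the first assertion is in hand, both directions of the equivalence follow immediately from this formula and the strict positivity of $\pair{\rho-\rho_{\J}}{\alpha_i^{\vee}}$ for $i\in\Jc$; no lifting is needed for this half.
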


For $w,\,v \in \WJu$, we take a shortest directed path $\bp$ in 
$\QBJ$ from $w$ to $v$, and set 
$\wt^{\J}(w \Rightarrow v):=[\wt^{\J}(\bp)]^{\J} \in Q_{\Jc}^{\vee,+}$. 
When $\J=\emptyset$, we write $\wt(u \Rightarrow v)$ 
for $\wt^{\emptyset}(u \Rightarrow v)$. 
%
%
\begin{lem}[{\cite[Lemma~7.2]{LNSSS2}}] \label{lem:wtS} \mbox{}
Let $w,\,v \in \WJu$, and let $w_{1} \in w\WJ$, $v_{1} \in v\WJ$. 
Then we have $\wt^{\J}(w \Rightarrow v) = [\wt(w_{1} \Rightarrow v_{1})]^{S}$. 
\end{lem}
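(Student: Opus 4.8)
The plan is to deduce Lemma~\ref{lem:wtS} from Proposition~\ref{prop:81} by exploiting the interplay between the parabolic quantum Bruhat graph $\QBJ$ on $\WJu$ and the ordinary quantum Bruhat graph $\QB$ on $W$. The key input I would use is the ``lifting'' result relating paths in $\QBJ$ to paths in $\QB$: by \cite[Lemma~6.12]{LNSSS1} (which underlies Proposition~\ref{prop:81}), every shortest directed path $\bp$ in $\QBJ$ from $w$ to $v$ lifts to a directed path $\ti{\bp}$ in $\QB$ from $w$ to some element of $v\WJ$, and conversely any directed path in $\QB$ from $w_1$ to $v_1$ projects, under $\mcr{\cdot}^{\J}$, to a directed path in $\QBJ$ from $\mcr{w_1}$ to $\mcr{v_1}$ whose $\wt^{\J}$ is the image under $[\,\cdot\,]^{\J}$ of the original weight, restricted to the quantum edges that survive the projection.

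First I would reduce to the case $w_1 = w$, $v_1 = v$ (i.e.\ $w, v \in \WJu$): since $\wt(w_1 \Rightarrow v_1)$ for varying coset representatives differs only by elements of $\QJv = \ker [\,\cdot\,]^{\J}$ — this is itself a known stability property of $\wt(\cdot \Rightarrow \cdot)$ under right multiplication by $\WJ$, which I would cite from \cite{LNSSS1} — the right-hand side $[\wt(w_1 \Rightarrow v_1)]^{\J}$ does not depend on the choice of $w_1 \in w\WJ$ and $v_1 \in v\WJ$. Then I would take a shortest path $\bp$ in $\QBJ$ from $w$ to $v$, lift it to a path $\ti{\bp}$ in $\QB$ from $w$ to some $v_1 \in v\WJ$, and observe that $[\wt(\ti{\bp})]^{\J} = \wt^{\J}(\bp) = \wt^{\J}(w \Rightarrow v)$ because the lifting preserves quantum edges and the labels satisfy $[\beta^{\vee}]^{\J} = [\ti\beta^{\vee}]^{\J}$ for the lifted roots. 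Finally, I would argue that $\ti{\bp}$ is in fact a shortest path in $\QB$ from $w$ to $v_1$: any shorter path would project back down to a path in $\QBJ$ from $w$ to $v$ of strictly smaller $\wt^{\J}$-class under $[\,\cdot\,]^{\J}$, contradicting the ``shortest iff minimal weight class'' criterion of Proposition~\ref{prop:81}. Hence $\wt(w \Rightarrow v_1) = [\wt(\ti\bp)]^{\emptyset}$ and $[\wt(w \Rightarrow v_1)]^{\J} = [\wt(\ti\bp)]^{\J} = \wt^{\J}(w \Rightarrow v)$, which is the claim.

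The main obstacle I anticipate is making the lifting correspondence between $\QBJ$-paths and $\QB$-paths fully precise — in particular, controlling exactly which edges are Bruhat versus quantum under projection and lifting, and verifying that length drops by the ``right'' amount so that shortest paths correspond to shortest paths. This is where I would lean hardest on \cite[Sect.~6]{LNSSS1} and the proof of \cite[Proposition~8.1]{LNSSS1}; the edge-type bookkeeping (using $\ell(\mcr{w_1 s_\beta}^{\J})$ versus $\ell(w_1 s_\beta)$ and the correction term $2\pair{\rho-\rho_{\J}}{\beta^{\vee}}$) is routine but delicate. Once that correspondence is set up cleanly, the weight identity follows formally from applying $[\,\cdot\,]^{\J}$ to both sides and using $Q_{\Jc}^{\vee} \cap \QJv = 0$.

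Alternatively, if the lifting machinery is cumbersome to state, a cleaner route is purely combinatorial: pick $w_1 \in w\WJ$, $v_1 \in v\WJ$ arbitrary, choose a shortest path $\bq$ in $\QB$ from $w_1$ to $v_1$, project it via $\mcr{\cdot}^{\J}$ to a path $\ol{\bq}$ in $\QBJ$ from $w$ to $v$, and show directly that $\ol{\bq}$ is shortest in $\QBJ$ (by the minimality criterion of Proposition~\ref{prop:81} applied in the reverse direction, comparing against the lift of a known shortest $\QBJ$-path) while $[\wt(\bq)]^{\J} = \wt^{\J}(\ol{\bq})$. I would present whichever of these two organizations makes the cross-references to \cite{LNSSS1,LNSSS2} most transparent.
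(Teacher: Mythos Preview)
The paper does not prove this lemma: it is stated with the citation \cite[Lemma~7.2]{LNSSS2} and used without further argument. So there is no ``paper's own proof'' to compare against; your proposal is an attempt to reconstruct the argument behind the cited result.

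Your outline is the right shape and is essentially the argument given in \cite{LNSSS2}: one uses the lifting/projection correspondence between directed paths in $\QB$ and in $\QBJ$ developed in \cite[Sect.~6 and Sect.~8]{LNSSS1}, together with the ``shortest $\iff$ minimal weight class'' criterion (your Proposition~\ref{prop:81}). One remark: the cleanest organization is the second one you suggest. Start from a shortest path $\bq$ in $\QB$ from $w_1$ to $v_1$, project it via $\mcr{\cdot}^{\J}$ to a path $\ol{\bq}$ in $\QBJ$ from $w$ to $v$, and check directly that $[\wt(\bq)]^{\J}=[\wt^{\J}(\ol{\bq})]^{\J}$ (the quantum edges of $\bq$ whose label lies in $\DeJ^{+}$ contribute only to $\QJv$, hence vanish under $[\cdot]^{\J}$, and any edge that collapses under projection has label in $\DeJ^{+}$). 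That $\ol{\bq}$ is shortest then follows by comparing against a lift of a known shortest $\QBJ$-path and invoking Proposition~\ref{prop:81}. The independence of the choice of coset representatives $w_1,v_1$ is then a consequence rather than a separate step. Your first route (lift a shortest $\QBJ$-path and argue the lift is shortest in $\QB$) also works but requires a bit more care, since a lift of a shortest parabolic path need not be shortest in $\QB$ in general; what one actually obtains from \cite[Lemma~6.12]{LNSSS1} is a lift to a path ending in a \emph{specific} element of $v\WJ$, and shortestness there needs an extra sentence.
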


For $w,\,v \in W$, we denote by $\ell^{\J}(w \Rightarrow v)$ 
the length of a shortest directed path from $w$ to $v$ in $\QBJ$; 
when $\J=\emptyset$, we write $\ell(w \Rightarrow v)$ 
for $\ell^{\emptyset}(w \Rightarrow v)$. 
%
%
\begin{dfn}[tilted Bruhat order; see \cite{BFP}] \label{dfn:tilted}
For each $w \in W$, we define the $w$-tilted Bruhat order $\tb{w}$ on $W$ as follows:
for $v_{1},v_{2} \in W$, 
%
%
\begin{equation} \label{eq:tilted}
v_{1} \le_{w} v_{2} \iff \ell(w \Rightarrow v_{2}) = 
 \ell(w \Rightarrow v_{1}) + \ell(v_{1} \Rightarrow v_{2}).
\end{equation}
Namely, $v_{1} \le_{w} v_{2}$ if and only if 
there exists a shortest directed path in $\QB$ 
from $w$ to $v_{2}$ passing through $v_{1}$; 
or equivalently, the concatenation of a shortest directed path 
from $w$ to $v_{1}$ and one from $v_{1}$ to $v_{2}$ 
is one from $w$ to $v_{2}$. 
\end{dfn}
%
%
\begin{prop}[{\cite[Theorem~7.1]{LNSSS1}}] \label{prop:tbmin}
Let $w \in W$, and let $\J$ be a subset of $I$. 
Then each coset $v\WJ$, $v \in W$, has a unique minimal element
with respect to $\tb{w}$; 
we denote it by $\tbmin{v}{\J}{w}$. 
\end{prop}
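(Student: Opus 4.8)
The statement asserts that within each coset $v\WJ$, the $w$-tilted Bruhat order $\tb{w}$ has a unique minimal element. The plan is to reduce the assertion to a statement purely about the parabolic quantum Bruhat graph $\QBJ$ and shortest paths therein. First I would recall (as in \cite[Theorem~7.1]{LNSSS1}) the characterization that $v_1 \tb{w} v_2$ holds precisely when some shortest directed path in $\QB$ from $w$ to $v_2$ passes through $v_1$; equivalently, $\ell(w \Rightarrow v_2) = \ell(w \Rightarrow v_1) + \ell(v_1 \Rightarrow v_2)$. Since we seek the minimal element of $v\WJ$, I would use the projection $\mcr{\cdot}^{\J} : W \to \WJu$ and the fundamental compatibility between $\QB$ and $\QBJ$: a directed path in $\QB$ projects (after deleting edges that stay within a single $\WJ$-coset) to a directed path in $\QBJ$, and conversely every edge of $\QBJ$ lifts; moreover this projection takes shortest paths to shortest paths, by the length formula built into the two kinds of edges (Bruhat vs.\ quantum) and the comparison of $\ell^{\J}(w \Rightarrow v)$ with $\ell(w \Rightarrow v)$.

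The key steps, in order, would be: (1) Show that among all elements of $v\WJ$, there is a unique one, call it $z$, minimizing $\ell(w \Rightarrow \cdot)$ — indeed $\ell(w \Rightarrow u)$ for $u \in v\WJ$ depends only on the coset plus a ``distance within the coset'' term, and $z$ is the endpoint in $v\WJ$ of a shortest path from $w$ to the coset; one argues uniqueness using that $\WJ$ acts on the fiber and the quantum Bruhat graph restricted to a $\WJ$-coset is (a shift of) the quantum Bruhat graph of the Levi, which is strongly connected with a well-defined minimal-distance vertex from any fixed external approach. (2) Verify $z \tb{w} u$ for every $u \in v\WJ$: concatenate a shortest path $w \Rightarrow z$ with a shortest path $z \Rightarrow u$ and check, via the additivity of path lengths and Step~(1), that the concatenation is again shortest from $w$ to $u$ — this is where the defining equality \eqref{eq:tilted} is checked directly. (3) Conclude uniqueness of the minimal element: if $z'$ were another minimal element of $v\WJ$ with respect to $\tb{w}$, then $z \tb{w} z'$ forces $\ell(w\Rightarrow z') = \ell(w \Rightarrow z) + \ell(z \Rightarrow z')$, and minimality of $z'$ forces $z' \tb{w} z$, hence $\ell(z \Rightarrow z') = \ell(z' \Rightarrow z) = 0$, so $z = z'$ by strong connectedness / acyclicity of the relevant subgraph.

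The main obstacle I anticipate is Step~(1): proving that the function $u \mapsto \ell(w \Rightarrow u)$ on the single coset $v\WJ$ attains its minimum at a \emph{unique} point. One must control how $\ell(w \Rightarrow u)$ varies as $u$ ranges over the coset, and this requires the precise structure theory of $\QBJ$ versus $\QB$ — in particular the fact that a shortest path from $w$ into the coset $v\WJ$, once it first reaches the coset, can be completed to reach any prescribed element $u$ of the coset by appending a shortest path \emph{inside} the coset, without backtracking. Equivalently, one needs that the minimal element exists as a genuine ``entry point'' and that $\ell(w \Rightarrow u) = \ell^{\J}(w \Rightarrow v) + \ell(z \Rightarrow u)$ for all $u \in v\WJ$ with the appropriate $z$. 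This is essentially a parabolic analogue of the classical fact for the ordinary Bruhat order and is where the combinatorics of \cite{LNSSS1} (especially the ``diamond lemma''–type local moves in the quantum Bruhat graph and the lifting/projection properties of $\PJ$ and $\mcr{\cdot}$) does the real work. Once Step~(1) is in place, Steps~(2) and~(3) are short formal consequences of the definition of $\tb{w}$.
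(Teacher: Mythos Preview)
The paper does not give a proof of this proposition at all: it is stated with the attribution \cite[Theorem~7.1]{LNSSS1} and then used as a black box, so there is no ``paper's own proof'' to compare against. Your plan is therefore an attempt to reconstruct the argument of \cite{LNSSS1} rather than to match anything in the present paper.

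As a sketch of that argument, your outline is broadly reasonable, and you correctly identify Step~(1) as the crux. One caution: the formula you write at the end, $\ell(w \Rightarrow u) = \ell^{\J}(w \Rightarrow v) + \ell(z \Rightarrow u)$, is not literally what one proves (the left-hand side lives in $\QB$, the first term on the right in $\QBJ$, and these lengths are not directly additive); what one actually needs is $\ell(w \Rightarrow u) = \ell(w \Rightarrow z) + \ell(z \Rightarrow u)$ for all $u \in v\WJ$, which is exactly the statement $z \tb{w} u$. Establishing this, together with the uniqueness of the minimizer $z$, is precisely the content of \cite[Theorem~7.1]{LNSSS1}, and it does rely on the lifting/projection machinery between $\QB$ and $\QBJ$ developed in \cite[\S 5--6]{LNSSS1} (in particular the diamond lemmas you allude to). So your plan points in the right direction, but Step~(1) is not something you can dispatch with a paragraph; it is essentially the whole theorem, and a complete proof would require reproducing a substantial portion of \cite{LNSSS1}.
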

%
%
\subsection{Quantum Lakshmibai-Seshadri paths and the degree function.}
\label{subsec:QLS}
We fix $\lambda \in P^{+}$, and take $\J=\J_{\lambda}$ as in \eqref{eq:J}. 
%
%
\begin{dfn} \label{dfn:QBa}
For a rational number $0 < \sigma < 1$, 
we define $\QBa$ to be the subgraph of $\QBJ$ 
with the same vertex set but having only those directed edges of the form
$w \edge{\beta} v$ for which 
$\sigma\pair{\lambda}{\beta^{\vee}} \in \BZ$ holds.
\end{dfn}
%
%
\begin{dfn}\label{dfn:QLS}
A quantum LS path of shape $\lambda$ is a pair 
%
%
\begin{equation} \label{eq:QLS}
\eta = (v_{1},\,\dots,\,v_{s} \,;\, \sigma_{0},\,\sigma_{1},\,\dots,\,\sigma_{s}), \quad s \ge 1, 
\end{equation}
of a sequence $v_{1},\,\dots,\,v_{s}$ 
of elements in $\WJu$ with $v_{u} \ne v_{u+1}$ 
for any $1 \le u \le s-1$ and an increasing sequence 
$0 = \sigma_0 < \sigma_1 < \cdots  < \sigma_s =1$ of rational numbers 
satisfying the condition that there exists a directed path 
in $\QBb{\sigma_{u}}$ from $v_{u+1}$ to  $v_{u}$ 
for each $u = 1,\,2,\,\dots,\,s-1$. 
\end{dfn}
%
%
\begin{rem} \label{rem:QLS}
It is obvious by the definition that if $\eta \in \QLS(\lambda)$ 
is of the form \eqref{eq:QLS}, then 
$\sigma_{0},\,\sigma_{1},\,\dots,\,\sigma_{s} \in \turn{\lambda}$, 
where $\turn{\lambda}$ is as defined in Remark~\ref{rem:SLS}. 
\end{rem}

We denote by $\QLS(\lambda)$ 
the set of all quantum LS paths of shape $\lambda$.
We set $\eta_{\lambda}:=(e;0,1) \in \QLS(\lambda)$, and 
set $\eta_{\lambda}^{v}:=(v;0,1) \in \QLS(\lambda)$ 
for $v \in \WJu$. 
Also, if $\eta \in \QLS(\lambda)$ is of the form \eqref{eq:QLS}, 
then we set $\iota(\eta):=v_{1} \in \WJu$ and 
$\kappa(\eta):=v_{s} \in \WJu$, 
and call them the initial and final directions of $\eta$, 
respectively. 
We identify $\eta \in \QLS(\lambda)$ of the form \eqref{eq:QLS} 
with the piecewise-linear, continuous map 
$\eta:[0,1] \rightarrow \BR \otimes_{\BZ} P$ 
whose ``direction vector'' for the interval 
$[\sigma_{u-1},\,\sigma_{u}]$ is $v_{u}\lambda \in P$ 
for each $1 \le u \le s$, that is, 
%
%
\begin{equation} \label{eq:eta}
\eta (t) := 
\sum_{k = 1}^{u-1}(\sigma_{k} - \sigma_{k-1}) v_{k}\lambda + (t - \sigma_{u-1}) v_{u}\lambda
\quad
\text{for $t \in [\sigma_{u-1},\,\sigma_u]$, $1 \le u \le s$};
\end{equation}
note that $\BR \otimes_{\BZ} P \cong 
(\BR \otimes_{\BZ} P_{\af}^{0})/\BR\delta \subset 
(\BR \otimes_{\BZ} P_{\af})/\BR\delta$. 
%
%
\begin{rem} \label{rem:LScl}
We know from \cite[Theorem~3.3]{LNSSS2} that 
the set $\QLS(\lambda)$ of quantum LS paths of shape $\lambda$ is identical
(as a set of piecewise-linear, continuous map 
from $[0,1]$ to $(\BR \otimes_{\BZ} P_{\af})/\BR\delta$)
to the set $\BB(\lambda)_{\cl}$ of 
(affine) LS paths of shape $\lambda$ projected by 
$\cl:\BR \otimes_{\BZ} P_{\af} \twoheadrightarrow 
(\BR \otimes_{\BZ} P_{\af})/\BR\delta$, 
which we studied in \cite{NS03}, \cite{NS05}, \cite{NS06}. 
\end{rem}

We endow the set $\QLS(\lambda)$ with a crystal structure 
with weights in $P \cong P_{\af}^{0}/\BZ\delta \subset P_{\af}/\BZ\delta$ as follows
(see \cite[\S4.2]{LNSSS16}). Let $\eta \in \QLS(\lambda)$ 
be of the form \eqref{eq:QLS}. We set
%
%
\begin{equation} \label{eq:wteta}
\wt (\eta):= \eta(1) = \sum_{u = 1}^{s} (\sigma_{u}-\sigma_{u-1})v_{u}\lambda \in P. 
\end{equation}
We define root operators $e_{i}$, $f_{i}$, $i \in I_{\af}$, 
in the same manner as in \cite[Sect.~2]{Lit95}. Set
%
%
\begin{equation} \label{eq:H2}
\begin{cases}
H^{\eta}_{i}(t) := \pair{\eta(t)}{\alpha_{i}^{\vee}} \quad 
\text{for $t \in [0,1]$}, \\[1.5mm]
m^{\eta}_{i} := 
 \min \bigl\{ H^{\eta}_{i} (t) \mid t \in [0,1] \bigr\}. 
\end{cases}
\end{equation}
Since $\QLS(\lambda) = \BB(\lambda)_{\cl}$, 
it follows from \cite[Lemma~4.5\,d)]{Lit95} 
(see also \cite[Proposition~4.1.12]{LNSSS16}) that 
all local minima of the function $H^{\eta}_{i}(t)$, $t \in [0,1]$, 
are integers; in particular, 
the minimum value $m^{\eta}_{i}$ is a nonpositive integer 
(recall that $\eta(0)=0$, and hence $H^{\eta}_{i}(0)=0$).
We define $e_{i}\eta$ as follows. 
If $m^{\eta}_{i}=0$, then we set $e_{i} \eta := \bzero$. 
If $m^{\eta}_{i} \le -1$, then we set
%
%
\begin{equation} \label{eq:t-e2}
\begin{cases}
t_{1} := 
  \min \bigl\{ t \in [0,\,1] \mid 
    H^{\eta}_{i}(t) = m^{\eta}_{i} \bigr\}, \\[1.5mm]
t_{0} := 
  \max \bigl\{ t \in [0,\,t_{1}] \mid 
    H^{\eta}_{i}(t) = m^{\eta}_{i} + 1 \bigr\}; 
\end{cases}
\end{equation}
note that $H^{\eta}_{i}(t)$ is 
strictly decreasing on the interval $[t_{0},\,t_{1}]$. 
Let $1 \le p \le q \le s$ be such that 
$\sigma_{p-1} \le t_{0} < \sigma_p$ and $t_{1} = \sigma_{q}$. 
Then we define $e_{i}\eta$ by
%
%
\begin{equation} \label{eq:eeta}
\begin{split}
& e_{i} \eta := ( 
  v_{1},\,\ldots,\,v_{p},\,\mcr{\ti{s}_{i}v_{p}},\,\mcr{\ti{s}_{i}v_{p+1}},\,\ldots,\,
  \mcr{\ti{s}_{i}v_{q}},\,v_{q+1},\,\ldots,\,v_{s} ; \\
& \hspace*{40mm}
  \sigma_{0},\,\ldots,\,\sigma_{p-1},\,t_{0},\,\sigma_{p},\,\ldots,\,\sigma_{q}=t_{1},\,
\ldots,\,\sigma_{s}), 
\end{split}
\end{equation}
where $\ti{s}_{i}$ is as in \eqref{eq:tis}; 
if $t_{0} = \sigma_{p-1}$, then we drop $v_{p}$ and $\sigma_{p-1}$, and 
if $\ti{s}_{i} v_{q} = v_{q+1}$, then we drop $v_{q+1}$ and $\sigma_{q}=t_{1}$.
Similarly, we define $f_{i}\eta$ as follows. 
Observe that $H^{\eta}_{i}(1) - m^{\eta}_{i}$ is a nonnegative integer. 
If $H^{\eta}_{i}(1) - m^{\eta}_{i} = 0$, then we set $f_{i} \eta := \bzero$. 
If $H^{\eta}_{i}(1) - m^{\eta}_{i}  \ge 1$, then we set
%
%
\begin{equation} \label{eq:t-f2}
\begin{cases}
t_{0} := 
 \max \bigl\{ t \in [0,1] \mid H^{\eta}_{i}(t) = m^{\eta}_{i} \bigr\}, \\[1.5mm]
t_{1} := 
 \min \bigl\{ t \in [t_{0},\,1] \mid H^{\eta}_{i}(t) = m^{\eta}_{i} + 1 \bigr\};
\end{cases}
\end{equation}
note that $H^{\eta}_{i}(t)$ is 
strictly increasing on the interval $[t_{0},\,t_{1}]$. 
Let $0 \le p \le q \le s-1$ be such that $t_{0} = \sigma_{p}$ and 
$\sigma_{q} < t_{1} \le \sigma_{q+1}$. Then we define $f_{i}\eta$ by
%
%
\begin{equation} \label{eq:feta}
\begin{split}
& f_{i} \eta := ( v_{1},\,\ldots,\,v_{p},\,\mcr{\ti{s}_{i}v_{p+1}},\,\dots,\,
  \mcr{\ti{s}_{i} v_{q}},\,\mcr{\ti{s}_{i} v_{q+1}},\,v_{q+1},\,\ldots,\,v_{s} ; \\
& \hspace{40mm} 
  \sigma_{0},\,\ldots,\,\sigma_{p}=t_{0},\,\ldots,\,\sigma_{q},\,t_{1},\,
  \sigma_{q+1},\,\ldots,\,\sigma_{s});
\end{split}
\end{equation}
if $t_{1} = \sigma_{q+1}$, then we drop $v_{q+1}$ and $\sigma_{q+1}$, and 
if $v_{p} = \ti{s}_{i} v_{p+1}$, then we drop $v_{p}$ and $\sigma_{p}=t_{0}$.
In addition, we set $e_{i} \bzero = f_{i} \bzero := \bzero$ for all $i \in I_{\af}$.
%
%
\begin{thm} \label{thm:QLS}
The set $\QLS(\lambda) \sqcup \{ \bzero \}$ is 
stable under the action of the root operators 
$e_{i}$ and $f_{i}$, $i \in I_{\af}$.
Moreover, if we set 
\begin{equation*}
\begin{cases}
\ve_{i} (\eta) := 
 \max \bigl\{ n \ge 0 \mid e_{i}^{n} \eta \neq \bzero \bigr\}, \\[1.5mm]
\vp_{i} (\eta) := 
 \max \bigl\{ n \ge 0 \mid f_{i}^{n} \eta \neq \bzero \bigr\}
\end{cases}
\end{equation*}
for $\eta \in \QLS(\lambda)$ and $i \in I_{\af}$, then 
the set $\QLS(\lambda)$, 
equipped with the maps $\wt$, $e_{i}$, $f_{i}$, $i \in I_{\af}$, 
and $\ve_{i}$, $\vp_{i}$, $i \in I_{\af}$, 
defined above, is a crystal with weights in 
$P \cong P_{\af}^{0}/\BZ\delta \subset P_{\af}/\BZ\delta$.
\end{thm}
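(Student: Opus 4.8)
The plan is to reduce the assertion to the corresponding statement about \emph{ordinary} (affine) LS paths, which is classical, using the identification $\QLS(\lambda) = \BB(\lambda)_{\cl}$ recorded in Remark~\ref{rem:LScl}. Concretely, the set $\BB(\lambda)$ of affine LS paths of shape $\lambda \in P_{\af}^{0}$ is known (from \cite{Lit95}, \cite{NS03}, \cite{NS05}, \cite{NS06}) to be a crystal with weights in $P_{\af}$, stable under the root operators $e_i, f_i$ for $i \in I_{\af}$; moreover the projection $\cl : \BR \otimes_{\BZ} P_{\af} \twoheadrightarrow (\BR \otimes_{\BZ} P_{\af})/\BR\delta$ intertwines the root operators on $\BB(\lambda)$ with those on $\BB(\lambda)_{\cl}$, and $\cl$ is surjective onto $\BB(\lambda)_{\cl}$. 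Hence $\BB(\lambda)_{\cl} \sqcup \{\bzero\}$ is automatically stable under $e_i, f_i$, and it inherits the crystal axioms from $\BB(\lambda)$ after dividing weights by $\BZ\delta$; this gives both the stability assertion and the crystal structure with weights in $P \cong P_{\af}^{0}/\BZ\delta$.

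First I would verify that the root operators on $\QLS(\lambda)$ as defined combinatorially in \eqref{eq:eeta}, \eqref{eq:feta} (using the lifts $\mcr{\ti s_i v}$ and the quantum Bruhat graph edges) genuinely coincide with the Littelmann root operators on $\BB(\lambda)_{\cl}$ under the identification of Remark~\ref{rem:LScl}. The weight formula \eqref{eq:wteta} and the height function \eqref{eq:H2} are manifestly the $\cl$-images of the corresponding data for affine LS paths, so the cut points $t_0, t_1$ and the indices $p, q$ are the same; the only content is that replacing each $x_u \in \WJa$ appearing in an affine LS path by $s_i x_u$ (the affine root operator rule \eqref{eq:epi}, \eqref{eq:fpi}) corresponds, after applying $\cl$ and passing to minimal coset representatives, to replacing $v_u \in \WJu$ by $\mcr{\ti s_i v_u}$, and that the condition ``there is a directed path in $\SBb{\sigma_u}$'' for the affine path maps to ``there is a directed path in $\QBb{\sigma_u}$'' for its $\cl$-image. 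This last compatibility between the parabolic semi-infinite Bruhat graph and the parabolic quantum Bruhat graph under $\cl$ is exactly the kind of statement underlying \cite[Theorem~3.3]{LNSSS2}, so I would cite it rather than reprove it.

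With that identification in hand, stability under $e_i, f_i$ is inherited from Theorem~\ref{thm:SLS}(1) (or directly from the known stability of $\BB(\lambda)$), and the crystal axioms — that $\wt(e_i\eta) = \wt(\eta) + \alpha_i$ in $P_{\af}/\BZ\delta$, that $\ve_i(\eta) = \max\{n \mid e_i^n\eta \ne \bzero\}$ and $\vp_i(\eta) = \max\{n \mid f_i^n\eta \ne \bzero\}$ satisfy $\vp_i(\eta) - \ve_i(\eta) = \pair{\wt(\eta)}{\alpha_i^\vee}$, and that $e_i, f_i$ are mutually inverse partial bijections — all follow by applying $\cl$ to the corresponding identities for $\BB(\lambda)$, noting that $\pair{\delta}{\alpha_i^\vee} = 0$ so these pairings are well defined modulo $\BZ\delta$. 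Here I would invoke \cite[Lemma~4.5\,d)]{Lit95} and \cite[Proposition~4.1.12]{LNSSS16}, already cited in the excerpt, for the integrality of local minima, which is what makes $m_i^\eta \in \BZ_{\le 0}$ and hence makes the operators well defined.

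The main obstacle is the compatibility in the second paragraph: one must check carefully that the combinatorial root operators written in \eqref{eq:eeta}–\eqref{eq:feta} using $\mcr{\ti s_i(\cdot)}$ and quantum Bruhat edges are literally the Littelmann operators on $\BB(\lambda)_{\cl}$, i.e.\ that passing to minimal coset representatives is consistent with the piecewise-linear path operators and does not change the path as a map $[0,1] \to (\BR\otimes_{\BZ}P_{\af})/\BR\delta$. This is a known but somewhat delicate point (it is where the quantum Bruhat graph enters, via \cite{NS03}, \cite{NS06} and \cite{LNSSS2}); everything else is a formal transport of structure along the surjection $\cl$. I would therefore organize the proof as: (i) recall $\QLS(\lambda) = \BB(\lambda)_{\cl}$ and that $\BB(\lambda)$ is a crystal with the Littelmann operators; (ii) identify the combinatorial operators \eqref{eq:eeta}, \eqref{eq:feta} with the $\cl$-pushforwards of the affine ones, citing \cite{LNSSS2}; (iii) conclude stability and the crystal axioms by applying $\cl$ and reducing weights modulo $\BZ\delta$.
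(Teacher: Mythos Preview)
The paper does not give an in-text proof of this theorem: it is stated as a known result, with the crystal structure introduced by reference to \cite[\S4.2]{LNSSS16} and the identification $\QLS(\lambda)=\BB(\lambda)_{\cl}$ of Remark~\ref{rem:LScl} (which in turn rests on \cite{NS03}, \cite{NS05}, \cite{NS06}, \cite{LNSSS2}). Your proposal---transport the crystal structure from $\BB(\lambda)$ through $\cl$, after checking that the combinatorial operators \eqref{eq:eeta}, \eqref{eq:feta} written with $\mcr{\ti s_i(\cdot)}$ agree with the Littelmann operators on $\BB(\lambda)_{\cl}$---is precisely the argument carried out in those references, so it is correct and aligned with what the paper implicitly invokes.
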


The next lemma follows from 
\cite[Theorem~3.2 and Proposition~3.23]{NS05}. 
%
%
\begin{thm} \label{thm:NS05} \mbox{}
\begin{enu}
\item For every $\lambda \in P^{+}$, 
the crystal $\QLS(\lambda)$ is connected. 

\item Let $\lambda_{1},\dots,\,\lambda_{n} \in P^{+}$, 
and set $\lambda:=\lambda_{1}+\cdots+\lambda_{n}$. 
There exists a {\rm(}unique{\rm)} isomorphism
%
%
\begin{equation} \label{eq:Theta}
\Theta=\Theta_{\lambda_{1},\dots,\lambda_{n}}:
\QLS(\lambda) \stackrel{\sim}{\rightarrow}
\QLS(\lambda_{1}) \otimes \cdots \otimes \QLS(\lambda_{n})
\end{equation}
of crystals that sends $\eta_{\lambda}$ to 
$\eta_{\lambda_{1}} \otimes \cdots \otimes \eta_{\lambda_{n}}$. 
\end{enu}
\end{thm}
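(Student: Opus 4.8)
The plan is to establish the two assertions separately. For part~(1) I would deduce the connectedness of $\QLS(\lambda)$ from that of $\SLS_{0}(\lambda)$ via the projection $\cl$; for part~(2) I would produce $\Theta$ from the realization of $\QLS$-crystals as crystal bases of tensor products of level-zero fundamental representations, together with a characterization of the distinguished vertex as the unique weight-$\lambda$ element, and then derive the uniqueness of $\Theta$ from part~(1).

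Concretely, for part~(1): recall from the Introduction (and \cite{INS}) that there is a surjective morphism of crystals $\cl\colon\SLS(\lambda)\to\QLS(\lambda)$ whose restriction to the connected component $\SLS_{0}(\lambda)$ is still surjective onto $\QLS(\lambda)$. Since $\pair{\delta}{\alpha_{i}^{\vee}}=0$ for $i\in I_{\af}$, the function $H^{\pi}_{i}$ depends only on $\ol{\pi}$ modulo $\BR\delta$; hence $\cl$ intertwines the root operators $e_{i},f_{i}$ and preserves $\varepsilon_{i},\varphi_{i}$ for all $i\in I_{\af}$, and $\cl(\pi)\ne\bzero$ for $\pi\ne\bzero$. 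It follows that $\cl$ maps a connected subset of $\SLS(\lambda)$ onto a connected subset of $\QLS(\lambda)$; as $\SLS_{0}(\lambda)$ is connected with $\cl(\SLS_{0}(\lambda))=\QLS(\lambda)$, the crystal $\QLS(\lambda)$ is connected. (An intrinsic path-model argument is also possible, but it must go beyond merely making a path $\Fg$-dominant by the $e_{i}$, $i\in I$, since the resulting element need not be $\eta_{\lambda}$: one must then move further along $e_{0}$-edges, and the quantum edges of the quantum Bruhat graph make this delicate. This is the step of part~(1) I expect to require the most care; the route via $\cl$ avoids it.)

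For part~(2): write $\lambda=\sum_{i\in I}m_{i}\vpi_{i}$ and $\lambda_{k}=\sum_{i\in I}m_{i}^{(k)}\vpi_{i}$, so that $m_{i}=\sum_{k=1}^{n}m_{i}^{(k)}$. By \cite{NS03} and \cite{NS06}, for $\nu=\sum_{i}n_{i}\vpi_{i}\in P^{+}$ the crystal $\QLS(\nu)$ is isomorphic to the crystal basis of the $U_{q}'(\Fg_{\af})$-module $\bigotimes_{i\in I}W(\vpi_{i})^{\otimes n_{i}}$. Applying this to $\lambda$ and to each $\lambda_{k}$, and using that a tensor product of crystal bases is the crystal basis of the tensor product module, one sees that $\QLS(\lambda)$ and $\QLS(\lambda_{1})\otimes\cdots\otimes\QLS(\lambda_{n})$ are both crystal bases of $\bigotimes_{i\in I}W(\vpi_{i})^{\otimes m_{i}}$; since the crystal basis of a module is unique up to isomorphism of crystals, there is an isomorphism $\Theta\colon\QLS(\lambda)\to\QLS(\lambda_{1})\otimes\cdots\otimes\QLS(\lambda_{n})$. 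As isomorphisms of crystals preserve weights, to see that $\Theta$ sends $\eta_{\lambda}$ to $\eta_{\lambda_{1}}\otimes\cdots\otimes\eta_{\lambda_{n}}$ it is enough to check that each of these is the unique vertex of weight $\lambda$ in its crystal. Indeed, for $\eta=(v_{1},\dots,v_{s};\sigma_{0},\dots,\sigma_{s})\in\QLS(\lambda)$ the weight $\wt(\eta)=\sum_{u}(\sigma_{u}-\sigma_{u-1})v_{u}\lambda$ is a convex combination of points of the orbit $W\lambda$, and $\lambda$ is an extreme point of $\mathrm{conv}(W\lambda)$, so $\wt(\eta)=\lambda$ forces $v_{u}\lambda=\lambda$ for all $u$; since $\J=\{i\in I\mid\pair{\lambda}{\alpha_{i}^{\vee}}=0\}$ the stabilizer of $\lambda$ in $W$ is $\WJ$, and $\WJ\cap\WJu=\{e\}$, so $v_{u}=e$ for all $u$, whence $\eta=\eta_{\lambda}$. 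The same argument in each $\QLS(\lambda_{k})$, together with the tensor-product rule $\wt(\eta_{1}'\otimes\cdots\otimes\eta_{n}')=\sum_{k}\wt(\eta_{k}')$, identifies $\eta_{\lambda_{1}}\otimes\cdots\otimes\eta_{\lambda_{n}}$ as the unique weight-$\lambda$ vertex of the target. Finally, $\Theta$ is unique: if $\Theta'$ is another such isomorphism, then $\Theta^{-1}\circ\Theta'$ is an automorphism of the connected crystal $\QLS(\lambda)$ fixing $\eta_{\lambda}$, hence the identity.

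So the substantive inputs are the identification of $\QLS(\nu)$ with the crystal basis of $\bigotimes_{i}W(\vpi_{i})^{\otimes n_{i}}$ from \cite{NS03}, \cite{NS06}, and the connectedness in part~(1); given these, both assertions are formal. If instead a proof internal to the path model is wanted, part~(2) can be obtained by descending the embedding $\Xi$ of Theorem~\ref{thm:SMT} along $\cl$ and using Lemma~\ref{lem:Txi2} — setting $\Theta(\cl(\pi)):=(\cl\otimes\cdots\otimes\cl)(\Xi(\pi))$ for $\pi\in\SLS_{0}(\lambda)$ — and the crux there becomes verifying that the fibers of $\cl\otimes\cdots\otimes\cl$ on $(\SLS(\lambda_{1})\otimes\cdots\otimes\SLS(\lambda_{n}))_{0}$ are exactly the orbits of the $T_{\xi}$-action, $\xi\in Q^{\vee}$.
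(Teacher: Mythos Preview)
The paper does not supply its own proof here; the sentence preceding the theorem simply records that the statement follows from \cite[Theorem~3.2 and Proposition~3.23]{NS05}. So there is no in-paper argument to compare against, only the question of whether your outline stands on its own.

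Two points need attention. For part~(1), your route via $\cl$ hinges on the surjectivity $\cl(\SLS_{0}(\lambda))=\QLS(\lambda)$. In this paper that fact is recorded only \emph{after} Theorem~\ref{thm:NS05} (as Lemma~\ref{lem:deg}, citing \cite{NS16}), and the most direct proof of it---write $\eta=X\eta_{\lambda}$ for a monomial $X$ in root operators and lift to $X\pi_{\lambda}$---already presupposes the connectedness you are trying to establish. A non-circular route does exist (e.g.\ use the explicit lift of Proposition~\ref{prop:deg}, or the description of the connected components of $\SLS(\lambda)$ from \cite{INS} together with $\cl(\pi\cdot T_{\xi})=\cl(\pi)$), but you should spell it out rather than appeal to the Introduction. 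For part~(2), the sentence ``both are crystal bases of $\bigotimes_{i}W(\vpi_{i})^{\otimes m_{i}}$'' hides the actual content. First, $\QLS(\lambda_{1})\otimes\cdots\otimes\QLS(\lambda_{n})$ realizes the crystal of $\bigotimes_{k}\bigotimes_{i}W(\vpi_{i})^{\otimes m_{i}^{(k)}}$, which differs from $\bigotimes_{i}W(\vpi_{i})^{\otimes m_{i}}$ by a reordering of the tensor factors; identifying these two modules (via $R$-matrices or otherwise) is essentially the isomorphism you are asserting. Second, and more seriously, the identification of $\QLS(\lambda)$ with the crystal of $\bigotimes_{i}W(\vpi_{i})^{\otimes m_{i}}$ for \emph{general} $\lambda$ is obtained in the literature by first proving $\BB(\lambda)_{\cl}\cong\bigotimes_{i}\BB(\vpi_{i})_{\cl}^{\otimes m_{i}}$---which is exactly \cite[Proposition~3.23]{NS05}, i.e.\ part~(2) itself---and then invoking \cite{NS03}, \cite{NS06} for each fundamental factor. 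So as written the argument is circular. Your uniqueness argument and the convexity characterization of the unique weight-$\lambda$ vertex are correct and would be the right finishing steps once a non-circular construction of $\Theta$ is in hand; the alternative route you sketch at the end, descending $\Xi$ along $\cl$, is closer to a self-contained proof.
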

%
%
\begin{rem} \label{rem:ass2}
Let $\lambda,\mu,\nu \in P^{+}$. 
By the same reasoning as in Remark~\ref{rem:ass1}, 
we see that the following diagram is commutative: 
%
%
\begin{equation} \label{eq:ass2a}
\begin{split}
\xymatrix{
\QLS(\lambda+\mu+\nu) 
 \ar[rr]^-{\Theta_{\lambda+\mu,\nu}}
 \ar[d]_{\Theta_{\lambda,\mu+\nu}} & &
\QLS(\lambda+\mu) \otimes \QLS(\nu) \ar[d]^{\Theta_{\lambda\mu} \otimes \id} \\
\QLS(\lambda) \otimes \QLS(\mu+\nu)
 \ar[rr]^-{\id \otimes \Theta_{\mu\nu}} & &
\QLS(\lambda) \otimes \QLS(\mu) \otimes \QLS(\nu). 
}
\end{split}
\end{equation}
\end{rem}

Now, we define a projection $\cl : (\WJu)_{\af} \twoheadrightarrow \WJu$ by
$\cl (x) := w$ for $x \in (\WJu)_{\af}$ of the form 
$x = w\PJ(t_{\xi})$ with $w \in \WJu$ and $\xi \in Q^{\vee}$.
For $\pi = (x_{1},\,\dots,\,x_{s}\,;\,\sigma_{0},\,\sigma_{1},\,\dots,\,\sigma_{s}) 
\in \SLS(\lambda)$, we define 
\begin{equation} \label{eq:clpi}
\cl(\pi):=(\cl(x_{1}),\,\dots,\,\cl(x_{s})\,;\,\sigma_{0},\,\sigma_{1},\,\dots,\,\sigma_{s});
\end{equation}
here, for each $1 \le p < q \le s$ such that $\cl(x_{p})= \cdots = \cl(x_{q})$, 
we drop $\cl(x_{p}),\,\dots,\,\cl(x_{q-1})$ and $\sigma_{p},\,\dots,\,\sigma_{q-1}$. 
We set $\cl(\bzero):=\bzero$ by convention. 
We know from \cite[Sect.~6.2]{NS16} that $\cl(\pi) \in \QLS(\lambda)$ 
for all $\pi \in \SLS(\lambda)$. Also, we see from the definitions that
%
%
\begin{equation} \label{eq:cl}
\begin{cases}
\wt(\cl(\pi)) = \cl(\wt(\pi)), \\
\cl(e_{i}\pi) = e_{i} \cl(\pi), \ \cl(f_{i}\pi) = f_{i} \cl(\pi), \\
\ve_{i}(\cl(\pi))=\ve_{i}(\pi), \ \vp_{i}(\cl(\pi))=\vp_{i}(\pi)
\end{cases}
\end{equation}
for all $\pi \in \SLS(\lambda)$ and $i \in I$. 
We know the following lemma from \cite[Lemma~6.2.3]{NS16}; 
recall that $\SLS_{0}(\lambda)$ denotes the connected component of $\SLS(\lambda)$ 
containing $\pi_{\lambda}=(e\,;\,0,1)$. 
%
%
\begin{lem} \label{lem:deg}
For each $\eta \in \QLS(\lambda)$, there exists a unique 
$\pi_{\eta} \in \SLS_{0}(\lambda)$ such that 
$\cl(\pi_{\eta})=\eta$ and $\kappa(\pi_{\eta}) = \kappa(\eta) \in \WJu$. 
\end{lem}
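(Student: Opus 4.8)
The plan is to establish the claim in Lemma~\ref{lem:deg} by working with the crystal-theoretic relationship between $\SLS(\lambda)$ and $\QLS(\lambda)$ encoded in the map $\cl$. First I would note that, by Theorem~\ref{thm:NS05}(1), the crystal $\QLS(\lambda)$ is connected, so I can write any given $\eta \in \QLS(\lambda)$ as $\eta = X \eta_{\lambda}$ for some monomial $X$ in the root operators $e_{i}, f_{i}$, $i \in I_{\af}$. Since $\cl(\pi_{\lambda}) = \eta_{\lambda}$ and $\cl$ intertwines the root operators by \eqref{eq:cl}, the element $\pi := X \pi_{\lambda} \in \SLS_{0}(\lambda)$ satisfies $\cl(\pi) = \cl(X\pi_{\lambda}) = X\cl(\pi_{\lambda}) = X\eta_{\lambda} = \eta$. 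This already produces \emph{an} element of $\SLS_{0}(\lambda)$ lying over $\eta$, but without control on its final direction, so the work lies in adjusting it and in proving uniqueness.

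Next I would address the final-direction condition. The fibers of $\cl : \SLS_0(\lambda) \to \QLS(\lambda)$ over a fixed $\eta$ should be exactly the orbit of any one preimage under the operators $T_{\xi}$, $\xi \in Q^{\vee}$, modulo $\QJv$: indeed, by Lemma~\ref{lem:Txi} the operators $T_{\xi}$ commute with the root operators and only shift the weight by a multiple of $\delta$, hence act within a single $\cl$-fiber, and Remark~\ref{rem:Txi} shows they preserve $\SLS_0(\lambda)$. For $\pi = (x_1, \dots, x_s; \dots)$, applying $T_\xi$ replaces each $x_u$ by $\PJ(x_u t_\xi)$, and by Lemma~\ref{lem:PiJ} the effect on the final direction $\kappa(\pi) = x_s$ is to replace its ``$W$-part'' image under $\cl$ by $\cl(x_s)\mcr{t_\xi}$-type translates. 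The point is that $\cl(\kappa(\pi))$ and $\kappa(\eta) = v_s$ both lie in $\WJu$ and the weight data force them to lie in the same $\WJ$-coset, so one can choose $\xi$ (or rather use the precise structure of $\PJ(t_\xi)$) to arrange $\cl(\kappa(\pi \cdot T_\xi)) = \kappa(\eta)$. A cleaner route is to invoke the compatibility of $\cl$ with $\kappa$ established in \cite[Sect.~6.2]{NS16} together with the transitivity of the $W_{\af}$-action on the relevant fiber; since this lemma is quoted from \cite[Lemma~6.2.3]{NS16}, I would in fact simply cite that result, but the self-contained argument is as sketched.

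For uniqueness, suppose $\pi, \pi' \in \SLS_0(\lambda)$ both satisfy $\cl(\pi) = \cl(\pi') = \eta$ and $\kappa(\pi) = \kappa(\pi') = \kappa(\eta)$. Since they lie in the same $\cl$-fiber inside the connected component $\SLS_0(\lambda)$, there is $\xi \in Q^{\vee}$ with $\pi' = \pi \cdot T_{\xi}$ (this uses that the $\cl$-fiber over $\eta$ within $\SLS_0(\lambda)$ is a single $\{T_\xi\}$-orbit, which follows from \eqref{eq:cl} and the explicit description of $\SLS_0(\lambda)$). Comparing final directions, $\kappa(\pi') = \PJ(\kappa(\pi) t_\xi) = \kappa(\pi)\PJ(t_\xi)$ by \eqref{eq:PiJ2}, so $\kappa(\pi)\PJ(t_\xi) = \kappa(\pi)$, forcing $\PJ(t_\xi) = e$; by Lemma~\ref{lem:PiJ}(3) this means $\xi \in \QJv$, and then $T_\xi$ acts trivially on every $x_u$ since $x_u \PJ(t_\xi) = \PJ(x_u t_\xi) = \PJ(x_u)$ for $\xi \in \QJv$. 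Hence $\pi = \pi'$.

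The main obstacle I anticipate is the existence-of-the-right-$\xi$ step: one must verify that the $\cl$-fiber over $\eta$ inside the \emph{connected component} $\SLS_0(\lambda)$ is genuinely a single $\{T_\xi \mid \xi \in Q^\vee\}$-orbit (not merely that $T_\xi$ acts within fibers), and that among this orbit some element hits the prescribed final direction $\kappa(\eta) \in \WJu$. This requires knowing that $\kappa$ of elements of $\SLS_0(\lambda)$ realizes all of $\WJu$ in the appropriate compatible way with $\cl$ — precisely the content of the structural results on semi-infinite LS paths in \cite{INS} and \cite{NS16}. Given that the statement is attributed to \cite[Lemma~6.2.3]{NS16}, the honest plan is to reduce to that citation after recording the two mechanisms above (connectedness of $\QLS(\lambda)$ giving a preimage, and the $T_\xi$-action giving the adjustment and the rigidity needed for uniqueness).
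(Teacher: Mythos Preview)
The paper does not prove this lemma; it is simply stated with a citation to \cite[Lemma~6.2.3]{NS16}. Your proposal is a reasonable outline of the argument one finds in that reference, and you correctly isolate the two mechanisms at play: connectedness of $\QLS(\lambda)$ to produce \emph{some} preimage in $\SLS_{0}(\lambda)$, and the $T_{\xi}$-action to adjust the final direction and to pin down uniqueness.

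The one genuine gap is the step you yourself flag: the assertion that the $\cl$-fiber over $\eta$ inside $\SLS_{0}(\lambda)$ is a single $\{T_{\xi}\mid \xi \in Q^{\vee}\}$-orbit. This does \emph{not} follow from \eqref{eq:cl} alone (which only says $\cl$ commutes with root operators and respects $\ve_{i},\vp_{i},\wt$), and the paper never records an ``explicit description of $\SLS_{0}(\lambda)$'' of the kind you invoke. Establishing that fiber structure is exactly the content of the structural analysis in \cite{INS} and \cite[Sect.~6.2]{NS16}, which is why the present paper defers to the citation rather than giving a self-contained proof. An alternative route to existence, once one is willing to use the results around Proposition~\ref{prop:deg}, is to check directly that the right-hand side of \eqref{eq:pieta} lies in $\SLS_{0}(\lambda)$ with the required properties; but that proposition sits after the lemma and itself relies on \cite{LNSSS15}, so circularity must be avoided. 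Your concluding plan---record the two mechanisms and then cite \cite[Lemma~6.2.3]{NS16}---is therefore the correct resolution and matches what the paper does.
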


We define the (tail) degree function $\deg_{\lambda}:
\QLS(\lambda) \rightarrow \BZ_{\le 0}$ as follows. 
Let $\eta \in \QLS(\lambda)$, and 
take $\pi_{\eta} \in \SLS_{0}(\lambda)$ as in Lemma~\ref{lem:deg};
we see from the argument in \cite[Sect.~6.2]{NS16} that 
$\wt(\pi_{\eta}) = \lambda - \gamma + k\delta$ 
for some $\gamma \in Q^{+}$ and $k \in \BZ_{\le 0}$. 
Then we set $\deg_{\lambda}(\eta):=k$. 

Now, for $\eta = (v_{1},\,\dots,\,v_{s} \,;\, 
\sigma_{0},\,\sigma_{1},\,\dots,\,\sigma_{s}) \in \QLS(\lambda)$, 
we set 
%
%
\begin{equation} \label{eq:bxi}
\begin{cases}
\bxi{\eta}:=(\xi_1,\,\dots,\,\xi_{s-1},\,\xi_{s}), \quad \text{where} \\[2mm]
\xi_{s}:=0, \quad 
\xi_{u}:=\xi_{u+1} + \wt^{\J}(v_{u+1} \Rightarrow v_{u})
\quad \text{for $1 \le u \le s-1$}; 
\end{cases}
\end{equation}
for the definition of $\wt^{\J}(w \Rightarrow v)$, see Section~\ref{subsec:QBG}. 
%
%
\begin{prop} \label{prop:deg}
Keep the notation and setting above. It holds that 
%
%
\begin{equation} \label{eq:pieta}
\pi_{\eta} = 
(v_{1}\PJ(t_{\xi_1}),\,\dots,\,v_{s-1}\PJ(t_{\xi_{s-1}}),\,v_{s} \,;\, 
\sigma_{0},\,\sigma_{1},\,\dots,\,\sigma_{s}), 
\end{equation}
%
%
\begin{equation} \label{eq:deg0}
\deg_{\lambda}(\eta) = - \sum_{u=1}^{s-1} 
\sigma_{u} \pair{\lambda}{\wt^{\J}(v_{u+1} \Rightarrow v_{u})}.
\end{equation}
\end{prop}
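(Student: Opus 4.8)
The plan is to prove \eqref{eq:pieta} first, and then deduce \eqref{eq:deg0} from it as a direct computation. For \eqref{eq:pieta}, denote by $\pi'$ the pair on the right-hand side, i.e.
\[
\pi' :=
(v_{1}\PJ(t_{\xi_1}),\,\dots,\,v_{s-1}\PJ(t_{\xi_{s-1}}),\,v_{s} \,;\,
\sigma_{0},\,\sigma_{1},\,\dots,\,\sigma_{s}).
\]
First I would check that $\pi'$ is a well-defined element of $\SLS(\lambda)$: using Lemma~\ref{lem:PiJ}(1), each entry $v_{u}\PJ(t_{\xi_u}) = \PJ(v_u t_{\xi_u})$ lies in $(\WJu)_{\af}$, and one must verify, for each $u$, the strict-decreasing condition $v_u\PJ(t_{\xi_u}) \sig v_{u+1}\PJ(t_{\xi_{u+1}})$ together with the existence of a directed path in $\SBb{\sigma_u}$ between successive entries. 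Since there is a directed path from $v_{u+1}$ to $v_u$ in $\QBb{\sigma_u}$ by Definition~\ref{dfn:QLS}, and since $\xi_u - \xi_{u+1} = \wt^{\J}(v_{u+1}\Rightarrow v_u)$ by \eqref{eq:bxi}, this is exactly the compatibility between quantum edges in $\QBJ$ and semi-infinite edges in $\SBJ$ established in the foundational work (the ``lifting'' of a quantum-Bruhat path to a semi-infinite-Bruhat path, cf.\ the translation-twist construction underlying Lemma~\ref{lem:pix} and the projection $\cl$). The precise statement I would invoke here is the description of $\cl^{-1}$-fibers from \cite{NS16}: a quantum edge $v_{u+1}\edge{\beta} v_u$ in $\QBa$ lifts to a semi-infinite edge after applying the translation $T_{\wt^{\J}(v_{u+1}\Rightarrow v_u)}$ on the appropriate side.

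Next I would verify the two defining properties of $\pi_\eta$ from Lemma~\ref{lem:deg}, namely $\cl(\pi') = \eta$ and $\kappa(\pi') = \kappa(\eta) = v_s$; by the uniqueness asserted in Lemma~\ref{lem:deg}, this forces $\pi_\eta = \pi'$. The equality $\kappa(\pi') = v_s$ is immediate since the last entry is literally $v_s = v_s\PJ(t_{\xi_s})$ (as $\xi_s = 0$ and $\PJ(t_0) = e$). For $\cl(\pi') = \eta$: applying $\cl : (\WJu)_{\af}\twoheadrightarrow\WJu$ entrywise sends $v_u\PJ(t_{\xi_u})\mapsto v_u$, so after dropping repetitions one recovers $(v_1,\dots,v_s;\sigma_0,\dots,\sigma_s) = \eta$ — here I should note that the $v_u$ are pairwise distinct by Definition~\ref{dfn:QLS}, so no entries are actually dropped. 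Finally I would confirm $\pi' \in \SLS_0(\lambda)$; the cleanest way is to observe that $\pi'$ lies in the $\cl$-fiber over $\eta\in\QLS(\lambda)$ and has the prescribed final direction, and then cite that the unique such lift lands in $\SLS_0(\lambda)$ (this is precisely Lemma~\ref{lem:deg} as stated, so strictly speaking I only need existence of \emph{some} lift in $\SLS_0(\lambda)$ with final direction $v_s$, and then match it with $\pi'$ by the computation of $\cl$ and $\kappa$).

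Once \eqref{eq:pieta} is in hand, \eqref{eq:deg0} follows by computing $\wt(\pi_\eta)$ via \eqref{eq:wt} and extracting the $\delta$-coefficient. Using \eqref{eq:wtmu}, each entry contributes $v_u\PJ(t_{\xi_u})\lambda = v_u t_{\xi_u}\lambda = v_u(\lambda - \pair{\lambda}{\xi_u}\delta) = v_u\lambda - \pair{\lambda}{\xi_u}\delta$ (using that $\PJ(t_{\xi_u})$ and $t_{\xi_u}$ differ by an element of $(\WJ)_{\af}$, which fixes $\lambda$, and that $\pair{\lambda}{\cdot}$ is $W$-invariant in the sense needed). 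Hence
\[
\wt(\pi_\eta) = \sum_{u=1}^{s}(\sigma_u - \sigma_{u-1})v_u\lambda \;-\; \Bigl(\sum_{u=1}^{s}(\sigma_u-\sigma_{u-1})\pair{\lambda}{\xi_u}\Bigr)\delta,
\]
and the first sum is $\wt(\eta)\in P$, which has no $\delta$-component, so $\deg_\lambda(\eta) = -\sum_{u=1}^{s}(\sigma_u-\sigma_{u-1})\pair{\lambda}{\xi_u}$. Substituting $\xi_u = \sum_{r=u}^{s-1}\wt^{\J}(v_{r+1}\Rightarrow v_r)$ from \eqref{eq:bxi} and swapping the order of summation (using $\sigma_s = 1$, $\sigma_0 = 0$, so $\sum_{u=1}^{r}(\sigma_u-\sigma_{u-1}) = \sigma_r$) gives exactly $-\sum_{u=1}^{s-1}\sigma_u\pair{\lambda}{\wt^{\J}(v_{u+1}\Rightarrow v_u)}$, which is \eqref{eq:deg0}.

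The main obstacle I anticipate is the verification that $\pi'$ genuinely satisfies the semi-infinite LS chain condition — that is, that applying the translation $T_{\xi_u}$ turns the quantum-Bruhat path from $v_{u+1}$ to $v_u$ (which exists in $\QBb{\sigma_u}$) into a semi-infinite-Bruhat path from $v_{u+1}\PJ(t_{\xi_{u+1}})$ to $v_u\PJ(t_{\xi_u})$ in $\SBb{\sigma_u}$, with the correct comparison of semi-infinite lengths and the correct edge labels. This is the technical heart and relies on the precise combinatorial dictionary between $\QBJ$ and $\SBJ$ from \cite{INS} and \cite{NS16}; everything else is bookkeeping.
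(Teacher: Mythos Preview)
Your proposal is essentially correct and matches the paper's approach: the paper cites \cite[Theorem~4.1.1]{LNSSS15} for \eqref{eq:pieta} and says \eqref{eq:deg0} follows by direct computation from \eqref{eq:pieta} and \eqref{eq:wt}, which is exactly the computation you carry out (and your Abel-summation step is correct).

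One point deserves tightening. Your argument for $\pi'=\pi_\eta$ runs: show $\pi'\in\SLS(\lambda)$, check $\cl(\pi')=\eta$ and $\kappa(\pi')=v_s$, then invoke the uniqueness in Lemma~\ref{lem:deg}. But that uniqueness is stated only within $\SLS_0(\lambda)$, so you must first know $\pi'\in\SLS_0(\lambda)$, and your suggested justification (``the unique such lift lands in $\SLS_0(\lambda)$'') is circular. The clean fix is to reverse the logic: start from the element $\pi_\eta\in\SLS_0(\lambda)$ guaranteed by Lemma~\ref{lem:deg}, write it as $(x_1,\dots,x_t;\tau_0,\dots,\tau_t)$, and argue that $\cl(\pi_\eta)=\eta$ with the $v_u$ distinct forces $t=s$, $\tau_u=\sigma_u$, and $x_u=v_u\PJ(t_{\zeta_u})$ for some $\zeta_u$ with $\zeta_s\in\QJv$; then use the semi-infinite chain condition between $x_{u+1}$ and $x_u$ together with the quantum--semi-infinite dictionary you cite to deduce $\zeta_u-\zeta_{u+1}\equiv\wt^{\J}(v_{u+1}\Rightarrow v_u)\bmod\QJv$, hence $\zeta_u\equiv\xi_u\bmod\QJv$ by downward induction. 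This yields $x_u=v_u\PJ(t_{\xi_u})$ via Lemma~\ref{lem:PiJ}(3) without ever needing to place $\pi'$ in the connected component a priori. (That no collapsing occurs under $\cl$, i.e.\ $t=s$, is part of what the argument in \cite{LNSSS15} establishes; it uses that an edge in $\SBJ$ always changes the finite Weyl component, cf.\ Proposition~\ref{prop:beta}.)
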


\begin{proof}
Equality \eqref{eq:pieta} can be shown in exactly the same way as 
\cite[Theorem~4.1.1]{LNSSS15}. 
Equality \eqref{eq:deg0} follows from 
\cite[Theorem~4.1.1]{LNSSS15}; or, we can show this equality by 
direct computation, using with \eqref{eq:pieta} and \eqref{eq:wt}. 
\end{proof}

Also, for an arbitrary $w \in W$ and $\eta = 
(v_{1},\,\dots,\,v_{s} \,;\, \sigma_{0},\,\sigma_{1},\,\dots,\,\sigma_{s}) \in \QLS(\lambda)$, 
we define the degree of $\eta$ at $w\lambda$ 
(see \cite[Sect.~3.2]{NNS1} and \cite[Sect.~2.3]{NNS2}) by 
%
%
\begin{equation} \label{eq:degw}
\deg_{w\lambda}(\eta):=
- \sum_{u=1}^{s} 
\sigma_{u} \pair{\lambda}{\wt^{\J}(v_{u+1} \Rightarrow v_{u})}, \quad 
\text{with $v_{s+1}:=\mcr{w}$}.
\end{equation}
We set
\begin{equation}
\gch_{w\lambda} \QLS(\lambda) := 
 \sum_{\eta \in \QLS(\lambda)} q^{\deg_{w\lambda}(\eta)}e^{\wt(\eta)}. 
\end{equation}

\begin{rem} \label{rem:degw}
Let $\eta \in \QLS(\lambda)$, and $w \in W$. 
We see by \eqref{eq:Txi1a} that 
$\wt (\pi_{\eta} \cdot T_{\wt^{\J}(\mcr{w} \Rightarrow \kappa(\eta))}) = 
\wt (\eta) + \deg_{w\lambda}(\eta)\delta$. 
\end{rem}

Let $\lambda_{1},\,\dots,\,\lambda_{n} \in P^{+}$, 
and set $\lambda:=\lambda_{1}+\cdots+\lambda_{n} \in P^{+}$. 
Then the following diagram is commutative: 
%
%
\begin{equation} \label{eq:CD}
\begin{split}
\xymatrix{
\SLS_{0}(\lambda) 
 \ar[rr]^-{\Xi_{\lambda_{1},\dots,\lambda_{n}}}
 \ar[d]_{\cl}
 & & (\SLS(\lambda_{1}) \otimes \cdots \otimes \SLS(\lambda_{n}))_{0} 
 \ar[d]^{\cl \otimes \cdots \otimes \cl} \\
\QLS(\lambda) 
 \ar[rr]^-{\Theta_{\lambda_{1},\dots,\lambda_{n}}}
 & & \QLS(\lambda_{1}) \otimes \cdots \otimes \QLS(\lambda_{n}),}
\end{split}
\end{equation}
where $\Xi_{\lambda_{1},\dots,\lambda_{n}}$ and 
$\Theta_{\lambda_{1},\dots,\lambda_{n}}$ are the isomorphisms of crystals 
in Theorems~\ref{thm:SMT} and \ref{thm:NS05}\,(2), respectively. 
Indeed, observe that both of the maps 
$(\cl \otimes \cdots \otimes \cl) \circ 
\Xi_{\lambda_{1},\dots,\lambda_{n}}$ and 
$\Theta_{\lambda_{1},\dots,\lambda_{n}} \circ \cl$ send 
$\pi_{\lambda}$ 
to $\eta_{\lambda_{1}} \otimes \cdots \otimes \eta_{\lambda_{n}}$, 
and that these two maps commute with the root operators 
(see \eqref{eq:cl}). Because $\SLS_{0}(\lambda) \subset \SLS(\lambda)$ 
is connected and contains $\pi_{\lambda}$, 
we conclude that the diagram above is commutative. 
%
%
\section{Main result and its proof.}
\label{sec:main}
%
%
\subsection{Main result.}
\label{subsec:main}
In this subsection, we fix $\lambda,\mu \in P^{+}$, 
and take $\J_{\lambda}$, $\J_{\mu}$, $\J_{\lambda+\mu}$ as in \eqref{eq:J}; 
note that $\J_{\lambda+\mu} = \J_{\lambda} \cap \J_{\mu}$. 
Recall from \eqref{eq:CD} that the following diagram commutes: 
%
%
\begin{equation} \label{eq:CD2}
\begin{split}
\xymatrix{
\SLS_{0}(\lambda+\mu) 
 \ar[rr]^-{\Xi_{\lambda\mu}}
 \ar[d]_{\cl}
 & & (\SLS(\lambda) \otimes \SLS(\mu))_{0} 
 \ar[d]^{\cl \otimes \cl} \\
\QLS(\lambda+\mu) 
 \ar[rr]^-{\Theta_{\lambda\mu}}
 & & \QLS(\lambda) \otimes \QLS(\mu).}
\end{split}
\end{equation}
For $\eta=(v_1,\,\dots,\,v_{s};
\sigma_{0},\,\sigma_1,\,\dots,\,\sigma_s) \in \QLS(\mu)$ and $w \in W$, 
we define
%
%
\begin{equation} \label{eq:ti1}
\begin{cases}
\tiv{\eta}{w}:=
(\ti{v}_1,\,\dots,\,\ti{v}_{s},\,\ti{v}_{s+1}), \quad \text{where} \\[2mm]
\ti{v}_{s+1}:=w, \quad \ti{v}_{u}:=\tbmin{v_{u}}{\J_{\mu}}{\ti{v}_{u+1}} 
\quad \text{for $1 \le u \le s$}; 
\end{cases}
\end{equation}
note that $\ti{v}_{u} \in v_{u}\WS{\mu}$ for $1 \le u \le s$. 
In this notation, we set $\io{\eta}{w}:=\ti{v}_{1}$, and call it 
the initial direction of $\eta$ with respect to $w$. Also, we define
%
%
\begin{equation} \label{eq:ti2}
\begin{cases}
\tixi{\eta}{w}:=
(\ti{\xi}_1,\,\dots,\,\ti{\xi}_{s}), \quad \text{where} \\[2mm]
\ti{\xi}_{s}:=\wt(\ti{v}_{s+1} \Rightarrow \ti{v}_{s}), \quad
\ti{\xi}_{u}:=\ti{\xi}_{u+1} + \wt(\ti{v}_{u+1} \Rightarrow \ti{v}_{u})
\quad \text{for $1 \le u \le s-1$}.
\end{cases}
\end{equation}
In this notation, we set $\ze{\eta}{w}:=\ti{\xi}_{1}$. 

%
\begin{rem} \label{rem:equiv}
Keep the notation and setting above. 
We see from Lemma~\ref{lem:wtS} that 
\begin{equation*}
\begin{cases}
\wt(\ti{v}_{u+1} \Rightarrow \ti{v}_{u}) \equiv 
 \wt^{\J_{\mu}}(v_{u+1} \Rightarrow v_{u}) \mod \QSv{\mu} \quad 
 \text{for every $1 \le u \le s-1$}, \\
\wt(\ti{v}_{s+1} \Rightarrow \ti{v}_{s}) \equiv 
\wt^{\J_{\mu}}(\mcr{w}^{\J_{\mu}} \Rightarrow v_{s}) = 
\wt^{\J_{\mu}}(\mcr{w}^{\J_{\mu}} \Rightarrow \kappa(\eta)) \mod \QSv{\mu}.
\end{cases}
\end{equation*}
If $\bxi{\eta}=(\xi_{1},\,\dots,\,\xi_{s-1},\,\xi_{s})$ (see \eqref{eq:bxi}), 
then $\ti{\xi}_{u} \equiv 
\xi_{u} + \wt^{\J_{\mu}}(\mcr{w}^{\J_{\mu}} \Rightarrow \kappa(\eta))$ 
mod $\QSv{\mu}$ for all $1 \le u \le s$. 
\end{rem}

The following theorem is the main result of this paper. 
%
%
\begin{thm} \label{thm:main}
Keep the notation and setting above. 
Let $\eta \in \QLS(\lambda+\mu)$, and write 
$\Theta_{\lambda\mu}(\eta) \in \QLS(\lambda) \otimes \QLS(\mu)$ as
$\Theta_{\lambda\mu}(\eta) = \eta_{1} \otimes \eta_{2}$ 
with $\eta_{1} \in \QLS(\lambda)$ and 
$\eta_{2} \in \QLS(\mu)$. Let $w \in W$. 
Then the following equality holds{\rm:}
%
%
\begin{equation} \label{eq:main}
\Xi_{\lambda\mu}(\pi_{\eta} \cdot T_{\wt(w \Rightarrow \kappa(\eta))}) = 
\pi_{\eta_{1}} \cdot T_{\wt(\io{\eta_2}{w} \Rightarrow \kappa(\eta_1))+ \ze{\eta_2}{w}} \otimes 
\pi_{\eta_{2}} \cdot T_{\wt(w \Rightarrow \kappa(\eta_2))}. 
\end{equation}
\end{thm}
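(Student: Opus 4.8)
The plan is to reduce the statement to the case where $\lambda$ and $\mu$ are both \emph{regular} (i.e., $\J_\lambda = \J_\mu = \emptyset$), and then to deduce the general case via the similarity maps for semi-infinite LS paths and QLS paths. First I would treat the regular case directly: here $\WS{\mu}$ is trivial, so $\io{\eta_2}{w} = \iota(\eta_2)$ and $\kappa$-conditions are literal equalities, and the parabolic projection $\PJ$ disappears. In this case, one writes $\eta = (v_1,\dots,v_s;\sigma_0,\dots,\sigma_s) \in \QLS(\lambda+\mu)$, uses the explicit formula \eqref{eq:pieta} for $\pi_\eta$ together with the definition \eqref{eq:Txi} of the twist $\cdot T_\xi$, and applies $\Xi_{\lambda\mu}$ to the resulting semi-infinite LS path. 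Since $\Xi_{\lambda\mu}$ is characterized as the unique crystal embedding sending $\pi_{\lambda+\mu}$ to $\pi_\lambda \otimes \pi_\mu$, and by Lemma~\ref{lem:ext} it intertwines the $W_{\af}$-action on straight-line paths, the image $\Xi_{\lambda\mu}(\pi_\eta \cdot T_{\wt(w \Rightarrow \kappa(\eta))})$ can be computed by writing $\pi_\eta \cdot T_\bullet = X\pi_{\lambda+\mu}$ for a monomial $X$ in root operators and tracking how $X$ splits into submonomials $X_1 \otimes X_2$ under the tensor product rule. The combinatorial heart is to show that the first tensor factor is governed precisely by the sequence $(\ti{v}_1,\dots,\ti{v}_s,\ti{v}_{s+1}=w)$ and the coweights $\tixi{\eta_2}{w}$, which is exactly what the shifts $\wt(\io{\eta_2}{w} \Rightarrow \kappa(\eta_1)) + \ze{\eta_2}{w}$ encode; Remark~\ref{rem:equiv} is the bridge, since it identifies $\ti{\xi}_u$ modulo $\QSv{\mu}$ with $\xi_u + \wt^{\J_\mu}(\mcr{w} \Rightarrow \kappa(\eta))$.

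Next I would leverage Lemma~\ref{lem:Txi2} ($\Xi$ commutes with the twist operators $T_\xi$) to reorganize the left-hand side: since $\pi_\eta \in \SLS_0(\lambda+\mu)$, we have $\Xi_{\lambda\mu}(\pi_\eta \cdot T_\xi) = \Xi_{\lambda\mu}(\pi_\eta) \cdot T_\xi$, and $\Xi_{\lambda\mu}(\pi_\eta)$ can be analyzed using the commutative diagram \eqref{eq:CD2}: its image under $\cl \otimes \cl$ is $\Theta_{\lambda\mu}(\eta) = \eta_1 \otimes \eta_2$. The key structural fact I would need is a description of $\Xi_{\lambda\mu}(\pi_\eta)$ as $\pi' \otimes \pi''$ where $\pi'' \in \SLS_0(\mu)$ satisfies $\cl(\pi'') = \eta_2$ — but $\pi''$ need \emph{not} be $\pi_{\eta_2}$, because its final direction (an element of $(\WSu{\mu})_{\af}$, not reduced to $\kappa(\eta_2) \in \WSu{\mu}$) carries a nontrivial $Q^\vee$-part recording the relationship between $\kappa(\eta) = \kappa(\eta_1 \otimes \eta_2)$ inside $\QLS(\lambda+\mu)$ and the two factors. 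Similarly $\pi'$ differs from $\pi_{\eta_1}$ by a twist. I would pin these down by comparing final directions on both sides and using the uniqueness in Lemma~\ref{lem:deg} (the element $\pi_{\eta_i}$ is the \emph{unique} lift with reduced final direction), so that applying the appropriate $T_\xi$ converts $\pi' \otimes \pi''$ into $\pi_{\eta_1} \cdot T_{(\cdots)} \otimes \pi_{\eta_2} \cdot T_{(\cdots)}$.

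For the passage from regular to general $\lambda, \mu$, the plan is to use the similarity maps: embed $\QLS(\lambda)$ and $\SLS(\lambda)$ compatibly into the corresponding objects for a regular dominant weight by the standard trick of adding a regular weight $\nu$ and using the associativity diagrams \eqref{eq:ass1a} and \eqref{eq:ass2a}. More precisely, for regular $\nu \in P^+$ with $\J_\nu = \emptyset$, one has $\J_{\lambda+\nu} = \emptyset = \J_{\mu+\nu}$, so the regular case applies to the pair $(\lambda+\nu, \mu)$ and to $(\lambda, \mu)$ suitably; composing with the embeddings $\QLS(\lambda+\mu+\nu) \to \QLS(\lambda+\nu) \otimes \QLS(\mu)$ etc., and using that the degree functions and the operators $T_\xi$ are compatible with these embeddings, one extracts the general formula \eqref{eq:main} by "cancelling" the $\nu$-contribution. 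The identities $\deg_{w(\lambda+\mu)}$, $\io{\eta_2}{w}$, $\ze{\eta_2}{w}$ all have parabolic definitions \eqref{eq:ti1}--\eqref{eq:ti2} designed precisely so that $\min(\cdot\,W_{\J_\mu}, \le_{\ti{v}_{u+1}})$ and $\wt^{\J_\mu}$ behave correctly under restriction from the regular setting (Lemma~\ref{lem:wtS} and Proposition~\ref{prop:tbmin}).

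The main obstacle I expect is \textbf{precisely identifying the two tensor factors of $\Xi_{\lambda\mu}(\pi_\eta)$}, i.e., showing that the second factor $\pi''$ has final direction $\kappa(\eta) \cdot (\text{something})$ whose $Q^\vee$-component is exactly $\wt^{\J_\mu}(\mcr{w} \Rightarrow \kappa(\eta))$-related, and that the first factor's final direction relates to $\io{\eta_2}{w}$ via the tilted-Bruhat-order minimum. This requires a careful analysis of how the tensor product rule for crystals distributes a monomial $X$ in root operators acting on $\pi_\lambda \otimes \pi_\mu$ — in particular, which "submonomial" acts on each factor — and then matching the resulting final directions against the quantum Bruhat graph data. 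The technical lemma on quantum Deodhar lifts promised in the introduction (stated just before the proof of Theorem~\ref{thm:main} in the paper) is exactly what is needed here: it should guarantee that $\tbmin{v_u}{\J_\mu}{\ti{v}_{u+1}}$ interacts correctly with the chain conditions defining QLS paths, so that $\tiv{\eta_2}{w}$ is itself a legitimate "lifted" path and the weight bookkeeping in \eqref{eq:ti2} is consistent with the semi-infinite Bruhat order. Once that lemma is in hand, the rest is a (lengthy but routine) verification using \eqref{eq:pieta}, \eqref{eq:Txi}, \eqref{eq:Txi1}, and Remark~\ref{rem:equiv}.
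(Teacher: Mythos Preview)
Your proposal diverges from the paper's proof, and the reduction strategy you sketch has a genuine gap.

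The paper does \emph{not} reduce to the regular case. Instead, the argument proceeds in two stages that are orthogonal to regularity. First (Proposition~\ref{prop:s}), the paper proves the special case where $\eta_2 = (v; 0, 1)$ is a \emph{straight-line} path, for arbitrary $\lambda, \mu$. This is done by induction along a chain of $f_i^{\max}$ operators (supplied by Lemma~\ref{lem:conn}) connecting $\eta$ to $\eta_{\lambda+\mu}^{\lfloor w_\circ \rfloor^{\J_{\lambda+\mu}}}$. The inductive step is a detailed case analysis (the three Claims inside the proof) tracking how $f_i^{\max}$ interacts with the twist $T_\xi$, the final direction $\kappa$, and the tilted lift $\tilde{v}_w$; here Lemma~\ref{lem:tb} on quantum Deodhar lifts is the essential input. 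This is exactly the ``main obstacle'' you identify, and the paper resolves it not by assuming regularity but by this root-operator induction, which works uniformly in the parabolic setting.

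Second, the paper uses the $N$-multiple similarity maps $\Sigma_N$, $\Sigma_N'$ of Section~\ref{subsec:sim} to convert a general $\eta_2$ into a tensor product of straight-line paths (one per segment, with multiplicity $N(\sigma_u - \sigma_{u-1})$). Proposition~\ref{prop:s2} (an induction on the number of tensor factors, using Proposition~\ref{prop:s} at each step) gives the result for $\eta' = \Sigma_N'(\eta)$, and the commutative diagrams \eqref{eq:CD4a}--\eqref{eq:CD5a} together with injectivity of $\Sigma_N$ pull it back.

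Your ``add a regular $\nu$'' trick is not the similarity map the paper uses and does not obviously work: there is no evident map allowing you to recover the statement for $(\lambda,\mu)$ from one for $(\lambda+\nu,\mu)$ or $(\lambda,\mu+\nu)$, since $\io{\eta_2}{w}$ and $\ze{\eta_2}{w}$ are defined via $\WS{\mu}$-lifts and change nontrivially when $\mu$ is replaced by $\mu+\nu$ (indeed $\J_{\mu+\nu}=\emptyset$ collapses all the parabolic structure you are trying to control). Moreover, your treatment of the regular case is only a statement of intent (``track how $X$ splits into submonomials''); even with $\J_\lambda=\J_\mu=\emptyset$ the identification of the two tensor factors of $\Xi_{\lambda\mu}(\pi_\eta)$ requires precisely the root-operator induction of Proposition~\ref{prop:s}, so regularity buys you nothing.
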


We will give a proof of Theorem~\ref{thm:main} 
in Section~\ref{subsec:prf-main1}. 

As an application of Theorem~\ref{thm:main}, 
using Remarks~\ref{rem:degw} and \ref{rem:equiv}, 
we obtain the following.  
%
%
\begin{cor} \label{cor:main}
Keep the notation and setting in Theorem~\ref{thm:main}. 
It holds that
\begin{equation}
\gch_{w(\lambda+\mu)} \QLS(\lambda+\mu) = 
\sum_{\eta \in \QLS(\mu)} e^{\wt(\eta)} 
 q^{\deg_{w\mu}(\eta)-\pair{\lambda}{\ze{\eta}{w}} }
 \gch_{\io{\eta}{w}\lambda} \QLS(\lambda). 
\end{equation}
\end{cor}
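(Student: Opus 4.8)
The plan is to derive Corollary~\ref{cor:main} from Theorem~\ref{thm:main} by summing over $\eta \in \QLS(\lambda+\mu)$ after translating the $T_{\xi}$-statement of the theorem into the language of weights and degrees. First I would fix $w \in W$ and, for each $\eta \in \QLS(\lambda+\mu)$, write $\Theta_{\lambda\mu}(\eta) = \eta_1 \otimes \eta_2$ as in Theorem~\ref{thm:main}. Applying $\cl$ to both sides of \eqref{eq:main} and using the commutativity of \eqref{eq:CD2} together with \eqref{eq:cl}, I would first check that $\cl(\pi_{\eta_1} \cdot T_{\ast}) = \eta_1$ and $\cl(\pi_{\eta_2} \cdot T_{\ast}) = \eta_2$, so that $\Theta_{\lambda\mu}(\cl(\pi_\eta)) = \eta_1 \otimes \eta_2$ is consistent; the real content is extracting the degrees from the weights.

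Next I would take weights in \eqref{eq:main}. By Remark~\ref{rem:degw}, the left-hand side $\Xi_{\lambda\mu}(\pi_\eta \cdot T_{\wt(w \Rightarrow \kappa(\eta))})$ has weight $\wt(\eta) + \deg_{w(\lambda+\mu)}(\eta)\delta$ (note $\wt(\pi_\eta \cdot T_\xi)$ is preserved by the embedding $\Xi_{\lambda\mu}$, which is a morphism of crystals with weights in $P_{\af}$). On the right-hand side, the tensor product rule gives that the weight is the sum of the two tensor factors' weights. Using Remark~\ref{rem:degw} applied to $\QLS(\mu)$, the second factor $\pi_{\eta_2} \cdot T_{\wt(w \Rightarrow \kappa(\eta_2))}$ has weight $\wt(\eta_2) + \deg_{w\mu}(\eta_2)\delta$. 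For the first factor, I need to compute $\wt(\pi_{\eta_1} \cdot T_{\wt(\io{\eta_2}{w} \Rightarrow \kappa(\eta_1)) + \ze{\eta_2}{w}})$; by \eqref{eq:Txi1a} this equals $\wt(\pi_{\eta_1}) - \pair{\lambda}{\wt(\io{\eta_2}{w} \Rightarrow \kappa(\eta_1)) + \ze{\eta_2}{w}}\delta$, and invoking Remark~\ref{rem:degw} once more (with $w$ replaced by $\io{\eta_2}{w}$), $\wt(\pi_{\eta_1} \cdot T_{\wt(\io{\eta_2}{w} \Rightarrow \kappa(\eta_1))}) = \wt(\eta_1) + \deg_{\io{\eta_2}{w}\lambda}(\eta_1)\delta$, so the first factor has weight $\wt(\eta_1) + \bigl(\deg_{\io{\eta_2}{w}\lambda}(\eta_1) - \pair{\lambda}{\ze{\eta_2}{w}}\bigr)\delta$. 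Comparing the $\delta$-coefficients on both sides (the $P$-parts give $\wt(\eta) = \wt(\eta_1) + \wt(\eta_2)$, which is automatic from $\Theta_{\lambda\mu}$ being weight-preserving mod $\BZ\delta$), I obtain the pointwise degree identity
\[
\deg_{w(\lambda+\mu)}(\eta) = \deg_{\io{\eta_2}{w}\lambda}(\eta_1) + \deg_{w\mu}(\eta_2) - \pair{\lambda}{\ze{\eta_2}{w}},
\]
which is exactly the formula advertised in the introduction.

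Finally I would sum $q^{\deg_{w(\lambda+\mu)}(\eta)} e^{\wt(\eta)}$ over all $\eta \in \QLS(\lambda+\mu)$. Since $\Theta_{\lambda\mu} : \QLS(\lambda+\mu) \to \QLS(\lambda) \otimes \QLS(\mu)$ is a bijection, the sum can be reindexed by pairs $(\eta_1, \eta_2) \in \QLS(\lambda) \times \QLS(\mu)$; but I must be careful that for a fixed $\eta_2$, the set of $\eta_1$ that arise as the first tensor factor of some $\Theta_{\lambda\mu}(\eta)$ is \emph{all} of $\QLS(\lambda)$ — this is where I would appeal to $\Theta_{\lambda\mu}$ being an \emph{isomorphism} of crystals (Theorem~\ref{thm:NS05}\,(2)), so every element of $\QLS(\lambda) \otimes \QLS(\mu)$ is hit. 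Using $\wt(\eta) = \wt(\eta_1) + \wt(\eta_2)$ and the degree identity, and substituting $\eta$ for $\eta_2$ to match the notation of the statement (so $\io{\eta}{w} = \iota(\eta,w)$ in the paper's macro $\io{\cdot}{\cdot}$), the sum factors as
\[
\sum_{\eta \in \QLS(\mu)} e^{\wt(\eta)} q^{\deg_{w\mu}(\eta) - \pair{\lambda}{\ze{\eta}{w}}} \sum_{\eta_1 \in \QLS(\lambda)} q^{\deg_{\io{\eta}{w}\lambda}(\eta_1)} e^{\wt(\eta_1)},
\]
and recognizing the inner sum as $\gch_{\io{\eta}{w}\lambda}\QLS(\lambda)$ yields the claim. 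The main obstacle is the bookkeeping in the second paragraph: carefully isolating the degree (the $\delta$-coefficient) from the full affine weight in \eqref{eq:main}, making sure that $\Xi_{\lambda\mu}$ preserves the $\delta$-grading and that Remark~\ref{rem:degw} is applied with the correct Weyl group element (namely $\io{\eta_2}{w}$, not $w$) in the first tensor factor; once the pointwise degree formula is in hand, the summation step is routine given that $\Theta_{\lambda\mu}$ is a bijection.
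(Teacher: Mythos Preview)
Your proposal is correct and follows exactly the route the paper indicates: the paper gives no detailed proof of Corollary~\ref{cor:main}, merely stating that it follows from Theorem~\ref{thm:main} ``using Remarks~\ref{rem:degw} and \ref{rem:equiv}'', and your argument is precisely the unpacking of that hint---take affine weights on both sides of \eqref{eq:main}, read off the $\delta$-coefficients via Remark~\ref{rem:degw} to obtain the pointwise degree identity, then sum over the bijection $\Theta_{\lambda\mu}$. The only small omission is that when you invoke Remark~\ref{rem:degw} you are tacitly using that $\pi_{\eta} \cdot T_{\wt(w \Rightarrow \kappa(\eta))} = \pi_{\eta} \cdot T_{\wt^{\J}(\mcr{w} \Rightarrow \kappa(\eta))}$ (and likewise for the other factors), which holds because the two subscripts differ by an element of $\QJv$ by Lemma~\ref{lem:wtS} and hence give the same $T$-action by Lemma~\ref{lem:PiJ}\,(3); this is where Remark~\ref{rem:equiv} enters in the paper's account.
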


%
\subsection{Some technical lemmas concerning the quantum Bruhat graph.}
\label{subsec:lem1}
%
%
\begin{lem}[{see \cite[Lemma~7.7]{LNSSS1}}] \label{lem:dia}
Let $w,v \in W$, and $i \in I_{\af}$. 
\begin{enu}
\item If $w^{-1}\ti{\alpha}_{i} \in \Delta^{+}$ and 
$v^{-1}\ti{\alpha}_{i} \in \Delta^{-}$, then 
\begin{equation*}
\begin{cases}
\ell(w \Rightarrow v) = \ell(\ti{s}_{i}w \Rightarrow v) + 1 = \ell(w \Rightarrow \ti{s}_{i}v) + 1, \\
\wt(w \Rightarrow v) 
 = \wt(\ti{s}_{i}w \Rightarrow v) + \delta_{i0}w^{-1}\ti{\alpha}_{0}^{\vee} 
 = \wt(w \Rightarrow \ti{s}_{i}v) - \delta_{i0}v^{-1}\ti{\alpha}_{0}^{\vee}.
\end{cases}
\end{equation*}

\item If $w^{-1}\ti{\alpha}_{i},\,v^{-1}\ti{\alpha}_{i} \in \Delta^{+}$, 
or if $w^{-1}\ti{\alpha}_{i},\,v^{-1}\ti{\alpha}_{i} \in \Delta^{-}$, then 
\begin{equation*}
\begin{cases}
\ell(w \Rightarrow v) = \ell(s_{i}w \Rightarrow s_{i}v), \\
\wt(w \Rightarrow v) 
 = \wt(\ti{s}_{i}w \Rightarrow \ti{s}_{i}v) + \delta_{i0}w^{-1}\ti{\alpha}_{0}^{\vee} 
   - \delta_{i0}v^{-1}\ti{\alpha}_{0}^{\vee}.
\end{cases}
\end{equation*}
\end{enu}
\end{lem}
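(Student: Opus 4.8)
The plan is to reduce everything to the analysis of a single root operator $\ti{s}_i$, i.e.\ to the behavior of $\wt(w \Rightarrow v)$ and $\ell(w \Rightarrow v)$ under left multiplication by $\ti{s}_i$ on both arguments simultaneously, as recorded in \cite[Lemma~7.7]{LNSSS1}. The statement of Lemma~\ref{lem:dia} is essentially a repackaging of that result in the affine-index notation $\ti{s}_i$, $\ti\alpha_i$ of \eqref{eq:tis}, so the first thing I would do is recall the precise form of \cite[Lemma~7.7]{LNSSS1} and check how its hypotheses translate. For $i \in I$ one has $\ti{s}_i = s_i$ and $\ti\alpha_i = \alpha_i$, and the sign condition $w^{-1}\alpha_i \in \Delta^{\pm}$ is exactly the usual condition controlling whether $w \edge{} s_i w$ is an up- or down-edge in $\QB$; for $i = 0$ one has $\ti{s}_0 = s_\theta$ and $\ti\alpha_0 = -\theta$, so $w^{-1}\ti\alpha_0 = -w^{-1}\theta$, and the extra terms $\delta_{i0} w^{-1}\ti\alpha_0^\vee$ arise because $s_0 = s_\theta t_{\theta^\vee}$ carries a translation part $t_{\theta^\vee}$ that contributes to the ``quantum weight'' bookkeeping.

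Next I would treat part (1) and part (2) separately. For part (1), the hypothesis $w^{-1}\ti\alpha_i \in \Delta^+$, $v^{-1}\ti\alpha_i \in \Delta^-$ means that in $\QB$ we have an edge $\ti{s}_i w \to w$ going ``up towards'' $v$ and an edge $v \to \ti{s}_i v$ again going ``up away'' — more precisely, that $\ell(\ti{s}_i w \Rightarrow v) = \ell(w \Rightarrow v) - 1$ and $\ell(w \Rightarrow \ti{s}_i v) = \ell(w \Rightarrow v) - 1$. I would invoke the standard ``diamond'' / shellability properties of the quantum Bruhat graph (the chain property and the fact that shortest paths can be chosen compatibly with a reflection, cf.\ \cite{BFP} and \cite{LNSSS1}) to produce, from a shortest path $w \Rightarrow v$, shortest paths $\ti{s}_i w \Rightarrow v$ and $w \Rightarrow \ti{s}_i v$ obtained by prepending/appending the relevant edge, and then read off the weight identities by tracking which edges along these paths are quantum edges; the correction term $\pm\delta_{i0} w^{-1}\ti\alpha_0^\vee$ (resp.\ $v^{-1}\ti\alpha_0^\vee$) is precisely the quantum-weight contribution of the edge being added when $i=0$, and is absent when $i \in I$ because then $s_i$ has no translation part. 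For part (2), where $w^{-1}\ti\alpha_i$ and $v^{-1}\ti\alpha_i$ have the same sign, the map $u \mapsto \ti{s}_i u$ (together with relabeling each edge $\beta$ by $\ti{s}_i\beta$) is a length-preserving automorphism-like correspondence of paths in $\QB$ from $w$ to $v$ onto paths from $\ti{s}_i w$ to $\ti{s}_i v$ — this is exactly \cite[Lemma~5.3 / Lemma~7.7]{LNSSS1} type statement — giving $\ell(w \Rightarrow v) = \ell(\ti{s}_i w \Rightarrow \ti{s}_i v)$; the weight then transforms by the sum of the translation corrections $\delta_{i0} w^{-1}\ti\alpha_0^\vee - \delta_{i0} v^{-1}\ti\alpha_0^\vee$ coming from the endpoints, all intermediate corrections cancelling telescopically.

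Concretely, the cleanest route is probably to not reprove the $\QB$ combinatorics from scratch but rather to cite \cite[Lemma~7.7]{LNSSS1} directly and spend the proof verifying that the affine-index reformulation is literally equivalent: for $i \in I$ both assertions are verbatim \cite[Lemma~7.7]{LNSSS1}, while for $i = 0$ I would substitute $\ti{s}_0 = s_\theta$, $\ti\alpha_0 = -\theta$, $\ti\alpha_0^\vee = -\theta^\vee$, note that the hypotheses $w^{-1}\ti\alpha_0 \in \Delta^\mp$ are the same as $w^{-1}\theta \in \Delta^\pm$, and check that the displayed correction terms match those in \cite[Lemma~7.7]{LNSSS1} after this substitution. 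The main obstacle I anticipate is purely bookkeeping: getting every sign right in the $i=0$ case, since $\ti\alpha_0 = -\theta$ introduces a sign flip in the hypotheses while $\ti\alpha_0^\vee = -\theta^\vee$ introduces another in the conclusions, and one must be careful that $\wt(w \Rightarrow \ti{s}_i v)$ on the right-hand side of part (1) is the shortest-path weight for the \emph{shorter} path, not the original one. Once the dictionary between the two index conventions is pinned down, the result follows immediately from the cited lemma, so no genuinely new argument is needed — the content is entirely in \cite{LNSSS1}.
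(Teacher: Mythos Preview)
Your proposal is correct and matches the paper's approach: the paper does not give an independent proof of this lemma but simply records it with the citation ``see \cite[Lemma~7.7]{LNSSS1}'', so your plan to cite that lemma and verify the dictionary between the two index conventions is exactly what is needed. In fact you go slightly further than the paper by spelling out the $i=0$ sign bookkeeping, which is helpful but not strictly required here.
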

%
%
\begin{lem}[{see \cite[Propositions~5.10 and 5.11]{LNSSS1}}] \label{lem:edge}
Let $w \in W$, and $i \in I_{\af}$. 
If $w^{-1}\ti{\alpha}_{i} \in \Delta^{+}$, then 
$w \edge{w^{-1}\ti{\alpha}_{i}} \ti{s}_{i}w$ is a directed edge of $\QB${\rm;}
this edge is a quantum edge if and only if $i = 0$. 
\end{lem}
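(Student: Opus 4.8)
The plan is to verify directly the two defining conditions of an edge of $\QB=\mathrm{QBG}(W)$, i.e.\ the case $\J=\emptyset$, in which $\mcr{\cdot}$ is the identity map and $\rho_{\J}=0$. Thus I must show that the target vertex is $ws_{\beta}$ with $\beta=w^{-1}\ti{\alpha}_{i}$, and that the appropriate length condition (Bruhat or quantum) holds. First I would record the elementary identity $\ti{s}_{i}w=ws_{\beta}$ for $\beta:=w^{-1}\ti{\alpha}_{i}$: indeed $s_{\beta}=s_{w^{-1}\ti{\alpha}_{i}}=w^{-1}s_{\ti{\alpha}_{i}}w=w^{-1}\ti{s}_{i}w$, using $s_{\ti{\alpha}_{i}}=\ti{s}_{i}$ (with $s_{-\theta}=s_{\theta}$ when $i=0$). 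By hypothesis $\beta\in\Delta^{+}$, so $\beta$ is a legitimate edge label, and it remains only to compute $\ell(\ti{s}_{i}w)-\ell(w)$.

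For $i\in I$ we have $\ti{\alpha}_{i}=\alpha_{i}$, $\ti{s}_{i}=s_{i}$, and $\beta=w^{-1}\alpha_{i}\in\Delta^{+}$; the standard length rule for simple reflections then gives $\ell(s_{i}w)=\ell(w)+1$. Hence condition~(i) holds and $w\edge{\beta}s_{i}w$ is a Bruhat edge. It is not a quantum edge, since condition~(ii) would force $\ell(s_{i}w)=\ell(w)+1-2\pair{\rho}{\beta^{\vee}}<\ell(w)+1$, a contradiction as $\pair{\rho}{\beta^{\vee}}>0$.

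For $i=0$ we have $\beta=-w^{-1}\theta\in\Delta^{+}$, equivalently $\theta\in N(w^{-1})$, where $N(x):=\{\gamma\in\Delta^{+}\mid x\gamma\in\Delta^{-}\}$; the claim is the quantum-edge identity $\ell(s_{\theta}w)=\ell(w)+1-2\pair{\rho}{\beta^{\vee}}$. The plan here is to apply the standard left-multiplication length formula $\ell(s_{\theta}w)=\ell(s_{\theta})+\ell(w)-2\,|N(s_{\theta})\cap N(w^{-1})|$ and evaluate each term using that $\theta$ is the highest root. Concretely, $\theta$ highest forces $\pair{\gamma}{\theta^{\vee}}\in\{0,1,2\}$ for every $\gamma\in\Delta^{+}$, with value $2$ exactly for $\gamma=\theta$: if some $\gamma\in\Delta^{+}$ had $\pair{\gamma}{\theta^{\vee}}=-1$, then $s_{\theta}\gamma=\gamma+\theta$ would be a root of height exceeding $\mathrm{ht}(\theta)$, which is impossible. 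Consequently $N(s_{\theta})=\{\gamma\in\Delta^{+}\mid\pair{\gamma}{\theta^{\vee}}\ge 1\}$ and $\ell(s_{\theta})=\pair{2\rho}{\theta^{\vee}}-1$. Using $\sum_{\gamma\in N(w^{-1})}\gamma=\rho-w\rho$ together with $\theta\in N(w^{-1})$, one obtains $|N(s_{\theta})\cap N(w^{-1})|=\pair{\rho-w\rho}{\theta^{\vee}}-1$. Substituting and simplifying, and using $\beta^{\vee}=-w^{-1}\theta^{\vee}$ so that $2\pair{\rho}{\beta^{\vee}}=-\pair{2w\rho}{\theta^{\vee}}$, yields exactly $\ell(s_{\theta}w)=\ell(w)+1-2\pair{\rho}{\beta^{\vee}}$. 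This is condition~(ii), so $w\edge{\beta}s_{\theta}w$ is a quantum edge; it is not Bruhat because $\pair{\rho}{\beta^{\vee}}\ge 1$ forces $\ell(s_{\theta}w)\le\ell(w)-1$.

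The main obstacle is the bookkeeping in the $i=0$ case: everything hinges on the combinatorial input that $\theta$ is the highest root, which both pins down $N(s_{\theta})$ and rules out a coroot-pairing of $-1$ on positive roots, so that the count of $N(s_{\theta})\cap N(w^{-1})$ collapses to a single $\pair{\cdot}{\theta^{\vee}}$-sum over $N(w^{-1})$. Once this is in hand the identity is a short computation; alternatively, the whole statement may be quoted directly from \cite[Propositions~5.10 and 5.11]{LNSSS1}.
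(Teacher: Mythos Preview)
Your proof is correct. The paper does not actually prove this lemma; it simply quotes it from \cite[Propositions~5.10 and 5.11]{LNSSS1}. What you give is a self-contained direct verification: the $i\in I$ case is the standard simple-reflection length rule, and for $i=0$ you reprove the length identity $\ell(s_{\theta}w)=\ell(w)+1-2\pair{\rho}{\beta^{\vee}}$ via the product-length formula $\ell(xy)=\ell(x)+\ell(y)-2|N(x)\cap N(y^{-1})|$ together with the special combinatorics of the highest root. This is essentially the content of the cited propositions, so your route and the paper's differ only in that you supply an argument rather than a reference. One minor remark: your justification of $\pair{\gamma}{\theta^{\vee}}\in\{0,1,2\}$ only explicitly rules out negative values; the symmetric height argument (if $\gamma\ne\theta$ and $\pair{\gamma}{\theta^{\vee}}\ge 2$, then $-s_{\theta}\gamma$ would be a positive root of height exceeding $\mathrm{ht}(\theta)$) gives the upper bound, and is equally standard.
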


%
\begin{lem} \label{lem:tb}
Let $\J$ be a subset of $I$.
Let $w \in W$ and $i \in I_{\af}$ 
be such that $w^{-1}\ti{\alpha}_{i} \in \Delta^{+}$. 
Let $v \in W$, and set $\ti{v}:=\tbmin{v}{\J}{w}$.
\begin{enu}
\item 
If $v^{-1}\ti{\alpha}_{i} \in \Delta^{+} \setminus \DeJ^{+}$, then 
$\tbmin{\ti{s}_{i}v}{\J}{\ti{s}_{i}w} = \ti{s}_{i}\ti{v}$. 

\item 
If $v^{-1}\ti{\alpha}_{i} \in \Delta^{-} \setminus \DeJ^{-}$, 
then $\tbmin{v}{\J}{\ti{s}_{i}w} = \ti{v}$. 

\item 
If $v^{-1}\ti{\alpha}_{i} \in \DeJ$, then 
$\ti{v}^{-1} \ti{\alpha}_{i} \in \Delta^{+}$. 
Moreover, if we set 
$\ti{v}':=\tbmin{v}{\J}{\ti{s}_{i}w}$, then
\begin{equation} \label{eq:tb0}
\ti{v}'=
 \begin{cases}
 \ti{v} & \text{\rm if $(\ti{v}')^{-1} \ti{\alpha}_{i} \in \Delta^{+}$}, \\[1.5mm]
 \ti{s}_{i}\ti{v} & \text{\rm if $(\ti{v}')^{-1} \ti{\alpha}_{i} \in \Delta^{-}$}. 
 \end{cases}
\end{equation}
\end{enu}
\end{lem}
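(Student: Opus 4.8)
The plan is to reduce everything to the combinatorics of the quantum Bruhat graph $\QB$ via Lemma~\ref{lem:dia} and Lemma~\ref{lem:edge}, together with the defining property \eqref{eq:tilted} of the tilted Bruhat order and the uniqueness of the minimal coset representative $\tbmin{v}{\J}{w}$ from Proposition~\ref{prop:tbmin}. The key observation is that $\ti{v} = \tbmin{v}{\J}{w}$ is characterized by: $\ti{v} \in v\WJ$ and, for \emph{every} $z \in v\WJ$, one has $\ti{v} \le_{w} z$, i.e.\ $\ell(w \Rightarrow z) = \ell(w \Rightarrow \ti{v}) + \ell(\ti{v} \Rightarrow z)$. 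So in each case I would take an arbitrary $z$ in the relevant coset, apply Lemma~\ref{lem:dia} to convert lengths computed from $w$ into lengths computed from $\ti{s}_{i}w$ (or vice versa), and verify the analogous additivity identity for the proposed minimizer.

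For part (1): since $v^{-1}\ti{\alpha}_{i} \in \Delta^{+}\setminus\DeJ^{+}$, left multiplication by $\ti{s}_{i}$ maps $v\WJ$ bijectively onto $(\ti{s}_{i}v)\WJ$ (here $\ti{s}_i$ normalizes nothing, but $\ti{s}_i$ preserves $\DeJ$ so $\ti{s}_i (v\WJ) = (\ti{s}_i v)\WJ$ — this needs a short check using $v^{-1}\ti\alpha_i \notin \DeJ$). I would first show $\ti{v}^{-1}\ti{\alpha}_{i}\in\Delta^{+}$: if it were negative, applying $\tb{w}$-minimality to the pair $\ti{v}, \ti{s}_{i}\ti{v} = \mcr{\ti v s_{?}}$... actually the cleaner route is via Lemma~\ref{lem:dia}(1) — the case $w^{-1}\ti\alpha_i\in\Delta^+$, $\ti v^{-1}\ti\alpha_i\in\Delta^-$ would give $\ell(w\Rightarrow \ti v) = \ell(\ti s_i w\Rightarrow \ti v)+1 = \ell(w\Rightarrow \ti s_i \ti v)+1$, contradicting minimality of $\ti v$ in its coset against the element $\mcr{\ti{s}_i\ti v}$ which lies in the same coset. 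Having $\ti{v}^{-1}\ti{\alpha}_{i}\in\Delta^{+}$, I then apply Lemma~\ref{lem:dia}(2) (the "both positive" subcase) to conclude $\ell(\ti s_i w \Rightarrow \ti s_i z) = \ell(w\Rightarrow z)$ for all $z\in v\WJ$, which transports the additivity identity for $\ti v$ verbatim to $\ti s_i \ti v$ in $(\ti s_i v)\WJ$; uniqueness then forces $\tbmin{\ti s_i v}{\J}{\ti s_i w} = \ti s_i \ti v$.

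For part (2): here $v^{-1}\ti\alpha_i\in\Delta^-$ so $\ti v^{-1}\ti\alpha_i$ could a priori be either sign, but I claim it is negative — if it were positive, Lemma~\ref{lem:dia}(1) applied to the pair $(w, \ti v)$ gives $\ell(w\Rightarrow \ti v) = \ell(w\Rightarrow \ti s_i \ti v)+1$ with $\ti s_i\ti v = \mcr{\cdots}$ in the same coset, contradicting minimality. So $\ti v^{-1}\ti\alpha_i\in\Delta^-$, and Lemma~\ref{lem:dia}(2) (the "both negative" subcase) gives $\ell(\ti s_i w\Rightarrow z)$ in terms of... wait, for part (2) I need to relate distances from $\ti s_i w$ to those from $w$; here I use that for any $z\in v\WJ$ with $z^{-1}\ti\alpha_i\in\Delta^-$ Lemma~\ref{lem:dia}(1) gives $\ell(\ti s_i w\Rightarrow z) = \ell(w\Rightarrow z)-1$, uniformly, so the additivity identity is preserved after subtracting $1$ from the two terms involving $w$; uniqueness gives $\tbmin{v}{\J}{\ti s_i w}=\ti v$. (If some $z$ in the coset has $z^{-1}\ti\alpha_i\in\Delta^+$, one handles it by first moving to $\mcr{\ti s_i z}$, which has the same tilted-order position relative to any base point whose reflection behaviour is controlled — this is exactly where care is needed.) Part (3): $v^{-1}\ti\alpha_i\in\DeJ$ means $\ti s_i$ fixes the coset $v\WJ$ setwise but the issue is delicate since now $\ti s_i$ is "internal." First, $\ti v^{-1}\ti\alpha_i\in\Delta^+$: if negative, $\ti s_i\ti v\in v\WJ$ (as $v^{-1}\ti\alpha_i\in\DeJ$) and Lemma~\ref{lem:dia}(1) on $(w,\ti v)$ gives $\ell(w\Rightarrow \ti v)=\ell(w\Rightarrow\ti s_i\ti v)+1$, contradicting minimality — so $\ti v^{-1}\ti\alpha_i\in\Delta^+$. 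Then to identify $\ti v' := \tbmin{v}{\J}{\ti s_i w}$: it lies in $v\WJ$ and either $(\ti v')^{-1}\ti\alpha_i\in\Delta^+$ — in which case Lemma~\ref{lem:dia}(2) shows $\ell(\ti s_i w\Rightarrow z)=\ell(w\Rightarrow z)$ whenever $z^{-1}\ti\alpha_i\in\Delta^+$ too, so $\ti v$ satisfies the $\ti s_i w$-additivity, hence $\ti v'=\ti v$ — or $(\ti v')^{-1}\ti\alpha_i\in\Delta^-$, and then applying Lemma~\ref{lem:dia}(1) to $(\ti s_i w, \ti v')$ (legitimate since $(\ti s_i w)^{-1}\ti\alpha_i = -w^{-1}\ti\alpha_i\in\Delta^-$... no: I need $w^{-1}\ti\alpha_i\in\Delta^+$ translating) — carefully, $(\ti s_i w)^{-1}\ti\alpha_i\in\Delta^-$, so Lemma~\ref{lem:dia}(1) swaps roles and relates $\ti v'$ to $\ti s_i\ti v'$, yielding $\ti s_i\ti v' = \ti v$ after matching with the first base point, i.e.\ $\ti v'=\ti s_i\ti v$.

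\textbf{Main obstacle.} The delicate point throughout is handling cosets $v\WJ$ containing elements $z$ with $z^{-1}\ti\alpha_i$ of \emph{both} signs (or inside $\DeJ$ in part (3)): Lemma~\ref{lem:dia} gives clean uniform statements only when the sign of $z^{-1}\ti\alpha_i$ is fixed, so one must verify that the tilted-order additivity can be checked against a single well-chosen representative (e.g.\ $\mcr{\ti s_i z}$ or $\mcr{z s_\beta}$ for suitable $\beta\in\DeJ$) and that this reduction is compatible with passing between $w$ and $\ti s_i w$. I expect parts (1) and (2) to be essentially bookkeeping once the sign of $\ti v^{-1}\ti\alpha_i$ is pinned down, while part (3) — especially the bifurcation in \eqref{eq:tb0} and proving consistency of the two cases — is where the real work lies.
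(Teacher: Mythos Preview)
Your overall strategy---reduce to Lemma~\ref{lem:dia}, Lemma~\ref{lem:edge}, and the defining additivity \eqref{eq:tilted}---matches the paper's, and for parts (1) and (2) your argument can be made to work. However, there are two real problems.

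\textbf{Parts (1) and (2): the sign of $\ti{v}^{-1}\ti{\alpha}_i$ is automatic, and your contradiction arguments are broken.} In part (1), since $v^{-1}\ti{\alpha}_i \in \Delta^{+}\setminus\DeJ^{+}$ and $\Delta^{+}\setminus\DeJ^{+}$ is $\WJ$-stable, \emph{every} $z\in v\WJ$ satisfies $z^{-1}\ti{\alpha}_i \in \Delta^{+}\setminus\DeJ^{+}$; in particular $\ti{v}^{-1}\ti{\alpha}_i\in\Delta^{+}$. Your proposed contradiction (``compare with $\ti{s}_i\ti{v}$, which lies in the same coset'') fails precisely because $\ti{s}_i\ti{v} = \ti{v}\,s_{\ti{v}^{-1}\ti{\alpha}_i}$ with $\ti{v}^{-1}\ti{\alpha}_i\notin\DeJ$, so $\ti{s}_i\ti{v}\notin v\WJ$. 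The same remark applies to part~(2): $\ti{v}^{-1}\ti{\alpha}_i\in\Delta^{-}$ is immediate from $\WJ$-stability of $\Delta^{-}\setminus\DeJ^{-}$, and your contradiction (which in any case invokes Lemma~\ref{lem:dia}\,(1) in a situation where both signs are positive) is unnecessary. Once these signs are established, your ``transport the additivity'' argument does go through; the paper instead starts from the unknown minimizer $\ti{v}'$ and pulls it back via Lemma~\ref{lem:dia}\,(2) to conclude $\ti{s}_i\ti{v}'=\ti{v}$, which is equivalent.

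\textbf{Part (3), Case $(\ti{v}')^{-1}\ti{\alpha}_i\in\Delta^{+}$: a genuine gap.} You claim Lemma~\ref{lem:dia}\,(2) gives $\ell(\ti{s}_iw\Rightarrow z)=\ell(w\Rightarrow z)$ when $z^{-1}\ti{\alpha}_i\in\Delta^{+}$. It does not: it gives $\ell(w\Rightarrow z)=\ell(\ti{s}_iw\Rightarrow \ti{s}_iz)$, and $z\ne\ti{s}_iz$. So your direct transfer of the $w$-additivity of $\ti{v}$ to $\ti{s}_iw$-additivity does not follow. Moreover, since $v^{-1}\ti{\alpha}_i\in\DeJ$, the coset $v\WJ$ contains elements with $z^{-1}\ti{\alpha}_i$ of both signs, so you cannot restrict attention to positive-sign representatives. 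The paper's argument here is genuinely more delicate: starting from $\ti{v}\le_{w}\ti{v}'$ it applies Lemma~\ref{lem:dia}\,(2) to all three terms, then feeds in both $\ti{v}'\le_{\ti{s}_iw}\ti{s}_i\ti{v}$ and $\ti{v}'\le_{\ti{s}_iw}\ti{s}_i\ti{v}'$ (both legitimate since $\ti{s}_i\ti{v},\,\ti{s}_i\ti{v}'\in v\WJ$), and finally uses Lemma~\ref{lem:edge} to get $\ell(\ti{v}'\Rightarrow\ti{s}_i\ti{v}')=1$, forcing $\ell(\ti{v}'\Rightarrow\ti{s}_i\ti{v})+\ell(\ti{s}_i\ti{v}\Rightarrow\ti{s}_i\ti{v}')=1$, whence one of the two summands vanishes and the sign hypothesis rules out $\ti{v}'=\ti{s}_i\ti{v}$. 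This chaining of several minimality relations is the idea you are missing. (Your Case $(\ti{v}')^{-1}\ti{\alpha}_i\in\Delta^{-}$ is essentially the argument of part~(1) and is fine once written carefully.)
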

\begin{proof}
(1) First we show that 
$u^{-1}\ti{\alpha}_{i} \in \Delta^{-}$ for all $u \in \ti{s}_{i}v\WJ$. 
Let us write $u \in \ti{s}_{i}v\WJ$ as 
$u = \ti{s}_{i}vz$ for some $z \in \WJ$. 
We see by the assumption of part (1) that 
$u^{-1}\ti{\alpha}_{i} = - z^{-1}v^{-1}\ti{\alpha}_{i} \subset z^{-1}(\Delta^{-} \setminus \DeJ^{-})$. 
Since  $\Delta^{-} \setminus \DeJ^{-}$ is stable under the action of $\WJ$, 
we deduce that 
$u^{-1}\ti{\alpha}_{i} \in \Delta^{-} \setminus \DeJ^{-} \subset \Delta^{-}$, as desired. 

We set $\ti{v}':=\tbmin{\ti{s}_{i}v}{\J}{\ti{s}_{i}w}$. 
Since $\ti{s}_{i} \ti{v} \in \ti{s}_{i}v\WJ$, 
we have $\ti{v}' \tb{\ti{s}_{i}w} \ti{s}_{i} \ti{v}$, and hence 
\begin{equation} \label{eq:tb1a}
\ell(\ti{s}_{i}w \Rightarrow \ti{s}_{i} \ti{v}) = 
\ell(\ti{s}_{i}w \Rightarrow \ti{v}') + \ell(\ti{v}' \Rightarrow \ti{s}_{i} \ti{v})
\end{equation}
by the definition of $\tb{\ti{s}_{i}w}$. 
Since $(\ti{s}_{i} \ti{v})^{-1}\ti{\alpha}_{i} \in \Delta^{-}$, as seen above, 
and $(\ti{s}_{i}w)^{-1}\ti{\alpha}_{i} \in \Delta^{-}$ by the assumption, 
we deduce from Lemma~\ref{lem:dia}\,(2) that 
$\ell(\ti{s}_{i}w \Rightarrow \ti{s}_{i} \ti{v}) = 
 \ell(w \Rightarrow \ti{v})$.
Similarly, since $(\ti{s}_{i}w)^{-1}\ti{\alpha}_{i},\,
(\ti{v}')^{-1}\ti{\alpha}_{i},\,
(\ti{s}_{i} \ti{v})^{-1}\ti{\alpha}_{i} \in \Delta^{-}$, 
we deduce that
$\ell(\ti{s}_{i}w \Rightarrow \ti{v}') = 
 \ell(w \Rightarrow \ti{s}_{i}\ti{v}')$ and 
$\ell(\ti{v}' \Rightarrow \ti{s}_{i} \ti{v}) = 
 \ell(\ti{s}_{i}\ti{v}' \Rightarrow \ti{v})$ 
from Lemma~\ref{lem:dia}\,(2). 
Substituting these equalities into \eqref{eq:tb1a}, 
we obtain 
$\ell(w \Rightarrow \ti{v}) = 
\ell(w \Rightarrow \ti{s}_{i}\ti{v}') + 
\ell(\ti{s}_{i}\ti{v}' \Rightarrow \ti{v})$, 
which implies that $\ti{s}_{i}\ti{v}' \tb{w} \ti{v}$. 
Since $\ti{v}=\tbmin{v}{\J}{w}$ and $\ti{s}_{i}\ti{v}' \in v\WJ$, 
it follows that $\ti{s}_{i}\ti{v}' = \ti{v}$, and hence 
$\ti{v}'=\ti{s}_{i}\ti{v}$. 

(2) We set $\ti{v}':=\tbmin{v}{\J}{\ti{s}_{i}w}$. 
Since $\ti{v} \in v\WJ$, we have $\ti{v}' \tb{\ti{s}_{i}w} \ti{v}$, and hence 
\begin{equation} \label{eq:tb2a}
\ell(\ti{s}_{i}w \Rightarrow \ti{v}) = 
\ell(\ti{s}_{i}w \Rightarrow \ti{v}') + \ell(\ti{v}' \Rightarrow \ti{v})
\end{equation}
by the definition of $\tb{\ti{s}_{i}w}$. 
Note that $w^{-1}\ti{\alpha}_{i} \in \Delta^{+}$ by the assumption. 
If we write $\ti{v} \in v\WJ$ as $\ti{v}=vz$ for some $z \in \WJ$, 
then we see by the assumption of part (2) 
that $\ti{v}^{-1}\ti{\alpha}_{i} = z^{-1}v^{-1}\ti{\alpha}_{i} 
\in z^{-1}(\Delta^{-} \setminus \DeJ^{-}) \subset 
\Delta^{-} \setminus \DeJ^{-} \subset \Delta^{-}$. 
Similarly, we see that $(\ti{v}')^{-1}\ti{\alpha}_{i} \in \Delta^{-}$ 
since $\ti{v}' \in v\WJ$.
Therefore, we see from Lemma~\ref{lem:dia}\,(1) that 
$\ell(w \Rightarrow \ti{v}) = \ell(\ti{s}_{i}w \Rightarrow \ti{v}) + 1$ and 
$\ell(w \Rightarrow \ti{v}') = \ell(\ti{s}_{i}w \Rightarrow \ti{v}') +1$. 
Substituting these equalities into \eqref{eq:tb2a}, we obtain 
$\ell(w \Rightarrow \ti{v}) = 
\ell(w \Rightarrow \ti{v}') + 
\ell(\ti{v}' \Rightarrow \ti{v})$, 
which implies that $\ti{v}' \tb{w} \ti{v}$. 
Since $\ti{v}=\tbmin{v}{\J}{w}$ and $\ti{v}' \in v\WJ$, 
we get $\ti{v}' = \ti{v}$. 

(3) Note that $\ti{s}_{i}v\WJ = v\WJ$ since $v^{-1}\ti{\alpha}_{i} \in \DeJ$. 
Suppose, for a contradiction, that 
$\ti{v}^{-1}\ti{\alpha}_{i} \in \Delta^{-}$. 
Since $\ti{s}_{i}\ti{v} \in \ti{s}_{i}v\WJ = v\WJ$ and 
$\ti{v}=\tbmin{v}{\J}{w}$, 
we have $\ti{v} \tb{w} \ti{s}_{i}\ti{v}$, and hence 
\begin{equation} \label{eq:tb3a}
\ell(w \Rightarrow \ti{s}_{i}\ti{v}) = 
\ell(w \Rightarrow \ti{v}) + \ell(\ti{v} \Rightarrow \ti{s}_{i}\ti{v}) \ge 
\ell(w \Rightarrow \ti{v}). 
\end{equation}
Also, since $w^{-1}\ti{\alpha}_{i} \in \Delta^{+}$ and 
$\ti{v}^{-1}\ti{\alpha}_{i} \in \Delta^{-}$ by our assumptions, 
we see from Lemma~\ref{lem:dia}\,(1) that 
$\ell(w \Rightarrow \ti{s}_{i}\ti{v}) = \ell(w \Rightarrow \ti{v})-1 < 
\ell(w \Rightarrow \ti{v})$, which contradicts \eqref{eq:tb3a}. 
Thus we obtain $\ti{v}^{-1}\ti{\alpha}_{i} \in \Delta^{+}$. 
%
\paragraph{Case 1.}
%
Assume that $(\ti{v}')^{-1}\ti{\alpha}_{i} \in \Delta^{+}$. 
Since $\ti{v}' \in v\WJ$, we have $\ti{v} \tb{w} \ti{v}'$, and hence 
\begin{equation} \label{eq:tb3b}
\ell(w \Rightarrow \ti{v}') = 
\ell(w \Rightarrow \ti{v}) + \ell(\ti{v} \Rightarrow \ti{v}')
\end{equation}
by the definition of $\tb{w}$. 
Recall that $w^{-1}\ti{\alpha}_{i} \in \Delta^{+}$ by the assumption. 
Also, recall that $\ti{v}^{-1}\ti{\alpha}_{i} \in \Delta^{+}$ as seen above, 
and that $(\ti{v}')^{-1}\ti{\alpha}_{i} \in \Delta^{+}$ by the assumption of Case 1. 
Therefore, by Lemma~\ref{lem:dia}\,(2), we deduce that 
$\ell(\ti{s}_{i}w \Rightarrow \ti{s}_{i}\ti{v}') = \ell(w \Rightarrow \ti{v}')$, 
$\ell(\ti{s}_{i}w \Rightarrow \ti{s}_{i}\ti{v}) = \ell(w \Rightarrow \ti{v})$, and 
$\ell(\ti{s}_{i}\ti{v} \Rightarrow \ti{s}_{i}\ti{v}') = \ell(\ti{v} \Rightarrow \ti{v}')$. 
Substituting these equalities into \eqref{eq:tb3b}, we obtain
\begin{equation} \label{eq:tb3c}
\ell(\ti{s}_{i}w \Rightarrow \ti{s}_{i}\ti{v}') = 
\ell(\ti{s}_{i}w \Rightarrow \ti{s}_{i}\ti{v}) + 
\ell(\ti{s}_{i}\ti{v} \Rightarrow \ti{s}_{i}\ti{v}').
\end{equation}
Since $\ti{s}_{i}\ti{v} \in \ti{s}_{i}v\WJ=v\WJ$, 
and since $\ti{v}'=\tbmin{v}{\J}{\ti{s}_{i}w}$, 
we have $\ti{v}' \tb{\ti{s}_{i}w} \ti{s}_{i}\ti{v}$, and hence 
$\ell(\ti{s}_{i}w \Rightarrow \ti{s}_{i}\ti{v}) = 
\ell(\ti{s}_{i}w \Rightarrow \ti{v}') + \ell(\ti{v}' \Rightarrow \ti{s}_{i}\ti{v})$. 
Substituting this equality into \eqref{eq:tb3c}, we obtain
\begin{equation} \label{eq:tb3d}
\ell(\ti{s}_{i}w \Rightarrow \ti{s}_{i}\ti{v}') = 
\ell(\ti{s}_{i}w \Rightarrow \ti{v}') + \ell(\ti{v}' \Rightarrow \ti{s}_{i}\ti{v}) + 
\ell(\ti{s}_{i}\ti{v} \Rightarrow \ti{s}_{i}\ti{v}').
\end{equation}
Since $\ti{s}_{i}\ti{v}' \in \ti{s}_{i}v\WJ=v\WJ$, we have 
$\ti{v}' \tb{\ti{s}_{i}w} \ti{s}_{i}\ti{v}'$, 
and hence 
$\ell(\ti{s}_{i}w \Rightarrow \ti{s}_{i}\ti{v}') = 
 \ell(\ti{s}_{i}w \Rightarrow \ti{v}') +
 \ell(\ti{v}' \Rightarrow \ti{s}_{i}\ti{v}')$. 
Combining this equality and \eqref{eq:tb3d}, 
we obtain 
$\ell(\ti{v}' \Rightarrow \ti{s}_{i}\ti{v}) + 
 \ell(\ti{s}_{i}\ti{v} \Rightarrow \ti{s}_{i}\ti{v}') = 
 \ell(\ti{v}' \Rightarrow \ti{s}_{i}\ti{v}')$. 
Since $(\ti{v}')^{-1}\ti{\alpha}_{i} \in \Delta^{+}$ by the assumption of Case 1,  
it follows from Lemma~\ref{lem:edge} that 
$\ti{v}' \edge{(\ti{v}')^{-1}\ti{\alpha}_{i}} \ti{s}_{i}\ti{v}'$ 
is a directed edge of $\QB$, 
which implies that $\ell(\ti{v}' \Rightarrow \ti{s}_{i}\ti{v}') = 1$. 
Hence either $\ell(\ti{v}' \Rightarrow \ti{s}_{i}\ti{v})=0$ or 
$\ell(\ti{s}_{i}\ti{v} \Rightarrow \ti{s}_{i}\ti{v}')=0$. 
If $\ell(\ti{v}' \Rightarrow \ti{s}_{i}\ti{v}) = 0$, then 
$\ti{v}' = \ti{s}_{i}\ti{v}$. However, this contradicts our assumption that
$(\ti{v}')^{-1}\ti{\alpha}_{i} \in \Delta^{+}$ since 
$(\ti{s}_{i}\ti{v})^{-1}\ti{\alpha}_{i} = -\ti{v}^{-1}\ti{\alpha}_{i} \in \Delta^{-}$. 
Therefore, we obtain $\ell(\ti{s}_{i}\ti{v} \Rightarrow \ti{s}_{i}\ti{v}')=0$, 
from which we conclude that $\ti{s}_{i}\ti{v} = \ti{s}_{i}\ti{v}'$, 
and hence $\ti{v} = \ti{v}'$. 
%
\paragraph{Case 2.}
%
Assume that $(\ti{v}')^{-1}\ti{\alpha}_{i} \in \Delta^{-}$. 
Since $\ti{s}_{i} \ti{v} \in \ti{s}_{i}v\WJ = v\WJ$, 
we have $\ti{v}' \tb{\ti{s}_{i}w} \ti{s}_{i} \ti{v}$, and hence 
\begin{equation} \label{eq:tb3e}
\ell(\ti{s}_{i}w \Rightarrow \ti{s}_{i} \ti{v}) = 
\ell(\ti{s}_{i}w \Rightarrow \ti{v}') + \ell(\ti{v}' \Rightarrow \ti{s}_{i} \ti{v}). 
\end{equation}
Since $\ti{v}^{-1}\ti{\alpha}_{i},\,
w^{-1}\ti{\alpha}_{i} \in \Delta^{+}$, 
we deduce from Lemma~\ref{lem:dia}\,(2) that 
$\ell(\ti{s}_{i}w \Rightarrow \ti{s}_{i} \ti{v}) = 
 \ell(w \Rightarrow \ti{v})$.
Similarly, since $(\ti{s}_{i}w)^{-1}\ti{\alpha}_{i},\,
(\ti{v}')^{-1}\ti{\alpha}_{i},\,
(\ti{s}_{i} \ti{v})^{-1}\ti{\alpha}_{i} \in \Delta^{-}$, 
we deduce that 
$\ell(\ti{s}_{i}w \Rightarrow \ti{v}') = 
 \ell(w \Rightarrow \ti{s}_{i}\ti{v}')$ and 
$\ell(\ti{v}' \Rightarrow \ti{s}_{i} \ti{v}) = 
 \ell(\ti{s}_{i}\ti{v}' \Rightarrow \ti{v})$ 
from Lemma~\ref{lem:dia}\,(2). 
Substituting these equalities into \eqref{eq:tb3e}, 
we obtain 
$\ell(w \Rightarrow \ti{v}) = 
\ell(w \Rightarrow \ti{s}_{i}\ti{v}') + 
\ell(\ti{s}_{i}\ti{v}' \Rightarrow \ti{v})$, 
which implies that $\ti{s}_{i}\ti{v}' \tb{w} \ti{v}$. 
Since $\ti{v}=\tbmin{v}{\J}{w}$ and $\ti{s}_{i}\ti{v}' \in \ti{s}_{i}v\WJ = v\WJ$, 
it follows that $\ti{s}_{i}\ti{v}' = \ti{v}$, and hence 
$\ti{v}'=\ti{s}_{i}\ti{v}$. 

This completes the proof of Lemma~\ref{lem:tb}. 
\end{proof}
%
%
\subsection{Similarity maps for $\SLS(\lambda)$ and $\QLS(\lambda)$.}
\label{subsec:sim}

Let $\lambda \in P^{+}$, and take $\J=\J_{\lambda}$ as in \eqref{eq:J}; 
recall the definition of $\turn{\lambda}$ from Remark~\ref{rem:SLS}. 
Take $N=N_{\lambda} \in \BZ_{\ge 1}$ such that 
%
%
\begin{equation} \label{eq:N}
N\sigma=N_{\lambda}\sigma \in \BZ \quad \text{for all $\sigma \in \turn{\lambda}$}.
\end{equation} 
We define $\Sigma_{N}:\SLS(\lambda) \rightarrow 
\SLS(\lambda)^{\otimes N}$ as follows. 
Let $\pi = (x_{1},\,\dots,\,x_{s};\sigma_{0},\sigma_{1},\dots,\sigma_{s}) 
\in \SLS(\lambda)$; note that $N\sigma_{u} \in \BZ$ for all $0 \le u \le s$ 
(see Remark~\ref{rem:SLS}). We set
\begin{equation}
\Sigma_{N}(\pi) : = 
(\pi_{\lambda}^{x_{1}})^{\otimes N(\sigma_{1}-\sigma_{0})} \otimes 
(\pi_{\lambda}^{x_{2}})^{\otimes N(\sigma_{2}-\sigma_{1})} \otimes \cdots \otimes 
(\pi_{\lambda}^{x_{s}})^{\otimes N(\sigma_{s}-\sigma_{s-1})} \in \SLS(\lambda)^{\otimes N}; 
\end{equation}
recall that $\pi_{\lambda}^{x}=(x;0,1) \in \SLS(\lambda)$ for $x \in (\WJu)_{\af}$. 
We set $\Sigma_N (\bzero) := \bzero$ by convention. 
We know the following proposition from \cite[Proposition~5.24]{INS}. 
%
%
\begin{prop} \label{prop:sim1}
The map $\Sigma_{N}:\SLS(\lambda) \rightarrow 
\SLS(\lambda)^{\otimes N}$ is an injective map such that 
$\Sigma_{N}(\pi_{\lambda})=\pi_{\lambda}^{\otimes N}$, and 
for all $\pi \in \SLS(\lambda)$ and $i \in I_{\af}$, 
\begin{align}
& \wt(\Sigma_N (\pi)) = N \wt(\pi), &
& \ve_i (\Sigma_N (\pi)) = N \ve_i (\pi), &
& \vp_i (\Sigma_N (\pi)) = N \vp_i (\pi), \label{eq:SN1} \\
& \Sigma_N (e_i \pi) = e_i^N \Sigma_N (\pi), &
& \Sigma_N (f_i \pi) = f_i^N \Sigma_N (\pi). \label{eq:SN2}
\end{align}
\end{prop}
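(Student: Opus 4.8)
The plan is to carry everything out inside Littelmann's path model. Recall from \eqref{eq:olpi} and Theorem~\ref{thm:SLS} that $\SLS(\lambda)$ is realized as a set of (affine) LS paths $\ol{\pi}$ of shape $\lambda$ equipped with the path-model root operators, and that the tensor product of such crystals is realized by concatenation of paths. First I would pin down the path underlying $\Sigma_N(\pi)$: the factor $\pi_\lambda^{x_u}=(x_u;0,1)$ has underlying path $t\mapsto t\,x_u\lambda$, so concatenating $N(\sigma_u-\sigma_{u-1})$ copies of these for $u=1,\dots,s$ and reparametrizing to $[0,1]$ gives a path which, on the $u$-th block (occupying $[\sigma_{u-1},\sigma_u]$ after reparametrization), is linear with total displacement $N(\sigma_u-\sigma_{u-1})\,x_u\lambda$ --- that is, it runs in the same direction $x_u\lambda$ as $\ol{\pi}$ on $[\sigma_{u-1},\sigma_u]$, scaled by $N$. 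Hence this path is the $N$-fold dilation $t\mapsto N\ol{\pi}(t)$ of $\ol{\pi}$, and $H^{\Sigma_N(\pi)}_i(t)=N\,H^{\pi}_i(t)$ for every $i\in I_{\af}$, up to the monotone reparametrization, which affects no path-model quantity.

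Given this, the numerical assertions are immediate: $\wt(\Sigma_N(\pi))=N\ol{\pi}(1)=N\wt(\pi)$, and $m^{\Sigma_N(\pi)}_i=N\,m^{\pi}_i$ yields $\ve_i(\Sigma_N(\pi))=-m^{\Sigma_N(\pi)}_i=N\ve_i(\pi)$ and $\vp_i(\Sigma_N(\pi))=H^{\Sigma_N(\pi)}_i(1)-m^{\Sigma_N(\pi)}_i=N\vp_i(\pi)$, using the path-model identities $\ve_i(\pi)=-m^{\pi}_i$ and $\vp_i(\pi)=H^{\pi}_i(1)-m^{\pi}_i$. That $\Sigma_N(\pi_\lambda)=\pi_\lambda^{\otimes N}$ is clear from the definition, since $\pi_\lambda=(e;0,1)=\pi_\lambda^{e}$ and $N(\sigma_1-\sigma_0)=N$.

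Next, the root-operator identities \eqref{eq:SN2}. Since the root operators on $\SLS(\lambda)$ follow the same path-model recipe as in \cite{Lit95}, we have $\ol{e_i\pi}=e_i\ol{\pi}$ and $\ol{f_i\pi}=f_i\ol{\pi}$, so \eqref{eq:SN2} reduces to the statement that the $N$-fold dilation of paths intertwines $e_i$ with $e_i^{N}$ and $f_i$ with $f_i^{N}$ --- Littelmann's stretching (``similarity'') compatibility. The point is that the $e_i$-algorithm depends only on $H_i$: it cuts out the stretch of the path lying between the last visit to height $m_i+1$ preceding the first visit to the minimum value $m_i$, and reflects it. Applied to $N\,H^{\pi}_i$ it excises a dip of depth $1/N$, and carrying this out $N$ times reconstitutes (the reflection of) the depth-one dip that $e_i$ removes from $\ol{\pi}$; comparing underlying paths (or arguing directly with the tensor product rule) this gives $e_i^{N}\Sigma_N(\pi)=\Sigma_N(e_i\pi)$, and symmetrically for $f_i$. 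Injectivity of $\Sigma_N$ is then formal: in $\Sigma_N(\pi)=(\pi_\lambda^{x_1})^{\otimes a_1}\otimes\cdots\otimes(\pi_\lambda^{x_s})^{\otimes a_s}$ one has $a_u=N(\sigma_u-\sigma_{u-1})>0$ and $x_1\sig\cdots\sig x_s$ pairwise distinct, so $s$, the $x_u$, the multiplicities $a_u$, and hence the $\sigma_u=\tfrac1N\sum_{k\le u}a_k$ are all recovered from $\Sigma_N(\pi)$, so $\pi$ is determined.

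The one step with genuine content is the root-operator identity: one must check carefully how the cut-and-reflect procedure underlying $e_i$ (resp.\ $f_i$) behaves under vertical scaling of $H_i$ by $N$ and under $N$-fold iteration --- this is precisely where the similarity phenomenon lives, and the rest is bookkeeping. A preliminary point to settle is the ordering convention in the tensor--concatenation dictionary, so that the path underlying $\Sigma_N(\pi)$ is genuinely $N\ol{\pi}$ rather than a block-reversed variant; the conventions of \cite{INS} are arranged so that this holds, and in any case path-model operators are insensitive to monotone reparametrization. (Alternatively, one could identify $\Sigma_N$ with the composite of a similarity embedding $\SLS(\lambda)\hookrightarrow\SLS(N\lambda)$ and the standard-monomial embedding $\Xi_{\lambda,\dots,\lambda}$ of Theorem~\ref{thm:SMT}, and deduce the proposition from the known properties of those two maps.)
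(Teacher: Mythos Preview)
The paper does not give a proof of this proposition at all; it simply cites \cite[Proposition~5.24]{INS}. So there is no ``paper's own proof'' to compare against, and your sketch is effectively supplying what the paper outsources.

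Your argument is sound. The key step --- that the concatenated path underlying $\Sigma_N(\pi)$ is, after the obvious reparametrization, exactly the dilation $t\mapsto N\,\ol{\pi}(t)$ --- is correct: each factor $\pi_\lambda^{x_u}$ contributes a straight segment of direction $x_u\lambda$ occupying time $1/N$, and there are $N(\sigma_u-\sigma_{u-1})$ of them in the $u$-th block, so that block occupies $[\sigma_{u-1},\sigma_u]$ with slope $Nx_u\lambda$. From $H^{\Sigma_N(\pi)}_i = N\,H^{\pi}_i$ the identities \eqref{eq:SN1} are immediate, and \eqref{eq:SN2} is precisely Littelmann's stretching compatibility \cite[Lemma~2.4]{Lit95}, as you say. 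Your injectivity argument is fine. The only place to be careful is in invoking the dictionary between the tensor-product crystal structure and Littelmann's concatenation, but this is standard and consistent with the conventions used here.

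Your parenthetical alternative --- factoring $\Sigma_N$ as $\Xi_{\lambda}^{(N)}\circ\Sigma_N'$ and using the known properties of each factor --- is exactly the commutative diagram \eqref{eq:CDs1} that the paper records immediately after this proposition. Note, however, that the paper uses Proposition~\ref{prop:sim1} as an input to establish \eqref{eq:CDs1} (checking that both routes send $\pi_\lambda$ to $\pi_\lambda^{\otimes N}$ and satisfy \eqref{eq:SN2}), so using that diagram to \emph{prove} the proposition would be circular in the paper's logical order; one would need an independent verification that $\Sigma_N'$ has the analogous properties, which the paper asserts by analogy with \cite[Lemma~2.4]{Lit95}.
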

%
%
\begin{rem} \label{rem:sim1}
By \eqref{eq:SN2}, we see that 
$\Sigma_{N}(\SLS_{0}(\lambda)) \subset 
(\SLS(\lambda)^{\otimes N})_{0}$. 
\end{rem}

For each $\pi = (x_{1},\,\dots,\,x_{s};\sigma_{0},\sigma_{1},\dots,\sigma_{s}) 
\in \SLS(\lambda)$, we see from the definition of semi-infinite LS paths that 
$\Sigma_{N}'(\pi):=
(x_{1},\,\dots,\,x_{s};\sigma_{0},\sigma_{1},\dots,\sigma_{s})$ is contained in $\SLS(N\lambda)$.
Hence the map $\Sigma_{N}':\SLS(\lambda) \rightarrow \SLS(N\lambda)$, 
$\pi \mapsto \Sigma_{N}'(\pi)$, is an analogue of the ``$N$-multiple'' map 
in \cite[page~504]{Lit95}; it is verified 
in the same way as \cite[Lemma~2.4]{Lit95} that $\Sigma_{N}'$ has 
the same properties as \eqref{eq:SN1} and \eqref{eq:SN2}. 
In particular, we see that $\Sigma_{N}'(\SLS_{0}(\lambda)) \subset 
\SLS_{0}(N\lambda)$. Also, the following diagram is commutative:
%
%
\begin{equation} \label{eq:CDs1}
\begin{split}
\xymatrix{%
\SLS_{0}(\lambda) 
 \ar[r]^{\Sigma'_{N}}
 \ar[d]_{\Sigma_{N}} & 
\SLS_{0}(N\lambda) \ar[ld]^{\Xi_{\lambda}^{(N)}} \\
(\SLS(\lambda)^{\otimes N})_{0},
 & }
\end{split}
\end{equation}
where $\Xi_{\lambda}^{(N)}$ is the isomorphism of crystals 
given by Theorem~\ref{thm:SMT}, applied to 
the decomposition $N\lambda=\lambda+\cdots+\lambda$ ($N$ times). 
Indeed, both of the maps $\Sigma_{N}$ and $\Xi_{\lambda}^{(N)} \circ \Sigma_{N}'$ 
send the element $\pi_{\lambda}$ to 
$\pi_{\lambda}^{\otimes N}$, 
and have property \eqref{eq:SN2}. 
Since $\SLS_{0}(\lambda) \subset \SLS(\lambda)$ 
is connected and contains $\pi_{\lambda}$, we conclude that 
the diagram above is commutative. 

We define $\Sigma_{N}:\QLS(\lambda) \rightarrow 
\QLS(\lambda)^{\otimes N}$ and 
$\Sigma_{N}':\QLS(\lambda) \rightarrow 
\QLS(N\lambda)$ in exactly the same way as above; 
we deduce that these maps $\Sigma_{N}$ and $\Sigma_{N}'$ have 
the same properties as \eqref{eq:SN1} and \eqref{eq:SN2}, and that
the following diagram is commutative:  
%
%
\begin{equation} \label{eq:CDs2}
\begin{split}
\xymatrix{%
\QLS(\lambda) 
 \ar[r]^{\Sigma'_{N}}
 \ar[d]_{\Sigma_{N}} & 
\QLS(N\lambda) \ar[ld]^{\Theta_{\lambda}^{(N)}} \\
\QLS(\lambda)^{\otimes N},
 & }
\end{split}
\end{equation}
where $\Theta_{\lambda}^{(N)}$ is the isomorphism of crystals 
given by Theorem~\ref{thm:NS05}\,(2), applied to 
the decomposition $N\lambda=\lambda+\cdots+\lambda$ ($N$ times). 
Moreover, the same argument as above shows that 
the following diagram is commutative:
%
%
\begin{equation} \label{eq:CD3}
\begin{split}
\xymatrix{%
 \SLS(\lambda) \ar[r]^-{\Sigma_{N}} \ar[d]_-{\cl} & 
 \SLS(\lambda)^{\otimes N} \ar[d]^-{\cl^{\otimes N}} \\
 \QLS(\lambda) \ar[r]^-{\Sigma_{N}} & 
 \QLS(\lambda)^{\otimes N}. 
}
\end{split}
\end{equation}

Now, let $\lambda_{1},\dots,\lambda_{n} \in P^{+}$, and set 
$\lambda := \lambda_{1}+\cdots +\lambda_{n}$. 
Take $N_{\lambda_{k}} \in \BZ_{\ge 1}$, $1 \le k \le n$, and 
$N_{\lambda} \in \BZ_{\ge 1}$ as in \eqref{eq:N}, 
and let $N \in \BZ_{\ge 1}$ be a common multiple of $N_{\lambda}$, 
$N_{\lambda_1},\,\dots,\,N_{\lambda_n}$. 
In exactly the same way as above, we see that 
the following diagram is commutative: 
%
%
\begin{equation} \label{eq:CD4}
\begin{split}
\xymatrix{%
 \SLS_{0}(\lambda) 
  \ar[rr]^-{\Xi_{\lambda_1,\dots,\lambda_{n}}} 
  \ar[d]_-{\Sigma_{N}}
  \ar[ddr]^-{\Sigma_{N}'} & &
 (\SLS(\lambda_{1}) \otimes \cdots \otimes \SLS(\lambda_{n}))_{0} 
 \ar[d]^-{\Sigma_{N} \otimes \cdots \otimes \Sigma_{N}} \\
 (\SLS(\lambda)^{\otimes N})_{0} & & 
 (\SLS(\lambda_{1})^{\otimes N} \otimes \cdots \otimes \SLS(\lambda_{n})^{\otimes N})_{0} \\
 & \SLS_{0}(N\lambda), \ar[ru]_{\Xi_{\lambda_1,\cdots,\lambda_n}^{(N)}} 
 \ar[lu]^-{\Xi_{\lambda}^{(N)}} & 
}
\end{split}
\end{equation}
where the map $\Xi_{\lambda_1,\cdots,\lambda_n}^{(N)}$ is 
the isomorphism of crystals given by Theorem~\ref{thm:SMT}, 
applied to the decomposition
\begin{equation} \label{eq:decN}
N\lambda = 
(\underbrace{\lambda_1+\cdots+\lambda_1}_{\text{$N$ times}}) + \cdots +
(\underbrace{\lambda_n+\cdots+\lambda_n}_{\text{$N$ times}}); 
\end{equation}
note that $\Sigma_{N} \otimes \cdots \otimes \Sigma_{N}$ has 
the same properties as \eqref{eq:SN1} and \eqref{eq:SN2}. 
Similarly, we obtain the following commutative diagram:
%
%
\begin{equation} \label{eq:CD5}
\begin{split}
\xymatrix{%
 \QLS(\lambda) 
  \ar[rr]^-{\Theta_{\lambda_1,\dots,\lambda_{n}}} 
  \ar[d]_-{\Sigma_{N}}
  \ar[ddr]^-{\Sigma_{N}'} & &
 \QLS(\lambda_{1}) \otimes \cdots \otimes \QLS(\lambda_{n}))
 \ar[d]^-{\Sigma_{N} \otimes \cdots \otimes \Sigma_{N}} \\
 \QLS(\lambda)^{\otimes N} & & 
 \QLS(\lambda_{1})^{\otimes N} \otimes \cdots \otimes \QLS(\lambda_{n})^{\otimes N} \\
 & \QLS(N\lambda), \ar[ru]_{\Theta_{\lambda_1,\cdots,\lambda_n}^{(N)}} 
 \ar[lu]^-{\Theta_{\lambda}^{(N)}} & 
}
\end{split}
\end{equation}
where the map $\Theta_{\lambda_1,\cdots,\lambda_n}^{(N)}$ is 
the isomorphism of crystals given by Theorem~\ref{thm:NS05}\,(2), 
applied to the decomposition \eqref{eq:decN}. 
%
%
\subsection{Some lemmas concerning final directions.}
\label{subsec:lem2}
Let $\lambda \in P^{+}$, and take $\J=\J_{\lambda}$ as in \eqref{eq:J}. 
For $\pi \in \SLS(\lambda)$ (resp., $\eta \in \QLS(\lambda)$) and $i \in I_{\af}$, 
we set $e_{i}^{\max}\pi:=e_{i}^{\ve_{i}(\pi)}\pi$ 
(resp., $e_{i}^{\max}\eta:=e_{i}^{\ve_{i}(\eta)}\eta$) and 
$f_{i}^{\max}\pi:=f_{i}^{\vp_{i}(\pi)}\pi$ 
(resp., $f_{i}^{\max}\eta:=f_{i}^{\vp_{i}(\eta)}\eta$). 
The next lemma follows from the definition of the root operator $f_{i}$ 
(see also \cite[Remark~41]{NNS2}). 
%
%
\begin{lem} \label{lem:kap} \mbox{}
\begin{enu}
\item Let $\pi \in \SLS(\lambda)$, and $i \in I_{\af}$. 
If $0 \le m < \vp_{i}(\pi)$, then $\kappa(f_{i}^{m}\pi) = \kappa(\pi)$. 
Also, 
\begin{equation*}
\kappa(f_{i}^{\max}\pi) = 
 \begin{cases}
   s_{i}\kappa(\pi) 
   & \text{\rm if $\pair{\kappa(\pi)\lambda}{\alpha_{i}^{\vee}} > 0$}, \\[1mm]
   \kappa(\pi) 
   & \text{\rm if $\pair{\kappa(\pi)\lambda}{\alpha_{i}^{\vee}} \le 0$}.
 \end{cases}
\end{equation*}

\item Let $\eta \in \QLS(\lambda)$, and $i \in I_{\af}$. 
If $0 \le m < \vp_{i}(\eta)$, then $\kappa(f_{i}^{m}\eta) = \kappa(\eta)$. 
Also, 
\begin{equation*}
\kappa(f_{i}^{\max}\eta) = 
 \begin{cases}
   \mcr{\ti{s}_{i}\kappa(\eta)} (\ne \kappa(\eta))
   & \text{\rm if $\pair{\kappa(\eta)\lambda}{\alpha_{i}^{\vee}} > 0$}, \\[1mm]
   \kappa(\eta) 
   & \text{\rm if $\pair{\kappa(\eta)\lambda}{\alpha_{i}^{\vee}} \le 0$}.
 \end{cases}
\end{equation*}
\end{enu}
\end{lem}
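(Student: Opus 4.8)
The plan is to unwind the definition of the root operator $f_i$ from \eqref{eq:fpi} (resp.\ \eqref{eq:feta}) and to track exactly when it alters the \emph{final} direction of a path. Recall that $\vp_{i}(\pi) = H^{\pi}_{i}(1) - m^{\pi}_{i}$ (a nonnegative integer, immediate from the definitions together with $\wt(f_i\pi)=\wt(\pi)-\alpha_i$), so for $0 \le m < \vp_{i}(\pi)$ one has $f_i^{m}\pi \ne \bzero$ and $\vp_{i}(f_i^{m}\pi) = \vp_{i}(\pi) - m \ge 1$. Since $f_i$ modifies directions only on the interval $[t_0,t_1]$ of \eqref{eq:t-f} (resp.\ \eqref{eq:t-f2}), replacing $x_u$ there by $s_i x_u$ (resp.\ $\mcr{\ti{s}_i v_u}$), a short inspection of the drop conventions shows that the final direction is left fixed exactly when $t_1 < 1$, and that when $t_1 = 1$ (which forces $t_1 = \sigma_s$, $q = s-1$) the new final direction is $s_i\kappa(\pi)$ (resp.\ $\mcr{\ti{s}_i\kappa(\eta)}$), a bona fide distinct element by Lemma~\ref{lem:si} (resp.\ its finite-type analogue), since $\pair{\kappa(\pi)\lambda}{\alpha_i^{\vee}} \ne 0$ is forced in that case.

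So the crux is a trichotomy for any path $\pi$ with $\vp_{i}(\pi) \ge 1$: (i) if $\pair{\kappa(\pi)\lambda}{\alpha_i^{\vee}} \le 0$, then $t_1 < 1$; (ii) if $\pair{\kappa(\pi)\lambda}{\alpha_i^{\vee}} > 0$ and $\vp_{i}(\pi) \ge 2$, then $t_1 < 1$; (iii) if $\pair{\kappa(\pi)\lambda}{\alpha_i^{\vee}} > 0$ and $\vp_{i}(\pi) = 1$, then $t_1 = 1$. In case (i) the slope of $H^{\pi}_{i}$ on the last segment $[\sigma_{s-1},1]$ is $\le 0$ and $H^{\pi}_{i}(1) = m^{\pi}_{i} + \vp_{i}(\pi) \ge m^{\pi}_{i} + 1$, so $H^{\pi}_{i} \ge m^{\pi}_{i} + 1$ throughout that segment; hence $t_0 < \sigma_{s-1}$ and $H^{\pi}_{i}$ reaches level $m^{\pi}_{i} + 1$ no later than $\sigma_{s-1}$, giving $t_1 \le \sigma_{s-1} < 1$. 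In case (ii), $H^{\pi}_{i}(1) \ge m^{\pi}_{i} + 2$, so $H^{\pi}_{i}$ passes level $m^{\pi}_{i} + 1$ strictly before $t=1$ by continuity. Case (iii) is the delicate one: there $H^{\pi}_{i}$ is strictly increasing on $[\sigma_{s-1},1]$ up to $H^{\pi}_{i}(1) = m^{\pi}_{i} + 1$, so $H^{\pi}_{i} < m^{\pi}_{i} + 1$ on $[\sigma_{s-1},1)$; were level $m^{\pi}_{i}+1$ attained at some $t' \le \sigma_{s-1}$ (necessarily $t'>t_0$), then on $[t',1]$ the function $H^{\pi}_{i}$ would be $\ge m^{\pi}_{i}+1$ — its endpoint values are $m^{\pi}_{i}+1$ and every interior local minimum is an integer $> m^{\pi}_{i}$, hence $\ge m^{\pi}_{i}+1$, by the integrality of local minima of (affine) LS paths — contradicting $H^{\pi}_{i}(\sigma_{s-1}) < m^{\pi}_{i}+1$; so $t_1 = 1$.

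Granting the trichotomy, I would finish by induction on $m$. If $\pair{\kappa(\pi)\lambda}{\alpha_i^{\vee}} \le 0$, then each intermediate path $f_i^{m}\pi$ still has final direction $\kappa(\pi)$ and $\vp_{i}\ge 1$ whenever $m<\vp_{i}(\pi)$, so case (i) applies repeatedly and $\kappa(f_i^{m}\pi) = \kappa(\pi)$ for all $0 \le m \le \vp_{i}(\pi)$ — in particular for $f_i^{\max}\pi$, matching the $\le 0$ clause. If $\pair{\kappa(\pi)\lambda}{\alpha_i^{\vee}} > 0$, the same induction via case (ii) gives $\kappa(f_i^{m}\pi) = \kappa(\pi)$ as long as $\vp_{i}(f_i^{m}\pi) \ge 2$, i.e.\ for all $m < \vp_{i}(\pi)$; one further application of $f_i$ to $f_i^{\vp_{i}(\pi)-1}\pi$, which has final direction $\kappa(\pi)$ and $\vp_{i}=1$, lands in case (iii) and produces final direction $s_i\kappa(\pi)$ (resp.\ $\mcr{\ti{s}_i\kappa(\eta)}$). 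The $\QLS(\lambda)$ statement is proved word for word the same way, using \eqref{eq:feta}, \eqref{eq:t-f2}, and the fact that $\QLS(\lambda) = \BB(\lambda)_{\cl}$ consists of projected LS paths so that the integrality of local minima is again available.

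The step I expect to be the main obstacle is case (iii) of the trichotomy: the naive intermediate-value argument does not pin down $t_1$, and one genuinely needs the integrality of the local minima of $H^{\pi}_{i}$ to exclude an early crossing of level $m^{\pi}_{i}+1$. The remaining effort is the purely bookkeeping check that, in the displays \eqref{eq:fpi}/\eqref{eq:feta} with their drop conventions, ``$t_1 = 1$'' really does propagate an $s_i$ onto the final direction.
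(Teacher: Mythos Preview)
Your argument is correct and is precisely the approach the paper intends: the paper itself offers no proof of this lemma, merely asserting that it ``follows from the definition of the root operator $f_{i}$ (see also \cite[Remark~41]{NNS2}),'' and you have carried out that verification in detail. The trichotomy on $t_{1}$, the use of the integrality of local minima to rule out an early crossing in case~(iii), and the bookkeeping with the drop conventions in \eqref{eq:fpi}/\eqref{eq:feta} are exactly what is needed.
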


We can show the following lemma 
by Lemma~\ref{lem:kap}, \eqref{eq:cl}, and 
the definition of $\pi_{\eta}$ (cf. \cite[Remark~4.4]{LNSSS1}). 
%
%
\begin{lem} \label{lem:rec}
Let $\eta \in \QLS(\lambda)$, and $i \in I_{\af}$. 
Then
\begin{align*}
& \pi_{f_{i}^{m}\eta}=f_{i}^{m}\pi_{\eta} \quad 
  \text{\rm for all $0 \le m < \vp_{i}(\eta)$}, \\[2mm]
& \pi_{f_{i}^{\max}\eta} = 
\begin{cases}
f_{i}^{\max}\pi_{\eta} & \text{\rm if $i \ne 0$, or if 
$i = 0$ and $\pair{\kappa(\eta)\lambda}{\alpha_{0}^{\vee}} \le 0$}, \\
f_{0}^{\max}\pi_{\eta} \cdot T_{-\kappa(\eta)^{-1}\ti{\alpha}_{0}^{\vee}}
 & \text{\rm if $i=0$ and $\pair{\kappa(\eta)\lambda}{\alpha_{0}^{\vee}} > 0$}. 
\end{cases}
\end{align*}
\end{lem}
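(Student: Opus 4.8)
The plan is to argue case-by-case on the behavior of the function $H^{\eta}_{i}$ near $t = 1$, exploiting the compatibility \eqref{eq:cl} between $\cl$ and the root operators, together with the uniqueness assertion in Lemma~\ref{lem:deg}. The basic strategy is this: since $\cl(f_{i}^{m}\pi_{\eta}) = f_{i}^{m}\cl(\pi_{\eta}) = f_{i}^{m}\eta$ for any $m$ with $f_{i}^{m}\eta \ne \bzero$ (using $\cl(\pi_{\eta}) = \eta$ and the fact that $\vp_{i}(\pi_{\eta}) = \vp_{i}(\eta)$ by \eqref{eq:cl}), and since $f_{i}^{m}\pi_{\eta} \in \SLS_{0}(\lambda)$ because $\pi_{\eta} \in \SLS_{0}(\lambda)$, the characterization of $\pi_{f_{i}^{m}\eta}$ in Lemma~\ref{lem:deg} reduces everything to checking that $\kappa(f_{i}^{m}\pi_{\eta}) = \kappa(f_{i}^{m}\eta)$ as elements of $\WJu$. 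For the twisted last case, the same reasoning applies to $f_{0}^{\max}\pi_{\eta} \cdot T_{-\kappa(\eta)^{-1}\ti{\alpha}_{0}^{\vee}}$, using Lemma~\ref{lem:Txi} (which says $\cdot\, T_{\xi}$ commutes with root operators and preserves $\ve_{i},\vp_{i}$) and Remark~\ref{rem:Txi} (which says $\cdot\, T_{\xi}$ preserves $\SLS_{0}(\lambda)$); one must also check that applying $\cl$ to this element still gives $f_{0}^{\max}\eta$, which follows since $\cl(\pi \cdot T_{\xi})$ and $\cl(\pi)$ have the same final direction in $\WJu$ (the translation part dies under $\cl$).

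First I would dispose of the case $0 \le m < \vp_{i}(\eta)$: by Lemma~\ref{lem:kap}\,(1), $\kappa(f_{i}^{m}\pi_{\eta}) = \kappa(\pi_{\eta})$, and by Lemma~\ref{lem:kap}\,(2), $\kappa(f_{i}^{m}\eta) = \kappa(\eta)$; since by definition $\kappa(\pi_{\eta}) = \kappa(\eta)$ (as elements of $\WJu$, via the projection $\cl : (\WJu)_{\af} \twoheadrightarrow \WJu$, noting $\kappa(\pi_\eta) \in (\WJu)_{\af}$ projects to $\kappa(\eta)$), these match, so Lemma~\ref{lem:deg} forces $\pi_{f_{i}^{m}\eta} = f_{i}^{m}\pi_{\eta}$. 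Next, for $m = \vp_{i}(\eta)$ with either $i \ne 0$, or $i = 0$ and $\pair{\kappa(\eta)\lambda}{\alpha_{0}^{\vee}} \le 0$: by Lemma~\ref{lem:kap}, $\kappa(f_{i}^{\max}\pi_{\eta}) = \kappa(f_{i}^{\max}\eta)$ when $\pair{\kappa(\eta)\lambda}{\alpha_{i}^{\vee}} \le 0$ (both equal $\kappa(\eta)$), and when $\pair{\kappa(\eta)\lambda}{\alpha_{i}^{\vee}} > 0$ we have $\kappa(f_{i}^{\max}\pi_{\eta}) = s_{i}\kappa(\eta)$ which projects under $\cl$ to $\mcr{\ti{s}_{i}\kappa(\eta)} = \kappa(f_{i}^{\max}\eta)$ — here for $i \ne 0$ we use $\ti{s}_{i} = s_{i}$, and note that the hypothesis in this branch excludes the case $i=0$ with positive pairing. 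So again Lemma~\ref{lem:deg} gives the claim.

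The main obstacle, and the heart of the lemma, is the last case: $i = 0$ and $\pair{\kappa(\eta)\lambda}{\alpha_{0}^{\vee}} > 0$. The difficulty is that $f_{0}^{\max}\pi_{\eta}$ ends at $s_{\theta}\kappa(\eta)$ \emph{times a translation} — more precisely, by Lemma~\ref{lem:kap}\,(1) the final direction of $f_{0}^{\max}\pi_{\eta}$ is $s_{0}\kappa(\pi_{\eta}) \in (\WJu)_{\af}$, and since $s_{0} = s_{\theta}t_{\theta^{\vee}}$ this element is \emph{not} of the form $w$ for $w \in \WJu$ but rather involves a nontrivial translation, so it fails to be $\pi_{f_{0}^{\max}\eta}$ directly (it lies in $\SLS_0(\lambda)$ and projects to $f_0^{\max}\eta$ under $\cl$, but has the wrong final direction in the sense of Lemma~\ref{lem:deg}). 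The fix is to correct by $\cdot\, T_{-\kappa(\eta)^{-1}\ti{\alpha}_{0}^{\vee}}$: I would compute explicitly, using $\kappa(f_0^{\max}\pi_\eta) = s_0 \kappa(\pi_\eta)$ and the description $s_\beta = s_\alpha t_{n\alpha^\vee}$ for $\beta = \alpha + n\delta$ together with Lemma~\ref{lem:PiJ}, that $\cl\bigl(\kappa(f_0^{\max}\pi_\eta)\,\PJ(t_{-\kappa(\eta)^{-1}\ti{\alpha}_0^\vee})\bigr) = \mcr{\ti{s}_0 \kappa(\eta)} = \mcr{s_\theta \kappa(\eta)}$, which by Lemma~\ref{lem:kap}\,(2) equals $\kappa(f_0^{\max}\eta)$. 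Verifying that the translation part cancels correctly — i.e., that $s_\theta \kappa(\eta)$ is precisely the finite Weyl group part recovered after the twist — is the computation I expect to require the most care, and it is exactly the kind of bookkeeping with $t_{\xi}$, $\PJ$, and $\cl$ on $(\WJu)_{\af}$ that the preparatory lemmas (Lemma~\ref{lem:PiJ}, the semidirect product decomposition, and \eqref{eq:tis}) are designed to support. Once the final directions match, Lemma~\ref{lem:deg} closes the argument as before.
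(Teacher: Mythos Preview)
Your proposal is correct and takes essentially the same approach as the paper, which does not give a detailed proof but simply cites Lemma~\ref{lem:kap}, \eqref{eq:cl}, and the definition of $\pi_{\eta}$ (Lemma~\ref{lem:deg}) as the ingredients. Your fleshed-out argument --- verifying for each candidate that it lies in $\SLS_{0}(\lambda)$, projects to the correct QLS path under $\cl$, and has the correct final direction in $\WJu$, then invoking uniqueness --- is exactly the intended route, and your computation in the last case that $s_{0}\kappa(\eta)\,\PJ(t_{-\kappa(\eta)^{-1}\ti{\alpha}_{0}^{\vee}}) = \mcr{s_{\theta}\kappa(\eta)}$ via $s_{0} = s_{\theta}t_{-\theta^{\vee}}$ and Lemma~\ref{lem:PiJ} is the correct bookkeeping.
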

%
%
\begin{lem} \label{lem:conn}
Let $w \in W$. For each $\eta \in \QLS(\lambda)$, 
there exist $i_{1},\,i_{2},\,\dots,\,i_{n} \in I_{\af}$ 
such that 
$f_{i_n}^{\max} \cdots f_{i_2}^{\max}f_{i_1}^{\max}\eta = \eta_{\lambda}^{\mcr{\lng}}$, and 
$(\ti{s}_{i_{k-1}} \cdots \ti{s}_{i_2}\ti{s}_{i_1}w)^{-1}\ti{\alpha}_{i_{k}} \in \Delta^{+}$ 
for all $1 \le k \le n$.
\end{lem}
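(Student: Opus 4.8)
The plan is to prove the statement by induction, peeling off one index at a time. Given $\eta \in \QLS(\lambda)$ with $\eta \neq \eta_{\lambda}^{\mcr{\lng}}$ and $w \in W$, I will produce an index $i \in I_{\af}$ with $w^{-1}\ti{\alpha}_{i} \in \Delta^{+}$ such that the pair $(f_{i}^{\max}\eta,\,\ti{s}_{i}w)$ is strictly smaller than $(\eta,w)$ for a suitable well-founded order on $\QLS(\lambda) \times W$; applying the inductive hypothesis to $(f_{i}^{\max}\eta,\,\ti{s}_{i}w)$ yields indices $i_{2},\dots,i_{n}$, and prepending $i_{1}:=i$ gives the required sequence. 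The base case is $\eta = \eta_{\lambda}^{\mcr{\lng}}$, where we take $n = 0$ (there is nothing to check).

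First I would isolate the elementary moves and their effect on the relevant invariants. By Lemma~\ref{lem:kap}(2), if $i \in I$ and $\pair{\kappa(\eta)\lambda}{\alpha_{i}^{\vee}} > 0$, then $\kappa(f_{i}^{\max}\eta) = \mcr{\ti{s}_{i}\kappa(\eta)}$; since in that case $\kappa(\eta)^{-1}\alpha_{i} \in \Delta^{+}\setminus\DeJ^{+}$ by Proposition~\ref{prop:beta} and Lemma~\ref{lem:si}, this is a length-raising move for the final direction. Moreover, by Lemma~\ref{lem:rec}, for $i \neq 0$ one has $\pi_{f_{i}^{\max}\eta} = f_{i}^{\max}\pi_{\eta}$, so finite moves do not change $\deg_{\lambda}$; whereas the move $i = 0$ (when $\pair{\kappa(\eta)\lambda}{\alpha_{0}^{\vee}} > 0$) replaces $\pi_{\eta}$ by a $Q^{\vee}$-translate of $f_{0}^{\max}\pi_{\eta}$, lowers $\deg_{\lambda}$, and sends $\kappa(\eta)$ to $\mcr{\ti{s}_{0}\kappa(\eta)}$, whose length is \emph{smaller}. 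Thus finite moves push $\kappa(\eta)$ up toward $\mcr{\lng}$, while the move $i = 0$ is the one that must be used when $w$ offers no admissible finite index, at the cost of temporarily undoing progress on $\kappa(\eta)$. Already for straight paths $\eta = \eta_{\lambda}^{v}$ one sees both phenomena: if $w$ is such that some $i$ with $\pair{v\lambda}{\alpha_{i}^{\vee}}>0$ is a left ascent of $w$ one climbs directly, but when $w$ is close to $\lng$ the only admissible index is $0$, forcing a detour (a ``reset'' of $w$) before one can resume climbing.

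Because this detour is unavoidable in general, the induction measure must combine data from $\eta$ and $w$. I would take a lexicographically ordered tuple whose leading entry records how far $w^{-1}\kappa(\eta)$ sits below $w^{-1}\mcr{\lng}$ (this entry is the one that drops on ``no-op'' moves, i.e. moves by a left ascent $i$ of $w$ with $\vp_{i}(\eta) = 0$, which change $w$ without changing $\eta$), with lower-order entries given by $\ell(\mcr{\lng}) - \ell(\kappa(\eta))$ together with a ``height of $\eta$ above the straight path $\eta_{\lambda}^{\kappa(\eta)}$'' assembled from $-\deg_{\lambda}(\eta)$ and the number $s$ of breakpoints. The inductive step is then a case analysis: (a) if some finite $i$ with $\pair{\kappa(\eta)\lambda}{\alpha_{i}^{\vee}} > 0$ satisfies $w^{-1}\alpha_{i} \in \Delta^{+}$, use it — by Lemma~\ref{lem:kap}(2) and Lemma~\ref{lem:rec} it lowers $\ell(\mcr{\lng})-\ell(\kappa(\eta))$ while keeping $\deg_{\lambda}$; (b) otherwise, if some finite left ascent $i$ of $w$ has $\vp_{i}(\eta) \geq 1$, use it (it lowers a lower-order entry, again using that finite moves preserve $\deg_{\lambda}$); (c) if neither holds, show that $i = 0$ is admissible and productive and that this move lowers the leading entry. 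Throughout, the identities for $\ell(w \Rightarrow \,\cdot\,)$ and the behaviour of the tilted minima under left multiplication by $\ti{s}_{i}$ when $w^{-1}\ti{\alpha}_{i} \in \Delta^{+}$ — exactly the content of Lemmas~\ref{lem:dia}, \ref{lem:edge}, \ref{lem:tb} of Section~\ref{subsec:lem1} — are what allow the statistics to be tracked along each move.

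The step I expect to be the main obstacle is twofold. The first point is pinning down a \emph{single} well-founded measure for which finite length-raising moves, finite height-lowering moves, \emph{and} the forced $i = 0$ detour (together with the no-op moves used to maneuver $w$ into a favourable position) all register as strict decreases: this is delicate precisely because the $i = 0$ move makes $\ell(\kappa(\eta))$, $-\deg_{\lambda}(\eta)$, and $\ell(w)$ all move in the ``wrong'' direction simultaneously. The second point is the residual case (c): one must show that when no finite index is at once admissible for $w$ and effective on $\eta$, a bounded sequence of admissible no-op moves brings $w$ to a configuration from which $i = 0$ (or some finite index) is admissible \emph{and} effective — here one uses that $\eta \neq \eta_{\lambda}^{\mcr{\lng}}$ forces $\vp_{i}(\eta) \geq 1$ for some $i \in I_{\af}$, combined with the ascent/descent combinatorics of $W$ and Lemma~\ref{lem:kap}(2), Lemma~\ref{lem:rec} applied at $i = 0$, to rule out an infinite regress of no-op moves.
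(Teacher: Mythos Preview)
Your proposal has a genuine gap: you never actually define the well-founded measure, and you explicitly flag the two obstacles (pinning down a single measure that decreases under all three kinds of moves, and resolving case~(c)) without resolving either. The difficulty is real --- the $i=0$ move can simultaneously worsen $\ell(\kappa(\eta))$, $-\deg_{\lambda}(\eta)$, and $\ell(w)$, so no lexicographic combination of these alone will work, and your suggested leading entry (``how far $w^{-1}\kappa(\eta)$ sits below $w^{-1}\mcr{\lng}$'') is not made precise enough to check that it is well-defined, bounded below, and strictly decreasing under the relevant moves. As written, the argument does not terminate.

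The paper's proof avoids this entirely by \emph{decoupling} $w$ from $\eta$ and proceeding in three phases. First, it invokes \cite[Lemma~1.4]{AK} to obtain indices $i_{1},\dots,i_{a}$ satisfying the positivity condition with $\ti{s}_{i_{a}}\cdots\ti{s}_{i_{1}}w = e$, applying the corresponding $f_{i}^{\max}$ to $\eta$ along the way without tracking the effect on $\eta$ at all. Second, with the running element of $W$ now equal to $e$, it appends the simple reflections from a reduced expression for $\lng$ (all indices in $I$, so the positivity condition is automatic from reducedness); by regularity of the crystal $\QLS(\lambda)$ and \cite[Corollaire~9.1.4\,(2)]{KasF}, the resulting $\eta''$ satisfies $f_{i}\eta'' = \bzero$ for all $i \in I$. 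Third, with the running element now at $\lng$, it applies the Lusztig involution to convert the remaining task into an $e_{i}^{\max}$ problem starting from an element with $e_{i}\Lus(\eta'') = \bzero$ for all $i \in I$, and invokes \cite[Proposition~4.3.1]{NS08}; conjugation by $\lng$ converts the $\Delta^{-}$ condition produced there into the required $\Delta^{+}$ condition. The point is that phases~1--2 manipulate only the $W$-parameter and phase~3 manipulates only the crystal element, so no combined measure on $\QLS(\lambda)\times W$ is ever needed.
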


\begin{proof}
It follows from \cite[Lemma~1.4]{AK} that 
there exist $i_{1},i_{2},\dots,i_{a} \in I_{\af}$ such that 
$(\ti{s}_{i_{k-1}} \cdots \ti{s}_{i_2}\ti{s}_{i_1}w)^{-1}\ti{\alpha}_{i_{k}} 
\in \Delta^{+}$ for all $1 \le k \le a$, and 
$\ti{s}_{i_{a}} \cdots \ti{s}_{i_2}\ti{s}_{i_1}w = e$; 
we set $\eta':=
f_{i_a}^{\max} \cdots f_{i_2}^{\max}f_{i_1}^{\max}\eta$. 
Take $i_{a+1},\,i_{a+2},\,\dots,\,i_{b} \in I$ 
in such a way that 
$\lng = s_{i_{b}} \cdots s_{i_{a+2}}s_{i_{a+1}}$
is a reduced expression for $\lng$. Then, for $a+1 \le k \le b$,
\begin{equation*}
(\ti{s}_{i_{k-1}} \cdots \ti{s}_{i_{a+1}}
 \underbrace{\ti{s}_{i_{a}} \cdots \ti{s}_{i_1}w}_{=e})^{-1}\ti{\alpha}_{i_{k}} = 
(\ti{s}_{i_{k-1}} \cdots \ti{s}_{i_{a+1}})^{-1}\ti{\alpha}_{i_{k}} = 
s_{i_{a+1}} \cdots s_{i_{k-1}}\alpha_{i_{k}} \in \Delta^{+}, 
\end{equation*}
and $\ti{s}_{i_{b}} \cdots \ti{s}_{i_{a+1}} 
\ti{s}_{i_{a}} \cdots \ti{s}_{i_1}w = \lng$. 
Here we recall that the crystal $\QLS(\lambda) = \BB(\lambda)_{\cl}$ is regular 
in the sense that for every proper subset $J \subsetneqq I$, 
it is isomorphic, as a crystal for $U_{q}(\Fg_{J})$, to 
the crystal basis of a finite-dimensional $U_{q}(\Fg_{J})$-module, 
where $\Fg_{J}$ is the (finite-dimensional) Levi subalgebra of $\Fg_{\af}$ corresponding to $J$
(see \cite[Proposition~3.1.3]{NS05}). 
Therefore, we deduce from \cite[Corollaire~9.1.4\,(2)]{KasF} that
$\eta'':=
f_{i_b}^{\max} \cdots f_{i_{a+1}}^{\max}
f_{i_{a}}^{\max} \cdots f_{i_1}^{\max}\eta = 
f_{i_b}^{\max} \cdots f_{i_{a+1}}^{\max}\eta'$
satisfies the condition that $f_{i}\eta'' = \bzero$ for all $i \in I$. 

We know from \cite[Sect.~4.5]{LNSSS2} that 
there exists a (unique) involution $\Lus:\QLS(\lambda) \rightarrow \QLS(\lambda)$ 
(called the Lusztig involution) such that 
$\wt (\Lus(\psi)) = \lng \wt (\psi)$ for $\psi \in \QLS(\lambda)$, 
and $\Lus(e_{i}\psi)=f_{i}\Lus (\psi)$, $\Lus(f_{i}\psi)=e_{i}\Lus (\psi)$ 
for $\psi \in \QLS(\lambda)$ and $i \in I_{\af}$; 
by convention, we set $\Lus(\bzero):=\bzero$. 
Observe that $\Lus(\eta_{\lambda}) = \eta_{\lambda}^{\mcr{\lng}}$. 
We see that $e_{i}\Lus(\eta'')=\bzero$ for all $i \in I$. 
It follows from \cite[Proposition~4.3.1]{NS08} that
there exist $i_{b+1},\,i_{b+2},\,\dots,\,i_{n} \in I_{\af}$ such that 
$e_{i_n}^{\max} \cdots e_{i_{b+2}}^{\max}e_{i_{b+1}}^{\max}\Lus(\eta'') = \eta_{\lambda}$, and 
$(\ti{s}_{i_{k-1}} \cdots \ti{s}_{i_{b+2}}\ti{s}_{i_{b+1}})^{-1}\ti{\alpha}_{i_{k}} \in \Delta^{-}$ 
for all $b+1 \le k \le n$. 
Since $\Lus(\eta_{\lambda})=\eta_{\lambda}^{\mcr{\lng}}$ and 
$e_{i}\Lus(\eta'')=\bzero$ for all $i \in I$, 
we have $f_{i_n}^{\max} \cdots f_{i_{b+2}}^{\max}f_{i_{b+1}}^{\max}\eta'' = \eta_{\lambda}^{\mcr{\lng}}$, 
and $(\ti{s}_{i_{k-1}} \cdots \ti{s}_{i_{b+2}}\ti{s}_{i_{b+1}}\lng)^{-1}\ti{\alpha}_{i_{k}} \in \Delta^{+}$ 
for all $b+1 \le k \le n$. Thus, the sequence 
$i_{1},\,\dots,\,i_{a},\,i_{a+1},\,\dots,\,i_{b},\,i_{b+1},\,\dots,\,i_{n} \in I_{\af}$ 
satisfies the condition of the assertion. This proves the lemma. 
\end{proof}
%
%
\begin{lem} \label{lem:kappa}
Let $\lambda_{1},\,\dots,\,\lambda_{n} \in P^{+}$, and set 
$\lambda:=\lambda_{1} + \cdots + \lambda_{n}$. 
Take $\J_{n}:=\J_{\lambda_{n}}$ and $\J=\J_{\lambda}$ as in \eqref{eq:J}{\rm;}
note that $\J \subset \J_{n}$. 
Let $\eta \in \QLS(\lambda)$, and write $\Theta_{\lambda_{1},\dots,\lambda_{n}}(\eta)$ as 
$\Theta_{\lambda_{1},\dots,\lambda_{n}}(\eta) = \eta_{1} \otimes \cdots \otimes \eta_{n}$ 
for some $\eta_{k} \in \QLS(\lambda_{k})$, $1 \le k \le n$. Then, 
$\mcr{\kappa(\eta)}^{\J_{n}}=\kappa(\eta_{n})$. 
\end{lem}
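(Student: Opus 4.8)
The plan is to reduce to the case $n = 2$ (i.e. $\lambda = \lambda_1 + \lambda_n'$ with $\lambda_n' = \lambda_n$), since the general statement follows from the associativity of $\Theta$ recorded in Remark~\ref{rem:ass2}: writing $\Theta_{\lambda_1,\dots,\lambda_n} = (\id^{\otimes(n-2)} \otimes \Theta_{\lambda_{n-1},\lambda_n}) \circ \cdots$, the final tensor factor $\eta_n$ is produced by the last application $\Theta_{\mu,\lambda_n}$ with $\mu = \lambda_1 + \cdots + \lambda_{n-1}$, so it suffices to prove $\mcr{\kappa(\eta)}^{\J_{\lambda_n}} = \kappa(\eta_n)$ when $\Theta_{\mu,\lambda_n}(\eta) = \eta' \otimes \eta_n$. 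So assume henceforth $n = 2$, write $\mu$ for $\lambda_1$ and $\nu$ for $\lambda_2 = \lambda_n$, set $\J_\nu = \J_{\lambda_n}$ and $\J = \J_\lambda \subset \J_\nu$, and we must show $\mcr{\kappa(\eta)}^{\J_\nu} = \kappa(\eta_2)$ where $\Theta_{\mu\nu}(\eta) = \eta_1 \otimes \eta_2$.

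The main tool will be a connectedness/induction argument using the root operators, exactly in the spirit of Lemma~\ref{lem:conn}. Since $\QLS(\lambda)$ is connected (Theorem~\ref{thm:NS05}\,(1)), every $\eta$ is obtained from $\eta_\lambda = (e;0,1)$ by applying a sequence of root operators $f_i$, and the base case is immediate: $\Theta_{\mu\nu}(\eta_\lambda) = \eta_\mu \otimes \eta_\nu$, so $\eta_2 = \eta_\nu = (e;0,1)$, $\kappa(\eta_2) = e$, while $\kappa(\eta_\lambda) = e$ and $\mcr{e}^{\J_\nu} = e$. For the inductive step I will show that if $\mcr{\kappa(\eta)}^{\J_\nu} = \kappa(\eta_2)$ holds for some $\eta$ (with $\Theta_{\mu\nu}(\eta) = \eta_1 \otimes \eta_2$), then it holds for $f_i \eta$ whenever $f_i \eta \ne \bzero$, for every $i \in I_{\af}$. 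By the tensor product rule for crystals, $\Theta_{\mu\nu}(f_i \eta) = f_i(\eta_1 \otimes \eta_2)$ equals either $(f_i \eta_1) \otimes \eta_2$ or $\eta_1 \otimes (f_i \eta_2)$, according to whether $\vp_i(\eta_1) > \ve_i(\eta_2)$ or $\vp_i(\eta_1) \le \ve_i(\eta_2)$. In the first case the second tensor factor is unchanged, and one must check that the final direction $\kappa(\eta)$ projects the same way before and after — here the point is that, since $f_i$ acts on the \emph{left} (first) factor, the final direction of $\eta$ is modified only through $\kappa(f_i\eta)$ versus $\kappa(\eta)$, and Lemma~\ref{lem:kap}\,(2) describes exactly how: either $\kappa$ is unchanged, or it becomes $\mcr{\ti s_i \kappa(\eta)}$; combined with the fact that in this case the "$f_i$-active" portion of $\eta$ lies strictly before its final segment, one gets $\mcr{\kappa(f_i\eta)}^{\J_\nu} = \mcr{\kappa(\eta)}^{\J_\nu}$. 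In the second case, $\eta_2$ is replaced by $f_i \eta_2$, and $\kappa(\eta)$ is governed by the final segment, which is precisely where $\eta_2$'s contribution sits; one then needs $\mcr{\kappa(f_i\eta)}^{\J_\nu} = \mcr{\ti s_i\,\kappa(\eta_2)}^{\J_\nu} = \kappa(f_i \eta_2)$ (using Lemma~\ref{lem:kap}\,(2) again, for $\QLS(\nu)$).

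To make the bookkeeping on "which segment is active" clean, I would invoke the similarity/multiplication maps $\Sigma_N$ of Section~\ref{subsec:sim} to reduce to the case where $\eta$, $\eta_1$, $\eta_2$ all have a single step, i.e. are of the form $\eta_\lambda^v$; this is legitimate because $\Sigma_N$ commutes with $\Theta$ (diagram~\eqref{eq:CD5}) and intertwines $f_i$ with $f_i^N$ (Proposition~\ref{prop:sim1}-type properties), and the projection $\mcr{\cdot}^{\J_\nu}$ of the final direction is unaffected by $\Sigma_N$. After this reduction the induction step reads: if $\Theta_{\mu\nu}(\eta_\lambda^v) = \eta_\mu^{v_1} \otimes \eta_\nu^{v_2}$ with $\mcr{v}^{\J_\nu} = v_2$, then the same holds after applying $f_i^{\max}$, which is exactly the kind of assertion that Lemma~\ref{lem:kap} and the tensor rule handle directly, and one reaches $\eta_\lambda^{\mcr{\lng}}$ by Lemma~\ref{lem:conn}.

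The hard part will be the careful case analysis at the boundary between the two tensor factors in the step case — precisely, verifying that when $f_i$ acts on the first factor the resulting change (if any) in $\kappa(\eta)$ is absorbed by the projection $\mcr{\cdot}^{\J_\nu}$, and dually that when $f_i$ acts on the second factor $\kappa(\eta)$ tracks $\kappa(\eta_2)$ faithfully under projection. This is where the hypothesis $\J \subset \J_\nu$ is essential: the "extra" reflections $\ti s_i$ that can appear when passing to $f_i^{\max}$ and that alter $\kappa(\eta)$ within its $\WJ$-coset may still be nontrivial modulo $\WS{\nu}$, so one must use Lemma~\ref{lem:kap}'s precise formula (the case distinction on the sign of $\pair{\kappa\lambda}{\alpha_i^\vee}$) together with the analogous formula applied to $\eta_2 \in \QLS(\nu)$, and check the two sign conditions are compatible. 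I expect the cleanest route is to handle $i \in I$ and $i = 0$ uniformly via the $\ti s_i, \ti\alpha_i$ notation of \eqref{eq:tis}, mirroring the structure of the proof of Lemma~\ref{lem:conn}.
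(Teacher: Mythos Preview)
Your overall strategy---reduce to $n=2$ via Remark~\ref{rem:ass2}, then induct on the length of a monomial $X$ in the $f_i$'s with $\eta = X\eta_\lambda$---is essentially the paper's approach (the paper works directly with general $n$, but this is cosmetic). However, the inductive step as you describe it has a real gap.

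Your case analysis splits on which tensor factor $f_i$ hits, and in the first case you argue that ``the $f_i$-active portion of $\eta$ lies strictly before its final segment.'' This relies on picturing $\Theta_{\mu\nu}$ as concatenation, which it is not; there is no a~priori reason why $f_i$ acting on $\eta_1$ forces $\kappa(\eta)$ to stay fixed. In fact $\kappa(\eta)$ can change in this case: it changes exactly when $\vp_i(\eta)=1$ and $\pair{\kappa(\eta)\lambda}{\alpha_i^\vee}>0$ (this is Lemma~\ref{lem:kap}\,(2), and is the paper's fact~(c)). What one must then show is that whenever this happens \emph{and} $\eta_2$ is unchanged, the change in $\kappa(\eta)$ is absorbed by $\mcr{\cdot}^{\J_\nu}$, i.e.\ $\kappa(\eta)^{-1}\ti\alpha_i\in\DeS{\nu}$. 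This is not automatic: it requires the companion equivalence for the last factor (the paper's fact~(d)), namely that $\kappa(\eta_2)$ changes iff $\vp_i(\eta_1\otimes\eta_2)=1$ and $\pair{\kappa(\eta_2)\nu}{\alpha_i^\vee}>0$, whose proof uses the tensor product rule in a slightly delicate way. Once (c) and~(d) are in hand, the paper's organization---split on whether $\kappa(\eta)$ changes rather than on which factor $f_i$ hits---makes the verification clean. Your split can be made to work too, but only after establishing (c) and~(d); the concatenation heuristic does not substitute for them.

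Two smaller points. First, the fact that every $\eta$ is reached from $\eta_\lambda$ by a monomial in the $f_i$ alone (not a mixture of $e_i$'s and $f_i$'s) is not a consequence of mere connectedness; it is the content of \cite[Proposition~4.3.1]{NS08}, which the paper cites. Second, the proposed use of $\Sigma_N$ to ``reduce to single-step paths'' does not help here: $\Sigma_N(\eta)$ is a tensor product of single-step paths, not itself single-step, and $\Sigma_N'(\eta)$ has the same step structure as $\eta$. Proving the lemma only for $\eta=\eta_\lambda^v$ (where it is trivial since $\Theta(\eta_\lambda^v)=\eta_\mu^{\mcr{v}^{\J_\mu}}\otimes\eta_\nu^{\mcr{v}^{\J_\nu}}$) does not yield the general case.
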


\begin{proof}
We deduce from \cite[Proposition~4.3.1]{NS08} (along with Remark~\ref{rem:LScl}) 
that there exists a monomial 
$X=f_{i_m}f_{i_{m-1}} \cdots f_{i_1}$ in root operators $f_{i}$, $i \in I_{\af}$, 
such that $\eta = X \eta_{\lambda}$. 
We prove the assertion of the lemma 
by induction on $m$. If $m=0$, then the assertion is obvious. 
Assume that $m > 0$, and set $X':=f_{i_{m-1}} \cdots f_{i_1}$ and $i:=i_{m}$; 
we have $X=f_{i}X'$. We set $\eta':=X' \eta_{\lambda}$, and write 
$X' (\eta_{\lambda_1} \otimes \cdots \otimes \eta_{\lambda_{n}}) = 
\eta_{1}' \otimes \cdots \otimes \eta_{n}'$ 
for some $\eta_{k}' \in \QLS(\lambda_{k})$, $1 \le k \le n$. 
Note that $\eta=f_{i}\eta'$ and $\eta_{1} \otimes \cdots \otimes \eta_{n} = 
f_{i}(\eta_{1}' \otimes \cdots \otimes \eta_{n}')$. Hence
\begin{enu}
\item[(a)] $\eta_{n}$ is identical to either $\eta_{n}'$ or $f_{i}\eta_{n}'$ 
by the tensor product rule for crystals; 
\item[(b)] if $\eta_{n} = \eta_{n}'$ (resp., $\eta_{n}=f_{i}\eta_{n}'$), then 
$\kappa(\eta_{n})$ is identical to $\kappa(\eta_{n}')$ (resp., 
either $\kappa(\eta_{n}')$ or $\mcr{\ti{s}_{i}\kappa(\eta_{n}')}^{S_n}$). 
\end{enu}
Also, we claim that
\begin{enu}
\item[(c)] $\kappa(\eta)=
\mcr{\ti{s}_{i}\kappa(\eta')}^{\J} \ne \kappa(\eta')$ if and only if
$\vp_{i}(\eta')=1$ and $\pair{\kappa(\eta')\lambda}{\alpha_{i}^{\vee}} > 0$;

\item[(d)] 
$\kappa(\eta_{n})=
\mcr{\ti{s}_{i}\kappa(\eta_{n}')}^{\J_{n}} \ne \kappa(\eta_{n}')$ if and only if
$\vp_{i}(\eta_{1}' \otimes \cdots \otimes \eta_{n}') = 1$ and 
$\pair{\kappa(\eta_{n}')\lambda_{n}}{\alpha_{i}^{\vee}} > 0$.
\end{enu}
Indeed, (c) is obvious by Lemma~\ref{lem:kap}\,(2). 
Let us show the ``if'' part of (d). 
Since $\pair{\kappa(\eta_{n}')\lambda_{n}}{\alpha_{i}^{\vee}} > 0$, 
we see from the definition of $f_{i}$ that $\vp_{i}(\eta_n') \ge 1$. 
By the tensor product rule for crystals, we have
$\vp_{i}(\eta_n') \le \vp_{i}(\eta_{1}' \otimes \cdots \otimes \eta_{n}') = 1$. 
Hence we obtain $\vp_{i}(\eta_n') = 1$. 
Suppose that $\eta_{n} = \eta_{n}'$ (see (a)). 
Then we have $1 = \vp_{i}(\eta_n') = \vp_{i}(\eta_n) \le 
\vp_{i}(\eta_{1} \otimes \cdots \otimes \eta_{n})$, 
which contradicts that
$\vp_{i}(\eta_{1} \otimes \cdots \otimes \eta_{n}) = 
\vp_{i}(\eta_{1}' \otimes \cdots \otimes \eta_{n}') - 1 = 0$. 
Thus we have $\eta_{n} = f_{i} \eta_{n}'$. 
Therefore, by Lemma~\ref{lem:kap}\,(2), we conclude that 
$\kappa(\eta_{n})=
\mcr{\ti{s}_{i}\kappa(\eta_{n}')}^{\J_{n}} \ne \kappa(\eta_{n}')$. 
Let us show the ``only if'' part of (d). 
If $\kappa(\eta_{n})=
\mcr{\ti{s}_{i}\kappa(\eta_{n}')}^{\J_{n}} \ne \kappa(\eta_{n}')$, 
then we have $\eta_{n}=f_{i}\eta_{n}'$ (see (b)), 
$\vp_{i}(\eta_{n}') = 1$, and 
$\pair{\kappa(\eta_{n}')\lambda_{n}}{\alpha_{i}^{\vee}} > 0$ 
by Lemma~\ref{lem:kap}\,(2). 
Since $f_{i}((\eta_{1}' \otimes \cdots \otimes \eta_{n-1}') \otimes \eta_{n}') = 
(\eta_{1}' \otimes \cdots \otimes \eta_{n-1}') \otimes f_{i}\eta_{n}'$, 
we see by the tensor product rule for crystals that
$\vp_{i}(\eta_{1}' \otimes \cdots \otimes \eta_{n-1}') 
\le \ve_{i}(\eta_{n}) = \vp_{i}(\eta_{n}') - 
\pair{\wt \eta_{n}'}{\alpha_{i}^{\vee}}$, 
and hence $\vp_{i}(\eta_{1}' \otimes \cdots \otimes \eta_{n-1}' \otimes \eta_{n}') = 
\max \bigl\{ \vp_{i}(\eta_{n}'),\,
\vp_{i}(\eta_{1}' \otimes \cdots \otimes \eta_{n-1}') + 
\pair{\wt \eta_{n}'}{\alpha_{i}^{\vee}} \bigr\} 
= \vp_{i}(\eta_{n}') =1$. This shows (d). 
Finally, by the induction hypothesis, we have 
%
%
\begin{equation} \label{eq:kappa1}
\mcr{\kappa(\eta')}^{\J_n}=\kappa(\eta_{n}').
\end{equation}
%
\paragraph{Case 1.}
%
Assume that $\kappa(\eta)=\kappa(\eta')$; 
it suffices to show that $\kappa(\eta_{n}') = \kappa(\eta_{n})$. 
We see by (c) that 
$\pair{\kappa(\eta')\lambda}{\alpha_{i}^{\vee}} \le 0$ or 
$\vp_{i}(\eta') \ge 2$. If $\pair{\kappa(\eta')\lambda}{\alpha_{i}^{\vee}} \le 0$, 
then it follows from \eqref{eq:kappa1} that 
$\pair{\kappa(\eta_{n}')\lambda_{n}}{\alpha_{i}^{\vee}} \le 0$, which implies that 
$\kappa(\eta_{n}) = \kappa(\eta_{n}')$ by (d). 
If $\vp_{i}(\eta') \ge 2$, then we have 
$\vp_{i}(\eta_{1}' \otimes \cdots \otimes \eta_{n}') \ge 2$, 
and hence $\kappa(\eta_{n}') = \kappa(\eta_{n})$ by (d). 

\paragraph{Case 2.}
%
Assume that $\kappa(\eta)=\mcr{\ti{s}_{i}\kappa(\eta')}^{\J} \ne \kappa(\eta')$; 
we see that 
\begin{equation} \label{eq:kc2}
\mcr{\kappa(\eta)}^{\J_n} = 
\mcr{\mcr{\ti{s}_{i}\kappa(\eta')}^{\J}}^{\J_n} = 
\mcr{\ti{s}_{i}\kappa(\eta')}^{\J_n} = 
\mcr{ \ti{s}_{i}\mcr{\kappa(\eta')}^{\J_n} }^{\J_n} 
\stackrel{\eqref{eq:kappa1}}{=} \mcr{ \ti{s}_{i}\kappa(\eta_{n}') }^{\J_n}.
\end{equation}
Also, by (c), we have $\vp_{i}(\eta')=1$ and 
$\pair{\kappa(\eta')\lambda}{\alpha_{i}^{\vee}} > 0$. 
Hence, $\vp_{i}(\eta_{1}' \otimes \cdots \otimes \eta_{n}')=\vp_{i}(\eta')=1$. 
We see that 
$\pair{\kappa(\eta_{n}')\lambda_{n}}{\alpha_{i}^{\vee}} 
\stackrel{\eqref{eq:kappa1}}{=}
\pair{\kappa(\eta')\lambda_{n}}{\alpha_{i}^{\vee}} \ge 0$ 
since $\pair{\kappa(\eta')\lambda}{\alpha_{i}^{\vee}} > 0$ implies that 
$\kappa(\eta')^{-1}\ti{\alpha}_{i} \in \Delta^{+}$. 
If $\pair{\kappa(\eta_{n}')\lambda_n}{\alpha_{i}^{\vee}} > 0$, then 
we have $\kappa(\eta_{n})=\mcr{\ti{s}_{i}\kappa(\eta_{n}')}^{\J_n}$ 
by (d), and hence $\mcr{\kappa(\eta)}^{\J_n} = 
\kappa(\eta_{n})$ by \eqref{eq:kc2}. 
If $\pair{\kappa(\eta_{n}')\lambda_n}{\alpha_{i}^{\vee}} = 0$, then 
we have $\mcr{\ti{s}_{i}\kappa(\eta_{n}')}^{\J_n} = \kappa(\eta_{n}')$, 
and also $\kappa(\eta_{n}) = \kappa(\eta_{n}')$ by (d). 
Therefore, by \eqref{eq:kc2}, 
$\mcr{\kappa(\eta)}^{\J_n} = \kappa(\eta_{n}') = \kappa(\eta_{n})$. 
This proves the lemma.
\end{proof}
%
%
\subsection{Proof of Theorem~\ref{thm:main}.}
\label{subsec:prf-main1}
%
%
\begin{prop} \label{prop:s}
Let $\lambda,\,\mu \in P^{+}$, and take $\J_{\lambda}$, $\J_{\mu}$, 
$\J_{\lambda+\mu} \subset I$ as in \eqref{eq:J}{\rm;} note that $\J_{\lambda+\mu} = 
\J_{\lambda} \cap \J_{\mu}$. 
Let $\eta_{1} \in \QLS(\lambda)$, and let $\eta_{2}=\eta_{\mu}^{v}=(v;0,1) \in \QLS(\mu)$
with $v \in \WSu{\mu}$. We set $\eta:=\Theta_{\lambda\mu}^{-1}(\eta_{1} \otimes \eta_{2}) 
\in \QLS(\lambda+\mu)$. Then, for every $w \in W$, 
\begin{equation} \label{eq:ps}
\Xi_{\lambda\mu}(\pi_{\eta} \cdot T_{\wt(w \Rightarrow \kappa(\eta))}) = 
\pi_{\eta_{1}} \cdot T_{\wt(\ti{v}_{w} \Rightarrow \kappa(\eta_1)) + 
 \wt(w \Rightarrow \ti{v}_{w})} \otimes \pi_{\eta_{2}} \cdot 
 T_{\wt(w \Rightarrow \ti{v}_{w})}, 
\end{equation}
where $\ti{v}_{w}:=\tbmin{v}{\J_{\mu}}{w}${\rm;} 
note that $\pi_{\eta_{2}} = \pi_{\mu}^{v} = (v;0,1)$. 
\end{prop}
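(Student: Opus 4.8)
The plan is to prove Proposition~\ref{prop:s} by induction, walking the pair $(\eta,w)$ --- where $\eta=\Theta_{\lambda\mu}^{-1}(\eta_{1}\otimes\eta_{2})$ with $\eta_{2}=\eta_{\mu}^{v}$ --- down to the base case $\eta=\eta_{\lambda+\mu}^{\mcr{\lng}}$. First I would apply Lemma~\ref{lem:conn} to $\eta\in\QLS(\lambda+\mu)$ and the given $w$, obtaining $i_{1},\dots,i_{n}\in I_{\af}$ with $f_{i_{n}}^{\max}\cdots f_{i_{1}}^{\max}\eta=\eta_{\lambda+\mu}^{\mcr{\lng}}$ and $(\ti{s}_{i_{k-1}}\cdots\ti{s}_{i_{1}}w)^{-1}\ti{\alpha}_{i_{k}}\in\Delta^{+}$ for all $k$, and induct on $n$. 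The inductive step passes from $(\eta,w)$ to $(f_{i_{1}}^{\max}\eta,\ti{s}_{i_{1}}w)$, to which the truncated sequence $i_{2},\dots,i_{n}$ applies. A preliminary point is that this stays within the scope of the proposition: if $\Theta_{\lambda\mu}(\eta)=\eta_{1}\otimes\eta_{\mu}^{v}$ then $\Theta_{\lambda\mu}(f_{i_{1}}^{\max}\eta)=\eta_{1}'\otimes\eta_{\mu}^{v'}$ with $\eta_{\mu}^{v'}$ again a straight-line path of the required form. This follows from the tensor product (signature) rule together with the fact that a straight-line path $(v;0,1)$ satisfies $\ve_{i}=0$ or $\vp_{i}=0$ for every $i\in I_{\af}$ (since $H_{i}$ is linear in $t$), so that $f_{i_{1}}^{\max}$ either leaves the right tensor factor fixed or sends it to $f_{i_{1}}^{\max}(v;0,1)=(\mcr{\ti{s}_{i_{1}}v};0,1)$.

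For the base case $\eta=\eta_{\lambda+\mu}^{\mcr{\lng}}$, I would first record that $\Theta_{\lambda\mu}(\eta_{\lambda+\mu}^{\mcr{\lng}})=\eta_{\lambda}^{\mcr{\lng}}\otimes\eta_{\mu}^{\mcr{\lng}}$; indeed $\eta_{\lambda+\mu}^{\mcr{\lng}}=\lng\cdot\eta_{\lambda+\mu}$ for the Weyl group action, $\Theta_{\lambda\mu}$ intertwines this action, and one checks by induction on a reduced word that $w\cdot(\eta_{\lambda}\otimes\eta_{\mu})=\eta_{\lambda}^{\mcr{w}}\otimes\eta_{\mu}^{\mcr{w}}$ for $w\in W$. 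Hence $\eta_{1}=\eta_{\lambda}^{\mcr{\lng}}$, $\eta_{2}=\eta_{\mu}^{\mcr{\lng}}$, $v=\mcr{\lng}$, and by Lemma~\ref{lem:pix} $\pi_{\eta}=\pi_{\lambda+\mu}^{\mcr{\lng}}=\lng\cdot\pi_{\lambda+\mu}$. Then Lemma~\ref{lem:ext} gives $\Xi_{\lambda\mu}(\pi_{\eta})=\pi_{\eta_{1}}\otimes\pi_{\eta_{2}}$, so by Lemma~\ref{lem:Txi2} and \eqref{eq:Txiten} the left-hand side of \eqref{eq:ps} equals $(\pi_{\eta_{1}}\cdot T_{\wt(w\Rightarrow\kappa(\eta))})\otimes(\pi_{\eta_{2}}\cdot T_{\wt(w\Rightarrow\kappa(\eta))})$. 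It then remains to match this with the right-hand side, i.e.\ to check $\wt(w\Rightarrow\kappa(\eta))\equiv\wt(w\Rightarrow\ti{v}_{w})$ modulo $Q_{\J_{\mu}}^{\vee}$ and $\wt(w\Rightarrow\kappa(\eta))\equiv\wt(\ti{v}_{w}\Rightarrow\kappa(\eta_{1}))+\wt(w\Rightarrow\ti{v}_{w})$ modulo $Q_{\J_{\lambda}}^{\vee}$; these follow from Lemma~\ref{lem:wtS}, the defining property of $\ti{v}_{w}=\tbmin{v}{\J_{\mu}}{w}$ (so that $\wt$ is additive along shortest paths passing through $\ti{v}_{w}$ in the tilted order), and $\J_{\lambda+\mu}=\J_{\lambda}\cap\J_{\mu}$.

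For the inductive step, write $i=i_{1}$, $w'=\ti{s}_{i}w$, $\eta'=f_{i}^{\max}\eta$, and $\eta_{1}'\otimes\eta_{2}'=\Theta_{\lambda\mu}(\eta')$. I would compare $\pi_{\eta'},\pi_{\eta_{1}'},\pi_{\eta_{2}'}$ with $\pi_{\eta},\pi_{\eta_{1}},\pi_{\eta_{2}}$ via Lemma~\ref{lem:rec} --- absorbing, when $i=0$ and $\pair{\kappa(\eta)(\lambda+\mu)}{\alpha_{0}^{\vee}}>0$, the twist by $T_{-\kappa(\eta)^{-1}\ti{\alpha}_{0}^{\vee}}$; compare $\kappa(\eta),\kappa(\eta_{1}),\kappa(\eta_{2})$ with their primed versions via Lemma~\ref{lem:kap}(2) and Lemma~\ref{lem:kappa} (which gives $\kappa(\eta_{2})=\mcr{\kappa(\eta)}^{\J_{\mu}}$); compare $\ti{v}_{w}=\tbmin{\kappa(\eta_{2})}{\J_{\mu}}{w}$ with $\tbmin{\kappa(\eta_{2}')}{\J_{\mu}}{w'}$ via Lemma~\ref{lem:tb} (whose three cases, according as $\kappa(\eta_{2})^{-1}\ti{\alpha}_{i}$ lies in $\Delta^{+}\setminus\Delta_{\J_{\mu}}^{+}$, in $\Delta^{-}\setminus\Delta_{\J_{\mu}}^{-}$, or in $\Delta_{\J_{\mu}}$, yield $\tbmin{\kappa(\eta_{2}')}{\J_{\mu}}{w'}\in\{\ti{v}_{w},\,\ti{s}_{i}\ti{v}_{w}\}$); and propagate all of these identifications through the $\wt(\,\cdot\Rightarrow\cdot\,)$-exponents occurring in \eqref{eq:ps} by means of Lemma~\ref{lem:dia} and Lemma~\ref{lem:edge}. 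Finally, applying $\Xi_{\lambda\mu}$ --- a crystal morphism, hence commuting with $f_{i}^{\max}$, and commuting with the operators $T_{\xi}$ by Lemma~\ref{lem:Txi2} --- to the inductive hypothesis for $(\eta',w')$ and rearranging yields \eqref{eq:ps} for $(\eta,w)$.

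The step I expect to be the main obstacle is precisely this last bookkeeping: reconciling, case by case, the three cases of Lemma~\ref{lem:tb} (with the two subcases of its part~(3)) against the $i=0$ versus $i\neq0$ dichotomy of Lemma~\ref{lem:rec}, and verifying that the $\delta_{i0}$-correction terms $w^{-1}\ti{\alpha}_{0}^{\vee}$, $\kappa(\eta)^{-1}\ti{\alpha}_{0}^{\vee}$, etc.\ produced by Lemma~\ref{lem:dia} cancel exactly the translation twists $T_{-\kappa(\eta)^{-1}\ti{\alpha}_{0}^{\vee}}$ coming from Lemma~\ref{lem:rec}, so that both tensor factors agree modulo $Q_{\J_{\lambda}}^{\vee}$, respectively $Q_{\J_{\mu}}^{\vee}$ --- which is all that the operators $T_{\xi}$ on $\SLS(\lambda)$, respectively $\SLS(\mu)$, detect (by Lemma~\ref{lem:PiJ}(3)). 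A secondary point demanding care is the consistent handling of the minimal coset representatives $\mcr{\cdot}^{\J}$ for the three subsets $\J_{\lambda},\J_{\mu},\J_{\lambda+\mu}$, using that $\tbmin{\cdot}{\J_{\mu}}{\cdot}$ depends only on the $W_{\J_{\mu}}$-coset of its first argument.
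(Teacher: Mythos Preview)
Your proposal is correct and follows essentially the same approach as the paper: induction on the length $n$ of the sequence from Lemma~\ref{lem:conn}, with the base case handled via Lemmas~\ref{lem:pix}, \ref{lem:ext}, \ref{lem:Txi2}, and \ref{lem:wtS}, and the inductive step via the case analysis of Lemma~\ref{lem:tb} combined with Lemmas~\ref{lem:dia}, \ref{lem:edge}, \ref{lem:rec}, and \ref{lem:kap}. The paper organizes the inductive step into three separate claims---one for the left-hand side of \eqref{eq:ps} and one for each tensor factor on the right---splitting on the sign of $\pair{v\mu}{\alpha_{i}^{\vee}}$, and in the base case it simplifies matters by noting $\wt(w\Rightarrow\lng)=0$ outright; but your outline captures the same argument.
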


\begin{proof}
By Lemma~\ref{lem:conn}, 
there exist $i_{1},\,i_{2},\,\dots,\,i_{n} \in I_{\af}$ 
such that 
$f_{i_n}^{\max} \cdots f_{i_2}^{\max}f_{i_1}^{\max}\eta = 
\eta_{\lambda+\mu}^{\mcr{\lng}^{\J_{\lambda+\mu}}}$, and 
$(\ti{s}_{i_{k-1}} \cdots \ti{s}_{i_2}\ti{s}_{i_1}w)^{-1}
\ti{\alpha}_{i_{k}} \in \Delta^{+}$ for all $1 \le k \le n$.
We prove the assertion of the proposition by induction on $n$. 
Assume that $n=0$; we have $\eta=\eta_{\lambda+\mu}^{\mcr{\lng}^{\J_{\lambda+\mu}}}$. 
By \cite[Lemma~3.19\,(3)]{NS05}, we deduce that 
$\Theta_{\lambda\mu}(\eta_{\lambda+\mu}^{\mcr{\lng}^{\J_{\lambda+\mu}}}) = 
\eta_{\lambda}^{\mcr{\lng}^{\J_{\lambda}}} \otimes 
 \eta_{\mu}^{\mcr{\lng}^{\J_{\mu}}}$, and hence 
$\eta_{1}=\eta_{\lambda}^{\mcr{\lng}^{\J_{\lambda}}}$ and 
$\eta_{2}=\eta_{\mu}^{\mcr{\lng}^{\J_{\mu}}}$; 
in particular, $\kappa(\eta)=\mcr{\lng}^{\J_{\lambda+\mu}}$, 
$\kappa(\eta_1) = \mcr{\lng}^{\J_{\lambda}}$, and $v=\mcr{\lng}^{\J_{\mu}}$. 
Also, we have $\pi_{\eta}=\pi_{\lambda+\mu}^{\mcr{\lng}^{\J_{\lambda+\mu}}}$, 
$\pi_{\eta_{1}}=\pi_{\lambda}^{\mcr{\lng}^{\J_{\lambda}}}$, 
and $\pi_{\eta_{2}}=\pi_{\mu}^{\mcr{\lng}^{\J_{\mu}}}$. 
By Lemma~\ref{lem:wtS}, we have
$\wt(w \Rightarrow \kappa(\eta)) = 
\wt(w \Rightarrow \mcr{\lng}^{\J_{\lambda+\mu}}) \equiv 
\wt(w \Rightarrow \lng)$ mod $\QSv{\lambda+\mu}$. 
Since $\lng$ is greater than or equal to $w$ in the (ordinary) Bruhat order on $W$, 
there exists a (shortest) directed path in $\QB$ from 
$w$ to $\lng$ whose edges are all Bruhat edges. 
Hence we obtain $\wt(w \Rightarrow \lng) = 0$, so that
$\wt(w \Rightarrow \kappa(\eta)) \in \QSv{\lambda+\mu}$. 
Therefore, by Lemma~\ref{lem:PiJ}\,(3), 
the left-hand side of \eqref{eq:ps} is equal to 
$\Xi_{\lambda\mu}(\pi_{\lambda+\mu}^{\mcr{\lng}^{\J_{\lambda+\mu}}})$. 
Since $\ti{v}_{w} \in v\WS{\mu}=\lng\WS{\mu}$, 
it follows from Lemma~\ref{lem:wtS} that 
$\wt(w \Rightarrow \ti{v}_{w}) \equiv \wt(w \Rightarrow \lng)$ 
mod $\QSv{\mu}$. Since $\wt(w \Rightarrow \lng) = 0$ as seen above, 
we have $\wt(w \Rightarrow \ti{v}_{w}) \in \QSv{\mu}$. 
Therefore, by Lemma~\ref{lem:PiJ}\,(3), 
the second factor of the right-hand side of \eqref{eq:ps} is equal to 
$\pi_{\mu}^{\mcr{\lng}^{\J_{\mu}}}$. 
Similarly, since $\kappa(\eta_{1})=\mcr{\lng}^{\J_{\lambda}} \in \lng \WS{\lambda}$, 
we see by Lemmas~\ref{lem:wtS} and \ref{lem:PiJ}\,(3) 
that the first factor of the right-hand side of \eqref{eq:ps} is equal to 
\begin{equation*}
\pi_{\eta_{1}} \cdot T_{\wt(\ti{v}_{w} \Rightarrow \kappa(\eta_1)) + 
 \wt(w \Rightarrow \ti{v}_{w})} = 
\pi_{\eta_{1}} \cdot T_{\wt(\ti{v}_{w} \Rightarrow \lng) + 
 \wt(w \Rightarrow \ti{v}_{w})}.
\end{equation*}
Since $\ti{v}_{w} \tb{w} \lng$ by the definition of $\ti{v}_{w}$, we have 
$\wt(w \Rightarrow \lng) = 
\wt(w \Rightarrow \ti{v}_{w})+ \wt(\ti{v}_{w} \Rightarrow \lng)$. 
Since $\wt(w \Rightarrow \lng) = 0$ as seen above, 
we conclude that the first factor of the right-hand side of 
\eqref{eq:ps} is equal to 
$\pi_{\lambda}^{\mcr{\lng}^{\J_{\lambda}}}$. 
Thus, equation \eqref{eq:ps} reduces to:
\begin{equation} \label{eq:311a}
\Xi_{\lambda\mu}(\pi_{\lambda+\mu}^{\mcr{\lng}^{\J_{\lambda+\mu}}}) = 
\pi_{\lambda}^{\mcr{\lng}^{\J_{\lambda}}} \otimes 
\pi_{\mu}^{\mcr{\lng}^{\J_{\mu}}}. 
\end{equation}
This equality is verified by using 
Lemmas~\ref{lem:PiJ}\,(1), \ref{lem:pix}, and \ref{lem:ext} as follows:
\begin{equation*}
\Xi_{\lambda\mu}(\pi_{\lambda+\mu}^{\mcr{\lng}^{\J_{\lambda+\mu}}}) = 
\Xi_{\lambda\mu}(\lng \cdot \pi_{\lambda+\mu})=
(\lng \cdot \pi_{\lambda}) \otimes (\lng \cdot \pi_{\mu})=
\pi_{\lambda}^{\mcr{\lng}^{\J_{\lambda}}} \otimes 
\pi_{\mu}^{\mcr{\lng}^{\J_{\mu}}}.
\end{equation*}
This proves the assertion in the case $n=0$.

Assume that $n > 0$. For simplicity of notation, 
we set $i:=i_{1}$, $\eta':=f_{i_{1}}^{\max}\eta = f_{i}^{\max}\eta$, 
and $w':=\ti{s}_{i_1}w = \ti{s}_{i}w$. 
Note that $\vp_{i}(\eta_{2})=\max\{ \pair{v\mu}{\alpha_{i}^{\vee}},0 \}$, 
and 
\begin{equation*}
f_{i}^{\max}\eta_{2} = 
 \begin{cases}
 (\mcr{\ti{s}_{i}v}^{\J_{\mu}};0,1) & 
   \text{if $\pair{v\mu}{\alpha_{i}^{\vee}} > 0$}, \\
 (v;0,1) & 
   \text{if $\pair{v\mu}{\alpha_{i}^{\vee}} \le 0$}; 
 \end{cases}
\end{equation*}
see, e.g., \cite[Lemma~8.2.7]{KasF}.
We see by the tensor product rule for crystals, 
along with these equalities, that 
\begin{equation} \label{eq:vp}
\vp_{i}(\eta) = \vp_{i}(\eta_{1} \otimes \eta_{2}) = 
 \begin{cases}
 \vp_{i}(\eta_{1}) + \pair{v\mu}{\alpha_{i}^{\vee}} 
    & \text{if $\pair{v\mu}{\alpha_{i}^{\vee}} > 0$}, \\[1mm]
 \vp_{i}(\eta_{1}) & \text{if $\pair{v\mu}{\alpha_{i}^{\vee}} = 0$}, \\[1mm]
 \max\{ \vp_{i}(\eta_{1}) + \pair{v\mu}{\alpha_{i}^{\vee}},0\} 
    & \text{if $\pair{v\mu}{\alpha_{i}^{\vee}} < 0$}, \\[1mm]
 \end{cases}
\end{equation}
and
\begin{align}
\Theta_{\lambda\mu}(f_{i}^{\max}\eta) = 
f_{i}^{\max}(\eta_{1} \otimes \eta_{2}) & = 
  \begin{cases}
  f_{i}^{\max}\eta_{1} \otimes f_{i}^{\max}\eta_{2} 
     & \text{if $\pair{v\mu}{\alpha_{i}^{\vee}} > 0$}, \\[1.5mm]
  f_{i}^{\max}\eta_{1} \otimes \eta_{2} 
     & \text{if $\pair{v\mu}{\alpha_{i}^{\vee}} = 0$}, \\[1.5mm]
  f_{i}^{\vp_{i}(\eta)}\eta_{1} \otimes \eta_{2}
     & \text{if $\pair{v\mu}{\alpha_{i}^{\vee}} < 0$},
  \end{cases} \nonumber \\[2mm]
& = 
  \begin{cases}
  f_{i}^{\max}\eta_{1} \otimes (\mcr{\ti{s}_{i}v}^{\J_{\mu}};0,1)
     & \text{if $\pair{v\mu}{\alpha_{i}^{\vee}} > 0$}, \\[1.5mm]
  f_{i}^{\max}\eta_{1} \otimes (v;0,1)
     & \text{if $\pair{v\mu}{\alpha_{i}^{\vee}} = 0$}, \\[1.5mm]
  f_{i}^{\vp_{i}(\eta)}\eta_{1} \otimes (v;0,1)
     & \text{if $\pair{v\mu}{\alpha_{i}^{\vee}} < 0$}. 
  \end{cases} \label{eq:s1}
\end{align}
We define $\eta_{1}' \in \QLS(\lambda)$ and 
$\eta_{2}' \in \QLS(\mu)$ by: 
$\eta_{1}' \otimes \eta_{2}'= \Theta_{\lambda\mu}(\eta') = 
\Theta_{\lambda\mu}(f_{i}^{\max}\eta) = 
f_{i}^{\max}(\eta_{1} \otimes \eta_{2})$. 
By \eqref{eq:s1}, we can apply our induction hypothesis to 
$\Theta_{\lambda\mu}(\eta')=\eta_{1}' \otimes \eta_{2}'$ and $w'=\ti{s}_{i}w$ 
to obtain
%
%
\begin{equation} \label{eq:s2}
\Xi_{\lambda\mu}(\pi_{\eta'} \cdot T_{\wt(w' \Rightarrow \kappa(\eta'))}) = 
\pi_{\eta_{1}'} \cdot T_{\wt(\ti{v}'_{w'} \Rightarrow \kappa(\eta_1')) + 
 \wt(w' \Rightarrow \ti{v}'_{w'})} \otimes \pi_{\eta_{2}'} \cdot 
 T_{\wt(w' \Rightarrow \ti{v}'_{w'})}, 
\end{equation}
where  
\begin{equation*}
v':=
  \begin{cases}
  \mcr{\ti{s}_{i}v}^{\J_{\mu}}
  & \text{if $\pair{v\mu}{\alpha_{i}^{\vee}} > 0$}, \\[1.5mm]
  v & \text{if $\pair{v\mu}{\alpha_{i}^{\vee}} \le 0$},
  \end{cases}
\qquad \text{and} \quad
\ti{v}'_{w'}:=\tbmin{v'}{\J_{\mu}}{w'}. 
\end{equation*}
%
%
\begin{claim} \label{c:s1}
It holds that 
%
%
\begin{equation} \label{eq:cs1a}
\pi_{\eta'} \cdot T_{\wt(w' \Rightarrow \kappa(\eta'))} = 
f_{i}^{\max}\pi_{\eta} \cdot 
T_{\wt(w \Rightarrow \kappa(\eta))-\delta_{i0}w^{-1}\ti{\alpha}_{0}^{\vee}}. 
\end{equation}
\end{claim}

\noindent
{\it Proof of Claim~\ref{c:s1}.} 
Assume first that $\pair{\kappa(\eta)(\lambda+\mu)}{\alpha_{i}^{\vee}} > 0$; 
we see from Lemma~\ref{lem:kap}\,(2) that 
$\kappa(\eta') = \kappa(f_{i}^{\max}\eta) = 
 \mcr{\ti{s}_{i}\kappa(\eta)}^{\J_{\lambda+\mu}}$. 
It follows from Lemma~\ref{lem:wtS} that
$\wt(w' \Rightarrow \kappa(\eta')) = 
 \wt(\ti{s}_{i}w \Rightarrow \mcr{\ti{s}_{i}\kappa(\eta)}^{\J_{\lambda+\mu}})
 \equiv 
 \wt(\ti{s}_{i}w \Rightarrow \ti{s}_{i}\kappa(\eta))$ mod 
$\QSv{\lambda+\mu}$. Hence, 
$\pi_{\eta'} \cdot T_{\wt(w' \Rightarrow \kappa(\eta'))} = 
 \pi_{\eta'} \cdot T_{\wt(\ti{s}_{i}w \Rightarrow \ti{s}_{i}\kappa(\eta))}$ 
by Lemma~\ref{lem:PiJ}\,(3). 
Since $\kappa(\eta)^{-1}\ti{\alpha}_{i} \in \Delta^{+}$ 
and $w^{-1}\ti{\alpha}_{i} \in \Delta^{+}$, 
we see from Lemma~\ref{lem:dia}\,(2) that 
$\wt(\ti{s}_{i}w \Rightarrow \ti{s}_{i}\kappa(\eta)) = 
\wt(w \Rightarrow \kappa(\eta)) - \delta_{i0}w^{-1}\ti{\alpha}_{0}^{\vee} + 
\delta_{i0}\kappa(\eta)^{-1}\ti{\alpha}_{0}^{\vee}$. 
Also, we have 
$\pi_{\eta'} = \pi_{f_{i}^{\max}\eta} = 
f_{i}^{\max}\pi_{\eta} \cdot T_{-\delta_{i0}\kappa(\eta)^{-1}\ti{\alpha}_{0}^{\vee}}$ 
by Lemma~\ref{lem:rec} and the assumption that 
$\pair{\kappa(\eta)(\lambda+\mu)}{\alpha_{i}^{\vee}} > 0$. 
Combining these equalities, we obtain \eqref{eq:cs1a}. 

Assume next that $\pair{\kappa(\eta)(\lambda+\mu)}{\alpha_{i}^{\vee}} \le 0$; 
we see from Lemma~\ref{lem:kap}\,(2) that 
$\kappa(\eta') = \kappa(f_{i}^{\max}\eta) = \kappa(\eta)$. 
We claim that there exists $z \in \WS{\lambda+\mu}$ such that 
$(\kappa(\eta')z)^{-1}\ti{\alpha}_{i} = 
(\kappa(\eta)z)^{-1}\ti{\alpha}_{i} \in \Delta^{-}$. 
Indeed, since $\pair{\kappa(\eta)(\lambda+\mu)}{\alpha_{i}^{\vee}} \le 0$, 
we see that $\alpha:=\kappa(\eta')^{-1}\ti{\alpha}_{i} = 
\kappa(\eta)^{-1}\ti{\alpha}_{i} \in \Delta^{-} \cup \DeS{\lambda+\mu}$. 
If $\alpha \in \Delta^{-}$ (resp., $\alpha \in \DeS{\lambda+\mu}^{+}$), 
then $z=e$ (resp., $z=s_{\alpha}$) satisfies the condition that 
$z^{-1}\alpha \in \Delta^{-}$. By Lemma~\ref{lem:wtS}, we have
\begin{equation*}
\wt(w' \Rightarrow \kappa(\eta')) = 
\wt(\ti{s}_{i}w \Rightarrow \kappa(\eta)) \equiv 
\wt(\ti{s}_{i}w \Rightarrow \kappa(\eta)z) \mod \QSv{\lambda+\mu}. 
\end{equation*}
Since $w^{-1}\ti{\alpha}_{i} \in \Delta^{+}$ and 
$(\kappa(\eta)z)^{-1}\ti{\alpha}_{i} \in \Delta^{-}$, 
it follows from Lemma~\ref{lem:dia}\,(1) that
$\wt(\ti{s}_{i}w \Rightarrow \kappa(\eta)z) = 
\wt(w \Rightarrow \kappa(\eta)z)-\delta_{i0}w^{-1}\ti{\alpha}_{0}^{\vee}$. 
Also, it follows from Lemma~\ref{lem:wtS} that
$\wt(w \Rightarrow \kappa(\eta)z) \equiv 
 \wt(w \Rightarrow \kappa(\eta))$ mod $\QSv{\lambda+\mu}$.
Therefore, we have
\begin{equation*}
\wt(w' \Rightarrow \kappa(\eta')) \equiv 
\wt(w \Rightarrow \kappa(\eta))-\delta_{i0}w^{-1}\ti{\alpha}_{0}^{\vee} 
\mod \QSv{\lambda+\mu}, 
\end{equation*}
and hence $\pi_{\eta'} \cdot T_{\wt(w' \Rightarrow \kappa(\eta'))} = 
\pi_{\eta'} \cdot
T_{\wt(w \Rightarrow \kappa(\eta))-\delta_{i0}w^{-1}\ti{\alpha}_{0}^{\vee}}$ 
by Lemma~\ref{lem:PiJ}\,(3). 
Because $\pi_{\eta'} = \pi_{f_{i}^{\max}\eta} = f_{i}^{\max}\pi_{\eta}$ 
by Lemma~\ref{lem:rec} and the assumption that 
$\pair{\kappa(\eta)(\lambda+\mu)}{\alpha_{i}^{\vee}} \le 0$, 
we obtain \eqref{eq:cs1a}. This proves Claim~\ref{c:s1}. \bqed
%
%
\begin{claim} \label{c:s2}
It holds that 
%
%
\begin{equation} \label{eq:cs2a}
\pi_{\eta_{2}'} \cdot T_{\wt(w' \Rightarrow \ti{v}'_{w'})} = 
\begin{cases}
f_{i}^{\max}\pi_{\eta_{2}} \cdot 
 T_{\wt(w \Rightarrow \ti{v}_{w}) - \delta_{i0}w^{-1}\ti{\alpha}_{0}^{\vee}}
 & \text{if $\pair{v\mu}{\alpha_{i}^{\vee}} > 0$}, \\[1.5mm]
 \pi_{\eta_{2}} \cdot 
 T_{\wt(w \Rightarrow \ti{v}_{w}) - \delta_{i0}w^{-1}\ti{\alpha}_{0}^{\vee}}
 & \text{if $\pair{v\mu}{\alpha_{i}^{\vee}} \le 0$}. 
\end{cases}
\end{equation}
\end{claim}

\noindent
{\it Proof of Claim~\ref{c:s2}.} 
Assume first that $\pair{v\mu}{\alpha_{i}^{\vee}} > 0$; 
note that $v^{-1}\ti{\alpha}_{i} \in \Delta^{+} \setminus \DeS{\mu}^{+}$ and 
$v'=\mcr{\ti{s}_{i}v}^{\J_{\mu}}$, so that
$\ti{v}'_{w'}=\tbmin{\ti{s}_{i}v}{\J_{\mu}}{\ti{s}_{i}w}$. 
Since $w^{-1}\ti{\alpha}_{i} \in \Delta^{+}$, 
we deduce from Lemma~\ref{lem:tb}\,(1) that 
$\ti{v}'_{w'} = \ti{s}_{i}\ti{v}_{w}$. 
Since $w^{-1}\ti{\alpha}_{i} \in \Delta^{+}$ and 
$\ti{v}_{w}^{-1}\ti{\alpha}_{i} \in \Delta^{+}$, 
it follows from Lemma~\ref{lem:dia}\,(2) that 
%
%
\begin{equation} \label{eq:cs2d}
\wt(w' \Rightarrow \ti{v}'_{w'}) 
 = \wt(\ti{s}_{i}w \Rightarrow \ti{s}_{i}\ti{v}_{w})
 = \wt(w \Rightarrow \ti{v}_{w})-\delta_{i0}w^{-1}\ti{\alpha}_{0}^{\vee} + 
\delta_{i0}\ti{v}_{w}^{-1}\ti{\alpha}_{0}^{\vee}. 
\end{equation}
Since $\ti{v}_{w} \in v\WS{\mu}$, 
we have $\ti{v}_{w}^{-1}\ti{\alpha}_{0}^{\vee} \equiv 
v^{-1}\ti{\alpha}_{0}^{\vee}$ mod $\QSv{\mu}$. Hence
\begin{equation*}
\wt(w' \Rightarrow \ti{v}'_{w'}) 
\equiv \wt(w \Rightarrow \ti{v}_{w})-\delta_{i0}w^{-1}\ti{\alpha}_{0}^{\vee} + 
\delta_{i0}v^{-1}\ti{\alpha}_{0}^{\vee} \mod \QSv{\mu}.
\end{equation*}
Because $\eta_{2}=(v;0,1) \in \QLS(\mu)$ with $\pair{v\mu}{\alpha_{i}^{\vee}} > 0$, and 
$\eta_{2}'=f_{i}^{\max}\eta_{2}$ (see \eqref{eq:s1}), 
we have $\pi_{\eta_{2}'} = \pi_{f_{i}^{\max}\eta_{2}}=f_{i}^{\max}\pi_{\eta_{2}}
\cdot T_{-\delta_{i0}v^{-1}\ti{\alpha}_{0}^{\vee}}$ by Lemma~\ref{lem:rec}. 
Therefore, we see that
\begin{align*}
\pi_{\eta_{2}'} \cdot T_{\wt(w' \Rightarrow \ti{v}'_{w'})}
 & = f_{i}^{\max}\pi_{\eta_{2}}
\cdot T_{-\delta_{i0}v^{-1}\ti{\alpha}_{0}^{\vee} + 
\wt(w \Rightarrow \ti{v}_{w})-\delta_{i0}w^{-1}\ti{\alpha}_{0}^{\vee} + 
\delta_{i0}\ti{v}_{w}^{-1}\ti{\alpha}_{0}^{\vee}} \\
 & = f_{i}^{\max}\pi_{\eta_{2}}
\cdot T_{-\delta_{i0}v^{-1}\ti{\alpha}_{0}^{\vee} + 
\wt(w \Rightarrow \ti{v}_{w})-\delta_{i0}w^{-1}\ti{\alpha}_{0}^{\vee} + 
\delta_{i0}v^{-1}\ti{\alpha}_{0}^{\vee}} \quad \text{by Lemma~\ref{lem:PiJ}\,(3)} \\
 & = f_{i}^{\max}\pi_{\eta_{2}}
\cdot T_{\wt(w \Rightarrow \ti{v}_{w})-\delta_{i0}w^{-1}\ti{\alpha}_{0}^{\vee}}. 
\end{align*}

Assume next that $\pair{v\mu}{\alpha_{i}^{\vee}} \le 0$; 
note that $v^{-1}\ti{\alpha}_{i} \in \Delta^{-} \cup \DeS{\mu}$ and $v'=v$, 
so that $\ti{v}'_{w'}=\tbmin{v}{\J_{\mu}}{\ti{s}_{i}w}$. We claim that
%
%
\begin{equation} \label{eq:cs2c}
\begin{split}
& \wt(w' \Rightarrow \ti{v}'_{w'}) = \\
& 
\begin{cases}
\wt(w \Rightarrow \ti{v}_{w}) - \delta_{i0}w^{-1}\ti{\alpha}_{0}^{\vee}
  & \text{if $v^{-1}\ti{\alpha}_{i} \in \Delta^{-} \setminus \DeS{\mu}^{-}$}, \\[1mm]
\wt(w \Rightarrow \ti{v}_{w}) - \delta_{i0}w^{-1}\ti{\alpha}_{0}^{\vee}
  & \text{if $v^{-1}\ti{\alpha}_{i} \in \DeS{\mu}$ and 
    $(\ti{v}'_{w'})^{-1}\ti{\alpha}_{i} \in \Delta^{+}$}, \\[1mm]
\wt(w \Rightarrow \ti{v}_{w})-\delta_{i0}w^{-1}\ti{\alpha}_{0}^{\vee} + 
\delta_{i0}\ti{v}_{w}^{-1}\ti{\alpha}_{0}^{\vee}
  & \text{if $v^{-1}\ti{\alpha}_{i} \in \DeS{\mu}$ and 
    $(\ti{v}'_{w'})^{-1}\ti{\alpha}_{i} \in \Delta^{-}$}. 
\end{cases}%
\end{split}
\end{equation}
Indeed, since $w^{-1}\ti{\alpha}_{i} \in \Delta^{+}$, 
we deduce from Lemma~\ref{lem:tb}\,(2) and (3) that
%
%
\begin{equation} \label{eq:cs2b}
\ti{v}'_{w'} = 
\begin{cases}
\ti{v}_{w} & 
  \text{if $v^{-1}\ti{\alpha}_{i} \in \Delta^{-} \setminus \DeS{\mu}^{-}$}, \\[1mm]
\ti{v}_{w} & 
  \text{if $v^{-1}\ti{\alpha}_{i} \in \DeS{\mu}$ 
        and $(\ti{v}'_{w'})^{-1}\ti{\alpha}_{i} \in \Delta^{+}$}, \\[1mm]
\ti{s}_{i}\ti{v}_{w} & 
  \text{if $v^{-1}\ti{\alpha}_{i} \in \DeS{\mu}$ 
        and $(\ti{v}'_{w'})^{-1}\ti{\alpha}_{i} \in \Delta^{-}$}. 
\end{cases}
\end{equation}
In the first case, we have $\ti{v}_{w}^{-1}\ti{\alpha}_{i} \in \Delta^{-}$ 
since $\ti{v}_{w} \in v\WS{\mu}$. 
Also, recall that $w^{-1}\ti{\alpha}_{i} \in \Delta^{+}$. 
It follows from Lemma~\ref{lem:dia}\,(1) that
$\wt(w' \Rightarrow \ti{v}'_{w'}) = 
\wt(\ti{s}_{i}w \Rightarrow \ti{v}_{w}) = 
\wt(w \Rightarrow \ti{v}_{w}) - \delta_{i0}w^{-1}\ti{\alpha}_{0}^{\vee}$. 
In the third case, since $w^{-1}\ti{\alpha}_{i} \in \Delta^{+}$ and 
$\ti{v}_{w}^{-1}\ti{\alpha}_{i} \in \Delta^{+}$, 
by the same argument as for \eqref{eq:cs2d}, we deduce that 
$\wt(w' \Rightarrow \ti{v}'_{w'}) = \wt(w \Rightarrow \ti{v}_{w})
-\delta_{i0}w^{-1}\ti{\alpha}_{0}^{\vee} + 
\delta_{i0}\ti{v}_{w}^{-1}\ti{\alpha}_{0}^{\vee}$. 
Let us consider the second case. 
Since $\ti{v}_{w}^{-1}\ti{\alpha}_{i} \in \Delta^{+}$, 
we see by Lemma~\ref{lem:edge} that 
$\ti{v}_{w} \edge{\ti{v}_{w}^{-1}\ti{\alpha}_{i}} \ti{s}_{i}\ti{v}_{w}$ 
is a directed edge of $\QB$; 
in particular, $\wt (\ti{v}_{w} \Rightarrow \ti{s}_{i}\ti{v}_{w}) = 
\delta_{i0}\ti{v}_{w}^{-1}\ti{\alpha}_{0}^{\vee}$. 
Because $\ti{v}_{w} = \ti{v}'_{w'} = \tbmin{v'}{\J_{\mu}}{w'} = 
\tbmin{v}{\J_{\mu}}{\ti{s}_{i}w}$, and 
because $\ti{s}_{i}\ti{v}_{w} \in v\WS{\mu}$ in this case, 
we have $\ti{v}_{w} \tb{\ti{s}_{i}w} \ti{s}_{i}\ti{v}_{w}$, and hence
\begin{equation*}
\wt(\ti{s}_{i}w \Rightarrow \ti{s}_{i}\ti{v}_{w}) = 
\underbrace{\wt(\ti{s}_{i}w \Rightarrow \ti{v}_{w})}_{=\wt(w' \Rightarrow \ti{v}'_{w'})} + 
\underbrace{\wt(\ti{v}_{w} \Rightarrow \ti{s}_{i}\ti{v}_{w})}_{%
=\delta_{i0}\ti{v}_{w}^{-1}\ti{\alpha}_{0}^{\vee}}.
\end{equation*}
By the same argument as for \eqref{eq:cs2d}, we deduce that 
$\wt(\ti{s}_{i}w \Rightarrow \ti{s}_{i}\ti{v}_{w}) = \wt(w \Rightarrow \ti{v}_{w})
-\delta_{i0}w^{-1}\ti{\alpha}_{0}^{\vee} + 
\delta_{i0}\ti{v}_{w}^{-1}\ti{\alpha}_{0}^{\vee}$. 
Hence we obtain $\wt(w' \Rightarrow \ti{v}'_{w'}) = 
\wt(w \Rightarrow \ti{v}_{w}) - \delta_{i0}w^{-1}\ti{\alpha}_{0}^{\vee}$. 
This shows \eqref{eq:cs2c}. 

Now, we remark that if $v^{-1}\ti{\alpha}_{i} \in \DeS{\mu}$, then 
$\delta_{i0}\ti{v}_{w}^{-1}\ti{\alpha}_{0}^{\vee} \in \QSv{\mu}$. 
Hence we see by \eqref{eq:cs2c} that 
$\wt(w' \Rightarrow \ti{v}'_{w'}) 
\equiv 
\wt(w \Rightarrow \ti{v}_{w})-\delta_{i0}w^{-1}\ti{\alpha}_{0}^{\vee}$ mod $\QSv{\mu}$. 
Also, recall that $\eta_{2}' = \eta_{2}$ 
in the case that $\pair{v\mu}{\alpha_{i}^{\vee}} \le 0$. 
Therefore, we obtain
$\pi_{\eta_{2}'} \cdot T_{\wt(w' \Rightarrow \ti{v}'_{w'})} = 
 \pi_{\eta_{2}} \cdot 
 T_{\wt(w \Rightarrow \ti{v}_{w}) - \delta_{i0}w^{-1}\ti{\alpha}_{0}^{\vee}}$
by Lemma~\ref{lem:PiJ}\,(3). Thus we have shown Claim~\ref{c:s2}. \bqed
%
%
\begin{claim} \label{c:s3}
It holds that 
%
%
\begin{equation} \label{eq:cs3a}
\begin{split}
& \pi_{\eta_{1}'} \cdot 
  T_{\wt(\ti{v}'_{w'} \Rightarrow \kappa(\eta_1')) + \wt(w' \Rightarrow \ti{v}'_{w'})} \\
& =
\begin{cases}
f_{i}^{\max}\pi_{\eta_{1}} \cdot 
   T_{\wt(\ti{v}_{w} \Rightarrow \kappa(\eta_1)) + \wt(w \Rightarrow \ti{v}_{w}) -
      \delta_{i0}w^{-1}\ti{\alpha}_{0}^{\vee}}
   & \text{if $\pair{v\mu}{\alpha_{i}^{\vee}} \ge 0$}, \\[1mm]
f_{i}^{\vp_{i}(\eta)}\pi_{\eta_{1}} \cdot 
   T_{\wt(\ti{v}_{w} \Rightarrow \kappa(\eta_1)) + \wt(w \Rightarrow \ti{v}_{w}) -
      \delta_{i0}w^{-1}\ti{\alpha}_{0}^{\vee}}
   & \text{if $\pair{v\mu}{\alpha_{i}^{\vee}} < 0$}.
\end{cases}
\end{split}
\end{equation}
\end{claim}

\noindent
{\it Proof of Claim~\ref{c:s3}.} 
We know from \eqref{eq:cs2d} and \eqref{eq:cs2c} that
%
%
\begin{equation} \label{eq:cs3b}
\begin{split}
& \wt(w' \Rightarrow \ti{v}'_{w'}) = \\
& 
\begin{cases}
\wt(w \Rightarrow \ti{v}_{w})-\delta_{i0}w^{-1}\ti{\alpha}_{0}^{\vee} + 
\delta_{i0}\ti{v}_{w}^{-1}\ti{\alpha}_{0}^{\vee}
  & \text{if $v^{-1}\ti{\alpha}_{i} \in \Delta^{+} \setminus \DeS{\mu}^{+}$}, \\[1mm]
\wt(w \Rightarrow \ti{v}_{w}) - \delta_{i0}w^{-1}\ti{\alpha}_{0}^{\vee}
  & \text{if $v^{-1}\ti{\alpha}_{i} \in \Delta^{-} \setminus \DeS{\mu}^{-}$}, \\[1mm]
\wt(w \Rightarrow \ti{v}_{w}) - \delta_{i0}w^{-1}\ti{\alpha}_{0}^{\vee}
  & \text{if $v^{-1}\ti{\alpha}_{i} \in \DeS{\mu}$ and 
    $(\ti{v}'_{w'})^{-1}\ti{\alpha}_{i} \in \Delta^{+}$}, \\[1mm]
\wt(w \Rightarrow \ti{v}_{w})-\delta_{i0}w^{-1}\ti{\alpha}_{0}^{\vee} + 
\delta_{i0}\ti{v}_{w}^{-1}\ti{\alpha}_{0}^{\vee}
  & \text{if $v^{-1}\ti{\alpha}_{i} \in \DeS{\mu}$ and 
    $(\ti{v}'_{w'})^{-1}\ti{\alpha}_{i} \in \Delta^{-}$}. 
\end{cases}%
\end{split}
\end{equation}
%
\paragraph{Case 1.}
%
Assume that $\pair{v\mu}{\alpha_{i}^{\vee}} \ge 0$; 
note that $v^{-1}\ti{\alpha}_{i} \in \Delta^{+} \cup \DeS{\mu}$. 
Since $\eta_{1}'=f_{i}^{\max}\eta_{1}$ in this case (see \eqref{eq:s1}), 
we see that 
if $\pair{\kappa(\eta_{1})\lambda}{\alpha_{i}^{\vee}} > 0$ (resp., $\le 0$), 
then $\kappa(\eta_{1}')=\mcr{\ti{s}_{i}\kappa(\eta_{1})}^{\J_{\lambda}}$ 
(resp., $\kappa(\eta_{1}')=\kappa(\eta_{1})$) by Lemma~\ref{lem:kap}\,(2). 
Also, we deduce from Lemma~\ref{lem:rec} that
%
%
\begin{equation} \label{eq:cs3-1b}
\pi_{\eta_{1}'} = \pi_{f_{i}^{\max}\eta_{1}} = 
\begin{cases}
f_{i}^{\max}\pi_{\eta_{1}} \cdot 
T_{-\delta_{i0}\kappa(\eta_{1})^{-1}\ti{\alpha}_{0}^{\vee}} 
 & \text{if $\pair{\kappa(\eta_{1})\lambda}{\alpha_{i}^{\vee}} > 0$}, \\[1mm]
f_{i}^{\max}\pi_{\eta_{1}}
 & \text{if $\pair{\kappa(\eta_{1})\lambda}{\alpha_{i}^{\vee}} \le 0$}.
\end{cases}
\end{equation}
Since $w^{-1}\ti{\alpha}_{i} \in \Delta^{+}$, 
it follows from Lemma~\ref{lem:tb}\,(1) and (3) that 
\begin{equation*}
\ti{v}'_{w'} = 
 \begin{cases}
 \ti{s}_{i}\ti{v}_{w} 
  & \text{in Cases 1a and 1c}, \\[1mm]
 \ti{v}_{w} 
  & \text{in Case 1b},
 \end{cases}
\end{equation*}
where 

Case 1a: $v^{-1}\ti{\alpha}_{i} \in \Delta^{+} \setminus \DeS{\mu}^{+}$; 

Case 1b: $v^{-1}\ti{\alpha}_{i} \in \DeS{\mu}$ and $(\ti{v}'_{w'})^{-1}\ti{\alpha}_{i} \in \Delta^{+}$; 

Case 1c: $v^{-1}\ti{\alpha}_{i} \in \DeS{\mu}$ and $(\ti{v}'_{w'})^{-1}\ti{\alpha}_{i} \in \Delta^{-}$; 

\noindent
notice that $\ti{v}_{w}^{-1}\ti{\alpha}_{i} \in \Delta^{+}$ in all of these cases. 

Now, assume first that $\pair{\kappa(\eta_{1})\lambda}{\alpha_{i}^{\vee}} > 0$; 
note that $\kappa(\eta_{1})^{-1}\ti{\alpha}_{i} \in \Delta^{+}$. 
Then we see by Lemmas~\ref{lem:dia} and \ref{lem:wtS} that
%
%
\begin{align}
\wt(\ti{v}'_{w'} \Rightarrow \kappa(\eta_1')) & \equiv
\begin{cases}
\wt(\ti{s}_{i}\ti{v}_{w} \Rightarrow \ti{s}_{i}\kappa(\eta_1)) 
\mod \QSv{\lambda} 
  & \text{in Cases 1a and 1c}, \\[1mm]
\wt(\ti{v}_{w} \Rightarrow \ti{s}_{i}\kappa(\eta_1)) 
\mod \QSv{\lambda} 
  & \text{in Case 1b},
\end{cases} \nonumber \\[2mm]
& = 
\begin{cases}
\wt(\ti{v}_{w} \Rightarrow \kappa(\eta_1)) 
- \delta_{i0}\ti{v}_{w}^{-1}\ti{\alpha}_{0}^{\vee}
+ \delta_{i0}\kappa(\eta_1)^{-1}\ti{\alpha}_{0}^{\vee}
  & \text{in Cases 1a and 1c}, \\[1mm]
\wt(\ti{v}_{w} \Rightarrow \kappa(\eta_1)) 
+ \delta_{i0}\kappa(\eta_1)^{-1}\ti{\alpha}_{0}^{\vee}
  & \text{in Case 1b}. 
\end{cases} \label{eq:cs3-1a}
\end{align}
Combining \eqref{eq:cs3b}, \eqref{eq:cs3-1b}, and \eqref{eq:cs3-1a}, 
along with Lemma~\ref{lem:PiJ}\,(3), 
we obtain \eqref{eq:cs3a} in this case. Assume next that 
$\pair{\kappa(\eta_{1})\lambda}{\alpha_{i}^{\vee}} \le 0$; 
note that $\kappa(\eta_{1})^{-1}\ti{\alpha}_{i} \in \Delta^{-} \cup \DeS{\lambda}$. 
In Case 1b, we have
%
%
\begin{equation} \label{eq:cs3-1d}
\wt(\ti{v}'_{w'} \Rightarrow \kappa(\eta_1')) = 
\wt(\ti{v}_{w} \Rightarrow \kappa(\eta_1)).
\end{equation}
By \eqref{eq:cs3b}, \eqref{eq:cs3-1b}, \eqref{eq:cs3-1d}, 
we obtain \eqref{eq:cs3a} in this case. 
Let us consider Cases 1a and 1c. 
By the same argument as in the proof of Claim~\ref{c:s1}, we see that 
there exists $z \in \WS{\lambda}$ such that 
$(\kappa(\eta_{1})z)^{-1}\ti{\alpha}_{i} \in \Delta^{-}$. 
Then we see by Lemmas~\ref{lem:wtS} and \ref{lem:dia} that
%
%
\begin{align}
\wt(\ti{v}'_{w'} \Rightarrow \kappa(\eta_1')) & = 
\wt(\ti{s}_{i}\ti{v}_{w} \Rightarrow \kappa(\eta_1)) \equiv 
\wt(\ti{s}_{i}\ti{v}_{w} \Rightarrow \kappa(\eta_1)z) \nonumber \\ 
& = \wt(\ti{v}_{w} \Rightarrow \kappa(\eta_1)z) 
- \delta_{i0}\ti{v}_{w}^{-1}\ti{\alpha}_{0}^{\vee} \nonumber \\ 
& \equiv
\wt(\ti{v}_{w} \Rightarrow \kappa(\eta_1)) 
- \delta_{i0}\ti{v}_{w}^{-1}\ti{\alpha}_{0}^{\vee} \mod \QSv{\lambda}. \label{eq:cs3-1c}
\end{align}
By \eqref{eq:cs3b}, \eqref{eq:cs3-1b}, \eqref{eq:cs3-1c}, 
along with Lemma~\ref{lem:PiJ}\,(3), we obtain \eqref{eq:cs3a} also in this case. 

\paragraph{Case 2.}
%
Assume that $v^{-1}\ti{\alpha}_{i} \in \Delta^{-} \setminus \DeS{\mu}^{-}$; 
note that $\pair{v\mu}{\alpha_{i}^{\vee}} < 0$. 
Recall that $\eta_{1}'=f_{i}^{\vp_{i}(\eta)}\eta_{1}$ (see \eqref{eq:s1}); 
by \eqref{eq:vp}, we see that $\vp_{i}(\eta) = 0$ or 
$\vp_{i}(\eta) < \vp_{i}(\eta_{1})$. 
In both cases, we deduce from Lemma~\ref{lem:kap}\,(2) and 
Lemma~\ref{lem:rec} that $\kappa(\eta_{1}')=\kappa(\eta_{1})$ and 
$\pi_{\eta_{1}'}=f_{i}^{\vp_{i}(\eta)}\pi_{\eta_{1}}$. 
Recall that $v'=v$, and hence $\ti{v}'_{w'} = 
\tbmin{v}{\J_{\mu}}{\ti{s}_{i}w}$ in this case. 
Since $w^{-1}\ti{\alpha}_{i} \in \Delta^{+}$, 
we see from Lemma~\ref{lem:tb}\,(2) that 
$\ti{v}'_{w'} = \ti{v}_{w}$. Hence we obtain 
$\wt(\ti{v}'_{w'} \Rightarrow \kappa(\eta_1')) = 
\wt(\ti{v}_{w} \Rightarrow \kappa(\eta_1))$. 
By this equality, \eqref{eq:cs3b}, and 
$\pi_{\eta_{1}'}=f_{i}^{\vp_{i}(\eta)}\pi_{\eta_{1}}$, 
we obtain \eqref{eq:cs3a}. 
Thus we have shown Claim~\ref{c:s3}. \bqed

\vsp

Substituting \eqref{eq:cs1a}, \eqref{eq:cs2a}, \eqref{eq:cs3a} 
into \eqref{eq:s2}, and then using Lemma~\ref{lem:Txi2}, we deduce that 
\begin{equation} \label{eq:s4}
\begin{split}
& \Xi_{\lambda\mu}(f_{i}^{\max}\pi_{\eta} \cdot 
  T_{\wt(w \Rightarrow \kappa(\eta))}) =  \\
& 
\begin{cases}
f_{i}^{\max}\pi_{\eta_{1}} \cdot 
   T_{\wt(\ti{v}_{w} \Rightarrow \kappa(\eta_1)) + \wt(w \Rightarrow \ti{v}_{w})} \otimes 
 f_{i}^{\max}\pi_{\eta_{2}} \cdot 
 T_{\wt(w \Rightarrow \ti{v}_{w})} 
 & \text{if $\pair{v\mu}{\alpha_{i}^{\vee}} > 0$}, \\[2mm]
f_{i}^{\max}\pi_{\eta_{1}} \cdot 
   T_{\wt(\ti{v}_{w} \Rightarrow \kappa(\eta_1)) + \wt(w \Rightarrow \ti{v}_{w})} \otimes 
 \pi_{\eta_{2}} \cdot 
 T_{\wt(w \Rightarrow \ti{v}_{w})}
 & \text{if $\pair{v\mu}{\alpha_{i}^{\vee}} = 0$}, \\[2mm]
f_{i}^{\vp_{i}(\eta)}\pi_{\eta_{1}} \cdot 
   T_{\wt(\ti{v}_{w} \Rightarrow \kappa(\eta_1)) + \wt(w \Rightarrow \ti{v}_{w})} \otimes 
 \pi_{\eta_{2}} \cdot T_{\wt(w \Rightarrow \ti{v}_{w})} 
 & \text{if $\pair{v\mu}{\alpha_{i}^{\vee}} < 0$}.
\end{cases}
\end{split}
\end{equation}
Here we see from \eqref{eq:Txi1a} and \eqref{eq:cl} that 
$\vp_{i}(\pi_{\eta} \cdot 
  T_{\wt(w \Rightarrow \kappa(\eta))}) = \vp_{i}(\eta)$. 
Hence the left-hand side of \eqref{eq:s4} is equal to 
$f_{i}^{\vp_{i}(\eta)}\Xi_{\lambda\mu}
(\pi_{\eta} \cdot T_{\wt(w \Rightarrow \kappa(\eta))})$. 
Similarly, it is easily verified, using \eqref{eq:Txi1a}, \eqref{eq:cl}, 
and the tensor product rule for crystals, that
\begin{equation*}
\vp_{i}(\pi_{\eta_{1}} \cdot 
   T_{\wt(\ti{v}_{w} \Rightarrow \kappa(\eta_1)) + \wt(w \Rightarrow \ti{v}_{w})} \otimes 
\pi_{\eta_{2}} \cdot T_{\wt(w \Rightarrow \ti{v}_{w})}) = \vp_{i}(\eta_{1} \otimes \eta_{2})=\vp_{i}(\eta).
\end{equation*}
Furthermore, we deduce by the tensor product rule for crystals that
the right-hand side of \eqref{eq:s4} is equal to 
\begin{equation*}
\begin{split}
& f_{i}^{\max}(\pi_{\eta_{1}} \cdot 
   T_{\wt(\ti{v}_{w} \Rightarrow \kappa(\eta_1)) + \wt(w \Rightarrow \ti{v}_{w})} \otimes 
\pi_{\eta_{2}} \cdot T_{\wt(w \Rightarrow \ti{v}_{w})}) \\
& \qquad = 
f_{i}^{\vp_{i}(\eta)}(\pi_{\eta_{1}} \cdot 
   T_{\wt(\ti{v}_{w} \Rightarrow \kappa(\eta_1)) + \wt(w \Rightarrow \ti{v}_{w})} \otimes 
\pi_{\eta_{2}} \cdot T_{\wt(w \Rightarrow \ti{v}_{w})}).
\end{split}
\end{equation*}
Therefore, we obtain
\begin{equation*}
f_{i}^{\vp_{i}(\eta)}\Xi_{\lambda\mu}
(\pi_{\eta} \cdot T_{\wt(w \Rightarrow \kappa(\eta))}) = 
f_{i}^{\vp_{i}(\eta)}(\pi_{\eta_{1}} \cdot 
   T_{\wt(\ti{v}_{w} \Rightarrow \kappa(\eta_1)) + \wt(w \Rightarrow \ti{v}_{w})} \otimes 
\pi_{\eta_{2}} \cdot T_{\wt(w \Rightarrow \ti{v}_{w})}),
\end{equation*}
and hence $\Xi_{\lambda\mu}
(\pi_{\eta} \cdot T_{\wt(w \Rightarrow \kappa(\eta))}) = \pi_{\eta_{1}} \cdot 
   T_{\wt(\ti{v}_{w} \Rightarrow \kappa(\eta_1)) + \wt(w \Rightarrow \ti{v}_{w})} \otimes 
\pi_{\eta_{2}} \cdot T_{\wt(w \Rightarrow \ti{v}_{w})}$,
as desired. This completes the proof of Proposition~\ref{prop:s}. 
\end{proof}
%
%
\begin{prop} \label{prop:s2}
Let $\lambda_{1},\,\dots,\,\lambda_{n} \in P^{+}$, and set 
$\lambda:=\lambda_{1} + \cdots + \lambda_{n}$. 
Take $\J_{k}:=\J_{\lambda_{k}}$, $1 \le k \le n$, and $\J=\J_{\lambda}$ as in \eqref{eq:J}. 
Let $v_{k} \in \WSu{\lambda_k}$ for $1 \le k \le n$, and set
\begin{equation*}
\eta:=\Theta_{\lambda_1,\dots,\lambda_n}^{-1}(
\eta_{\lambda_1}^{v_1} \otimes \cdots \otimes \eta_{\lambda_n}^{v_n}) 
\in \QLS(\lambda). 
\end{equation*}
Let $w \in W$. We define 
\begin{equation*}
\begin{cases}
  \ti{v}_{n+1}:=w, \quad 
  \ti{v}_{k}:=\tbmin{v_{k}}{\J_{k}}{\ti{v}_{k+1}} \quad 
  \text{\rm for $1 \le k \le n$}, \\[1mm]
  \xi_{n}:=\wt (\ti{v}_{n+1} \Rightarrow \ti{v}_{n}), \quad
  \xi_{k}:=\xi_{k+1} + \wt (\ti{v}_{k+1} \Rightarrow \ti{v}_{k}) 
  \quad \text{\rm for $1 \le k \le n-1$}.
\end{cases}
\end{equation*}
Then the following equality holds{\rm:}
\begin{equation} \label{eq:s2a}
\Xi_{\lambda_1,\dots,\lambda_n}(\pi_{\eta} \cdot T_{\wt (w \Rightarrow \kappa(\eta))}) = 
(\pi_{\lambda_1}^{v_1} \cdot T_{\xi_1}) \otimes \cdots \otimes 
(\pi_{\lambda_n}^{v_n} \cdot T_{\xi_n}).
\end{equation}
\end{prop}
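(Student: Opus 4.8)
The plan is to prove Proposition~\ref{prop:s2} by induction on $n$, using Proposition~\ref{prop:s} at each step to ``peel off'' the last tensor factor. The base case $n=1$ is immediate from the definitions: then $\Theta_{\lambda_1}$ is the identity, so $\eta=\eta_{\lambda_1}^{v_1}$ and hence $\pi_{\eta}=\pi_{\lambda_1}^{v_1}$ (because $\pi_{\lambda_1}^{v_1}\in\SLS_{0}(\lambda_1)$ satisfies $\cl(\pi_{\lambda_1}^{v_1})=\eta_{\lambda_1}^{v_1}$ and $\kappa(\pi_{\lambda_1}^{v_1})=v_1$, so it is the element of Lemma~\ref{lem:deg}); moreover $\kappa(\eta)=v_1$ and $\Xi_{\lambda_1}$ restricts to the identity on $\SLS_{0}(\lambda_1)$, so \eqref{eq:s2a} reduces to $\pi_{\lambda_1}^{v_1}\cdot T_{\wt(w\Rightarrow v_1)}=\pi_{\lambda_1}^{v_1}\cdot T_{\xi_1}$ with $\xi_1=\wt(w\Rightarrow\ti{v}_{1})$, $\ti{v}_{1}=\tbmin{v_1}{\J_{1}}{w}$. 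This holds because $\wt(w\Rightarrow v_1)\equiv\wt(w\Rightarrow\ti{v}_{1})$ mod $\QSv{1}$ by Lemma~\ref{lem:wtS}, and $\pi_{\lambda_1}^{v_1}\cdot T_{\xi}$ depends on $\xi$ only modulo $\QSv{1}$ by Lemma~\ref{lem:PiJ}\,(3). (Alternatively, $n=2$ is already covered by Proposition~\ref{prop:s} applied with the straight-line path $\eta_1=\eta_{\lambda_1}^{v_1}$, after the same reconciliation modulo $\QSv{1}$.)

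For the inductive step, set $\nu:=\lambda_1+\cdots+\lambda_{n-1}$ and put $\eta'':=\Theta_{\lambda_1,\dots,\lambda_{n-1}}^{-1}(\eta_{\lambda_1}^{v_1}\otimes\cdots\otimes\eta_{\lambda_{n-1}}^{v_{n-1}})\in\QLS(\nu)$; note that $\J=\J_{\nu}\cap\J_{n}$. The $n$-fold version of the associativity in Remark~\ref{rem:ass2} (proved exactly as there, by connectedness of $\QLS(\lambda)$) gives $\Theta_{\lambda_1,\dots,\lambda_n}=(\Theta_{\lambda_1,\dots,\lambda_{n-1}}\otimes\id)\circ\Theta_{\nu,\lambda_n}$, whence $\Theta_{\nu,\lambda_n}(\eta)=\eta''\otimes\eta_{\lambda_n}^{v_n}$; the crucial point of the regrouping $\lambda=\nu+\lambda_n$ is that this second factor is a straight-line path, so that Proposition~\ref{prop:s} becomes applicable (its first argument may be arbitrary, but its second argument must be straight-line). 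Applying Proposition~\ref{prop:s} to the pair $(\nu,\lambda_n)$ with $\eta_1=\eta''$, $\eta_2=\eta_{\lambda_n}^{v_n}$ — so that its $\ti{v}_w=\tbmin{v_n}{\J_{n}}{w}$ equals our $\ti{v}_{n}$ and $\xi_n=\wt(w\Rightarrow\ti{v}_{n})$ — we obtain
\[
\Xi_{\nu,\lambda_n}\bigl(\pi_{\eta}\cdot T_{\wt(w\Rightarrow\kappa(\eta))}\bigr)
=\bigl(\pi_{\eta''}\cdot T_{\wt(\ti{v}_{n}\Rightarrow\kappa(\eta''))+\xi_n}\bigr)
\otimes\bigl(\pi_{\lambda_n}^{v_n}\cdot T_{\xi_n}\bigr),
\]
whose second tensor factor is already the desired one.

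To handle the first tensor factor, write $T_{\wt(\ti{v}_{n}\Rightarrow\kappa(\eta''))+\xi_n}=T_{\wt(\ti{v}_{n}\Rightarrow\kappa(\eta''))}\,T_{\xi_n}$ (immediate from \eqref{eq:Txi} and Lemma~\ref{lem:PiJ}\,(1)), apply $\Xi_{\lambda_1,\dots,\lambda_{n-1}}$, commute it past $\cdot\,T_{\xi_n}$ by Lemma~\ref{lem:Txi2} (legitimate since $\pi_{\eta''}\cdot T_{\wt(\ti{v}_{n}\Rightarrow\kappa(\eta''))}\in\SLS_{0}(\nu)$ by Lemma~\ref{lem:deg} and Remark~\ref{rem:Txi}), and then invoke the induction hypothesis for $\eta''$ with $\ti{v}_{n}$ playing the role of $w$. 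One checks directly that the recursively defined families produced by that application of the induction hypothesis coincide, for $1\le k\le n-1$, with the $\ti{v}_{k}$ of the statement and with $\xi_k-\xi_n$ respectively (the two recursions for $\ti{v}_{k}$ are identical and start from the same element $\ti{v}_{n}$; the shift by $\xi_n$ in the $\xi$'s then follows by a telescoping argument). Using \eqref{eq:Txiten} together with $(\pi\cdot T_{\zeta})\cdot T_{\xi_n}=\pi\cdot T_{\zeta+\xi_n}$, the first factor therefore becomes $(\pi_{\lambda_1}^{v_1}\cdot T_{\xi_1})\otimes\cdots\otimes(\pi_{\lambda_{n-1}}^{v_{n-1}}\cdot T_{\xi_{n-1}})$. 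Finally, applying $\Xi_{\lambda_1,\dots,\lambda_{n-1}}\otimes\id$ to the displayed identity and using $\Xi_{\lambda_1,\dots,\lambda_n}=(\Xi_{\lambda_1,\dots,\lambda_{n-1}}\otimes\id)\circ\Xi_{\nu,\lambda_n}$ on $\SLS_{0}(\lambda)$ (the $n$-fold analogue of Remark~\ref{rem:ass1}, again by connectedness) yields \eqref{eq:s2a}.

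The main obstacle is not conceptual but organizational: the only delicate points are establishing the $n$-fold coherence of the families $\{\Xi_{\lambda_1,\dots,\lambda_n}\}$ and $\{\Theta_{\lambda_1,\dots,\lambda_n}\}$ (routine, via connectedness exactly as in Remarks~\ref{rem:ass1} and \ref{rem:ass2}) and carefully matching the recursively defined families $(\ti{v}_{k})$, $(\xi_k)$ of the statement with those generated by the induction hypothesis for $\eta''$ and parameter $\ti{v}_{n}$, i.e.\ tracking the index shift $\xi_k\leftrightarrow\xi_k-\xi_n$. Everything else is a formal manipulation of the operators $\cdot\,T_{\xi}$ via $T_{\xi}T_{\zeta}=T_{\xi+\zeta}$, the tensor compatibility \eqref{eq:Txiten}, Lemma~\ref{lem:Txi2}, and the base input Proposition~\ref{prop:s}.
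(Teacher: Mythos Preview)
Your proof is correct and follows essentially the same approach as the paper's: induction on $n$, with the inductive step splitting off the last factor via Proposition~\ref{prop:s} applied to $(\nu,\lambda_n)$ (the paper writes $\lambda'$ for your $\nu$ and $\eta'$ for your $\eta''$), then invoking the induction hypothesis with $\ti{v}_n$ in place of $w$ and using Lemma~\ref{lem:Txi2} to commute $\Xi_{\lambda_1,\dots,\lambda_{n-1}}$ past $T_{\xi_n}$. The index-shift bookkeeping $\xi_k\leftrightarrow\xi_k-\xi_n$ and the appeal to the $n$-fold associativity of Remarks~\ref{rem:ass1} and \ref{rem:ass2} are exactly what the paper does as well.
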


\begin{proof}
We prove the assertion of the proposition by induction on $n$. 
Assume  that $n=1$. In this case, 
both $\Xi_{\lambda_1,\dots,\lambda_n}$ and 
$\Theta_{\lambda_1,\dots,\lambda_n}$ are the identity map.
Hence, $\eta=\eta_{\lambda_1}^{v_1}$, and 
$\pi_{\eta}=\pi_{\lambda_1}^{v_1}$. By Lemma~\ref{lem:wtS}, 
we have $\xi_{1} = \wt(w \Rightarrow \ti{v}_{1}) \equiv 
\wt(w \Rightarrow v_{1}) = 
\wt(w \Rightarrow \kappa(\eta))$ mod $\QSv{1}$. 
Therefore, we obtain 
$\pi_{\eta} \cdot T_{\wt(w \Rightarrow \kappa(\eta))} = 
 \pi_{\eta} \cdot T_{\xi_1} = \pi_{\lambda_1}^{v_1} \cdot T_{\xi_1}$
by Lemma~\ref{lem:PiJ}\,(3). 
This proves the assertion for the case $n=1$.

Assume that $n > 1$; for simplicity of notation,
we set $\lambda':=\lambda_{1} + \cdots + \lambda_{n-1}$. 
We see from Remarks~\ref{rem:ass1} and \ref{rem:ass2} that 
the following diagrams \eqref{eq:CDa} and \eqref{eq:CDb} are commutative:
\begin{equation} \label{eq:CDa}
\begin{split}
\xymatrix{%
\QLS(\lambda) \ar[r]^-{\Theta_{\lambda_1,\dots,\lambda_n}}
\ar[d]_{\Theta_{\lambda',\lambda_n}} & 
\QLS(\lambda_{1}) \otimes \cdots \otimes \QLS(\lambda_{n}) \\
\QLS(\lambda') \otimes \QLS(\lambda_{n}),
\ar[ur]_{\qquad \Theta_{\lambda_1,\dots,\lambda_{n-1}} \otimes \id} & }
\end{split}
\end{equation}
\begin{equation} \label{eq:CDb}
\begin{split}
\xymatrix{%
\SLS_{0}(\lambda) \ar[r]^-{\Xi_{\lambda_1,\dots,\lambda_n}}
\ar[d]_{\Xi_{\lambda',\lambda_n}} & 
(\SLS(\lambda_{1}) \otimes \cdots \otimes \SLS(\lambda_{n}))_{0} \\
(\SLS(\lambda') \otimes \SLS(\lambda_{n}))_{0}.
\ar[ur]_{\qquad \Xi_{\lambda_1,\dots,\lambda_{n-1}} \otimes \id} & }
\end{split}
\end{equation}
Now, we set 
$\eta':=\Theta_{\lambda_1,\dots,\lambda_{n-1}}^{-1}(
\eta_{\lambda_1}^{v_1} \otimes \cdots \otimes \eta_{\lambda_{n-1}}^{v_{n-1}}) 
\in \QLS(\lambda')$; by the commutative diagram \eqref{eq:CDa}, 
we see that $\Theta_{\lambda',\lambda_{n}}^{-1}(\eta' \otimes 
\eta_{\lambda_{n}}^{v_{n}}) = \eta$. Therefore, we deduce 
from Proposition~\ref{prop:s} that 
\begin{equation}
\Xi_{\lambda',\lambda_{n}}(\pi_{\eta} \cdot T_{\wt(w \Rightarrow \kappa(\eta))}) = 
\pi_{\eta'} \cdot T_{ \wt (\ti{v}_{n} \Rightarrow \kappa(\eta')) + \wt(w \Rightarrow \ti{v}_{n})} 
\otimes \pi_{\lambda_{n}}^{v_{n}} \cdot T_{\wt(w \Rightarrow \ti{v}_{n})};
\end{equation}
note that $\wt(w \Rightarrow \ti{v}_{n}) = \xi_{n}$. 
Also, by the induction hypothesis 
(applied to $\eta' \in \QLS(\lambda')$ and $\ti{v}_{n} \in W$), 
we have
\begin{align*}
& \Xi_{\lambda_1,\dots,\lambda_{n-1}}( 
  \pi_{\eta'} \cdot T_{ \wt (\ti{v}_{n} \Rightarrow \kappa(\eta')) + \wt(w \Rightarrow \ti{v}_{n})}) \\
& \qquad = 
  \Bigl(\Xi_{\lambda_1,\dots,\lambda_{n-1}}( 
  \pi_{\eta'} \cdot T_{ \wt (\ti{v}_{n} \Rightarrow \kappa(\eta')) } ) \Bigr)
  \cdot T_{\wt(w \Rightarrow \ti{v}_{n})} 
  \quad \text{by Lemma~\ref{lem:Txi2}} \\
& \qquad =
  \Bigl(
  (\pi_{\lambda_1}^{v_1} \cdot T_{\xi_1-\wt(w \Rightarrow \ti{v}_{n})}) \otimes \cdots \otimes 
  (\pi_{\lambda_{n-1}}^{v_{n-1}} \cdot T_{\xi_{n-1}-\wt(w \Rightarrow \ti{v}_{n})})
  \Bigr) \cdot T_{\wt(w \Rightarrow \ti{v}_{n})} \\
& \qquad =
  (\pi_{\lambda_1}^{v_1} \cdot T_{\xi_1}) \otimes \cdots \otimes 
  (\pi_{\lambda_{n-1}}^{v_{n-1}} \cdot T_{\xi_{n-1}}). 
\end{align*}
Therefore, by the commutative diagram \eqref{eq:CDb}, we obtain
\begin{align*}
\Xi_{\lambda_1,\dots,\lambda_{n}}(\pi_{\eta} \cdot T_{\wt (w \Rightarrow \kappa(\eta))}) 
& = ((\Xi_{\lambda_1,\dots,\lambda_{n-1}} \otimes \id) \circ 
 \Xi_{\lambda',\lambda_{n}})(\pi_{\eta} \cdot T_{\wt (w \Rightarrow \kappa(\eta))}) \\
& = (\pi_{\lambda_1}^{v_1} \cdot T_{\xi_1}) \otimes \cdots \otimes 
(\pi_{\lambda_n}^{v_n} \cdot T_{\xi_n}), 
\end{align*}
as desired. This proves the proposition. 
\end{proof}

\begin{proof}[Proof of Theorem~\ref{thm:main}.]
Take $N_{\lambda}$, $N_{\mu}$, $N_{\lambda+\mu}$ as in \eqref{eq:N}, 
and let $N$ be a common multiple of $N_{\lambda}$, $N_{\mu}$, $N_{\lambda+\mu}$. 
We deduce from \eqref{eq:CD4} and \eqref{eq:CD5} that 
the following diagrams are commutative:
%
%
\begin{equation} \label{eq:CD4a}
\begin{split}
\xymatrix{%
 \SLS_{0}(\lambda+\mu) 
  \ar[rr]^-{\Xi_{\lambda\mu}} 
  \ar[d]_-{\Sigma_{N}}
  \ar[ddr]^-{\Sigma_{N}'} & &
 (\SLS(\lambda) \otimes \SLS(\mu))_{0} 
 \ar[d]^-{\Sigma_{N} \otimes \Sigma_{N}} \\
 (\SLS(\lambda+\mu)^{\otimes N})_{0} & & 
 (\SLS(\lambda)^{\otimes N} \otimes \SLS(\mu)^{\otimes N})_{0} \\
 & \SLS_{0}(N\lambda+N\mu), \ar[ru]_{\Xi_{\lambda\mu}^{(N)}} 
 \ar[lu]^-{\Xi_{\lambda+\mu}^{(N)}} & 
}
\end{split}
\end{equation}
%
%
\begin{equation} \label{eq:CD5a}
\begin{split}
\xymatrix{%
 \QLS(\lambda+\mu) 
  \ar[rr]^-{\Theta_{\lambda\mu}} 
  \ar[d]_-{\Sigma_{N}}
  \ar[ddr]^-{\Sigma_{N}'} & &
 \QLS(\lambda) \otimes \QLS(\mu) 
 \ar[d]^-{\Sigma_{N} \otimes \Sigma_{N}} \\
 \QLS(\lambda+\mu)^{\otimes N} & & 
 \QLS(\lambda)^{\otimes N} \otimes \QLS(\mu)^{\otimes N} \\
 & \QLS(N\lambda+N\mu), \ar[ru]_{\Theta_{\lambda\mu}^{(N)}} 
 \ar[lu]^-{\Theta_{\lambda+\mu}^{(N)}} & 
}
\end{split}
\end{equation}
By the commutative diagram \eqref{eq:CD4a}, it suffices to show that
\begin{equation} \label{eq:main1a}
\begin{split}
& (\Xi_{\lambda\mu}^{(N)} \circ \Sigma_{N}')
  (\overbrace{ \pi_{\eta} \cdot T_{\wt(w \Rightarrow \kappa(\eta))} }^{%
  \text{ cf. LHS of \eqref{eq:main} }}) \\
& \qquad =
 (\Sigma_{N} \otimes \Sigma_{N})
  (\underbrace{%
  \pi_{\eta_{1}} \cdot 
  T_{\wt(\io{\eta_2}{w} \Rightarrow \kappa(\eta_1))+ \ze{\eta_2}{w}} \otimes 
  \pi_{\eta_{2}} \cdot 
  T_{\wt(w \Rightarrow \kappa(\eta_2))}}_{%
  \text{RHS of \eqref{eq:main}}})
\end{split}
\end{equation}
First we compute the left-hand side of \eqref{eq:main1a}. 
We set $\eta':= \Sigma_{N}'(\eta)  \in \QLS(N\lambda+N\mu)$;
note that $\kappa(\eta')=\kappa(\eta)$.
By the commutative diagrams \eqref{eq:CD} and \eqref{eq:CD3}, 
and the definition of $\Sigma_{N}'$, we see that 
\begin{equation*}
\Sigma_{N}'(\pi_{\eta} \cdot T_{\wt(w \Rightarrow \kappa(\eta))})
  = \Sigma_{N}'(\pi_{\eta}) \cdot T_{\wt(w \Rightarrow \kappa(\eta))}
  = \pi_{\eta'} \cdot T_{\wt(w \Rightarrow \kappa(\eta'))}. 
\end{equation*}
Hence the left-hand side of \eqref{eq:main1a} is identical to 
$\Xi_{\lambda\mu}^{(N)}
 (\pi_{\eta'} \cdot T_{\wt(w \Rightarrow \kappa(\eta'))})$.

Next we compute the right-hand side of \eqref{eq:main1a}. 
Assume that $\eta_{1} \in \QLS(\lambda)$ and $\eta_{2} \in \QLS(\mu)$ are of the forms: 
\begin{equation*}
\eta_{1} = (u_{1},\,\dots,u_{p};\tau_{0},\tau_{1},\dots,\tau_{p}), \qquad
\eta_{2} = (v_{1},\,\dots,v_{s};\sigma_{0},\sigma_{1},\dots,\sigma_{s}), 
\end{equation*}
respectively. We define 
\begin{equation}
\begin{cases}
\tiv{\eta_{2}}{w} = (\ti{v}_{1},\,\dots,\,\ti{v}_{s},\,\ti{v}_{s+1}=w), \\[1.5mm]
\tiv{\eta_{1}}{\ti{v}_{1}} = (\ti{u}_{1},\,\dots,\,\ti{u}_{p},\,\ti{u}_{p+1}=\ti{v}_{1}), 
\end{cases}
\quad \text{and} \quad
\begin{cases}
\tixi{\eta_{2}}{w} = (\ti{\xi}_{1},\,\dots,\,\ti{\xi}_{s}), \\[1.5mm]
\tixi{\eta_{1}}{\ti{v}_{1}} = (\ti{\gamma}_{1},\,\dots,\,\ti{\gamma}_{p}), 
\end{cases}
\end{equation}
as in \eqref{eq:ti1} and \eqref{eq:ti2}, respectively; 
recall that $\io{\eta_2}{w}=\ti{v}_{1}$ and $\ze{\eta_2}{w}=\ti{\xi}_{1}$. 
We claim that
\begin{align}
& \pi_{\eta_1} \cdot T_{\wt(\io{\eta_2}{w} \Rightarrow \kappa(\eta_1))+ \ze{\eta_2}{w}}
  = (u_{1}\PS{\lambda}(t_{\ti{\gamma}_1+\ti{\xi}_1}),\,\dots,
     u_{p}\PS{\lambda}(t_{\ti{\gamma}_p+\ti{\xi}_1});
     \tau_{0},\tau_{1},\dots,\tau_{p}), \label{eq:23a} \\
& \pi_{\eta_2} \cdot T_{\wt(w \Rightarrow \kappa(\eta_{2}))}
  = (v_{1}\PS{\mu}(t_{\ti{\xi}_1}),\,\dots,
     v_{s}\PS{\mu}(t_{\ti{\xi}_s});
     \sigma_{0},\sigma_{1},\dots,\sigma_{s}). \label{eq:23b}
\end{align}
Let us show \eqref{eq:23b}; the proof of \eqref{eq:23a} is similar. 
We define $\bxi{\eta_{2}} = (\xi_{1},\,\dots,\,\xi_{s-1},\xi_{s}=0)$
as in \eqref{eq:bxi}. Then, by Remark~\ref{rem:equiv}, 
we have $\ti{\xi}_{u} \equiv \xi_{u}+\wt^{\J_{\mu}}
(\mcr{w}^{\J_{\mu}} \Rightarrow \kappa(\eta_{2}))$ mod $\QSv{\mu}$ 
for all $1 \le u \le s$. By Lemma~\ref{lem:wtS}, we have 
$\wt^{\J_{\mu}}
(\mcr{w}^{\J_{\mu}} \Rightarrow \kappa(\eta_{2}))
\equiv \wt(w \Rightarrow \kappa(\eta_{2}))$ mod $\QSv{\mu}$, 
and hence $\ti{\xi}_{u} \equiv \xi_{u} + 
\wt(w \Rightarrow \kappa(\eta_{2}))$ mod $\QSv{\mu}$ 
for all $1 \le u \le s$. From these, we see that
\begin{align*}
 \pi_{\eta_2} \cdot T_{\wt(w \Rightarrow \kappa(\eta_{2}))} 
 &   \stackrel{\eqref{eq:pieta}}{=} (v_{1}\PS{\mu}(t_{\xi_1}),\,\dots,
     v_{s}\PS{\mu}(t_{\xi_s}); 
     \sigma_{0},\sigma_{1},\dots,\sigma_{s}) \cdot 
     T_{\wt(w \Rightarrow \kappa(\eta_{2}))} \\
& \stackrel{\eqref{eq:PiJ2}}{=} 
  (v_{1}\PS{\mu}(t_{\xi_1+\wt(w \Rightarrow \kappa(\eta_{2}))}),\,\dots,
     v_{s}\PS{\mu}(t_{\xi_s+\wt(w \Rightarrow \kappa(\eta_{2}))}); 
     \sigma_{0},\sigma_{1},\dots,\sigma_{s}) \\
& = (v_{1}\PS{\mu}(t_{\ti{\xi_1}}),\,\dots,
     v_{s}\PS{\mu}(t_{\ti{\xi_s}}); 
     \sigma_{0},\sigma_{1},\dots,\sigma_{s}) \quad 
   \text{by Lemma~\ref{lem:PiJ}\,(3)}, 
\end{align*}
as desired.
By the definition of $\Sigma_{N}$, 
the right-hand side of \eqref{eq:main1a} is:
\begin{equation} \label{eq:main1c}
\begin{split}
& (\pi_{\lambda}^{u_1} \cdot T_{\ti{\gamma}_1+\ti{\xi}_1})^{\otimes N(\tau_1-\tau_0)} 
 \otimes \cdots \otimes 
(\pi_{\lambda}^{u_p} \cdot T_{\ti{\gamma}_{p}+\ti{\xi}_1})^{\otimes N(\tau_p-\tau_{p-1})} \\
& \qquad \otimes 
(\pi_{\mu}^{v_1} \cdot T_{\ti{\xi}_1})^{\otimes N(\sigma_1-\sigma_0)} 
 \otimes \cdots \otimes 
(\pi_{\mu}^{v_s} \cdot T_{\ti{\xi}_s})^{\otimes N(\sigma_s-\sigma_{s-1})}.
\end{split}
\end{equation}

Now, we see from the commutative diagram \eqref{eq:CD5a} and 
the definition of $\Sigma_{N}$ that 
\begin{align*}
& \Theta_{\lambda\mu}^{(N)}(\eta') =
  (\Theta_{\lambda\mu}^{(N)} \circ \Sigma_{N}')(\eta) = 
  ((\Sigma_{N} \otimes \Sigma_{N}) \circ \Theta_{\lambda\mu})(\eta) = 
  (\Sigma_{N} \otimes \Sigma_{N})(\eta_{1} \otimes \eta_{2}) \\
&  = 
(\eta_{\lambda}^{u_1})^{\otimes N(\tau_1-\tau_0)} 
 \otimes \cdots \otimes 
(\eta_{\lambda}^{u_p})^{\otimes N(\tau_p-\tau_{p-1})} \otimes 
(\eta_{\mu}^{v_1})^{\otimes N(\sigma_1-\sigma_0)} 
 \otimes \cdots \otimes 
(\eta_{\mu}^{v_s})^{\otimes N(\sigma_s-\sigma_{s-1})}.
\end{align*}
Therefore, by applying Proposition~\ref{prop:s2}, 
we deduce that $\Xi_{\lambda\mu}^{(N)}
(\pi_{\eta'} \cdot T_{\wt(w \Rightarrow \kappa(\eta'))})$ 
(which is identical to the left-hand side of \eqref{eq:main1a}, as seen above) 
is identical to the element in \eqref{eq:main1c} 
(which is identical to the right-hand side of \eqref{eq:main1a}, as seen above). 
Thus we have shown \eqref{eq:main1a}, 
thereby completing the proof of Theorem~\ref{thm:main}. 
\end{proof}

%
{\small
}
\end{document}